\documentclass[11pt]{elsarticle}
\usepackage[margin=.7in]{geometry}
\usepackage{graphicx} 
\usepackage{amsfonts}
\usepackage{amsmath}
\usepackage{mathtools}
\usepackage{amssymb}
\usepackage{amsthm}
\usepackage{mathrsfs}
\usepackage{dsfont}
 \newcommand{\R}{\mathbb{R}}
 \newcommand{\N}{\mathbb{N}}
 \newcommand{\E}{\mathbb{E}}
 \newcommand{\D}{\mathbb{D}}
 \renewcommand{\P}{\mathbb{P}}
 
\usepackage{autobreak}
\usepackage{hyperref}
\usepackage[noabbrev,capitalize]{cleveref}
\crefname{equation}{equation}{equations}
\numberwithin{equation}{section}

\newtheorem{theorem}{Theorem}[section]
\newtheorem{lemma}[theorem]{Lemma}

\newtheorem{definition}[theorem]{Definition}
\newtheorem{proposition}[theorem]{Proposition}
\newtheorem{corollary}[theorem]{Corollary}
\newcommand{\dd}{\mathrm{d}}
\begin{document}
\begin{frontmatter}

\title{Stochastic Tamed Navier--Stokes Equations with Wiener and Jump Noise on $\mathbb R^3$.
\\I.
Maximal Local $L^p$-Well-Posedness}

\author{Bikram Podder} \ead{bikrampoddar2@gmail.com} \author{Surendra Kumar} \ead{surendraiitr8@gmail.com}

\affiliation{organization={Department of Mathematics}, addressline={University of Delhi}, city={ Delhi}, postcode={110007}, country={India}}
\begin{abstract}
We establish maximal local $L^p$-well-posedness, for $p>3$, for the stochastic tamed Navier--Stokes equations on $\mathbb R^3$ driven simultaneously by multiplicative cylindrical Wiener noise and a compensated Poisson random measure.
For divergence-free initial data $ u_0\in L^p\bigl(\Omega,\mathcal F_0; L^p(\mathbb R^3;\mathbb R^3)\bigr), $ we prove existence and pathwise uniqueness of a local strong solution with $L^p$-valued c\`adl\`ag trajectories and the local $L^p$-energy regularity.
For solutions driven by the same noises, we establish localized Lipschitz dependence on the initial datum in the path supremum and space--time norms.
The discontinuous forcing makes the whole-space Gaussian construction non formal as stopping levels may be overshot by jumps, while convergence of the compensated-Poisson term requires simultaneous control of its quadratic and $p$th integrability modes.
We resolve these difficulties by a jump-compatible localization based on strict pre-exit bounds and predictable left limits.
We further establish bounded-time restart and stochastic pasting on the prescribed stochastic basis.
The resulting family of attainable lifetimes is upward directed and yields a unique maximal local strong solution, which inherits the localized dependence estimate.

Continuation criteria, blow-up alternatives, and global well-posedness under stronger finite-energy hypotheses are treated in the companion Part~II.
\end{abstract}

\begin{keyword}

Stochastic partial differential equation\sep Stochastic tamed Navier--Stokes Equation \sep Local strong solution

\MSC[2020] 60H15 \sep 35R60 \sep 35Q30 \sep 76D05

\end{keyword}

\end{frontmatter}
\tableofcontents
\section{Introduction}
\label{sec:introduction}

We consider the stochastic tamed Navier--Stokes equations (STNSE) on $\R^3$ driven simultaneously by a cylindrical Wiener process and a compensated Poisson random measure:
\begin{equation}
\label{eq:1_Main}
\begin{cases}
\dd u = \Big[ \nu\Delta u - \mathcal P((u\cdot\nabla)u) - \mathcal P\bigl(g_N(|u|^2)u\bigr) \Big]\dd t + \mathcal P\sigma(u)\,\dd W_t + \int_Z \mathcal PG(u(t-),z)\, \widetilde N(\dd t,\dd z), \\
\nabla\cdot u=0, \qquad (t,x)\in(0,\infty)\times\R^3, \\
u(0)=u_0 .
\end{cases}
\end{equation}
Here $\nu>0$ is the viscosity, $\mathcal P$ denotes the Helmholtz--Leray projection onto divergence-free vector fields, and $g_N$ is the taming function introduced in the STNSE framework of R\"ockner and Zhang \cite{MR2520127,MR2852224}.
The process $W$ is a cylindrical Wiener process on a separable Hilbert space, while $\widetilde N$ is a compensated Poisson random measure on a $\sigma$-finite measure space $(Z,\mathcal Z,\mu)$.
The multiplicative Wiener coefficient $\sigma$ and the jump coefficient  $G$ takes values in $\gamma \bigl(\mathcal U ; L^p(\R^3;\R^3)\bigr)$ and $L^p(\R^3;\R^3)$ respectively. 
The appearance of the predictable state $u(t-)$ in the jump coefficient is essential throughout.

The aim of the present paper is the maximal local well-posedness theory for \eqref{eq:1_Main} with initial data $ u_0 \in L^p\bigl( \Omega, \mathcal F_0;L^p(\R^3;\R^3) \bigr), \ p>3, \ \nabla\cdot u_0=0 \quad \P\text{-a.s.} $ The problem lies at the intersection of two features which require different analytic mechanisms.
On the one hand, the low spatial regularity of the initial datum places the equation naturally in a whole-space $L^p$ framework rather than in the classical $H^1$-based variational setting.
On the other hand, the compensated Poisson forcing produces c\`adl\`ag trajectories, so localization, stopping, restart, and concatenation must retain the distinction between the pre-jump state and the value after a jump.

This paper is the first part of a two-part study.
Its purpose is to construct the canonical maximal local solution on which the continuation and global analysis can subsequently be based.
The present work is restricted to existence, pathwise uniqueness, localized continuous dependence on the initial datum, and the construction of a unique maximal local strong solution on $[0,\tau_{\max})$.
In the second part, this maximal solution serves as the starting point for an intrinsic continuation theory and, under additional regularity assumptions on the initial data and the noise coefficients, for the analysis of non-explosion and global strong solvability.
No continuation criterion, blow-up alternative, or global existence statement is asserted in the present paper.

\subsection{Related literature}

The well-posedness theory for both the STNSE and the stochastic Navier--Stokes equations (SNSE) has been developed in a variety of domains and for different classes of initial data.
R\"ockner and Xicheng Zhang \cite{Rockner2009} studied the deterministic  three-dimensional  tamed NSE equation on $\R^3$, establishing existence, uniqueness and regularity for smooth data.
They also used the tamed approximation to construct suitable weak solutions of the classical Navier--Stokes equation.
In \cite{MR2520127}, R\"ockner and Zhang proved existence of strong solutions to the STNSE on $\R^3$ by first establishing existence of martingale solutions and pathwise uniqueness and then applying the Yamada--Watanabe theorem, with initial data in the divergence-free vector fields in $H^1$.
In \cite{MR2852224}, R\"ockner and T.
Zhang also proved existence and uniqueness of strong solutions to the STNSE on bounded domains in $\R^3$ with Dirichlet boundary conditions and $H^1$ initial data, using a Galerkin approximation together with a local monotonicity property of the coefficients.
Brze\'zniak and Dhariwal \cite{MR4085355} generalized the work of R\"ockner and Zhang by a different method, based on a truncated SPDE in an infinite-dimensional space, tightness arguments, the Skorokhod--Jakubowski theorem, and the Yamada--Watanabe theorem.
Dong and Zhang \cite{MR4127295} studied the well-posedness of the STNSE with multiplicative L\'evy noise for initial data $u_0\in L^2(\Omega;H^1)$ with periodic boundary condition, using a Galerkin scheme together with monotonicity properties of the coefficients to obtain existence and uniqueness of strong solutions.

We next recall several developments concerning existence and uniqueness for the SNSE.
Early contributions, dating back to the 1970s, were primarily formulated in Hilbert-space settings; see, for example, \cite{MR348841,MR1922695, MR2459085, MR3049076} for results in both two and three dimensions. In \cite{MR2865433}, Kim established local strong well-posedness for the three-dimensional SNSE on $\R^3$ with initial data in the subcritical Sobolev class $H^{1/2+\alpha}(\R^3)$, $0<\alpha<1/2$. For multiplicative non-degenerate noise, he further showed that, if the initial datum is sufficiently small, the corresponding strong solution is global with large probability.
More recently, Chen et al. \cite{MR3912800} proved the existence of martingale solutions for the three-dimensional stochastic nonhomogeneous incompressible NSE equations on bounded domains in $\R^3$, driven by L\'evy noise comprising Brownian, compensated-Poisson, and Poisson components. The velocity is treated in the energy framework $L^\infty(0,T;L^2)\cap L^2(0,T;H^1)$, while the construction relies on Faedo--Galerkin approximations, stopping-time estimates, Aubin--Simon compactness, stochastic tightness, and the Jakubowski--Skorokhod representation theorem.

The NSE with a damping term of the form $u^{\beta-1}\cdot u$ was studied by Cai et al. \cite{MR2401535}, who proved existence for $\beta\ge\frac{7}{2}$ and uniqueness of strong solutions for $\frac{7}{2}\le\beta \le5$.
Zhang et al. \cite{MR2754840} significantly improved this result to $\beta>3$ on $\R^3$ for initial data $u_0\in V\cap L^{\beta+1}$ for strong solution and uniqueness $3< \beta \le 5$ .

Recent developments for the SNSE with initial data in non-Hilbert spaces are particularly relevant to the present work.
Mohan and Sritharan \cite{MR3669657} established a local solution theory for the SNSE on $\R^m$, $m\ge2$, for initial data in $L^m(\R^m)$ for additive Wiener and Compensated Poison forcing.
Du and Zhang \cite{MR4002154} established local strong well-posedness for the SNSE on $\R^d$ with multiplicative Gaussian noise and initial data in critical homogeneous Besov spaces $\dot B_{q,r}^{d/q-1}$, for admissible parameters.
They also obtained global existence in probability for sufficiently small initial data in the critical norm.
In \cite{MR4703457}, Agresti and Veraar studied the SNSE with transport-type noise in the scaling-critical Besov spaces $B_{q,p}^{{d}/{q} -1}$, where $q\in[2,2d]$ and $p$ large enough.
They proved local well-posedness, established stochastic Serrin-type blow-up criteria, and derived a global existence theory.
Agresti and Veraar \cite{MR4716343} developed a critical variational framework in which suitable local Lipschitz assumptions replace the usual local monotonicity requirement.
Their applications include global well-posedness for STNSE on $\R^3$ with $H^1$ initial data and gradient-dependent Wiener forcing under the stated structural assumptions.

Kukavica et al.
further developed the $L^p$ theory for the SNSE .
In \cite{MR4385406}, local solutions for the SNSE driven by multiplicative noise on $\mathbb T^3$ were established for initial data in $L^p$, $p>5$,where the global solution is obtained for the small initial data with same condition in $p$ .
Kukavica et al.
\cite{MR4552356} subsequently proved local existence of strong solutions for initial data in $L^p$, $p>3$, thereby improving the previous range.
Their construction uses a spectral Galerkin scheme based on rectangular partial sums in Fourier space to obtain approximate solutions in $H^1(\mathbb T^3)$, which are then extended to the $L^p$ setting.
More recently, \cite{MR4908978} established local well-posedness for the three-dimensional SNSE on $\R^3$ with Gaussian noise and initial data in the same $L^p$ range, $p>3$, using the convolution-type Fourier multiplier ${\mathrm P}_{\leq k}$.

Discontinuous random perturbations also play an important role in stochastic fluid models.
While Gaussian noise describes continuous random fluctuations, L\'evy noise provides a natural mechanism for representing sudden and non-Gaussian disturbances.
Maximal inequalities for Poisson and L\'evy stochastic integrals in Banach spaces were developed by Hausenblas in \cite{MR2832576}, while optimal $L^p$-valued estimates for Poisson stochastic integrals were later obtained by Dirksen in \cite{MR3265175}.
Maximal inequalities for stochastic convolutions driven by compensated Poisson random measures in Banach spaces were established by Zhu, Brze\'zniak and Hausenblas in \cite{MR3634281}.
Their work shows that the jump-noise setting requires estimates distinct from those used for Wiener forcing and provides c\`adl\`ag modifications together with maximal bounds for compensated-Poisson stochastic convolutions.
Bechtel, Germ and Veraar \cite{MR5120871} extended the critical variational framework to SPDEs with L\'evy noise, obtaining maximal local solutions, blow-up criteria and global well-posedness under coercivity conditions.
Their framework also extends variational applications to tamed NSE equations on bounded and unbounded domains.
Motyl \cite{MR3034603} studied SNSE in two- and three-dimensional, possibly unbounded domains, driven simultaneously by a cylindrical Wiener process and a compensated Poisson random measure.
Martingale solutions were obtained by means of Faedo--Galerkin approximation, tightness, compactness, and a Skorokhod representation theorem in non-metric spaces.
SNSE driven by jump noise in $L^p$ spaces were studied by Fernando, R\"udiger and Sritharan in \cite{MR3411979}, who constructed local mild solutions using the Stokes semigroup framework and compensated Poisson random measures.
On the analytic side, Brze\'zniak and Hausenblas developed maximal regularity estimates for stochastic convolutions driven by time-homogeneous Poisson random measures and L\'evy-type noise in martingale-type Banach spaces in \cite{MR2529441}.
In \cite{MR3991628}, Zhu, Brze\'zniak and Liu developed an $L^p$ theory for two-dimensional SNSE driven by compensated Poisson jump noise.
They proved existence and uniqueness under spatially irregular jump perturbations and negative-order Sobolev initial data, showing that an $L^p$-based approach can accommodate weaker noise assumptions than the standard Galerkin framework.
Gy\"ongy and Wu \cite{MR4165650} established It\^o formulas for $L^p$-valued processes with jumps, extending the corresponding continuous-noise results.
Their formulas provide a reference for $L^p$ energy identities in SPDEs driven by Wiener and compensated-Poisson noise.

Kukavica and Xu \cite{MR3950967} established local existence for the stochastic Euler equations with L\'evy noise in $\R^d$, working in the Sobolev class $W^{s,p}$, $s>\frac{d}{p}+1$, $p>2$, and using mollified divergence-free approximations together with stopping times based on $\|u\|_\infty+\|\nabla u\|_\infty$.
Their estimates combine commutator bounds, Calder\'on--Zygmund estimates for the projected pressure contribution, and Burkholder--Davis--Gundy type controls for the Wiener and jump terms.
Mohan and Sritharan \cite{MR3942498} considered three-dimensional SNSE driven by L\'evy noise with hereditary viscosity and proved local solvability via a cut-off approximation, local monotonicity, and a stochastic Minty--Browder argument.
Mohan \cite{MR4439993} studied stochastic convective Brinkman--Forchheimer equations driven by multiplicative pure jump noise in bounded and periodic domains. Using monotonicity properties of the drift together with a stochastic Minty--Browder argument, he established global pathwise unique strong solutions for initial data $u_0\in L^2(\Omega;H)$, and, in periodic domains under additional regularity assumptions on the noise, higher-order regularity for $u_0\in L^2(\Omega;V)$.
\subsection{Main results and contributions}
The results reviewed above leave a specific whole-space problem unresolved.
The $L^p(\R^3)$ theory of Kukavica, Wang and Xu \cite{MR4908978} establishes local well-posedness for the three-dimensional SNSE with multiplicative cylindrical Wiener forcing, whereas the available L\'evy-driven theories cited above are formulated in different spatial regularity classes or solution frameworks.
The present work establishes a local $L^p$ theory for the STNSE on $\R^3$ with simultaneous cylindrical Wiener and compensated Poisson forcing, for $p>3$.
The results comprise existence, pathwise uniqueness, localized continuous dependence on the initial datum, and the construction of a unique maximal local strong solution.
The localization and pasting arguments retain the predictable pre-jump state and the possible jump at each terminal stopping time.

Under the coefficient assumptions stated in \cref{sec2}, namely \eqref{eq:2_WienerH}, \eqref{Hypo_W2}, \eqref{eq:4.G1}--\eqref{eq:4.G3}, and \eqref{eq:4_G5}, we first prove existence and pathwise uniqueness of a local strong solution of \eqref{eq:1_Main}.
More precisely, there exist an $(\mathcal F_t)$-stopping time $\tau$ with $\P(\tau>0)=1$ and an adapted $L^p$-valued c\`adl\`ag process $u$ satisfying \eqref{eq:1_Main} such that $ u \in L^p\bigl( \Omega;\D([0,\tau];L^p) \bigr) \cap L^p\bigl( \Omega;L^p(0,\tau;L^{3p}) \bigr) $ and
\begin{equation*}
\E \Bigg[ \sup_{0\le s\le\tau} \|u(s)\|_p^p + \int_0^\tau \left\| \nabla\bigl(|u(s)|^{p/2}\bigr) \right\|_2^2\,\dd s \Bigg] <\infty .
\end{equation*}

We then pass from this local solution to a unique maximal local strong solution
\begin{equation*}
\bigl( u,\{\tau_n\}_{n\ge1},\tau_{\max} \bigr), \qquad \tau_n\uparrow\tau_{\max} \quad \P\text{-a.s.},
\end{equation*}
where $\tau_{\max} = \operatorname*{ess\,sup}\mathscr T$ and $\mathscr T$ denotes the family of attainable local lifetimes.
The process is canonically defined on $[0,\tau_{\max})$, every $(u,\tau_n)$ belongs to the same local-energy class, and every other admissible local strong solution is a restriction of this maximal process up to its own lifetime.

The proof begins from the whole-space Fourier-multiplier strategy of \cite{MR4908978}, but the presence of the taming term and, more importantly, of the compensated Poisson forcing requires a different localization and convergence analysis.
We use the smooth convolution Fourier multipliers ${\mathrm P}_{\le k}$ (see \cref{sec2}) rather than a finite-dimensional eigenfunction expansion.
At a fixed scalar cutoff level $R$, the nonlinear and stochastic coefficients are regularized and the resulting equation is solved globally.
The estimates used to remove the spatial regularization are uniform in the spectral parameter, but they are not required to be uniform in $R$.
This distinction is important because the scalar cutoff is an auxiliary device for the local construction, whereas the maximal solution is subsequently obtained by pathwise uniqueness and stochastic pasting, not by an $R$-uniform global estimate.

The jump noise changes the localization mechanism in an essential way.
If a c\`adl\`ag trajectory reaches a localizing level by a jump, its value at the stopping time may overshoot that level.
Accordingly, we never use a deterministic post-jump bound at an exit time.
The localized estimates are instead formulated through the strict pre-exit bound for $u(s)$ together with the bound for the predictable left limit $u(s-)$.
In particular, for the localized stopping times used in the approximation argument, the relevant control has the form $ \sup_{0\le s<\tau}\|u(s)\|_p^p<\infty, \ \sup_{0\le s\le\tau}\|u(s-)\|_p^p<\infty, $ while the possible terminal jump is retained in the stopped Poisson integral.
The compensated-Poisson estimates are carried out simultaneously in the two natural $L^2(Z,\mu;L^p)$ and $L^p(Z,\mu;L^p)$ modes.
The Taylor-remainder martingale is controlled without imposing an additional fourth-moment assumption on the jump coefficient.

At the approximation level, the Cauchy argument separates genuine solution-difference terms from Fourier-multiplier defects.
The latter are estimated through the quantitative multiplier bound $ \| {\mathrm P}_{\le n}F - {\mathrm P}_{\le m}F \|_{L^q} \lesssim \left| \frac1n-\frac1m \right| \|\nabla F\|_{L^q}.
$ The spatial truncation of the initial datum gives finite $L^2$-energy for each approximation, but the corresponding $L^2$ bound need not be uniform in the spatial truncation index.
This causes no obstruction as the decay of the multiplier defect chosen sufficiently fast to overcome the finite $L^2$ contribution.
The resulting subsequence is Cauchy in the strong local path norm and converges almost surely to the desired $L^p$-valued c\`adl\`ag solution.

The second part of the argument concerns maximality and is structural rather than a priori.
For a bounded stopping time $\rho$, we prove a restart theorem on the shifted filtration $ \mathcal F_s^\rho=\mathcal F_{\rho+s}, \ W^\rho(s)=W(\rho+s)-W(\rho), $ under the stated independent-increment compatibility of the driving noises with the filtration.
For measurable $A\subset Z$ with $\mu(A)<\infty$, the shifted Poisson random measure is defined by $ N^\rho((0,s]\times A) = N((\rho,\rho+s]\times A).
$ The half-open interval $(\rho,\rho+s]$ is essential because of a possible jump at $\rho$ is already contained in the restart datum and is therefore excluded from the restarted Poisson integral, while a jump at the terminal endpoint is retained.

We next establish a stochastic pasting theorem for arbitrary local solutions driven by the same noises.
Rather than choosing a branch through terminal comparison events, the pasted process is written algebraically.
If $ \theta=\tau_1\wedge\tau_2, \ \tau=\tau_1\vee\tau_2, $ then $ u(t) = u^{(1)}(t\wedge\tau_1) + u^{(2)}(t\wedge\tau_2) - u^{(2)}(t\wedge\theta).
$ This representation makes adaptedness immediate from the stopped processes themselves.
In the weak formulation the additional branch is supported by $ \mathds 1_{\{\tau_1<s\le\tau_2\}} = \mathds 1_{\{s\le\tau_2\}} - \mathds 1_{\{s\le\tau_1\wedge\tau_2\}}.
$ So a possible Poisson jump at the switching time is counted exactly once and a possible terminal jump is preserved.
The pasting theorem implies that the family $\mathscr T$ of attainable lifetimes is upward directed.
A countable essential-supremum argument then produces an increasing sequence $ \tau_n\in\mathscr T, \ \tau_n\uparrow \operatorname*{ess\,sup}\mathscr T = \tau_{\max}, $ and pathwise uniqueness identifies the corresponding local branches with a single maximal process on $[0,\tau_{\max})$.

Thus the endpoint of the present paper is therefore maximal local well-posedness.
The dependence result compares solutions on common localized stochastic intervals and does not assert stability of the maximal lifetime.
No value of $u(\tau_{\max})$ is imposed on $\{\tau_{\max}<\infty\}$, and no continuation criterion at $\tau_{\max}$ is asserted here.
The maximal solution constructed above provides the canonical starting point for the second part of the work, where continuation and non-explosion are studied under additional regularity assumptions.

\subsection{Organization of the paper}

The paper is organized as follows.
In \cref{sec2} we introduce the stochastic framework, the Wiener and compensated-Poisson coefficient assumptions, the notion of local strong solution, and the convolution Fourier multipliers used throughout the approximation.
We also record the multiplier estimates and the stochastic-integration notation required later.

In \cref{sec3} we establish the $L^p$ theory for the linear stochastic heat equation with both Wiener and compensated-Poisson forcing.
The resulting estimate controls the path supremum together with the dissipation $ \int \left\| \nabla\bigl(|u|^{p/2}\bigr) \right\|_2^2 $ and provides the linear estimate used in the nonlinear construction.

In \cref{sec4} we study the spatially regularized and $L^p$-truncated STNSE equation.
For fixed spectral and scalar truncation parameters we prove existence and uniqueness of the auxiliary strong solution and derive the estimates required for the subsequent limiting procedure.

In \cref{sec5} we remove the spatial regularization at a fixed scalar cutoff level.
Estimates uniform in the spectral parameter and a Cauchy argument yield an almost surely convergent local approximation.
We identify the limiting equation, prove pathwise uniqueness, and remove the scalar cutoff to obtain a local strong solution of \eqref{eq:1_Main}.
We then establish a localized Lipschitz estimate with respect to the initial datum.

Finally, in \cref{sec:maximal-solution} we prove restart at bounded stopping times and establish stochastic pasting, retaining possible Poisson jumps at the switching and terminal times.
The family of attainable lifetimes is shown to be upward directed, and an essential-supremum argument yields the unique maximal local strong solution.
We conclude by transferring the localized dependence estimate to the maximal solutions on common localized stochastic intervals.
\section{Preliminaries and main results}
\label{sec2}

Throughout the paper, $\N$ denotes the positive integers and $C$ a generic positive constant, whose relevant dependencies are indicated when necessary.
For a Banach space $X$, its norm and dual are denoted by $\|\cdot\|_X$ and $X^\ast$, respectively.

\subsection{Analytic notation}

We write $C_c^\infty(\R^d)$, $\mathcal D'(\R^d)$, $\mathcal S(\R^d)$, and $\mathcal S'(\R^d)$ for the usual spaces of test functions, distributions, Schwartz functions, and tempered distributions.
With the Fourier convention $ \widehat f(\xi) = \int_{\R^d} e^{-2\pi i\xi\cdot x}f(x)\,\dd x, $ the Bessel potential operator is $ J^sf = \mathcal F^{-1} \left[ (1+4\pi^2|\xi|^2)^{s/2}\widehat f \right], $ and $$ W^{s,q}(\R^d;X) := \left\{ f\in\mathcal S'(\R^d;X): J^sf\in L^q(\R^d;X) \right\}, \ 1<q<\infty.
$$

\subsubsection{The smoothing multiplier \texorpdfstring{${\mathrm P}_{\le n}$}{P\_\{<=n\}}}

Let $ \psi(\xi):=e^{-|\xi|^2}, \ \psi_n(\xi):=\psi(\xi/n), $ and define
\begin{equation*}
{\mathrm P}_{\le n}f := \mathcal F^{-1} \bigl( \psi_n\widehat f \bigr).
\end{equation*}
Writing $ K:=\mathcal F^{-1}\psi = \pi^{d/2}e^{-\pi^2|\cdot|^2}, \ K_n(x):=n^dK(nx), $ we have $ {\mathrm P}_{\le n}f=K_n*f, \quad\|K_n\|_{L^1}=\|K\|_{L^1}=1.
$ Thus ${\mathrm P}_{\le n}$ is a scalar-kernel smoothing Fourier multiplier.
In particular, it is not a projection.
It acts componentwise, or in the Bochner sense, on vector-, tensor-, and Banach-valued functions.

The Helmholtz--Leray projector is $ (\mathcal Pu)_j = \sum_{\ell=1}^d (\delta_{j\ell}+R_jR_\ell)u_\ell, \ j=1,\ldots,d, $ and extends boundedly to $W^{s,q}(\R^d;\R^d)$ for $s\in\R$ and $1<q<\infty$.
When acting on tensor-valued functions it is understood componentwise.
Since ${\mathrm P}_{\le n}$, $\mathcal P$, and the spatial derivatives are Fourier multipliers, they commute on their common domains.

\subsection{Stochastic framework}

Let $ (\Omega,\mathcal F,(\mathcal F_t)_{t\ge0},\P) $ be a complete filtered probability space satisfying the usual conditions, and let $\mathscr P$ denote the predictable $\sigma$-algebra.
Let $\mathcal U$ be a separable real Hilbert space and let $W$ be an $(\mathcal F_t)$-adapted cylindrical $\mathcal U$-Wiener process.

Throughout the local $L^p$-theory, $ E:=L^p(\R^3;\R^3), \ p>3.
$ The space $E$ is UMD.
For $1<q<\infty$, the $\gamma$-Fubini isomorphism gives $ \gamma \bigl( \mathcal U; L^q(\R^3;\R^3) \bigr) \simeq L^q \bigl( \R^3; \gamma(\mathcal U;\R^3) \bigr).
$ We therefore set $$ \mathbb W^{s,q} := W^{s,q} \bigl( \R^3; \gamma(\mathcal U;\R^3) \bigr), \qquad \mathbb L^q:=\mathbb W^{0,q}, $$ and identify $\mathbb L^q$ with $\gamma(\mathcal U;L^q(\R^3;\R^3))$ up to equivalence of norms depending only on $q$.

The boundedness of $\mathcal P$ on the spatial Sobolev scale and the ideal property of $\gamma$-radonifying operators imply $ \|\mathcal PR\|_{\gamma(\mathcal U;W^{s,q})} \le C_{s,q} \|R\|_{\gamma(\mathcal U;W^{s,q})}.
$ We write $ \mathbb W_{\mathrm{sol}}^{s,q} := \mathcal P\mathbb W^{s,q}.
$

If $\Phi$ is an admissible progressively measurable $\mathbb L^p$-valued process, its stochastic integral with respect to $W$ is understood in the UMD/$\gamma$-radonifying sense.
For $r\ge2$,
\begin{equation}
\label{eq:2-W-BDG}
\E \sup_{t\le T} \left\| \int_0^t \Phi(s)\,\dd W_s \right\|_p^r \le C_{p,r} \E \left( \int_0^T \|\Phi(s)\|_{\mathbb L^p}^2\,\dd s \right)^{r/2}.
\end{equation}
We refer to \cite{MR2330977,MR3617205} for the stochastic integration theory in UMD spaces.

Let $(Z,\mathcal Z,\mu)$ be a $\sigma$-finite measure space and $N$ a Poisson random measure on $\R_+\times Z$, independent of $W$, with compensator $\dd t\otimes\mu(\dd z)$.
We write $ \widetilde N(\dd t,\dd z) = N(\dd t,\dd z)-\mu(\dd z)\dd t .
$ No separate L\'evy-measure assumption is imposed on $\mu$; the required integrability is encoded directly in the coefficient $G$.

For a predictable $E$-valued integrand $H$ and $p\ge2$, the $L^p$-valued Bichteler--Jacod inequality, equivalently the Poisson It\^o-isomorphism in the present range \cite{MR3265175}, yields
\begin{equation*}
\begin{aligned}
\E \sup_{t\le T} \left\| \int_0^t\int_Z H(s,z)\,\widetilde N(\dd s,\dd z) \right\|_p^p \le& C_p \E \left( \int_0^T\int_Z \|H(s,z)\|_p^2 \,\mu(\dd z)\,\dd s \right)^{p/2} + C_p \E \int_0^T\int_Z \|H(s,z)\|_p^p \,\mu(\dd z)\,\dd s .
\end{aligned}
\end{equation*}
These are precisely the two Poisson norms used below.
We refer to Kallenberg \cite{MR1464694} for the standard measurable and martingale structure of Poisson random measures.

\subsection{Noise coefficients}

The stochastic forcing in \eqref{eq:1_Main} is written as $ \mathcal P\sigma(u(t))\,\dd W_t + \int_Z \mathcal PG(t,u(t-),z)\, \widetilde N(\dd t,\dd z).
$ The dependence of $\sigma$ and $G$ on $(\omega,t)$ is suppressed whenever no ambiguity arises.

The Wiener coefficient $ \sigma: \Omega\times\R_+\times E \longrightarrow \mathbb L^p $ is assumed measurable with respect to $\mathscr P\otimes\mathcal B(E)$ in the variables for which a predictable representative is required.
We impose
\begin{equation}
\label{eq:2_WienerH}
\begin{aligned}
\|\sigma(u)\|_{\mathbb L^p} &\le C \left( 1+\|u\|_{(\frac{3p}{2})^-}^2 \right), \\
\|\sigma(u_1)-\sigma(u_2)\|_{\mathbb L^p} &\le C \left\| (|u_1|+|u_2|)^{1/2}|u_1-u_2| \right\|_p, \\
\|\nabla\sigma(u)\|_{\mathbb L^p} &\le C \left( 1+\|u\|_{\frac{3p}{2}}^2 \right).
\end{aligned}
\end{equation}
Here $(\frac{3p}{2})^-$ denotes a fixed exponent $q_\sigma<3p/2$, chosen sufficiently close to $3p/2$ whenever required by the interpolation arguments; the constant may depend on $q_\sigma$.

At the $L^2$-level we additionally assume
\begin{equation}
\label{Hypo_W2}
\|\sigma(u)\|_{\mathbb L^2} \le C(1+\|u\|_2).
\end{equation}

The jump coefficient $ G: \Omega\times\R_+\times E\times Z \longrightarrow E $ is $\mathscr P\otimes\mathcal B(E)\otimes\mathcal Z$-measurable.
For $r\in\{2,p\}$ we assume
\begin{equation}
\label{eq:4.G1}
\int_Z \|G(t,u,z)\|_p^r \,\mu(\dd z) \le C \bigl( 1+\|u\|_p^r \bigr),
\end{equation}
\begin{equation}
\label{eq:4.G2}
\int_Z \|G(t,u_1,z)-G(t,u_2,z)\|_p^r \,\mu(\dd z) \le C \left\| (|u_1|+|u_2|)^\alpha |u_1-u_2| \right\|_p^r, \qquad 0\le\alpha<\frac23,
\end{equation}
and
\begin{equation}
\label{eq:4.G3}
\int_Z \|\nabla G(t,u,z)\|_p^r \,\mu(\dd z) \le C \left( 1+\|u\|_{\frac{3p}{2}}^r \right).
\end{equation}
The $L^2$-growth assumption used in the persistence argument is
\begin{equation}
\label{eq:4_G5}
\int_Z \|G(t,u,z)\|_2^2 \,\mu(\dd z) \le C \left( 1+\|u\|_2^2 \right).
\end{equation}

If $u$ is adapted and c\`adl\`ag, then $u_-$ is predictable; hence $ (\omega,t,z) \longmapsto G(\omega,t,u(t-,\omega),z) $ is an admissible Poisson integrand.
The Wiener coefficient may be represented by a predictable version without changing the stochastic integral, since $u=u_-$ $\dd t\otimes\P$-a.e.
\subsubsection{A nonempty class of admissible coefficients}

We record a simple class of coefficients covered by the preceding hypotheses.
The construction separates the required state dependence from the spatial regularity.
A bounded nonlinear map provides the weighted Lipschitz structure, while convolution with a Schwartz kernel supplies the spatial derivatives without imposing any derivative on the velocity field..
For $\gamma\in[0,2/3)$, let $V_\gamma:\R^3\to\R^3$ be bounded and $C^1$ and suppose that
\begin{equation}
\label{eq:2-Theta}
\begin{aligned}
|V_\gamma(\xi)| &\le C\min \{|\xi|^{1+\gamma},1\}, \\
|V_\gamma(\xi)-V_\gamma(\eta)| &\le C(|\xi|+|\eta|)^\gamma|\xi-\eta|, \qquad \xi,\eta\in\R^3.
\end{aligned}
\end{equation}
For $\gamma=0$, the factor $(|\xi|+|\eta|)^\gamma$ is understood as $1$.
A model choice is $$ V_\gamma(\xi) = \frac{|\xi|^\gamma\xi}{1+|\xi|^{1+\gamma}}, \qquad V_\gamma(0)=0.
$$ Indeed, $$ |V_\gamma(\xi)| = \frac{|\xi|^{1+\gamma}}{1+|\xi|^{1+\gamma}} \le \min\{|\xi|^{1+\gamma},1\}, $$ while a direct differentiation gives $ \|DV_\gamma(\xi)\| \le C|\xi|^\gamma, \qquad \xi\in\R^3, $ with the preceding convention when $\gamma=0$; hence the second estimate in \eqref{eq:2-Theta} follows from the mean-value formula.
For the Wiener coefficient we use $V_{1/2}$.
Let $\kappa_\sigma,a_\sigma\in\mathcal S(\R^3),\ b_\sigma\in\mathcal S(\R^3;\R^3), $ let $\beta_\sigma$ be a bounded predictable scalar process, and let $\ell\in\mathcal U^\ast$.
Define $ S_\sigma(u) := \mathcal P\left[ \kappa_\sigma* \bigl(a_\sigma V_{1/2 }(u)+b_\sigma\bigr) \right] $ and
\begin{equation*}
\sigma(\omega,t,u)h := \beta_\sigma(\omega,t)\,\ell(h)\,S_\sigma(u), \qquad h\in\mathcal U.
\end{equation*}
Thus $\sigma(\omega,t,u)$ is a rank-one $\gamma(\mathcal U;E)$-valued operator.
For $q\in\{2,p\}$, the rank-one identity for $\gamma$-radonifying operators, together with the ideal property, gives $$ \|\sigma(\omega,t,u)\|_{\mathbb L^q} = |\beta_\sigma(\omega,t)| \,\|\ell\|_{\mathcal U^\ast} \,\|S_\sigma(u)\|_q.
$$

For the jump coefficient, fix the exponent $\alpha\in[0,2/3)$ appearing in \eqref{eq:4.G2}, and choose $ \kappa_G,a_G\in\mathcal S(\R^3), \ b_G\in\mathcal S(\R^3;\R^3).$ Let $ \lambda: \Omega\times\R_+\times Z\longrightarrow\R $ be $\mathscr P\otimes\mathcal Z$-measurable and assume that
\begin{equation}
\label{eq:2-example-lambda}
L_r := \operatorname*{ess\,sup}_{(\omega,t)} \int_Z |\lambda(\omega,t,z)|^r\,\mu(\dd z) <\infty, \qquad r\in\{2,p\},
\end{equation}
where the essential supremum is taken with respect to $\P\otimes\dd t$.
Set $$ S_G(u) := \mathcal P\left[ \kappa_G* \bigl(a_GV_\alpha(u)+b_G\bigr) \right], \qquad G(\omega,t,u,z) := \lambda(\omega,t,z)S_G(u).
$$

In particular, if $B\in\mathcal Z$ satisfies $0<\mu(B)<\infty$, then $ \lambda(\omega,t,z)=\mathds 1_B(z) $ satisfies \eqref{eq:2-example-lambda} and yields a concrete nontrivial jump coefficient.
Finite sums of the above Wiener coefficients, and finite sums of the above jump coefficients with a common exponent $\alpha$ in \eqref{eq:4.G2}, satisfy the same class of assumptions after an adjustment of the structural constants.
\subsection{C\`adl\`ag path space and strong solutions}

Let $X$ be a separable Banach space.
We denote by $\D([0,T];X)$ the space of $X$-valued c\`adl\`ag paths, equipped with the standard Skorokhod $J_1$ topology whenever a path-space topology is invoked.
For $v\in\D([0,T];X)$, $ v(t-):=\lim_{s\uparrow t}v(s), \ t\in(0,T], \ v(0-):=v(0).
$

Throughout the paper, $ L^p\bigl(\Omega;\D([0,T];X)\bigr) $ denotes the class of adapted processes admitting c\`adl\`ag $X$-valued trajectories and satisfying
\begin{equation}
\label{eq:cadlag-Lp-convention}
\E \sup_{t\le T} \|u(t)\|_X^p <\infty.
\end{equation}
Processes are identified up to indistinguishability, and $ u_n\to u \ \text{in}\ L^p\bigl(\Omega;\D([0,T];X)\bigr) $ means
\begin{equation}
\label{eq:cadlag-convergence}
\E \sup_{t\le T} \|u_n(t)-u(t)\|_X^p \longrightarrow0.
\end{equation}
This is a process-space convention, not a Bochner-space identification with $\D([0,T];X)$.
In particular, \eqref{eq:cadlag-convergence} is stronger than convergence in probability in the Skorokhod $J_1$ topology.

Consider
\begin{equation}
\label{eq:2.12}
u(t) = u_0 + \int_0^t \bigl( Au(s)+f(s) \bigr)\,\dd s + \int_0^t \mathcal P\sigma(u(s))\,\dd W_s + \int_0^t\int_Z \mathcal PG(s,u(s-),z) \,\widetilde N(\dd s,\dd z).
\end{equation}

\begin{definition}
[Local strong solution] A pair $(u,\tau)$ is a local strong solution of \eqref{eq:2.12} on the prescribed stochastic basis if:

\begin{enumerate}
\item $\tau$ is a stopping time and $\P(\tau>0)=1$;

\item $u$ is adapted and $u(\cdot\wedge\tau)$ is $E$-valued c\`adl\`ag;

\item for every $\varphi\in C_c^\infty(\R^3;\R^3)$ and $t\in[0,T]$,
\begin{equation}
\label{eq:2-strong-weak}
\begin{aligned}
\langle u(t\wedge\tau),\varphi\rangle =& \langle u_0,\varphi\rangle + \int_0^t \mathds 1_{\{s\le\tau\}} \Bigl[ \langle u(s),A^\ast\varphi\rangle + \langle f(s),\varphi\rangle \Bigr]\,\dd s \\
&+ \int_0^t \mathds 1_{\{s\le\tau\}} \left\langle (\mathcal P\sigma(u(s)))^\ast\varphi, \,\dd W_s \right\rangle_{\mathcal U} + \int_0^t\int_Z \mathds 1_{\{s\le\tau\}} \langle \mathcal PG(s,u(s-),z), \varphi \rangle \,\widetilde N(\dd s,\dd z),
\end{aligned}
\end{equation}
almost surely.
\end{enumerate}
\end{definition}

Here $\mathds 1_{\{s\le\tau\}}$ is predictable, so the possible jump at the terminal time is included in \eqref{eq:2-strong-weak}.
The term ``strong'' refers to the probabilistic sense.
The solution is constructed on the prescribed stochastic basis with the prescribed noises $W$ and $\widetilde N$.

\begin{definition}
[Pathwise uniqueness] Pathwise uniqueness holds if any two local strong solutions $(u,\tau)$ and $(v,\eta)$ with the same initial datum and the same driving noises satisfy $$ \P\left( u(t)=v(t) \text{ for every } t\in[0,\tau\wedge\eta] \right) =1.
$$
\end{definition}

\subsection{Main results}

\begin{theorem}
[Existence and pathwise uniqueness of a local strong solution] \label{eq:2_Main_thm} Let $p>3$ and let $ u_0 \in L^p\bigl( \Omega, \mathcal F_0; L^p(\R^3;\R^3) \bigr), \ \nabla\cdot u_0=0 \quad \P\text{-a.s.} $ Assume \eqref{eq:2_WienerH}, \eqref{Hypo_W2}, \eqref{eq:4.G1}--\eqref{eq:4.G3}, and \eqref{eq:4_G5}.
Then there exist an $(\mathcal F_t)$-stopping time $\tau$ and an adapted $L^p(\R^3;\R^3)$-valued c\`adl\`ag process $u$ such that $ \P(\tau>0)=1, $ $ u \in L^p\bigl( \Omega; \D([0,\tau];L^p) \bigr) \cap L^p\bigl( \Omega; L^p(0,\tau;L^{3p}) \bigr), $ and $(u,\tau)$ is a local strong solution of \eqref{eq:1_Main}.
Moreover,
\begin{equation}
\label{eq:2-main-energy}
\begin{aligned}
\E \Bigg[ & \sup_{0\le s\le\tau} \|u(s)\|_p^p + \int_0^\tau \left\| \nabla\bigl(|u(s)|^{p/2}\bigr) \right\|_2^2\,\dd s \Bigg] \le C \left( 1+\E\|u_0\|_p^p \right),
\end{aligned}
\end{equation}
where $C$ is independent of all auxiliary scalar and spatial regularization parameters.
The solution is pathwise unique in the class of local strong solutions satisfying the preceding local energy regularity.
\end{theorem}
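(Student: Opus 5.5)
The plan follows the roadmap announced in the introduction: a linear estimate, a doubly regularized global problem, removal of the spectral regularization at a fixed cutoff, and finally removal of the cutoff by localization. First I will establish the $L^p$ estimate for the linear stochastic heat equation with both Wiener and compensated-Poisson forcing (\cref{sec3}). For this I will apply the Gy\"ongy--Wu It\^o formula to $\|u\|_p^p$. The drift produces the dissipation $c\,\nu\|\nabla(|u|^{p/2})\|_2^2$. The Wiener term is handled by the BDG inequality \eqref{eq:2-W-BDG}. The Poisson term splits into a martingale part, controlled by the $L^p$-valued Bichteler--Jacod inequality in both the $L^2(Z;L^p)$ and $L^p(Z;L^p)$ modes, and a Taylor remainder $\|u_-+H\|_p^p-\|u_-\|_p^p-p\langle |u_-|^{p-2}u_-,H\rangle$. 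I will bound this remainder pointwise by $C(|u_-|^{p-2}|H|^2+|H|^p)$. Its compensator is then controlled by Young's inequality with the two modes, so no fourth moment of $G$ is needed.

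Next, for fixed $n$ and $R$, I will introduce the regularized and truncated system. In it the nonlinearity $\mathcal P((u\cdot\nabla)u)$, the taming term and the coefficients $\sigma,G$ are replaced by ${\mathrm P}_{\le n}$-smoothed versions, multiplied by a smooth cutoff $\theta_R(\|u\|_p)$ of the left limit (with $u(s-)$ in the jump term). The datum is also spatially truncated, so that each approximation has finite $L^2$ energy. Since ${\mathrm P}_{\le n}$ gains all derivatives, the coefficients are globally Lipschitz on $E$ for fixed $(n,R)$. A Picard or Banach fixed-point argument in $L^p(\Omega;\D([0,T];L^p))$ then gives a global solution $u^{n}$. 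The linear estimate together with the cutoff yields bounds on $\E\sup\|u^n\|_p^p+\E\int\|\nabla(|u^n|^{p/2})\|_2^2$ that are uniform in $n$. These bounds may depend on $R$, and the Sobolev embedding converts the dissipation into the $L^p(0,T;L^{3p})$ control.

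For the Cauchy step at fixed $R$, I will apply the It\^o formula to $\|u^n-u^m\|_p^p$ up to a localizing time $\tau_{M}^{n,m}$, defined through strict pre-exit bounds $\sup_{s<\tau}\|u^\cdot(s)\|_p\le M$ and the left-limit bounds $\|u^\cdot(s-)\|_p\le M$. The difference terms are treated with the weighted Lipschitz hypotheses \eqref{eq:2_WienerH} and \eqref{eq:4.G2}, using $\alpha<2/3$ and $q_\sigma<3p/2$. H\"older and interpolation between $L^p$ and $L^{3p}$ absorb them into the dissipation, and Gronwall's inequality closes the estimate. The multiplier defects are bounded by $|1/n-1/m|\,\|\nabla F\|_{L^q}$, where $\nabla F$ is controlled by \eqref{eq:2_WienerH}, \eqref{eq:4.G3} and the (non-uniform) $L^2$ energy from \eqref{Hypo_W2} and \eqref{eq:4_G5}. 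Choosing a sufficiently sparse subsequence $n_k$ makes the defects summable, so the subsequence is Cauchy in $L^p(\Omega;\D)$, and Borel--Cantelli gives almost sure uniform convergence. The limit is c\`adl\`ag, and I will pass to the limit in \eqref{eq:2-strong-weak} using the stochastic integral estimates. The stopping time is then $\tau=\inf\{t:\|u(t)\|_p\ge R/2\}\wedge T$, together with the energy time. On $[0,\tau)$ the cutoff is inactive, and at $\tau$ itself only the left limit enters, so $(u,\tau)$ solves \eqref{eq:1_Main}. We have $\P(\tau>0)=1$ by right-continuity at $0$, since $\|u_0\|_p<\infty$; if necessary, $R$ is taken random and $\mathcal F_0$-measurable, through a decomposition of $\Omega$ by the size of $\|u_0\|_p$. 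The energy bound \eqref{eq:2-main-energy} with a constant independent of $R$ and $n$ follows by rerunning the It\^o estimate on $[0,\tau]$, where the nonlinear terms are bounded by the stopping levels.

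Pathwise uniqueness follows from the same difference estimate applied to two solutions up to a joint localizing time, with no defect terms, and the overshoot at exit is handled by the left limits. The main obstacle I expect is the Cauchy estimate. There, the control of the convective and taming differences in $L^p$ must be combined with jump-compatible localization: the stopping levels can be overshot, the defects are only non-uniformly controlled, and Gronwall's inequality has to be applied up to stopping times at which the terminal jump still appears in the stopped Poisson integral.
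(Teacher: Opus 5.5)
Your architecture matches the paper's: a linear $L^p$ estimate using both Poisson modes, a ${\mathrm P}_{\le k}$ plus scalar-cutoff problem, removal of the spectral parameter at fixed $R$, and an $\mathcal F_0$-shell decomposition. Your uniqueness argument is fine. For two solutions of the untruncated equation there are no cutoff-difference terms, so a sup-$L^p$ localization suffices, as in \cref{thm:local-continuous-dependence}.

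The genuine gap is in the Cauchy step and the passage to a limit with positive lifetime. First, localizing only $\sup_{s<\tau}\|u^n(s)\|_p$ and $\|u^n(s-)\|_p$ does not let Gronwall close. Consider the terms in which the cutoff factors differ, e.g.\ $\phi_n(\phi_n-\phi_m){\mathrm P}_{\le k}\mathcal P\sigma({\mathrm P}_{\le k}u_n)$ or $\phi_n(\phi_n-\phi_m){\mathrm P}_{\le k}\mathcal P^{(1)}(u_n\otimes{\mathrm P}_{\le k}u_n)$. They are bounded by $\|w\|_p^p$ times a positive power of $\|u_n\|_{3p}$. The reason is that $\sigma$ grows in $L^{q_\sigma}$ with $q_\sigma$ near $3p/2$, and for $p<5$ the constraint $q_f\ge 3p/(p+1)$ forces the second factor of the flux into some $L^\ell$ with $\ell>p$. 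This random weight cannot be pulled out of $\E\sup\|w\|_p^p$. The paper therefore localizes with $\mathcal Q_n(t)=\sup_{s\le t}\|u^n(s)\|_p^p+\int_0^t\|u^n(s)\|_{3p}^p\,\dd s$ and absorbs by taking the time small; a stochastic Gronwall lemma would be the alternative.

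Second, the resulting estimate holds only on $[0,\tau^n_M\wedge\tau^m_M\wedge t]$, which depends on $(n,m)$. It does not make a subsequence Cauchy in $L^p(\Omega;\D)$ on a fixed interval. Defining $\tau$ afterwards through $\|u(t)\|_p\ge R/2$ presupposes the limit you are trying to construct. Since jumps may overshoot, $\inf_k\tau^{n_k}_M$ can degenerate. You therefore need control of the localizing times uniform in $n$: this is the paper's \eqref{eq:5_prob_bnd}, and an $n$-uniform energy bound like yours would give $\sup_n\P(\tau^n_M\le T)\lesssim_R 1/M$. You also need an extraction argument robust to overshoot, such as \cref{lem:5.3}.

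Third, the defect step needs the $L^2$ gradient bound to be independent of the spectral level. This relies on the symmetric placement ${\mathrm P}_{\le k}\mathcal P((u\cdot\nabla){\mathrm P}_{\le k}u)$, which makes the convective term cancel and the taming term nonnegative in $L^2$ (\cref{lem:4_L2}). It also relies on decoupling the truncation radius of the datum from the spectral level. Indeed, $\E\|\chi(\cdot/m)u_0\|_2^2$ is in general unbounded in $m$ (up to order $m^{3(p-2)/p}$), while the inner defect gains only a small power $k^{-\rho}$. If both are governed by the same index, no sparse subsequence helps. The paper fixes the radius $m$ first and then chooses $k(m)$ with $k(m)^\rho\ge mB_m$.

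Finally, the $R$-independent constant in \eqref{eq:2-main-energy} does not follow from rerunning the It\^o estimate on $[0,\tau]$ with the nonlinear terms bounded by the stopping levels. After the shell decomposition these levels grow with the shell index $R$. In $L^p$ with $p\neq2$, the Leray projection destroys both the cancellation of the convective term and the sign of the taming term. You therefore obtain a constant $C(R)$ on $B_R$ growing polynomially in $R$, and summing over shells produces a higher moment of $\|u_0\|_p$, not $C(1+\E\|u_0\|_p^p)$. The missing step is to shorten the lifetime on each shell by a deterministic $\delta_R\in(0,1]$. Right-continuity at $t=0$ and dominated convergence then make the expected energy of the branch on $B_R$ up to $\delta_R$ at most $\E[\mathds 1_{B_R}\|u_0\|_p^p]+2^{-R}$. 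Summing over shells gives \eqref{eq:2-main-energy}, in fact with $C=1$.
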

\begin{proposition}
[Existence and uniqueness of a maximal local strong solution] \label{prop:6.4-maximal} Let $p>3$, and assume the hypotheses of \cref{eq:2_Main_thm}.
Fix the stochastic basis, the initial datum $u_0$, and the driving Wiener process and Poisson random measure.
Let
\begin{equation*}
\mathscr T :=\left\{\sigma:
\begin{array}
{l} \text{there exists a local strong solution }(v,\sigma) \text{ of \eqref{eq:1_Main}} \\
\text{with the prescribed initial datum and driving noises,} \\
\text{belonging to the local-energy uniqueness class}
\end{array}
\right\}.
\end{equation*}
Then there exist an $(\mathcal F_t)$-stopping time $\tau_{\max}$, an increasing sequence $(\tau_n)_{n\ge1}\subset\mathscr T$, and an $E$-valued progressively measurable process $u$ such that
\begin{equation}
\label{eq:6.4-maximal-time}
\tau_{\max} =\operatorname*{ess\,sup}_{\sigma\in\mathscr T}\sigma, 0<\tau_n\le n,\qquad \tau_n\uparrow\tau_{\max} \quad\P\text{-a.s.}
\end{equation}
In particular,
\begin{equation}
\label{eq:6.4-positive-maximal-time}
\P(\tau_{\max}>0)=1.
\end{equation}
For every $n$, the stopped process $u^{\tau_n}:=u(\cdot\wedge\tau_n)$ is adapted and $E$-valued c\`adl\`ag, and $(u^{\tau_n},\tau_n)$ is a local strong solution of \eqref{eq:1_Main} in the prescribed class.
Moreover,
\begin{equation}
\label{eq:6.4-local-energy}
\E\left[ \sup_{0\le s\le\tau_n}\|u(s)\|_p^p +\int_0^{\tau_n} \left\|\nabla\bigl(|u(s)|^{p/2}\bigr)\right\|_2^2\,\dd s \right]<\infty, \qquad n\ge1.
\end{equation}
The restriction of $u$ to $[0,\tau_{\max})$ is locally $E$-valued c\`adl\`ag.

If $(v,\sigma)$ is any other local strong solution with the same initial datum and driving noises in the same class, then
\begin{equation}
\label{eq:6.4-maximality-time}
\sigma\le\tau_{\max}\quad\P\text{-a.s.},
\end{equation}
\begin{equation}
\label{eq:6.4-maximality}
\P\left( v(t)=u(t)\text{ for every }0\le t<\sigma \right)=1,
\end{equation}
\begin{equation}
\label{eq:6.4-maximality2}
v(\sigma)=u(\sigma) \quad\P\text{-a.s. on }\{\sigma<\tau_{\max}\}.
\end{equation}
The same lifetime domination and agreement before the lifetime hold for every local solution admitting an increasing sequence of closed localizations in the prescribed class.

If $(\widetilde u,(\widetilde\tau_m)_{m\ge1}, \widetilde\tau_{\max})$ is another maximal construction for the same data and noises, then
\begin{equation}
\label{eq:6.4-maximal-uniqueness-time}
\widetilde\tau_{\max}=\tau_{\max} \quad\P\text{-a.s.},
\end{equation}
and
\begin{equation}
\label{eq:6.4-maximal-uniqueness-process}
\P\left( \widetilde u(t)=u(t) \text{ for every }0\le t<\tau_{\max} \right)=1.
\end{equation}
Thus the maximal lifetime and the process on $[0,\tau_{\max})$ are intrinsic and the increasing localizing sequence is auxiliary.
\end{proposition}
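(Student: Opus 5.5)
The maximal solution will be built from three ingredients. The first is the local existence and pathwise uniqueness result \cref{eq:2_Main_thm}. The second is the restart and stochastic pasting results described in the introduction, which I take as available from \cref{sec:maximal-solution}. The third is a countable essential-supremum argument.

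\textbf{Step 1: $\mathscr T$ is nonempty.} By \cref{eq:2_Main_thm} there is a local solution $(u^0,\tau^0)$ in the local-energy class, so $\tau^0\in\mathscr T$.

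\textbf{Step 2: $\mathscr T$ is closed under truncation.} If $\sigma\in\mathscr T$ and $\rho\le\sigma$ is a stopping time with $\P(\rho>0)=1$, then $\rho\in\mathscr T$. The stopped process $v(\cdot\wedge\rho)$ is still a solution, because the weak formulation \eqref{eq:2-strong-weak} is stable under replacing $\mathds 1_{\{s\le\sigma\}}$ by $\mathds 1_{\{s\le\rho\}}$. In particular $\sigma\wedge\tau^0\wedge n\in\mathscr T$.

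\textbf{Step 3: $\mathscr T$ is upward directed.} Given $\sigma_1,\sigma_2\in\mathscr T$ with solutions $v^{(1)},v^{(2)}$, I apply the pasting formula
\[
v(t)=v^{(1)}(t\wedge\sigma_1)+v^{(2)}(t\wedge\sigma_2)-v^{(2)}(t\wedge\sigma_1\wedge\sigma_2).
\]
Pathwise uniqueness gives $v^{(1)}=v^{(2)}$ on $[0,\sigma_1\wedge\sigma_2]$, which makes this formula consistent. The pasting theorem then shows that $(v,\sigma_1\vee\sigma_2)$ is a local strong solution. Its local energy is bounded by the sum of the two energies, so $\sigma_1\vee\sigma_2\in\mathscr T$.

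\textbf{Step 4: the maximal time.} Set $\tau_{\max}:=\operatorname{ess\,sup}\mathscr T$. By the standard property of essential suprema there is a countable family $\sigma^k\in\mathscr T$ with $\sup_k\sigma^k=\tau_{\max}$ a.s. Define
\[
\tau_n:=\sigma^1\vee\dots\vee\sigma^n\vee\tau^0\wedge n .
\]
Steps 2 and 3 give $\tau_n\in\mathscr T$. Moreover $\tau_n$ is increasing, $0<\tau_n\le n$, and $\tau_n\uparrow\tau_{\max}$. Since $\tau_{\max}$ is a countable supremum of stopping times, it is itself a stopping time. Also $\tau_{\max}\ge\tau^0>0$, which proves \eqref{eq:6.4-positive-maximal-time}.

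\textbf{Step 5: the maximal process.} Let $u^n$ be the solution attached to $\tau_n$. Pathwise uniqueness gives $u^{n+1}=u^n$ on $[0,\tau_n]$ outside a null set; the union over $n$ of these null sets is still null. I then define $u(t):=u^n(t)$ for $t\le\tau_n$, and $u:=0$ on $[\tau_{\max},\infty)$. This $u$ is progressively measurable and $u^{\tau_n}=u^n(\cdot\wedge\tau_n)$, which gives the solution property for each $n$ and the bound \eqref{eq:6.4-local-energy}. The process is c\`adl\`ag on $[0,\tau_{\max})$ because every $t<\tau_{\max}$ satisfies $t<\tau_n$ for some $n$.

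\textbf{Step 6: maximality.} Let $(v,\sigma)$ be any other solution in the class. Then $\sigma\in\mathscr T$, so $\sigma\le\tau_{\max}$ by the definition of the essential supremum. For agreement, I compare $v$ with $u^n$ on $[0,\sigma\wedge\tau_n]$ using pathwise uniqueness and let $n\to\infty$. This gives $v=u$ on $[0,\sigma)\cap[0,\tau_{\max})=[0,\sigma)$, which is \eqref{eq:6.4-maximality}.

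For \eqref{eq:6.4-maximality2}: on $\{\sigma<\tau_{\max}\}$ we have $\sigma\le\tau_n$ for large $n$, so the closed-interval uniqueness on $[0,\sigma\wedge\tau_n]$ gives $v(\sigma)=u(\sigma)$. Here the jump at $\sigma$ must be handled with care. Both processes include the terminal jump through the $\mathds 1_{\{s\le\cdot\}}$ convention, so the closed-interval statement applies, but I need to check that $u^n(\sigma)$ and $v(\sigma)$ share the same post-jump value. I expect this to be the delicate point.

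For solutions admitting increasing closed localizations $\sigma_m\uparrow\sigma$, I apply the above to each $\sigma_m$.

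\textbf{Step 7: uniqueness of the maximal construction.} Given another construction $(\widetilde u,(\widetilde\tau_m),\widetilde\tau_{\max})$, each $\widetilde\tau_m\in\mathscr T$, so $\widetilde\tau_{\max}\le\tau_{\max}$. By symmetry the two maximal times are equal, and the agreement of the processes on $[0,\tau_{\max})$ follows from Step 6.

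\textbf{Main obstacle.} I expect the hardest part to be Step 3, namely verifying the weak formulation for the pasted process. The Poisson integral has to be split using
\[
\mathds 1_{\{\sigma_1<s\le\sigma_2\}}=\mathds 1_{\{s\le\sigma_2\}}-\mathds 1_{\{s\le\sigma_1\wedge\sigma_2\}},
\]
so that a jump at the switching time is counted exactly once. One also has to check that the left limit $v(s-)$ coincides with the left limit of the active branch, so that the predictable integrand $G(s,v(s-),z)$ is correct. My approach is to establish the identity directly on the stopped integrals, using linearity of the stochastic integral together with $v(s-)=v^{(2)}(s-)$ for $s\in(\sigma_1\wedge\sigma_2,\sigma_2]$.
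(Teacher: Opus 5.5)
Your proposal is correct and follows the paper's proof essentially step for step. The shared steps are: nonemptiness of $\mathscr T$ and its closure under truncation; upward directedness via the algebraic pasting formula of \cref{lem:6.3-pasting}\,(i); a countable subfamily realizing the essential supremum (the paper builds it by hand, maximizing $\E(1-e^{-\sigma})$ over $\mathscr T$, instead of citing the abstract property); gluing the branches by pathwise uniqueness on $[0,\tau_n]$; and maximality and uniqueness of the construction by mutual domination of lifetimes. Two small remarks. First, the endpoint identity \eqref{eq:6.4-maximality2} that you flag as delicate is immediate, because pathwise uniqueness already holds on the closed interval $[0,\sigma\wedge\tau_n]$, where both formulations contain the terminal jump with coefficient evaluated at the common left limit. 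Second, define $u$ through the indicators $\mathds 1_{\{\tau_{n-1}<t\le\tau_n\}}$ and set it to zero only where all of them vanish; putting $u:=0$ on all of $[\tau_{\max},\infty)$ would clash with $u^{\tau_n}=u^n(\cdot\wedge\tau_n)$ on $\{\tau_n=\tau_{\max}\}$.
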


\subsection{Auxiliary properties of \texorpdfstring{${\mathrm P}_{\le n}$}{P\_\{<=n\}}}

Set
\begin{equation*}
C_\psi := \int_{\R^d} |K(y)|\,|y|\,\dd y <\infty.
\end{equation*}
The following statements are used for $X=\R^3$, tensor spaces, and $X=\gamma(\mathcal U;\R^3)$.

\begin{lemma}
[Uniform boundedness] \label{lem:2.2} Let $X$ be a Banach space and $1\le q\le\infty$.
Then $$ \|{\mathrm P}_{\le n}F\|_{L^q(\R^d;X)} \le \|F\|_{L^q(\R^d;X)}, \qquad n\in\N.
$$ If $1\le r<q\le\infty$, then $ \|{\mathrm P}_{\le n}F\|_{L^q(\R^d;X)} \le C_{n,r,q} \|F\|_{L^r(\R^d;X)}.
$ Moreover, ${\mathrm P}_{\le n}$ is self-adjoint on $L^2$ whenever $X$ is a real Hilbert space.
\end{lemma}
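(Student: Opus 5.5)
The plan is to use the convolution representation ${\mathrm P}_{\le n}F=K_n*F$ with $K_n(x)=n^dK(nx)$, a nonnegative Gaussian kernel satisfying $\|K_n\|_{L^1}=1$, and to reduce each of the three claims to elementary inequalities for Bochner convolutions.

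\emph{Uniform boundedness.} For $F\in L^q(\R^d;X)$ the integral $K_n*F(x)=\int K_n(x-y)F(y)\,\dd y$ is a Bochner integral for a.e.\ $x$. The triangle inequality for Bochner integrals gives $\|K_n*F(x)\|_X\le (K_n*\|F\|_X)(x)$ pointwise. The scalar Young inequality then yields $\|{\mathrm P}_{\le n}F\|_{L^q(\R^d;X)}\le\|K_n\|_{L^1}\|F\|_{L^q(\R^d;X)}=\|F\|_{L^q(\R^d;X)}$ for every $1\le q\le\infty$. For $q=\infty$ one simply takes the supremum in the pointwise bound. Strong measurability of $K_n*F$ follows by approximating $F$ with simple functions; this is routine.

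\emph{Smoothing estimate.} For $1\le r<q\le\infty$, define $s\in(1,\infty]$ by $1+\frac1q=\frac1s+\frac1r$. The same pointwise domination and Young's inequality with exponents $(s,r)$ give $\|K_n*F\|_{L^q}\le\|K_n\|_{L^s}\|F\|_{L^r}$. Scaling gives $\|K_n\|_{L^s}=n^{d(1-1/s)}\|K\|_{L^s}=n^{d(1/r-1/q)}\|K\|_{L^s}$, which is finite because $K$ is Schwartz. Hence one may take $C_{n,r,q}=n^{d(1/r-1/q)}\|K\|_{L^s}$.

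\emph{Self-adjointness.} Let $X$ be a real Hilbert space and $F,G\in L^2(\R^d;X)$. The kernel $K_n$ is real and even. Fubini's theorem is justified because $\int\!\!\int K_n(x-y)\|F(y)\|\|G(x)\|\,\dd y\,\dd x\le\|F\|_2\|G\|_2$ by Cauchy--Schwarz and Young. Applying it gives
$$\langle K_n*F,G\rangle=\int\!\!\int K_n(x-y)\langle F(y),G(x)\rangle_X\,\dd y\,\dd x=\langle F,K_n*G\rangle.$$
Alternatively, one can use Plancherel, since $\psi_n$ is real-valued.

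None of these steps is a real obstacle. The only point needing care is the vector-valued setting: an arbitrary Banach space $X$ need not satisfy Plancherel, so I would avoid Fourier arguments and work only with the pointwise bound $\|K_n*F\|_X\le K_n*\|F\|_X$, which relies on $K_n\ge0$.
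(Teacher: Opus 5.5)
Your proposal is correct and takes the same route as the paper. The paper proves both estimates by citing Young's convolution inequality together with $\|K_n\|_{L^1}=1$, and proves self-adjointness from $K_n(x)=K_n(-x)$; your pointwise domination $\|K_n*F(x)\|_X\le (K_n*\|F\|_X)(x)$, the explicit constant $C_{n,r,q}=n^{d(1/r-1/q)}\|K\|_{L^s}$, and the Fubini justification fill in details the paper leaves implicit.
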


\begin{proof}
The two estimates follow immediately from Young's convolution inequality and $\|K_n\|_1=1$.
Self-adjointness follows from $K_n(x)=K_n(-x)$.
\end{proof}

\begin{lemma}
[Strong convergence] \label{lem:2.3} Let $X$ be a Banach space and $1\le q<\infty$.
Then $$ {\mathrm P}_{\le n}F \longrightarrow F \qquad \text{in } L^q(\R^d;X) $$ for every $F\in L^q(\R^d;X)$.
\end{lemma}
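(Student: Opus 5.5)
The plan is the standard approximate-identity argument: a density step combined with the uniform bound of \cref{lem:2.2}. Since ${\mathrm P}_{\le n}F=K_n*F$ with $K_n(x)=n^dK(nx)$ and $\int_{\R^d}K=\psi(0)=1$, we can write
\begin{equation*}
{\mathrm P}_{\le n}F(x)-F(x)=\int_{\R^d}K(y)\bigl(F(x-y/n)-F(x)\bigr)\,\dd y .
\end{equation*}
This identity holds pointwise for a.e.\ $x$ as a Bochner integral in $X$. Because $K\ge0$ and $\|K\|_{L^1}=1$, Minkowski's integral inequality gives
\begin{equation*}
\|{\mathrm P}_{\le n}F-F\|_{L^q(\R^d;X)}\le\int_{\R^d}K(y)\,\|\tau_{y/n}F-F\|_{L^q(\R^d;X)}\,\dd y,
\end{equation*}
where $\tau_hF:=F(\cdot-h)$.

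The key step is continuity of translations in $L^q(\R^d;X)$ for $1\le q<\infty$, that is, $\|\tau_hF-F\|_{L^q}\to0$ as $h\to0$. I would prove it by density. Simple functions $\sum_i\mathds 1_{A_i}x_i$, with $A_i$ of finite measure, are dense in the Bochner space $L^q(\R^d;X)$ because $q<\infty$. Each $\mathds 1_{A_i}$ can in turn be approximated in $L^q(\R^d)$ by $C_c(\R^d)$ functions, so $C_c(\R^d)\otimes X$ is dense. For a function $\Phi=\sum_i\phi_ix_i$ in this class, uniform continuity and compact support give $\|\tau_h\Phi-\Phi\|_{L^q}\to0$. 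Since $\|\tau_h\|_{L^q\to L^q}=1$, a standard $\varepsilon/3$ argument transfers the convergence to general $F$. This density step is the only point where the hypothesis $q<\infty$ enters, and it is the main (if mild) obstacle; for $q=\infty$ the statement is false.

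The conclusion then follows from dominated convergence in $y$. For each fixed $y$, $\|\tau_{y/n}F-F\|_{L^q}\to0$ as $n\to\infty$. The integrand is bounded by $2K(y)\|F\|_{L^q}$, which is integrable. Hence $\|{\mathrm P}_{\le n}F-F\|_{L^q(\R^d;X)}\to0$.

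Alternatively, one could avoid translation continuity and use \cref{lem:2.2} directly. Approximate $F$ by $\Phi\in C_c^\infty(\R^d)\otimes X$ and bound
\begin{equation*}
\|{\mathrm P}_{\le n}F-F\|_{L^q}\le2\|F-\Phi\|_{L^q}+\|{\mathrm P}_{\le n}\Phi-\Phi\|_{L^q}.
\end{equation*}
For smooth $\Phi$, the mean-value bound $\|\tau_{y/n}\Phi-\Phi\|_{L^q}\le |y|\,n^{-1}\|\nabla\Phi\|_{L^q}$ yields
\begin{equation*}
\|{\mathrm P}_{\le n}\Phi-\Phi\|_{L^q}\le C_\psi\, n^{-1}\|\nabla\Phi\|_{L^q}.
\end{equation*}
This is the same mechanism as the quantitative defect bound mentioned in the introduction, and it is where the constant $C_\psi$ is used.
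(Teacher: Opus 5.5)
Your proposal is correct and follows the same route as the paper, which proves the lemma in one line by invoking the standard approximate-identity property of $\{K_n\}_{n\ge1}$. You write out that standard argument in full (translation continuity in the Bochner space via density, then Minkowski's integral inequality and dominated convergence), and your alternative via density, \cref{lem:2.2}, and the $C_\psi n^{-1}\|\nabla\Phi\|_{L^q}$ defect bound for smooth $\Phi$ is equally valid.
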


\begin{proof}
This is the standard approximate-identity property of $\{K_n\}_{n\ge1}$.
\end{proof}

\begin{lemma}
[Cauchy estimate for $W^{1,q}$-data] \label{lem:2.4} Let $X$ be a Banach space, $1\le q<\infty$, and $F\in W^{1,q}(\R^d;X)$.
Then $$ \|{\mathrm P}_{\le n}F-{\mathrm P}_{\le m}F\|_{L^q(\R^d;X)} \le C_\psi \left| \frac1n-\frac1m \right| \|\nabla F\|_{L^q(\R^d;X^d)}.
$$
\end{lemma}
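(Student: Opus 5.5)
The plan is to write the difference ${\mathrm P}_{\le n}F-{\mathrm P}_{\le m}F$ as an integral in the scale parameter and estimate it with Minkowski's integral inequality. It suffices to treat $F\in C_c^\infty(\R^d;X)$. Both sides are continuous in $F$ with respect to the $W^{1,q}(\R^d;X)$ norm: the left side by \cref{lem:2.2} and the right side trivially. Such $F$ are dense in $W^{1,q}(\R^d;X)$ for $1\le q<\infty$, so the general case follows by approximation.

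For smooth compactly supported $F$, set $t=1/n$ and $s=1/m$, and write $K^{(t)}(x):=t^{-d}K(x/t)$, so that ${\mathrm P}_{\le n}F=K^{(t)}*F$. After the change of variables $x-y = t z$,
\begin{equation*}
{\mathrm P}_{\le n}F(x)=\int_{\R^d}K(z)\,F(x-tz)\,\dd z .
\end{equation*}
This gives
\begin{equation*}
{\mathrm P}_{\le n}F(x)-{\mathrm P}_{\le m}F(x)=\int_{\R^d}K(z)\bigl(F(x-tz)-F(x-sz)\bigr)\,\dd z .
\end{equation*}
By the fundamental theorem of calculus along the segment from $s$ to $t$, which is valid for $X$-valued $C^1$ functions,
\begin{equation*}
F(x-tz)-F(x-sz)=-\int_s^t \nabla F(x-\lambda z)\cdot z\,\dd\lambda .
\end{equation*}
Hence the pointwise bound is
\begin{equation*}
\|{\mathrm P}_{\le n}F(x)-{\mathrm P}_{\le m}F(x)\|_X\le \int_{\R^d}|K(z)|\,|z|\left|\int_s^t\|\nabla F(x-\lambda z)\|_{X^d}\,\dd\lambda\right|\dd z ,
\end{equation*}
where $\|\cdot\|_{X^d}$ is any norm on $X^d$ for which $\|\sum_j a_jz_j\|_X\le|z|\,\|a\|_{X^d}$, for instance the $\ell^2$-type norm.

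Next take the $L^q(\R^d)$ norm in $x$ and apply Minkowski's integral inequality, once for the $z$-integral and once for the $\lambda$-integral. Translation invariance of the Lebesgue measure gives $\|\nabla F(\cdot-\lambda z)\|_{L^q}=\|\nabla F\|_{L^q}$ for every $\lambda$ and $z$. This yields
\begin{equation*}
\|{\mathrm P}_{\le n}F-{\mathrm P}_{\le m}F\|_{L^q}\le \int_{\R^d}|K(z)|\,|z|\,\dd z\;|t-s|\;\|\nabla F\|_{L^q}=C_\psi\left|\tfrac1n-\tfrac1m\right|\|\nabla F\|_{L^q},
\end{equation*}
where $C_\psi$ is finite because $K$ is a Gaussian.

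The only delicate point is the density step for Banach-valued Sobolev functions. If one prefers to avoid it, the same identity can be derived directly for $F\in W^{1,q}$. For almost every line, $W^{1,q}$ functions are absolutely continuous along translations, so the difference quotient identity $F(\cdot-tz)-F(\cdot-sz)=-\int_s^t\nabla F(\cdot-\lambda z)z\,\dd\lambda$ holds in $L^q(\R^d;X)$ as a Bochner integral. The remaining steps above then apply without change.
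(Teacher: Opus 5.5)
Your argument is correct and is essentially the paper's proof: the same rescaled-kernel identity ${\mathrm P}_{\le n}F(x)-{\mathrm P}_{\le m}F(x)=\int_{\R^d}K(y)\bigl[F(x-y/n)-F(x-y/m)\bigr]\,\dd y$, followed by the fundamental theorem of calculus in the scale parameter and Minkowski's inequality with translation invariance. The density reduction to $C_c^\infty(\R^d;X)$ that you add is a valid way to justify the fundamental-theorem-of-calculus step, which the paper leaves implicit.
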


\begin{proof}
Using $ {\mathrm P}_{\le n}F(x)-{\mathrm P}_{\le m}F(x) = \int_{\R^d} K(y) \left[ F\left(x-\frac yn\right) - F\left(x-\frac ym\right) \right]\dd y, $ the fundamental theorem of calculus and Minkowski's inequality give $
\begin{aligned}
\|{\mathrm P}_{\le n}F-{\mathrm P}_{\le m}F\|_{L^q} &\le \left| \frac1n-\frac1m \right| \|\nabla F\|_{L^q} \int_{\R^d} |K(y)|\,|y|\,\dd y,
\end{aligned}
$ which is the asserted estimate.
\end{proof}

In particular, \cref{lem:2.2,lem:2.3,lem:2.4} apply to $\gamma(\mathcal U;\R^3)$-valued functions.
Hence every use of ${\mathrm P}_{\le n}$ on Wiener coefficients is compatible with the $\mathbb L^q$-norm, while the same lemmas apply without change to the velocity field and the tensor-valued nonlinear fluxes.
\section{Stochastic heat equation on the whole space \texorpdfstring{$\R^3$}{R3}}
\label{sec3}

Consider the stochastic heat equation
\begin{equation}
\label{eq:heat1}
\begin{aligned}
\mathrm{d}u(t,x) &= \bigl( \Delta u(t,x) + \nabla \!\cdot f(t,x) + h(t,x) \bigr) \,\mathrm{d}t + g(t,x) \,\mathrm{d}W_t + \int_Z G_0(t,x,z) \,\widetilde{N}(\mathrm{d}t, \mathrm{d}z), \\
u(0,x) &= u_0(x), \qquad \P\text{-a.s.}
\end{aligned}
\end{equation}
The linear $L^p$-theory below extends the whole-space framework of \cite[Theorem~3.1]{MR4908978} by incorporating, in addition to the divergence-form forcing $\nabla \!\cdot f$, a non-divergence source term $h$, additive Wiener forcing $ g \colon [0,T] \times \Omega \times \R^d \longrightarrow \gamma(\mathcal{U}; \R^D), $ and additive jump forcing $ G_0 \colon [0,T] \times \Omega \times Z \longrightarrow L^p(\R^d; \R^D), $ with $G_0$ assumed $\mathscr{P}_T \otimes \mathcal{Z}$-measurable.
The precise progressive and predictability assumptions, together with the corresponding $L^p$- and $L^2$-integrability requirements for the stochastic coefficients, are stated in \cref{Thm:heat-exis}.

In analogy with the low-regularity treatment of the divergence-form forcing in \cite{MR4552356}, the direct source term $h$ is admitted down to the endpoint $$ q_h \ge \frac{dp}{2p+d-2}, $$ which is exactly the range compatible with the Gagliardo--Nirenberg interpolation and the nonlinear $L^p$-dissipation used in the energy estimate.
This additional forcing is required in the subsequent analysis of the STNSE, where the taming contribution is incorporated into the linear theory through a term of the form $h$.
Under the assumptions specified below, \eqref{eq:heat1} admits a unique global strong solution $$ u \in L^p\bigl(\Omega; \D([0,T]; E)\bigr) $$ in the sense of the c\`adl\`ag path-space convention introduced above.
For the divergence-form coefficient, we use $ \|\cdot\|_{q_f} := \|\cdot\|_{L^{q_f}(\R^d; \R^{D\times d})}.
$

\begin{theorem}
[Stochastic heat equation with Wiener and jump forcing] \label{Thm:heat-exis} Let $p>2$, $d\ge2$, $D\in\N$, and $0<T<\infty$, and set $ E:=L^p(\R^d;\R^D).
$ Let $u_0$ be $\mathcal F_0$-measurable and satisfy $ u_0\in L^p(\Omega;E).
$ Assume that the divergence-form forcing $f$ and the direct forcing $h$ are progressively measurable and satisfy $ f\in L^p\!\left( \Omega\times(0,T); L^{q_f}(\R^d;\R^{D\times d}) \right), $ and $ h\in L^p\!\left( \Omega\times(0,T); L^{q_h}(\R^d;\R^D) \right).
$ Let $ g:(0,T)\times\Omega \longrightarrow L^p\!\left( \R^d; \gamma(\mathcal U;\R^D) \right) $ be predictable and satisfy $ g\in L^p\!\left( \Omega\times(0,T); \mathbb L^p \right).
$ Finally, let $ G_0: (0,T)\times\Omega\times Z \longrightarrow E $ be $\mathscr P_T\otimes\mathcal Z$-measurable and satisfy $ G_0 \in L^p\!\left( \Omega; L^p((0,T)\times Z,\dd t\otimes\mu;E) \right) \cap L^p\!\left( \Omega; L^2((0,T)\times Z,\dd t\otimes\mu;E) \right).
$

Suppose that, if $d>2$, $ \frac{dp}{p+d-2} \le q_f\le p, \ \frac{dp}{2p+d-2} \le q_h\le p, $ whereas, if $d=2$, $ 2<q_f\le p, \ 1<q_h\le p.
$

Then the stochastic heat equation
\begin{equation}
\label{eq:heat}
\begin{aligned}
\dd u(t) =& \bigl( \Delta u(t) + \nabla\!\cdot f(t) + h(t) \bigr)\,\dd t + g(t)\,\dd W_t + \int_ZG_0(t,z)\, \widetilde N(\dd t,\dd z), \\
u(0)=&u_0,
\end{aligned}
\end{equation}
admits a unique adapted global strong solution $u\in L^p\!\left( \Omega; \D([0,T];E) \right)$ and $ |u|^{p/2} \in L^2\!\left( \Omega\times(0,T); H^1(\R^d) \right).$

More precisely,
\begin{equation}
\begin{aligned}
\E \Bigg[ \sup_{t\in[0,T]} \|u(t)\|_p^p +& \int_0^T \int_{\R^d} \left| \nabla\bigl(|u(t,x)|^{p/2}\bigr) \right|^2 \,\dd x\,\dd t \Bigg]\le C\, \E \Bigg[ \|u_0\|_p^p + \int_0^T \left( \|f(t)\|_{q_f}^p + \|h(t)\|_{q_h}^p + \|g(t)\|_{\mathbb L^p}^p \right)\dd t \\
&+ \int_0^T\int_Z \|G_0(t,z)\|_p^p \,\mu(\dd z)\,\dd t + \left( \int_0^T\int_Z \|G_0(t,z)\|_p^2 \,\mu(\dd z)\,\dd t \right)^{p/2} \Bigg],
\label{ineq:3_eng-est}
\end{aligned}
\end{equation}
where $ C=C(T,d,D,p,q_f,q_h)>0 $ is independent of $u_0,f,h,g$, and $G_0$.
Equivalently, the second assertion means $$ \E \int_0^T \left( \||u(t)|^{p/2}\|_2^2 + \|\nabla|u(t)|^{p/2}\|_2^2 \right)\dd t <\infty.
$$
\end{theorem}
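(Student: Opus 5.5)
Since \eqref{eq:heat} is linear with additive forcing, the plan is to get existence and uniqueness from an explicit mild representation and to obtain the estimate \eqref{ineq:3_eng-est} through an It\^o formula for the $p$th power of the $L^p$ norm. Uniqueness is immediate: the difference of two solutions solves the deterministic heat equation with zero data, pathwise, so it vanishes.

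\emph{Regularized problem.} For existence we first take smooth, spatially localized data. We replace $u_0,f,h,g,G_0$ by $\mathrm P_{\le n}$-mollified versions, truncated in space and in size, so that every term lies in $L^2\cap L^p$ with all derivatives. For such data the solution is the sum of five pieces: the heat semigroup applied to $u_0$, the deterministic convolutions with $\nabla\cdot f$ and $h$, the Wiener stochastic convolution, and the Poisson stochastic convolution. The Poisson convolution has a c\`adl\`ag $E$-valued modification by the maximal inequality of Zhu--Brze\'zniak--Hausenblas \cite{MR3634281}. With this regularity we may apply the It\^o formula for $\|u\|_p^p$ with jumps, in the form of Gy\"ongy--Wu \cite{MR4165650}. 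It gives
\[
\|u(t)\|_p^p + c_p\int_0^t\|\nabla|u|^{p/2}\|_2^2\,\dd s = \|u_0\|_p^p + I_f + I_h + I_g + M^W_t + M^N_t + R_t .
\]
Here $I_f=-p\int\langle f,\nabla(|u|^{p-2}u)\rangle$, $I_h=p\int\langle h,|u|^{p-2}u\rangle$, $I_g$ is the It\^o correction, bounded by $C\int\|u\|_p^{p-2}\|g\|_{\mathbb L^p}^2$, and $R_t$ is the jump Taylor remainder
\[
\int\!\!\int\bigl(\|u_-+G_0\|_p^p-\|u_-\|_p^p-p\langle|u_-|^{p-2}u_-,G_0\rangle\bigr)\,N(\dd s,\dd z).
\]

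\emph{Drift terms.} For $I_f$ we use $|\nabla(|u|^{p-2}u)|\lesssim |u|^{p/2-1}|\nabla|u|^{p/2}|$ and H\"older to get $|I_f|\le \epsilon\|\nabla|u|^{p/2}\|_2^2+C\|f\|_{q_f}^2\,\||u|^{p/2-1}\|_{r}^2$. We then interpolate $\||u|^{p/2-1}\|_r$ between $\|u\|_p$ and the Sobolev bound $\||u|^{p/2}\|_{2d/(d-2)}\lesssim\|\nabla|u|^{p/2}\|_2$ via Gagliardo--Nirenberg. The condition $q_f\ge dp/(p+d-2)$ is exactly what makes the gradient exponent at most $2$, so Young's inequality closes with $\|f\|_{q_f}^p+\|u\|_p^p$. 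The term $I_h$ is treated the same way, with $|u|^{p-1}$ in place of $|u|^{p/2-1}\nabla|u|^{p/2}$, and the threshold becomes $q_h\ge dp/(2p+d-2)$. The endpoint cases use the critical Sobolev exponent directly, and for $d=2$ one uses Ladyzhenskaya-type interpolation instead.

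\emph{Jump terms and closing.} For the remainder we use the pointwise bound
\[
\bigl||a+b|^p-|a|^p-p|a|^{p-2}a\cdot b\bigr|\le C(|a|^{p-2}|b|^2+|b|^p).
\]
This gives $R_t\le C\int\!\!\int(\|u_-\|_p^{p-2}\|G_0\|_p^2+\|G_0\|_p^p)\,N(\dd s,\dd z)$. Taking expectations and replacing $N$ by its compensator, the $\|u_-\|_p^{p-2}\|G_0\|_p^2$ part is bounded by $\epsilon\E\sup\|u\|_p^p+C_\epsilon\E(\int\!\!\int\|G_0\|_p^2\,\dd\mu\,\dd s)^{p/2}$, via H\"older in time and Young. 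The martingales are handled with BDG (or Burkholder for purely discontinuous martingales). For $M^W$ the quadratic variation is bounded by $\sup\|u\|_p^{2p-2}\int\|g\|_{\mathbb L^p}^2$, which gives $\epsilon\E\sup\|u\|_p^p$ plus data terms. For $M^N$ the square-bracket bound $\sup\|u_-\|_p^{2(p-1)}\int\!\!\int\|G_0\|_p^2\,\dd N$ needs $\E(\int\!\!\int\|G_0\|_p^2\,\dd N)^{p/2}$. We control this by both modes $\E(\int\!\!\int\|G_0\|_p^2\,\dd\mu\,\dd s)^{p/2}+\E\int\!\!\int\|G_0\|_p^p\,\dd\mu\,\dd s$, using the Bichteler--Jacod/Dirksen inequality for the increasing process. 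Absorbing the $\epsilon$-terms and using Gronwall (localizing first with a stopping time so that all quantities are finite) yields \eqref{ineq:3_eng-est} for the regularized problem.

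\emph{Passage to the limit.} By linearity, the estimate applied to differences of regularized solutions shows they are Cauchy in $L^p(\Omega;\D([0,T];E))$. The gradient part converges as well, using the elementary inequality $\bigl||a|^{p/2}-|b|^{p/2}\bigr|\lesssim(|a|+|b|)^{p/2-1}|a-b|$ and weak lower semicontinuity. The limit satisfies the weak formulation, and the estimate passes to the limit by Fatou.

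The main obstacle is the jump part: justifying the It\^o formula for the $L^p$ norm of a c\`adl\`ag process, and controlling the remainder and the $p$th moment of the Poisson martingale. This is where both the $L^2$ mode and the $L^p$ mode of $G_0$ are genuinely needed, since neither alone controls $\E(\int\!\!\int\|G_0\|_p^2\,\dd N)^{p/2}$.
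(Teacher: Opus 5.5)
Your overall route is the paper's. You regularize the data, apply a localized It\^o formula to $\|u\|_p^p$, and treat the drifts by Gagliardo--Nirenberg. You control the jumps by positivity of the Taylor remainder plus the two-mode Poisson moment inequality; writing the remainder as one nondecreasing $N$-integral is equivalent to the paper's $I^\varepsilon_{G,\mathrm{corr}}+M^\varepsilon_{G,2}$. You then pass to the limit by a Cauchy argument with lower semicontinuity, and get uniqueness from the deterministic heat equation.

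One step, however, fails as written. The pointwise bound $|\nabla(|u|^{p-2}u)|\lesssim|u|^{p/2-1}|\nabla|u|^{p/2}|$ holds only for scalar $u$. For $\R^D$-valued $u$ with $D\ge2$ one has $\partial_i|u|^{p/2}=\frac p2|u|^{p/2-2}\,u\cdot\partial_iu$, which sees only the radial component of $\partial_iu$. By contrast, $\partial_i(|u|^{p-2}u)$ contains $|u|^{p-2}\partial_iu$ in full. If $|u|$ is locally constant and only the direction of $u$ varies, your right-hand side vanishes while the left-hand side does not. Hence the dissipation $c_p\int\|\nabla|u|^{p/2}\|_2^2$ that you retain in the It\^o identity cannot absorb $I_f$. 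The estimate does not close for $D\ge2$, which is exactly the case used later ($D=3$).

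The repair is to keep the full coercive term produced by the Laplacian,
$-p\int J_p(u)\cdot\Delta u=p\int|u|^{p-2}|\nabla u|^2+p(p-2)\int|u|^{p-4}\sum_i(u\cdot\partial_iu)^2\ge p\int|u|^{p-2}|\nabla u|^2$,
and to use the correct bound $|\nabla(|u|^{p-2}u)|\le(p-1)|u|^{p-2}|\nabla u|$. This gives $|I_f|\le\epsilon\int|u|^{p-2}|\nabla u|^2+C_\epsilon\int|f|^2|u|^{p-2}$. Your H\"older--Gagliardo--Nirenberg--Young step, with the same threshold $q_f\ge dp/(p+d-2)$, then goes through, because $\|\nabla|u|^{p/2}\|_2^2\le\frac{p^2}{4}\int|u|^{p-2}|\nabla u|^2$ still feeds the Sobolev bound.

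This is what the paper does via $\mathcal W=u|u|^{(p-2)/2}$. It uses $|u|^{(p-2)/2}|\nabla u|\le|\nabla\mathcal W|$ and $|\nabla J_p(u)|\le C_p|\mathcal W|^{(p-2)/p}|\nabla\mathcal W|$, and splits $-I_\Delta$ so that it controls both $\|\nabla\mathcal W\|_2^2$ and $\|\nabla|u|^{p/2}\|_2^2$. Your $I_h$ estimate is unaffected, since only the Sobolev bound for the scalar $|u|^{p/2}$ enters there.
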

We shall use the following lower-semicontinuity property of the nonlinear dissipation.
The argument is adapted from \cite[Lemma~4.4]{MR4385406} and \cite[Lemma~3.2]{MR4908978}; we include the proof in the present vector-valued c\`adl\`ag setting.

\begin{lemma}
[Lower semicontinuity of the nonlinear dissipation] \label{lem:L2_con} Let $2 \le p < \infty$ and $E_{p} = L^p(\R^d; \R^D)$.
Let $u_n, u \in L^p\bigl(\Omega; \D([0,T]; E_{p})\bigr)$ for $n \in \N$, where the path-space notation is understood according to the convention \eqref{eq:cadlag-Lp-convention}.
Suppose that
\begin{equation}
\label{eq:L2-con-path}
u_n \longrightarrow u \qquad \text{in } L^p\bigl(\Omega; \D([0,T]; E_{p})\bigr),
\end{equation}
Assume moreover that
\begin{equation}
\label{eq:L2-con-grad-bound}
\sup_{n\in\N} \E \int_0^T \int_{\R^d} \left| \nabla\bigl(|u_n(t,x)|^{p/2}\bigr) \right|^2 \,\mathrm{d}x\,\mathrm{d}t < \infty.
\end{equation}
Then $ |u|^{p/2} \in L^2\left( \Omega\times(0,T); H^1(\R^d) \right), $ and
\begin{equation}
\label{eq:L2-con-liminf}
\E \int_0^T \int_{\R^d} \left| \nabla\bigl(|u(t,x)|^{p/2}\bigr) \right|^2 \,\mathrm{d}x\,\mathrm{d}t \le \liminf_{n\to\infty} \E \int_0^T \int_{\R^d} \left| \nabla\bigl(|u_n(t,x)|^{p/2}\bigr) \right|^2 \,\mathrm{d}x\,\mathrm{d}t.
\end{equation}
Here, $|\cdot|$ denotes the Euclidean norm in $\R^D$.
\end{lemma}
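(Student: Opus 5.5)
The plan is to set $v_n:=|u_n|^{p/2}$ and $v:=|u|^{p/2}$, to prove that $v_n\to v$ in $L^2(\Omega\times(0,T)\times\R^d)$, and then to obtain the gradient statement from weak compactness in $L^2(\Omega\times(0,T);H^1)$ together with weak lower semicontinuity of the norm.

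\emph{Strong convergence of $v_n$.} We use the elementary inequality
\[
\bigl||a|^{p/2}-|b|^{p/2}\bigr|\le \tfrac p2\bigl(|a|^{p/2-1}+|b|^{p/2-1}\bigr)|a-b|, \qquad a,b\in\R^D ,
\]
which is valid since $p\ge2$. Applying H\"older with exponents $\frac{2p}{p-2}$ and $p$ in $(\omega,t,x)$ gives
\[
\|v_n-v\|_{L^2(\Omega\times(0,T)\times\R^d)}^2\le C\bigl(\|u_n\|^{p-2}_{L^p}+\|u\|^{p-2}_{L^p}\bigr)\,\|u_n-u\|_{L^p}^2,
\]
where all norms on the right are $L^p(\Omega\times(0,T);E_p)$. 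These norms are dominated by $T\,\E\sup_t\|\cdot\|_p^p$. By \eqref{eq:L2-con-path}, $\E\sup_t\|u_n-u\|_p^p\to0$, and $\sup_n\E\sup_t\|u_n\|_p^p<\infty$. Hence $v_n\to v$ in $L^2$, and in addition $\sup_n\|v_n\|_{L^2}<\infty$.

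\emph{Weak compactness.} Combining the previous bound with \eqref{eq:L2-con-grad-bound} shows that $(v_n)$ is bounded in the Hilbert space $\mathcal H:=L^2(\Omega\times(0,T);H^1(\R^d))$. First choose a subsequence realizing the $\liminf$ in \eqref{eq:L2-con-liminf}. From it extract a further subsequence with $v_{n_k}\rightharpoonup w$ weakly in $\mathcal H$. The embedding $\mathcal H\hookrightarrow L^2(\Omega\times(0,T)\times\R^d)$ is continuous, so $v_{n_k}\rightharpoonup w$ weakly in $L^2$ as well. The strong $L^2$ limit is $v$, so $w=v$, and therefore $v\in\mathcal H$.

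\emph{Lower semicontinuity of the gradient.} The map $\phi\mapsto\nabla\phi$ from $\mathcal H$ to $L^2(\Omega\times(0,T)\times\R^d;\R^d)$ is bounded and linear, hence weakly continuous. It follows that $\nabla v_{n_k}\rightharpoonup\nabla v$ weakly in $L^2$, and weak lower semicontinuity of the $L^2$ norm yields \eqref{eq:L2-con-liminf} along the chosen subsequence, which equals the full $\liminf$.

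\emph{Measurability and the main obstacle.} The only delicate point is measurability: $v$ should be a jointly measurable element of $\mathcal H$ and not just pathwise in $H^1$. This follows because $\mathcal H$ is a closed subspace of $L^2(\Omega\times(0,T);H^1)$ and weak limits remain in it. The identification $w=v$ ensures that the distributional gradient of $v$ for a.e.\ $(\omega,t)$ coincides with the $\mathcal H$-gradient. The c\`adl\`ag structure is not needed beyond the supremum bound, because the integrals are taken in time and not along paths.
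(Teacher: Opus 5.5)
Your proposal is correct and follows essentially the same route as the paper. Both arguments obtain strong convergence of $|u_n|^{p/2}$ in $L^2(\Omega\times(0,T)\times\R^d)$ from the elementary inequality and H\"older, then use weak compactness along a subsequence realizing the $\liminf$, identify the limit, and conclude by weak lower semicontinuity.

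The only cosmetic difference is where the weak limit is taken. You extract it in $L^2(\Omega\times(0,T);H^1(\R^d))$ and identify it through weak continuity of the embedding and of $\nabla$. The paper instead extracts a weak limit of the gradients alone and identifies it by passing to the limit in the distributional identity, tested against $C_c^\infty$ functions and bounded functions of $(\omega,t)$.
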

\begin{proof}
Set $w_n=|u_n|^{p/2}$ and $w=|u|^{p/2}$.
For $a,b\in\R^D$, $ \big||a|^{p/2}-|b|^{p/2}\big| \le C_p\big(|a|^{p/2-1}+|b|^{p/2-1}\big)|a-b|, $ and hence the assumed strong convergence, together with the uniform $L^p$-moment bounds, implies $$ w_n\to w \quad\text{strongly in }L^2(\Omega\times(0,T);L^2(\R^d)).
$$ After passing to a subsequence realizing the liminf and extracting further if necessary, the gradient bound gives $$ \nabla w_n\rightharpoonup\mathcal{I} \quad\text{weakly in } L^2(\Omega\times(0,T);L^2(\R^d;\R^d)).
$$ Passing to the limit in the distributional identity $$ \int_{\R^d}w_n\,\partial_j\varphi\,dx = -\int_{\R^d}\partial_jw_n\,\varphi\,dx, \qquad \varphi\in C_c^\infty(\R^d), $$ tested also against bounded functions of $(\omega,t)$, identifies $\mathcal{I}=\nabla w$.
Thus $w\in L^2(\Omega\times(0,T);H^1(\R^d))$, and weak lower semicontinuity yields $$ \E\int_0^T\|\nabla w(t)\|_2^2\,dt \le \liminf_{n\to\infty} \E\int_0^T\|\nabla w_n(t)\|_2^2\,dt.
$$.
\end{proof}
\begin{proof}
[Proof of \cref{Thm:heat-exis}] Throughout the proof all coefficients are understood to satisfy the standing progressive/predictability assumptions required for the deterministic, Wiener, and Poisson integrals.
We write $J_p(v):=|v|^{p-2}v,$ and denote by $(S(t))_{t\ge0}$ the heat semigroup on $E_{p}$.
Let $(\rho_\varepsilon)_{\varepsilon>0}$ be a standard family of spatial mollifiers and set $$ u_0^\varepsilon=\rho_\varepsilon*u_0,\qquad f^\varepsilon=\rho_\varepsilon*f,\qquad h^\varepsilon=\rho_\varepsilon*h, \qquad g^\varepsilon=\rho_\varepsilon*g,\qquad G_0^\varepsilon=\rho_\varepsilon*G_0.
$$ Since the convolution acts only in the spatial variable, it preserves the relevant measurability properties.
Moreover, by Young's inequality and the approximate-identity property,
\begin{align}
u_0^\varepsilon&\longrightarrow u_0 &&\text{in }L^p(\Omega;E_{p}),
\label{eq:heat-proof-moll-u0}\\
f^\varepsilon&\longrightarrow f &&\text{in } L^p(\Omega\times(0,T);L^{q_f}),
\label{eq:heat-proof-moll-f}\\
h^\varepsilon&\longrightarrow h &&\text{in } L^p(\Omega\times(0,T);L^{q_h}),
\label{eq:heat-proof-moll-h}\\
g^\varepsilon&\longrightarrow g &&\text{in } L^p(\Omega\times(0,T);\mathbb L^p),
\label{eq:heat-proof-moll-g}
\end{align}
and
\begin{align}
G_0^\varepsilon&\longrightarrow G_0 &&\text{in } L^p\!\left( \Omega; L^p((0,T)\times Z,\dd t\otimes\mu;E_{p}) \right),
\label{eq:heat-proof-moll-Gp}\\
G_0^\varepsilon&\longrightarrow G_0 &&\text{in } L^p\!\left( \Omega; L^2((0,T)\times Z,\dd t\otimes\mu;E_{p}) \right).
\label{eq:heat-proof-moll-G2}
\end{align}

For every fixed $\varepsilon>0$, the mollified coefficients possess arbitrarily high spatial regularity.
So, the linear problem
\begin{equation}
\label{eq:heat-proof-reg}
\begin{aligned}
\dd u^\varepsilon =& \bigl( \Delta u^\varepsilon +\nabla\!\cdot f^\varepsilon +h^\varepsilon \bigr)\,\dd t +g^\varepsilon\,\dd W_t +\int_ZG_0^\varepsilon(t-,z)\, \widetilde N(\dd t,\dd z), \\
u^\varepsilon(0)=&u_0^\varepsilon
\end{aligned}
\end{equation}
has a unique global adapted solution with $E_{p}$-valued c\`adl\`ag paths and sufficient spatial regularity for $ \Delta u^\varepsilon,\ \nabla J_p(u^\varepsilon) $ to be well defined in the subsequent identities.
Equivalently, this follows directly from the heat-semigroup representation of \eqref{eq:heat-proof-reg} and the standard deterministic, Wiener, and Poisson convolution estimates.
See, respectively, \cite{MR2330977} and \cite{MR3265175,MR2529441}.

The mollification is used here solely to justify the strong semimartingale formulation, the spatial integration by parts, and the application of It\^o's formula to the $L^p$-energy; c\`adl\`ag path regularity itself is not an obstruction to It\^o's formula.

Consider $$ F:E\longrightarrow\R, \qquad F(v)=\|v\|_p^p.
$$ For $p>2$, $F\in C^2(E;\R)$ and
\begin{equation}
\begin{aligned}
DF(v)w &= p\int_{\R^d}|v|^{p-2}v\cdot w\,\dd x, \\
D^2F(v)[w,\widetilde w] &= p\int_{\R^d}|v|^{p-2} w\cdot\widetilde w\,\dd x + p(p-2) \int_{\R^d} |v|^{p-4} (v\cdot w)(v\cdot\widetilde w)\,\dd x,
\end{aligned}
\end{equation}
where the second integrand is understood by continuity at $v=0$.

To avoid any issue caused by the polynomial growth of $DF$ and $D^2F$, define $$ \tau_M^\varepsilon := \inf\left\{ t\ge0: \|u^\varepsilon(t)\|_p \vee \|u^\varepsilon(t-)\|_p \ge M \right\}, \qquad \inf\varnothing:=\infty.
$$ The Banach-space It\^o formula with jumps, applied to $F(u^\varepsilon(t\wedge\tau_M^\varepsilon))$, is therefore legitimate.
All estimates obtained below are independent of $M$.
Since an $E_{p}$-valued c\`adl\`ag path is bounded on compact time intervals, $$ \tau_M^\varepsilon\uparrow\infty \qquad\text{a.s.
as }M\uparrow\infty, $$ and the localization will subsequently be removed.

For $v,H\in E$, set $$ \mathcal R_p(v,H) := F(v+H)-F(v)-DF(v)H.
$$ The It\^o formula gives, up to $t\wedge\tau_M^\varepsilon$,
\begin{align}
\|u^\varepsilon(t)\|_p^p =& \|u_0^\varepsilon\|_p^p +I_\Delta^\varepsilon(t) +I_f^\varepsilon(t) +I_h^\varepsilon(t) +I_{g,\mathrm{corr}}^\varepsilon(t)+ M_W^\varepsilon(t) + I_{G,\mathrm{corr}}^\varepsilon(t) + M_{G,1}^\varepsilon(t) + M_{G,2}^\varepsilon(t),
\label{eq:heat-proof-Ito}
\end{align}
where $$ I_\Delta^\varepsilon(t) = p\int_0^t \int_{\R^d} J_p(u^\varepsilon)\cdot\Delta u^\varepsilon \,\dd x\,\dd s,\quad I_f^\varepsilon(t) = p\int_0^t \int_{\R^d} J_p(u^\varepsilon)\cdot\nabla\!\cdot f^\varepsilon \,\dd x\,\dd s,$$$$ I_h^\varepsilon(t)= p\int_0^t \int_{\R^d} J_p(u^\varepsilon)\cdot h^\varepsilon \dd x\dd s.
$$ The Wiener correction is
\begin{align}
I_{g,\mathrm{corr}}^\varepsilon(t) = \frac p2 \int_0^t\int_{\R^d} |u^\varepsilon|^{p-2} \|g^\varepsilon\|_{\gamma(\mathcal U;\R^D)}^2 \,\dd x\,\dd s + \frac{p(p-2)}2 \int_0^t\int_{\R^d} |u^\varepsilon|^{p-4} \|(g^\varepsilon)^*u^\varepsilon\|_{\mathcal U}^2 \,\dd x\,\dd s,
\label{eq:heat-proof-Wcorr}
\end{align}
while
\begin{align}
M_W^\varepsilon(t) &= p\int_0^t \left\langle (g^\varepsilon(s))^* J_p(u^\varepsilon(s)), \,\dd W_s \right\rangle_{\mathcal U},
\label{eq:heat-proof-MW}\\
I_{G,\mathrm{corr}}^\varepsilon(t) &= \int_0^t\int_Z \mathcal R_p \bigl( u^\varepsilon(s-), G_0^\varepsilon(s,z) \bigr) \,\mu(\dd z)\,\dd s,
\label{eq:heat-proof-Gcorr}\\
M_{G,1}^\varepsilon(t) &= p\int_0^t\int_Z \left\langle J_p(u^\varepsilon(s-)), G_0^\varepsilon(s,z) \right\rangle \widetilde N(\dd s,\dd z),
\label{eq:heat-proof-MG1}\\
M_{G,2}^\varepsilon(t) &= \int_0^t\int_Z \mathcal R_p \bigl( u^\varepsilon(s-), G_0^\varepsilon(s,z) \bigr) \widetilde N(\dd s,\dd z).
\label{eq:heat-proof-MG2}
\end{align}
Here and below, pairings without a subscript denote the natural $L^p$--$L^{p'}$ spatial duality.

Introduce the $\R^D$-valued field $$ \mathcal W^\varepsilon := u^\varepsilon|u^\varepsilon|^{\frac{p-2}{2}}.
$$ Then $$ |\mathcal W^\varepsilon| = |u^\varepsilon|^{p/2}, \qquad \|\mathcal W^\varepsilon\|_2^2 = \|u^\varepsilon\|_p^p, $$ and, a.e.
in $x$,
\begin{equation}
|\nabla\mathcal W^\varepsilon|_F^2 = |u^\varepsilon|^{p-2}|\nabla u^\varepsilon|_F^2+ \frac{p^2-4}{4} |u^\varepsilon|^{p-4} \sum_{i=1}^d \bigl( u^\varepsilon\cdot\partial_i u^\varepsilon \bigr)^2.
\label{eq:heat-proof-Widentity}
\end{equation}
In particular,
\begin{equation}
\label{eq:heat-proof-Wlower}
|u^\varepsilon|^{\frac{p-2}{2}} |\nabla u^\varepsilon|_F \le |\nabla\mathcal W^\varepsilon|_F.
\end{equation}

Spatial integration by parts is justified by the regularity of $u^\varepsilon$ and a standard cutoff/density argument on $\R^d$.
Thus
\begin{equation}
-I_\Delta^\varepsilon(t) = p\int_0^t\int_{\R^d} \Bigg[ |u^\varepsilon|^{p-2} |\nabla u^\varepsilon|_F^2 + (p-2)|u^\varepsilon|^{p-4} \sum_{i=1}^d \bigl( u^\varepsilon\cdot\partial_i u^\varepsilon \bigr)^2 \Bigg] \,\dd x\,\dd s.
\label{eq:heat-proof-coercive-exact}
\end{equation}
Since $$ \left| \nabla |u^\varepsilon|^{p/2} \right|^2 = \frac{p^2}{4} |u^\varepsilon|^{p-4} \sum_{i=1}^d \bigl( u^\varepsilon\cdot\partial_i u^\varepsilon \bigr)^2, $$ and comparison of \eqref{eq:heat-proof-Widentity} and \eqref{eq:heat-proof-coercive-exact} yields $$ -I_\Delta^\varepsilon(t) \ge \frac{4(p-1)}p \int_0^t \left\| \nabla |u^\varepsilon(s)|^{p/2} \right\|_2^2\dd s, \quad -I_\Delta^\varepsilon(t) \ge \frac{4p}{p+2} \int_0^t \|\nabla\mathcal W^\varepsilon(s)\|_2^2\dd s.
$$ Splitting $-I_\Delta^\varepsilon$ into two equal parts gives
\begin{equation}
-I_\Delta^\varepsilon(t) \ge \frac{2p}{p+2} \int_0^t \|\nabla\mathcal W^\varepsilon(s)\|_2^2\,\dd s + \frac{2(p-1)}p \int_0^t \left\| \nabla |u^\varepsilon(s)|^{p/2} \right\|_2^2\,\dd s.
\label{eq:heat-proof-coercivity-final}
\end{equation}
For the divergence-form forcing, \eqref{eq:heat-proof-Wlower} gives $$ |\nabla J_p(u^\varepsilon)| \le C_p |\mathcal W^\varepsilon|^{\frac{p-2}{p}} |\nabla\mathcal W^\varepsilon|.
$$ Let $ \rho_f := \frac{2q_f(p-2)}{p(q_f-2)}.
$ The assumptions on $q_f$ imply $ 2\le\rho_f\le\frac{2d}{d-2}, \ d>2, $ whereas $2\le\rho_f<\infty$ when $d=2$.
Hence $$ \|\mathcal W^\varepsilon\|_{\rho_f} \le C \|\mathcal W^\varepsilon\|_2^{1-\alpha_f} \|\nabla\mathcal W^\varepsilon\|_2^{\alpha_f}, \qquad \alpha_f = d\left( \frac12-\frac1{\rho_f} \right).
$$ Consequently, $$ \left| p\int_{\R^d} J_p(u^\varepsilon)\cdot\nabla\cdot f^\varepsilon \dd x \right|\le C_p \|f^\varepsilon\|_{q_f} \|u^\varepsilon\|_p^{ \frac{(1-\alpha_f)(p-2)}2} \|\nabla\mathcal W^\varepsilon\|_2^{ 1+\frac{\alpha_f(p-2)}p}.
$$ Writing $ \gamma_f = \frac12 + \frac{\alpha_f(p-2)}{2p}, $ we have $ \frac1p + \frac{(1-\alpha_f)(p-2)}{2p} + \gamma_f =1.
$ Weighted Young's inequality therefore yields, for every $\delta>0$,
\begin{equation}
\label{eq:heat-proof-f-final}
\left| p\int_{\R^d} J_p(u^\varepsilon)\cdot\nabla\!\cdot f^\varepsilon \,\dd x \right| \le \delta \|\nabla\mathcal W^\varepsilon\|_2^2 + C_\delta \left( \|u^\varepsilon\|_p^p + \|f^\varepsilon\|_{q_f}^p \right).
\end{equation}

For the direct forcing, let $ r_h = \frac{q_h(p-1)}{q_h-1}, \qquad \rho_h = \frac{2r_h}{p} = \frac{2q_h(p-1)}{p(q_h-1)}.
$ The stated lower bound on $q_h$ gives $ 2\le\rho_h\le\frac{2d}{d-2}, \, d>2.
$ For $d=2$ the same argument is valid for $q_h>1$, in which case $2\le\rho_h<\infty$.
Therefore, $$ \|\mathcal W^\varepsilon\|_{\rho_h} \le C \|\mathcal W^\varepsilon\|_2^{1-\alpha_h} \|\nabla\mathcal W^\varepsilon\|_2^{\alpha_h}, \qquad \alpha_h = d\left( \frac12-\frac1{\rho_h} \right).
$$ It follows that $$ \left| p\int_{\R^d} J_p(u^\varepsilon)\cdot h^\varepsilon\dd x \right| \le p\|h^\varepsilon\|_{q_h} \|u^\varepsilon\|_{r_h}^{p-1} \le C \|h^\varepsilon\|_{q_h} \|u^\varepsilon\|_p^{ (p-1)(1-\alpha_h)} \|\nabla\mathcal W^\varepsilon\|_2^{ \frac{2(p-1)\alpha_h}{p}}.$$ Since $ \frac1p + \frac{(p-1)(1-\alpha_h)}p + \frac{(p-1)\alpha_h}{p} =1, $ another weighted Young inequality gives
\begin{equation}
\label{eq:heat-proof-h-final}
\left| p\int_{\R^d} J_p(u^\varepsilon)\cdot h^\varepsilon\,\dd x \right| \le \delta \|\nabla\mathcal W^\varepsilon\|_2^2 + C_\delta \left( \|u^\varepsilon\|_p^p + \|h^\varepsilon\|_{q_h}^p \right).
\end{equation}

For every $(s,x)$, $ \|(g^\varepsilon(s,x))^* u^\varepsilon(s,x)\|_{\mathcal U} \le |u^\varepsilon(s,x)| \|g^\varepsilon(s,x)\|_ {\gamma(\mathcal U;\R^D)}.
$ Hence
\begin{align}
I_{g,\mathrm{corr}}^\varepsilon(t) \le C_p \int_0^t \|u^\varepsilon(s)\|_p^{p-2} \|g^\varepsilon(s)\|_{\mathbb L^p}^{2} \,\dd s\le C_p \int_0^t \left( \|u^\varepsilon(s)\|_p^p + \|g^\varepsilon(s)\|_{\mathbb L^p}^p \right)\dd s.
\label{eq:heat-proof-Wcorr-final}
\end{align}

Moreover, $ \left\| (g^\varepsilon)^* J_p(u^\varepsilon) \right\|_{\mathcal U} \le C_p \|u^\varepsilon\|_p^{p-1} \|g^\varepsilon\|_{\mathbb L^p}.
$ Therefore, by the real-valued BDG inequality and Young's inequality, for every $\delta>0$,
\begin{align}
\E \sup_{0\le r\le t} |M_W^\varepsilon(r)| &\le C_p \E \left[ \sup_{0\le r\le t} \|u^\varepsilon(r)\|_p^{p-1} \left( \int_0^t \|g^\varepsilon(s)\|_{\mathbb L^p}^2\,\dd s \right)^{1/2} \right] \le \delta \E \sup_{0\le r\le t} \|u^\varepsilon(r)\|_p^p + C_{\delta,p,T} \E \int_0^t \|g^\varepsilon(s)\|_{\mathbb L^p}^p\,\dd s.
\label{eq:heat-proof-MW-final}
\end{align}
Convexity of $F$ implies $ \mathcal R_p(v,H)\ge0.
$ Moreover, for $p>2$,
\begin{equation}
\label{eq:heat-proof-Taylor}
0\le \mathcal R_p(v,H) \le C_p \left( \|v\|_p^{p-2}\|H\|_p^2 + \|H\|_p^p \right).
\end{equation}
Introduce $$ A_{2,\varepsilon}(t) := \int_0^t\int_Z \|G_0^\varepsilon(s,z)\|_p^2 \mu(\dd z)\dd s,\quad A_{p,\varepsilon}(t) := \int_0^t\int_Z \|G_0^\varepsilon(s,z)\|_p^p \mu(\dd z)\dd s.
$$ Then \eqref{eq:heat-proof-Taylor} and Young's inequality imply
\begin{align}
\E I_{G,\mathrm{corr}}^\varepsilon(t) &\le C_p \E \left[ \sup_{0\le r\le t} \|u^\varepsilon(r)\|_p^{p-2} A_{2,\varepsilon}(t) + A_{p,\varepsilon}(t) \right]\le \delta \E \sup_{0\le r\le t} \|u^\varepsilon(r)\|_p^p + C_{\delta,p} \E A_{2,\varepsilon}(t)^{p/2} + C_p \E A_{p,\varepsilon}(t).
\label{eq:heat-proof-Gcorr-final}
\end{align}

For the linear jump martingale, Davis' inequality gives
\begin{align*}
\E \sup_{0\le r\le t} |M_{G,1}^\varepsilon(r)| &\le C_p \E \left[ \sup_{0\le r\le t} \|u^\varepsilon(r)\|_p^{p-1} \left( \int_0^t\int_Z \|G_0^\varepsilon(s,z)\|_p^2 \,N(\dd s,\dd z) \right)^{1/2} \right] \\
&\le \delta \E \sup_{0\le r\le t} \|u^\varepsilon(r)\|_p^p+ C_{\delta,p} \E \left( \int_0^t\int_Z \|G_0^\varepsilon(s,z)\|_p^2 \,N(\dd s,\dd z) \right)^{p/2}.
\end{align*}
The standard Poisson moment inequality, applied with $ F(s,z)=\|G_0^\varepsilon(s,z)\|_p^2, \ \frac p2>1, $ yields $$ \E \left( \int_0^t\int_Z \|G_0^\varepsilon(s,z)\|_p^2 N(\dd s,\dd z) \right)^{p/2} \le C_p \E A_{2,\varepsilon}(t)^{p/2} + C_p \E A_{p,\varepsilon}(t).
$$ So,
\begin{align}
\E \sup_{0\le r\le t} |M_{G,1}^\varepsilon(r)| &\le \delta \E \sup_{0\le r\le t} \|u^\varepsilon(r)\|_p^p+ C_{\delta,p} \E A_{2,\varepsilon}(t)^{p/2} + C_{\delta,p} \E A_{p,\varepsilon}(t).
\label{eq:heat-proof-MG1-final}
\end{align}

For the nonlinear Taylor-remainder martingale, positivity of $\mathcal R_p$ permits one to avoid any higher moment of $G_0^\varepsilon$.
Indeed, pathwise,
\begin{align}
\sup_{0\le r\le t} |M_{G,2}^\varepsilon(r)| &\le \int_0^t\int_Z \mathcal R_p \bigl( u^\varepsilon(s-), G_0^\varepsilon(s,z) \bigr) \,N(\dd s,\dd z)+ \int_0^t\int_Z \mathcal R_p \bigl( u^\varepsilon(s-), G_0^\varepsilon(s,z) \bigr) \,\mu(\dd z)\,\dd s.
\label{eq:heat-proof-MG2-path}
\end{align}
The compensator identity and \eqref{eq:heat-proof-Gcorr-final} therefore imply
\begin{align}
\E \sup_{0\le r\le t} |M_{G,2}^\varepsilon(r)| &\le 2\, \E I_{G,\mathrm{corr}}^\varepsilon(t) \le \delta \E \sup_{0\le r\le t} \|u^\varepsilon(r)\|_p^p + C_{\delta,p} \E A_{2,\varepsilon}(t)^{p/2} + C_{\delta,p} \E A_{p,\varepsilon}(t).
\label{eq:heat-proof-MG2-final}
\end{align}
In particular, no $2p$-moment assumption on $G_0$ is introduced.

Insert \eqref{eq:heat-proof-coercivity-final}, \eqref{eq:heat-proof-f-final}, \eqref{eq:heat-proof-h-final}, \eqref{eq:heat-proof-Wcorr-final}, \eqref{eq:heat-proof-MW-final}, \eqref{eq:heat-proof-Gcorr-final}, \eqref{eq:heat-proof-MG1-final}, and \eqref{eq:heat-proof-MG2-final} into \eqref{eq:heat-proof-Ito}.
Choose $\delta>0$ sufficiently small so that the contributions containing $\|\nabla\mathcal W^\varepsilon\|_2^2$ and $\sup\|u^\varepsilon\|_p^p$ are absorbed into the left-hand side.
After removing the localization $\tau_M^\varepsilon$ by letting $M\uparrow\infty$, we obtain for $0\le t\le T$
\begin{align*}
& \E \sup_{0\le r\le t} \|u^\varepsilon(r)\|_p^p + c_p \E \int_0^t \left\| \nabla|u^\varepsilon(s)|^{p/2} \right\|_2^2\,\dd s\le C_p\E\|u_0^\varepsilon\|_p^p + C \int_0^t \E \sup_{0\le r\le s} \|u^\varepsilon(r)\|_p^p \,\dd s
\nonumber\\
&\quad+ C \E \int_0^t \left( \|f^\varepsilon(s)\|_{q_f}^p + \|h^\varepsilon(s)\|_{q_h}^p + \|g^\varepsilon(s)\|_{\mathbb L^p}^p \right)\dd s + C \E A_{p,\varepsilon}(t) + C \E A_{2,\varepsilon}(t)^{p/2}.
\end{align*}
Gronwall's inequality, together with the contraction properties of spatial mollification, yields
\begin{align}
& \E \left[ \sup_{0\le t\le T} \|u^\varepsilon(t)\|_p^p + \int_0^T \left\| \nabla|u^\varepsilon(s)|^{p/2} \right\|_2^2\,\dd s \right]\le C \E \Bigg[ \|u_0\|_p^p + \int_0^T \left( \|f(s)\|_{q_f}^p + \|h(s)\|_{q_h}^p + \|g(s)\|_{\mathbb L^p}^p \right)\dd s
\nonumber\\
&\hspace{19mm} + \int_0^T\int_Z \|G_0(s,z)\|_p^p \,\mu(\dd z)\,\dd s + \left( \int_0^T\int_Z \|G_0(s,z)\|_p^2 \,\mu(\dd z)\,\dd s \right)^{p/2} \Bigg],
\label{eq:heat-proof-uniform}
\end{align}
where $ C=C(T,d,D,p,q_f,q_h) $ is independent of $\varepsilon$.

Let $\varepsilon,\delta>0$ and set $ v^{\varepsilon,\delta} = u^\varepsilon-u^\delta.
$ Since \eqref{eq:heat-proof-reg} is linear and additive, $v^{\varepsilon,\delta}$ satisfies the same equation with data replaced by the corresponding differences.
Repeating the preceding estimate gives
\begin{align*}
\E \sup_{0\le t\le T} \|u^\varepsilon(t)-u^\delta(t)\|_p^p
\nonumber
&\le C \E \Bigg[ \|u_0^\varepsilon-u_0^\delta\|_p^p + \int_0^T \|f^\varepsilon-f^\delta\|_{q_f}^p\,\dd s + \int_0^T \|h^\varepsilon-h^\delta\|_{q_h}^p\,\dd s + \int_0^T \|g^\varepsilon-g^\delta\|_{\mathbb L^p}^p\,\dd s\nonumber \\
&\qquad + \int_0^T\int_Z \|G_0^\varepsilon-G_0^\delta\|_p^p \,\mu(\dd z)\,\dd s + \left( \int_0^T\int_Z \|G_0^\varepsilon-G_0^\delta\|_p^2 \,\mu(\dd z)\,\dd s \right)^{p/2} \Bigg].
\end{align*}
By \eqref{eq:heat-proof-moll-u0}--\eqref{eq:heat-proof-moll-G2}, the right-hand side tends to zero as $\varepsilon,\delta\downarrow0$.
Thus
\begin{equation}
\label{eq:heat-proof-Spconv}
\E \sup_{0\le t\le T} \|u^\varepsilon(t)-u(t)\|_p^p \longrightarrow0
\end{equation}
for some adapted $E_{p}$-valued process $u$.

To identify its path regularity, choose a sequence $\varepsilon_n\downarrow0$ such that $$ \sum_{n=1}^\infty \left( \E \sup_{0\le t\le T} \|u^{\varepsilon_{n+1}}(t) -u^{\varepsilon_n}(t)\|_p^p \right)^{1/p} <\infty.
$$ It follows that, outside a null set, $(u^{\varepsilon_n})_n$ is uniformly Cauchy on $[0,T]$ in $E_{p}$.
Since the uniform limit of $E_{p}$-valued c\`adl\`ag functions is c\`adl\`ag, $u$ has an $E_{p}$-valued c\`adl\`ag modification.
Moreover, \eqref{eq:heat-proof-Spconv} implies $$ \E \sup_{0\le t\le T} \|u(t)\|_p^p<\infty.
$$ For each $t$, $u^\varepsilon(t)\to u(t)$ in $L^p(\Omega;E_{p})$; hence $u(t)$ is $\mathcal F_t$-measurable.
Thus the c\`adl\`ag version may be chosen adapted.

Let $ \varphi\in C_c^\infty(\R^d;\R^D).
$ For every $\varepsilon>0$,
\begin{align}
\langle u^\varepsilon(t),\varphi\rangle =& \langle u_0^\varepsilon,\varphi\rangle + \int_0^t \Big[ \langle u^\varepsilon(s),\Delta\varphi\rangle - \langle f^\varepsilon(s),\nabla\varphi\rangle + \langle h^\varepsilon(s),\varphi\rangle \Big]\dd s
\nonumber\\
&+ \int_0^t \left\langle (g^\varepsilon(s))^*\varphi,\dd W_s \right\rangle_{\mathcal U} + \int_0^t\int_Z \langle G_0^\varepsilon(s,z),\varphi\rangle \,\widetilde N(\dd s,\dd z).
\label{eq:heat-proof-weak-eps}
\end{align}
The deterministic terms converge by \eqref{eq:heat-proof-Spconv} and \eqref{eq:heat-proof-moll-f}--\eqref{eq:heat-proof-moll-h}.

For the Wiener term, BDG and H\"older's inequality give $$ \E \sup_{0\le t\le T} \left| \int_0^t \left\langle (g^\varepsilon-g)^*(s)\varphi,\dd W_s \right\rangle_{\mathcal U} \right|^p \le C_{p,T,\varphi} \E \int_0^T \|g^\varepsilon(s)-g(s)\|_{\mathbb L^p}^p\dd s \longrightarrow0.
$$ Similarly, the BDG inequality for compensated Poisson integrals gives
\begin{align*}
&\E \sup_{0\le t\le T} \left| \int_0^t\int_Z \langle G_0^\varepsilon(s,z)-G_0(s,z), \varphi \rangle \,\widetilde N(\dd s,\dd z) \right|^p \\
& \qquad\le C_\varphi \E \int_0^T\int_Z \|G_0^\varepsilon-G_0\|_p^p \,\mu(\dd z)\,\dd s+ C_\varphi \E \left( \int_0^T\int_Z \|G_0^\varepsilon-G_0\|_p^2 \,\mu(\dd z)\,\dd s \right)^{p/2} \longrightarrow0.
\end{align*}
Passing to the limit in \eqref{eq:heat-proof-weak-eps} therefore yields
\begin{align}
\langle u(t),\varphi\rangle =& \langle u_0,\varphi\rangle + \int_0^t \Big[ \langle u(s),\Delta\varphi\rangle - \langle f(s),\nabla\varphi\rangle + \langle h(s),\varphi\rangle \Big]\dd s
\nonumber\\
&+ \int_0^t \left\langle g(s)^*\varphi,\dd W_s \right\rangle_{\mathcal U} + \int_0^t\int_Z \langle G_0(s,z),\varphi\rangle \,\widetilde N(\dd s,\dd z)
\label{eq:heat-proof-weak-limit}
\end{align}
indistinguishably in $t\in[0,T]$.
Thus $u$ is a probabilistically strong solution of \eqref{eq:heat} in the stated spatial distributional sense.

By \eqref{eq:heat-proof-Spconv} and the c\`adl\`ag path-space convention, $ u^\varepsilon \longrightarrow u \ \text{in } L^p\bigl(\Omega; \D([0,T]; E_{p})\bigr).
$ Moreover, \eqref{eq:heat-proof-uniform} yields $$ \sup_{\varepsilon>0} \E \int_0^T \left\| \nabla |u^\varepsilon(s)|^{p/2} \right\|_2^2 \mathrm{d}s < \infty.
$$ Hence, applying \cref{lem:L2_con} to any sequence $\varepsilon_n \downarrow 0$, we obtain $ |u|^{p/2} \in L^2\left( \Omega\times(0,T); H^1(\R^d) \right), $ and
\begin{equation*}
\E \int_0^T \left\| \nabla |u(s)|^{p/2} \right\|_2^2 \,\mathrm{d}s \le \liminf_{n\to\infty} \E \int_0^T \left\| \nabla |u^{\varepsilon_n}(s)|^{p/2} \right\|_2^2 \,\mathrm{d}s.
\end{equation*}
The supremum term passes to the limit directly.
Indeed, \eqref{eq:heat-proof-Spconv} and the reverse triangle inequality in $L^p(\Omega)$ give
\begin{equation*}
\begin{aligned}
& \left| \left( \E \sup_{t\in[0,T]} \|u^\varepsilon(t)\|_p^p \right)^{1/p} - \left( \E \sup_{t\in[0,T]} \|u(t)\|_p^p \right)^{1/p} \right| \le \left( \E \sup_{t\in[0,T]} \|u^\varepsilon(t) - u(t)\|_p^p \right)^{1/p} \longrightarrow 0.
\end{aligned}
\end{equation*}
Therefore,
\begin{equation}
\label{eq:heat-proof-energy-liminf}
\E \Bigg[ \sup_{t\in[0,T]} \|u(t)\|_p^p + \int_0^T \left\| \nabla |u(s)|^{p/2} \right\|_2^2 \,\mathrm{d}s \Bigg] \le \liminf_{n\to\infty} \E \Bigg[ \sup_{t\in[0,T]} \|u^{\varepsilon_n}(t)\|_p^p + \int_0^T \left\| \nabla |u^{\varepsilon_n}(s)|^{p/2} \right\|_2^2 \,\mathrm{d}s \Bigg].
\end{equation}
Finally, \eqref{eq:heat-proof-moll-u0}--\eqref{eq:heat-proof-moll-G2} imply convergence of all data terms appearing on the right-hand side of \eqref{eq:heat-proof-uniform}.
Therefore, letting $n \to \infty$ in \eqref{eq:heat-proof-uniform} along $\varepsilon_n \downarrow 0$ and using \eqref{eq:heat-proof-energy-liminf}, we obtain \eqref{ineq:3_eng-est}.

Let $u_1$ and $u_2$ be two solutions on the same stochastic basis with the same initial datum and the same coefficients and driving noises.
Their difference $ v:=u_1-u_2 $ satisfies
\begin{equation}
\label{eq:heat-proof-difference}
\partial_t v=\Delta v, \qquad v(0)=0,
\end{equation}
in $\mathcal D'((0,T)\times\R^d)$, and $ v\in L^\infty(0,T;L^p(\R^d)) \ \text{a.s.} $ Fix $t\in[0,T]$ and $\varphi\in C_c^\infty(\R^d;\R^D)$, and let $ \Phi(s)=S(t-s)\varphi, \ 0\le s\le t.
$ Using $\Phi$ as a time-dependent test function in \eqref{eq:heat-proof-difference}, justified by the standard approximation in the weak formulation, gives $$ \langle v(t),\varphi\rangle = \langle v(0),S(t)\varphi\rangle =0.
$$ Since $\varphi$ is arbitrary, $$ v(t)=0 \qquad\text{in }L^p(\R^d) $$ for every $t\in[0,T]$.
Thus $u_1$ and $u_2$ are indistinguishable, and uniqueness follows.
\end{proof}
\section{Stochastic truncated tamed Navier--Stokes equations}
\label{sec4}

In this section we establish global well-posedness for a spatially regularized and $L^p$-truncated version of the STNSE (\ref{eq:1_Main}).
This auxiliary problem is the starting point for the construction of local solutions to the untruncated equation in the subsequent section.

The approximation combines the convolution Fourier multiplier ${\mathrm P}_{\le k}$ introduced in \cref{sec2} with a scalar cutoff in the $L^p$-norm.
The multiplier regularizes all nonlinear coefficients at the fixed frequency level $k$, while the scalar cutoff restricts the nonlinear and the stochastic coefficients to a bounded subset of $L^p(\R^3;\R^3)$.
This is particularly useful in the Picard scheme as after one freezes the preceding iterate, the tamed drift no longer contributes through its coercive sign and is instead treated as a direct forcing term in the linear theory of \cref{sec3}.

The stochastic forcing contains both a cylindrical Wiener component and a compensated Poisson component.
We retain throughout the abstract jump framework of \cref{sec2}, $N$ is an $(\mathcal F_t)$-Poisson random measure on the $\sigma$-finite measurable space $(Z,\mathcal Z,\mu)$ with compensator $\dd t\,\mu(\dd z)$, and $ \widetilde N(\dd t,\dd z) = N(\dd t,\dd z)-\dd t\,\mu(\dd z).
$ The integrability required for the compensated Poisson integral is directly from Hypotheses \eqref{eq:4.G1}--\eqref{eq:4.G2}, which control the two $L^2(Z,\mu)$ and $L^p(Z,\mu)$ modes entering the compensated Poisson maximal inequality.

Fix $ R\ge1, \ k\in\N,\ N>0, $ and choose $ \phi\in C^\infty([0,\infty);[0,1]) $ such that
\begin{equation}
\label{eq:4.cutoff}
\phi(r)=1 \quad\text{for }0\le r\le2R, \qquad \phi(r)=0 \quad\text{for }r\ge4R,
\end{equation}
and
\begin{equation}
\label{eq:4.cutoff-Lipschitz}
|\phi(r_1)-\phi(r_2)| \le C_\phi|r_1-r_2|, \qquad r_1,r_2\ge0.
\end{equation}
In particular, for every $a>0$,
\begin{equation}
\label{eq:4.cutoff-uniform}
\sup_{\rho\ge0} \phi(\rho)\rho^a \le (4R)^a.
\end{equation}
This elementary consequence of the support of $\phi$ will be used repeatedly to obtain bounds independent of the Picard iteration indices.

Set $ E:=L^p(\R^3;\R^3).
$ For an $E$-valued c\`adl\`ag process $u$, write $ \phi_u(t) := \phi(\|u(t)\|_p), \ \phi_u(t-) := \phi(\|u(t-)\|_p).
$ We consider the truncated equation
\begin{equation}
\label{eq:4_truncation}
\begin{aligned}
\dd u(t) =& \Big[ \nu\Delta u(t) - \phi_u(t)^2 {\mathrm P}_{\le k}\mathcal P \Big( (u(t)\cdot\nabla) {\mathrm P}_{\le k}u(t) \Big) - \phi_u(t)^2 {\mathrm P}_{\le k}\mathcal P \Big( g_N(|{\mathrm P}_{\le k}u(t)|^2) {\mathrm P}_{\le k}u(t) \Big) \Big]\,\dd t \\
& \qquad \qquad + \phi_u(t)^2 {\mathrm P}_{\le k}\mathcal P \sigma({\mathrm P}_{\le k}u(t))\,\dd W_t + \int_Z \phi_u(t-)^2 {\mathrm P}_{\le k}\mathcal P G({\mathrm P}_{\le k}u(t-),z) \,\widetilde N(\dd t,\dd z), \\
\nabla\cdot u(t) =&0, \\
u(0) =& {\mathrm P}_{\le k}u_0, \qquad \P\text{-a.s.}
\end{aligned}
\end{equation}

The use of left limits in the Poisson coefficient is essential.
For an adapted c\`adl\`ag process, $u(\cdot-)$ is predictable and therefore provides the appropriate integrand for the compensated Poisson integral.
For the $\dd t$- and Wiener terms we retain the notation $u(t)$; whenever a predictable representative is required for the Wiener coefficient, it may be replaced by its left-limit version, since $ u(t)=u(t-) \ \dd t\otimes\dd\P\text{-a.e.} $ and hence the resulting Wiener integral is unchanged.

Since ${\mathrm P}_{\le k}$ is a scalar Fourier multiplier, it commutes with spatial differentiation and with the Leray projector.
For a tensor field $F=(F_{ij})$, define $\mathcal P^{(1)}F$ by $ \widehat{(\mathcal P^{(1)}F)}_{ij}(\xi) := \widehat{\mathcal P}_{i\ell}(\xi) \widehat F_{\ell j}(\xi).
$ If $\nabla\cdot u=0$, then
\begin{equation}
\label{eq:4.commutation}
{\mathrm P}_{\le k}\mathcal P \Big( (u\cdot\nabla){\mathrm P}_{\le k}u \Big)= {\mathrm P}_{\le k}\mathcal P \Big( \nabla\cdot (u\otimes{\mathrm P}_{\le k}u) \Big) = \nabla\cdot \Big( \mathcal P^{(1)}{\mathrm P}_{\le k} (u\otimes{\mathrm P}_{\le k}u) \Big)
\end{equation}
in $\mathcal D'(\R^3)$.
Thus the convective term enters \cref{Thm:heat-exis} through the divergence-form forcing, whereas the taming term is treated there as a direct forcing.

The truncated problem \eqref{eq:4_truncation} is the analogue, in the present tamed Wiener--Poisson setting, of the truncated construction in Section~$4$ of \cite{MR4908978}.
The jump component requires two additional features: the solution is sought in a c\`adl\`ag path space, and both Poisson integrability modes appearing in \cref{Thm:heat-exis} must be retained throughout the fixed-point and convergence arguments.

\begin{theorem}
[Global well-posedness of the truncated equation] \label{thm:main_existence} Let $p>2$, and assume \eqref{eq:2_WienerH} and \eqref{eq:4.G1}--\eqref{eq:4.G2}.
Let $ u_0 \in L^p(\Omega,\mathcal F_0;E), \ \nabla\cdot u_0=0 \quad \P\text{-a.s.} $ Then, for every $T>0$, equation \eqref{eq:4_truncation} admits a unique strong solution $ u \in L^p\bigl( \Omega; \D([0,T];E) \bigr) $ in the sense of the c\`adl\`ag path-space convention \eqref{eq:cadlag-Lp-convention}.
Moreover, $ |u|^{p/2} \in L^2\!\left( \Omega\times(0,T); H^1(\R^3) \right), $ and
\begin{equation}
\label{eq:4_main_energy_bound}
\begin{aligned}
\E \Bigg[ & \sup_{0\le s\le T} \|u(s)\|_p^p + \int_0^T \left\| \nabla \bigl( |u(s)|^{p/2} \bigr) \right\|_2^2 \,\dd s \Bigg] \le C \left( 1+\E\|u_0\|_p^p \right),
\end{aligned}
\end{equation}
where $C$ depends only on $p,k,\nu,N,R,T$, $C_\phi$, and the structural constants in \eqref{eq:2_WienerH} and \eqref{eq:4.G1}--\eqref{eq:4.G2}.
\end{theorem}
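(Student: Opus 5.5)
We construct the solution of \eqref{eq:4_truncation} by Picard iteration on a short deterministic interval, treating each iterate through the linear theory of \cref{Thm:heat-exis} (with $d=D=3$), and then concatenate to reach an arbitrary $T$. Set $u^{(0)}:={\mathrm P}_{\le k}u_0$. Given an adapted c\`adl\`ag $u^{(m)}\in L^p(\Omega;\D([0,T];E))$, let $u^{(m+1)}$ be the solution of \eqref{eq:heat} with $\Delta$ replaced by $\nu\Delta$ (a harmless time rescaling). The data are
\[
f=-\phi_{u^{(m)}}^2\,\mathcal P^{(1)}{\mathrm P}_{\le k}\bigl(u^{(m)}\otimes{\mathrm P}_{\le k}u^{(m)}\bigr),\qquad
h=-\phi_{u^{(m)}}^2\,{\mathrm P}_{\le k}\mathcal P\bigl(g_N(|{\mathrm P}_{\le k}u^{(m)}|^2){\mathrm P}_{\le k}u^{(m)}\bigr),
\]
with $g=\phi_{u^{(m)}}^2{\mathrm P}_{\le k}\mathcal P\sigma({\mathrm P}_{\le k}u^{(m)})$ and $G_0(t,z)=\phi_{u^{(m)}}(t-)^2{\mathrm P}_{\le k}\mathcal PG({\mathrm P}_{\le k}u^{(m)}(t-),z)$. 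We take $q_f=q_h=p$, which is admissible since $p\ge dp/(p+d-2)$ and $p\ge dp/(2p+d-2)$. The identity \eqref{eq:4.commutation} makes the convective term a divergence-form forcing.

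The key observation for the bounds is that every coefficient carries $\phi^2$ and is smoothed by ${\mathrm P}_{\le k}$. By \cref{lem:2.2}, ${\mathrm P}_{\le k}$ maps $L^p\to L^q$ for every $q\ge p$ with constant $C_k$, and ${\mathrm P}_{\le k}\mathcal P$ is bounded on $L^p$. Hence
\[
\|f\|_p\le C_k\phi^2\|u\|_p^2,\qquad \|h\|_p\le C_k\phi^2\bigl(\|u\|_p+\|u\|_p^3\bigr),
\]
using $g_N(r)\lesssim 1+r$. Similarly $\|g\|_{\mathbb L^p}\le C_k\phi^2(1+\|u\|_p^2)$ by \eqref{eq:2_WienerH}, since $\|{\mathrm P}_{\le k}u\|_{q_\sigma}\le C_k\|u\|_p$, and $\int_Z\|G_0\|_p^r\,\dd\mu\le C\phi^{2r}(1+\|u\|_p^r)$ for $r\in\{2,p\}$ by \eqref{eq:4.G1}. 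Using \eqref{eq:4.cutoff-uniform}, all these quantities are bounded by a deterministic constant $C(k,R,N)$, uniformly in $m$. \cref{Thm:heat-exis} therefore yields $u^{(m+1)}\in L^p(\Omega;\D([0,T];E))$ together with the bound \eqref{ineq:3_eng-est} uniformly in $m$. Divergence-freeness is preserved because all forcings lie in the range of $\mathcal P$ and ${\mathrm P}_{\le k}u_0$ is solenoidal.

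For the contraction, set $v^{(m)}=u^{(m+1)}-u^{(m)}$, which solves \eqref{eq:heat} with zero initial datum and difference coefficients. The needed Lipschitz bounds are the main technical step. Write each difference as
\[
\phi_1^2F(u_1)-\phi_2^2F(u_2)=(\phi_1^2-\phi_2^2)F(u_1)+\phi_2^2\bigl(F(u_1)-F(u_2)\bigr).
\]
The first piece is controlled by $2C_\phi\bigl|\|u_1\|_p-\|u_2\|_p\bigr|\,|F(u_1)|\mathds 1_{\{\|u_1\|_p\le4R\}}$ after symmetrising which argument carries the indicator: if both norms exceed $4R$ the difference vanishes, and otherwise we put the function with norm at most $4R$ first. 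This handles the polynomial growth. The second piece uses the local Lipschitz structure: ${\mathrm P}_{\le k}$ gives $L^\infty$ and $L^{3p}$ control by $\|u\|_p$, so \eqref{eq:2_WienerH} and \eqref{eq:4.G2} give Lipschitz constants $C(k,R)$ on the support of $\phi_2$, again with a symmetrised choice of indicator. For the jump term the same computation is carried out at the left limits $u(t-)$, in both modes $r=2$ and $r=p$. The outcome is
\[
\E\sup_{t\le T_0}\|v^{(m)}\|_p^p\le C(k,R,N)\bigl(T_0+T_0^{p/2}\bigr)\,\E\sup_{t\le T_0}\|v^{(m-1)}\|_p^p.
\]
The deterministic-time factor comes from $\int_0^{T_0}(\cdot)^p\,\dd t$ and from $\bigl(\int_0^{T_0}(\cdot)^2\,\dd t\bigr)^{p/2}$. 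Choosing $T_0$ small, with $T_0$ depending only on $k,R,N$ and not on the initial datum, makes the map a contraction on $L^p(\Omega;\D([0,T_0];E))$. This space is complete under the convention \eqref{eq:cadlag-convergence}, since a.s. uniform limits of c\`adl\`ag paths are c\`adl\`ag. The limit $u$ solves \eqref{eq:4_truncation} weakly: pass to the limit in the weak formulation exactly as at the end of the proof of \cref{Thm:heat-exis}, using the Lipschitz bounds. Uniqueness follows from the same contraction estimate applied to two solutions.

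Because $T_0$ is independent of the datum, we restart at $T_0,2T_0,\dots$ with $\mathcal F_{jT_0}$-measurable datum $u(jT_0)\in L^p(\Omega;E)$ and shifted noises; deterministic restart times avoid any stopping subtleties. Concatenating gives the solution on $[0,T]$ with $E$-valued c\`adl\`ag paths. Finally we establish \eqref{eq:4_main_energy_bound}. Apply \eqref{ineq:3_eng-est} to $u$ itself, viewed as the solution of the linear equation with its own frozen coefficients. The data norms are bounded by $C(k,R,N)$ via \eqref{eq:4.cutoff-uniform}, and $\E\|{\mathrm P}_{\le k}u_0\|_p^p\le\E\|u_0\|_p^p$, which gives $C(1+\E\|u_0\|_p^p)$ directly without Gronwall. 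The gradient part follows from \cref{lem:L2_con} applied along the Picard sequence. The main obstacle is the Lipschitz step: keeping the indicator on the correct argument while the cutoffs are evaluated at left limits for the jump part, and controlling the two Poisson modes simultaneously.
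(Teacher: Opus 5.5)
Your proof is correct, but it is organized differently from the paper's.

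The paper follows the telescoping scheme of Kukavica--Wang--Xu. Its $n$-th outer iterate \eqref{eq:4_iteration_n} carries the product cutoff $\phi^{(n)}\phi^{(n-1)}$, so it still depends on the unknown through $\phi^{(n)}$. Each iterate must therefore be produced by an inner fixed-point argument in the scalar factor (\cref{lem:4_trunexist}). The outer differences are then controlled through the three-term decomposition \eqref{eq:4.abstract-telescope}.

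You instead freeze the whole coefficient $\phi_{u^{(m)}}^2F(u^{(m)})$ at the previous iterate. Each step is then a single application of \cref{Thm:heat-exis}. The contraction reduces directly to the global Lipschitz continuity of $a\mapsto\phi(\|a\|_p)^2F(a)$ from $E$ into $L^{q_f}$, $L^{q_h}$, $\mathbb L^p$ and $L^r(Z,\mu;E)$, $r\in\{2,p\}$. The paper itself establishes this fact, by the same case distinction on which argument lies in the support of $\phi$, before its outer contraction.

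For fixed $k$, the frozen cutoff already makes every coefficient deterministically bounded via \eqref{eq:4.cutoff-uniform}. Indeed, the paper's inner lemma uses the current cutoff only through $0\le\Phi\le1$ and its Lipschitz bound. So your single-level scheme removes a whole layer of the construction at no cost. Your choice $q_f=q_h=p$ is legitimate because ${\mathrm P}_{\le k}$ maps $L^p$ into $L^\infty$, and it also simplifies the exponent bookkeeping compared with the paper's ranges. What the paper's route buys is mainly conformity with the reference scheme it adapts.

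The remaining steps coincide with the paper's:
\begin{itemize}
\item a short deterministic interval depending only on the fixed parameters;
\item deterministic restarts with the shifted Poisson measure;
\item the final bound from \eqref{ineq:3_eng-est} applied to $u$ viewed with its own frozen coefficients.
\end{itemize}

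One small redundancy: once $u$ is identified with the linear solution carrying coefficients frozen at $u$, via the uniqueness part of \cref{Thm:heat-exis}, the dissipation bound follows directly from \eqref{ineq:3_eng-est}. The appeal to \cref{lem:L2_con} along the Picard sequence is therefore unnecessary.
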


Throughout this section, the dependence of constants on the fixed truncation level $R$ will usually be suppressed.

We construct the solution by the telescoping Picard scheme of Section~$4$ of \cite{MR4908978}, modified to include the direct taming forcing and the compensated Poisson term.
Define
\begin{equation}
\label{eq:iteration_0}
\begin{aligned}
\dd u^{(0)}(t) &= \nu\Delta u^{(0)}(t)\,\dd t, \\
\nabla\cdot u^{(0)}(t) &=0, \\
u^{(0)}(0) &= {\mathrm P}_{\le k}u_0, \qquad \P\text{-a.s.}
\end{aligned}
\end{equation}
Equivalently, $ u^{(0)}(t) = e^{\nu t\Delta}{\mathrm P}_{\le k}u_0.
$ In particular, $ u^{(0)} \in L^p\bigl( \Omega; \D([0,T];E) \bigr).
$

For $n\ge1$, set $ \phi^{(n)}(t) := \phi(\|u^{(n)}(t)\|_p), \ \phi^{(n)}(t-) := \phi(\|u^{(n)}(t-)\|_p), $ and consider
\begin{equation}
\label{eq:4_iteration_n}
\begin{aligned}
\dd u^{(n)}(t) =& \Big[ \nu\Delta u^{(n)}(t) - \phi^{(n)}(t)\phi^{(n-1)}(t) {\mathrm P}_{\le k}\mathcal P \Big( (u^{(n-1)}(t)\cdot\nabla) {\mathrm P}_{\le k}u^{(n-1)}(t) \Big) \\
&\qquad - \phi^{(n)}(t)\phi^{(n-1)}(t) {\mathrm P}_{\le k}\mathcal P \Big( g_N( |{\mathrm P}_{\le k}u^{(n-1)}(t)|^2 ) {\mathrm P}_{\le k}u^{(n-1)}(t) \Big) \Big]\,\dd t \\
&+ \phi^{(n)}(t)\phi^{(n-1)}(t) {\mathrm P}_{\le k}\mathcal P \sigma( {\mathrm P}_{\le k}u^{(n-1)}(t) )\,\dd W_t \\
&+ \int_Z \phi^{(n)}(t-)\phi^{(n-1)}(t-) {\mathrm P}_{\le k}\mathcal P G( {\mathrm P}_{\le k}u^{(n-1)}(t-),z ) \,\widetilde N(\dd t,\dd z), \\
\nabla\cdot u^{(n)}(t) =&0, \\
u^{(n)}(0) =& {\mathrm P}_{\le k}u_0, \qquad \P\text{-a.s.}
\end{aligned}
\end{equation}
The dependence of \eqref{eq:4_iteration_n} on $u^{(n)}$ occurs only through the scalar factor $\phi^{(n)}$.
Thus, once $u^{(n-1)}$ is fixed, the $n$-th equation is obtained by an inner fixed-point argument based on \cref{Thm:heat-exis}.
The preceding cutoff estimate \eqref{eq:4.cutoff-uniform} is what makes the resulting coefficient bounds uniform in the outer index.

Since $u^{(n-1)}$ is divergence-free, $ (u^{(n-1)}\cdot\nabla) {\mathrm P}_{\le k}u^{(n-1)} = \nabla\cdot \bigl( u^{(n-1)} \otimes {\mathrm P}_{\le k}u^{(n-1)} \bigr), $ and hence the convection has precisely the divergence structure required by \cref{Thm:heat-exis}.
The taming contribution is placed in its direct-forcing component, while the jump coefficient is predictable by construction and is estimated simultaneously in the $L^2(Z,\mu;E)$ and $L^p(Z,\mu;E)$ norms.

The following lemma establishes the global solvability and the uniform estimate for every outer iterate.

\begin{lemma}
\label{lem:4_trunexist} Let $p>2$, and let $ u_0 \in L^p(\Omega,\mathcal F_0;E), \ \nabla\cdot u_0=0 \ \P\text{-a.s.} $ Then, for every $n\ge0$ and every $T>0$, problem \eqref{eq:4_iteration_n}, with \eqref{eq:iteration_0} for $n=0$, admits a unique strong solution $ u^{(n)} \in L^p\bigl( \Omega; \D([0,T];E) \bigr).
$ The estimate \eqref{eq:4_main_energy_bound} holds with $u$ replaced by $u^{(n)}$ and with a constant independent of $n$.
\end{lemma}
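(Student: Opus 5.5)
We argue by induction on $n$. The case $n=0$ is explicit: $u^{(0)}=e^{\nu t\Delta}{\mathrm P}_{\le k}u_0$ is continuous in $E$, and the energy bound follows from \cref{Thm:heat-exis} with $f=h=g=G_0=0$ (after rescaling time by $\nu$), using $\|{\mathrm P}_{\le k}u_0\|_p\le\|u_0\|_p$ from \cref{lem:2.2}. For $n\ge1$ we assume $u^{(n-1)}\in L^p(\Omega;\D([0,T];E))$ is given and adapted. We then set
\[
F(t)=\mathcal P^{(1)}{\mathrm P}_{\le k}\bigl(u^{(n-1)}\otimes{\mathrm P}_{\le k}u^{(n-1)}\bigr),\qquad H(t)={\mathrm P}_{\le k}\mathcal P\bigl(g_N(|{\mathrm P}_{\le k}u^{(n-1)}|^2){\mathrm P}_{\le k}u^{(n-1)}\bigr),
\]
together with $\Sigma(t)={\mathrm P}_{\le k}\mathcal P\sigma({\mathrm P}_{\le k}u^{(n-1)}(t))$ and $\Gamma(t,z)={\mathrm P}_{\le k}\mathcal PG({\mathrm P}_{\le k}u^{(n-1)}(t-),z)$, each multiplied by $\phi^{(n-1)}$. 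The key observation is that all these frozen coefficients are bounded uniformly in $n$ thanks to the cutoff \eqref{eq:4.cutoff-uniform}. By \cref{lem:2.2} with the smoothing gain $L^p\to L^q$ (constant $C_{k}$), we get $\phi^{(n-1)}\|F\|_{p}\le C_k(4R)^2$. Since $g_N(r)\lesssim N^{-1}r$-type growth and ${\mathrm P}_{\le k}u\in L^\infty$ with norm $\le C_k\|u\|_p$, we get $\phi^{(n-1)}\|H\|_p\le C_{k,N}(1+(4R)^3)$. By \eqref{eq:2_WienerH} and $\|{\mathrm P}_{\le k}u\|_{q}\le C_k\|u\|_p$, we get $\phi^{(n-1)}\|\Sigma\|_{\mathbb L^p}\le C_{k,R}$. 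Finally, \eqref{eq:4.G1} with $r\in\{2,p\}$, using the left-limit version $\phi^{(n-1)}(t-)$, gives $\int_Z\phi^{(n-1)}(t-)^r\|\Gamma\|_p^r\,\dd\mu\le C(1+(4R)^r)$. All of these are deterministic bounds, so every data norm appearing in \eqref{ineq:3_eng-est}, taken with $q_f=q_h=p$, is bounded by $C(T,k,N,R)$.

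The remaining difficulty is the self-dependence through the scalar factor $\phi^{(n)}=\phi(\|u^{(n)}\|_p)$. For this we run an inner Picard or contraction map $\Lambda:v\mapsto w$ on $\mathcal X=L^p(\Omega;\D([0,T_0];E))$. Here $w$ is the solution given by \cref{Thm:heat-exis} of the linear equation with coefficients $\phi(\|v(t)\|_p)\phi^{(n-1)}(t)F$, and similarly for $H,\Sigma$, and with $\phi(\|v(t-)\|_p)\phi^{(n-1)}(t-)\Gamma$ in the jump term; predictability of $\Gamma$ and of $v(t-)$ makes this an admissible integrand. For two inputs $v_1,v_2$, the difference $w_1-w_2$ solves the heat equation with zero initial datum and data multiplied by $\phi(\|v_1\|_p)-\phi(\|v_2\|_p)$. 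By \eqref{eq:4.cutoff-Lipschitz} this factor is bounded by $C_\phi\sup_s\|v_1-v_2\|_p$, while the companion coefficients stay bounded by the $n$-uniform constants above. Estimate \eqref{ineq:3_eng-est} then gives
\[
\E\sup_{t\le T_0}\|w_1-w_2\|_p^p\le C(T_0+T_0^{p/2})\,\E\sup_{t\le T_0}\|v_1-v_2\|_p^p,
\]
with all data integrals $\int_0^{T_0}$ producing factors of $T_0$. Choosing $T_0$ small, independent of $n$ and of the initial value, makes $\Lambda$ a contraction. This is the step we expect to be the main obstacle: we must make sure that the Gronwall constant in \cref{Thm:heat-exis} on $[0,T_0]$ stays bounded as $T_0\downarrow0$, and that the jump term uses the left limits consistently. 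We then iterate over the intervals $[jT_0,(j+1)T_0]$, restarting from $u^{(n)}(jT_0)$; this is legitimate because the step size does not depend on the datum. Gluing the pieces gives a global c\`adl\`ag adapted solution on $[0,T]$. Uniqueness follows from the same contraction estimate applied to two solutions on each subinterval.

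For the energy bound, we apply \cref{Thm:heat-exis} once more to the fixed point on $[0,T]$, now with the explicit coefficients. Because $\phi^{(n)}\phi^{(n-1)}\le\phi^{(n-1)}$, the same deterministic cutoff bounds hold, and we obtain
\[
\E\Big[\sup_{t\le T}\|u^{(n)}\|_p^p+\int_0^T\|\nabla|u^{(n)}|^{p/2}\|_2^2\Big]\le C\bigl(\E\|{\mathrm P}_{\le k}u_0\|_p^p+C_{k,N,R,T}\bigr)\le C(1+\E\|u_0\|_p^p),
\]
with $C$ independent of $n$. Divergence-freeness of $u^{(n)}$ is preserved because every forcing term carries $\mathcal P$, which commutes with the heat semigroup and with ${\mathrm P}_{\le k}$, and because the initial datum is divergence-free. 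This closes the induction.
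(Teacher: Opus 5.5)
Your proposal is correct and follows essentially the same route as the paper. The shared steps are induction in $n$, frozen coefficients bounded uniformly in $n$ via \eqref{eq:4.cutoff-uniform}, a contraction estimate of the form $C(t+t^{p/2})$ to handle the self-dependence through $\phi^{(n)}$, finitely many deterministic restarts, and a final application of \cref{Thm:heat-exis} on $[0,T]$. The paper writes the fixed-point step as an explicit inner Picard sequence $U^{(m)}$ rather than via Banach's theorem, and takes $q_f<p$ where your admissible choice $q_f=q_h=p$ works equally well. The obstacle you flag is harmless: extend the data by zero from $[0,T_0]$ to $[0,T]$, apply \cref{Thm:heat-exis} with the fixed horizon $T$, and restrict by uniqueness, so the constant is uniform in $T_0\le T$.
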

\begin{proof}
The argument follows the fixed-point construction of \cite[Lemma~4.2]{MR4908978}.
We use throughout its immediate $\nu\Delta$-variant.

We argue inductively in $n$.
The process $u^{(0)}$ is given by \eqref{eq:iteration_0}.
Fix $n\ge1$ and set $ v:=u^{(n-1)}, \ \phi_v(t):=\phi(\|v(t)\|_p).
$ By the induction hypothesis, $ v\in L^p\bigl(\Omega;\D([0,T];E)\bigr), \ \nabla\cdot v=0.
$ For later use, define $$
\begin{aligned}
F_v(t) &:= -\phi_v(t)\, \mathcal P^{(1)}{\mathrm P}_{\le k} \bigl( v(t)\otimes{\mathrm P}_{\le k}v(t) \bigr), \\
H_v(t) &:= -\phi_v(t)\, {\mathrm P}_{\le k}\mathcal P \bigl( g_N(|{\mathrm P}_{\le k}v(t)|^2) {\mathrm P}_{\le k}v(t) \bigr), \\
\Sigma_v(t) &:= \phi_v(t)\, {\mathrm P}_{\le k}\mathcal P \sigma({\mathrm P}_{\le k}v(t)), \\
\Gamma_v(t,z) &:= \phi_v(t-)\, {\mathrm P}_{\le k}\mathcal P G({\mathrm P}_{\le k}v(t-),z).
\end{aligned}
$$ Here $ \widehat{(\mathcal P^{(1)}F)}_{ij}(\xi) = \widehat{\mathcal P}_{i\ell}(\xi)\widehat F_{\ell j}(\xi).
$ Since ${\mathrm P}_{\le k}$, $\mathcal P$, and the spatial derivatives commute, $ {\mathrm P}_{\le k}\mathcal P \bigl( \nabla\cdot(v\otimes{\mathrm P}_{\le k}v) \bigr) = \nabla\cdot \Bigl( \mathcal P^{(1)}{\mathrm P}_{\le k} (v\otimes{\mathrm P}_{\le k}v) \Bigr) $ in $\mathcal D'(\R^3)$.

Choose $ q_f \in \left[ \max\left\{ \frac p2,\frac{3p}{p+1} \right\}, p \right), \ \frac1{q_f} = \frac1p+\frac1{\ell_f}, $ and $ q_h \in \left[ \max\left\{ \frac p3,\frac{3p}{2p+1} \right\}, p \right].
$ Then $\ell_f\ge p$ and $3q_h\ge p$.
By \cref{lem:2.2}, the growth of $g_N$, \eqref{eq:2_WienerH}, \eqref{eq:4.G1}, and \eqref{eq:4.cutoff-uniform},
\begin{equation}
\label{eq:4.frozen-bounds}
\|F_v(t)\|_{q_f} \le C_{p,q_f,k,R},\quad \|H_v(t)\|_{q_h} \le C_{p,q_h,k,R,N,\nu},\quad \|\Sigma_v(t)\|_{\mathbb L^p} \le C_{p,k,R},
\end{equation}
and, for $r\in\{2,p\}$,
\begin{equation}
\label{eq:4.frozen-jump-bounds}
\int_Z \|\Gamma_v(t,z)\|_p^r\,\mu(\dd z) \le C_{p,k,R}.
\end{equation}
The same estimates hold with $t$ replaced by $t-$.

The Poisson coefficient in \eqref{eq:4.frozen-jump-bounds} is predictable.
For the Wiener integral we use, whenever necessary, the predictable representative obtained by replacing the c\`adl\`ag factors by their left limits; this does not change the integral since a c\`adl\`ag process agrees with its left-limit process $\dd t\otimes\dd\P$-a.e.

Let $ U^{(0)}(t) := e^{\nu t\Delta}{\mathrm P}_{\le k}u_0 $ and, for $m\ge1$, put $ \Phi^{(m)}(t):=\phi(\|U^{(m)}(t)\|_p), $ where $U^{(m)}$ is the solution of
\begin{equation*}
\begin{aligned}
\dd U^{(m)}(t) =& \Big[ \nu\Delta U^{(m)}(t) + \nabla\cdot \bigl( \Phi^{(m-1)}(t)F_v(t) \bigr) + \Phi^{(m-1)}(t)H_v(t) \Big]\dd t \\
&+ \Phi^{(m-1)}(t)\Sigma_v(t)\,\dd W_t + \int_Z \Phi^{(m-1)}(t-)\Gamma_v(t,z) \,\widetilde N(\dd t,\dd z), \\
U^{(m)}(0) =& {\mathrm P}_{\le k}u_0.
\end{aligned}
\end{equation*}
All forcing terms are solenoidal, and hence $\nabla\cdot U^{(m)}=0$.

Since $0\le\Phi^{(m-1)}\le1$, \eqref{eq:4.frozen-bounds} gives
\begin{equation*}
\E\int_0^T \Big( \|\Phi^{(m-1)}F_v\|_{q_f}^p + \|\Phi^{(m-1)}H_v\|_{q_h}^p + \|\Phi^{(m-1)}\Sigma_v\|_{\mathbb L^p}^p \Big)\,\dd t \le C_{p,k,R,N,\nu,T}.
\end{equation*}
, The two Poisson norms required by \cref{Thm:heat-exis} satisfy
\begin{equation*}
\begin{aligned}
\E \int_0^T\int_Z \| \Phi^{(m-1)}(t-)\Gamma_v(t,z) \|_p^p \,\mu(\dd z)\,\dd t &\le C_{p,k,R}T, \\
\E \left( \int_0^T\int_Z \| \Phi^{(m-1)}(t-)\Gamma_v(t,z) \|_p^2 \,\mu(\dd z)\,\dd t \right)^{p/2} &\le C_{p,k,R}T^{p/2}.
\end{aligned}
\end{equation*}
Thus \cref{Thm:heat-exis} yields a unique $ U^{(m)} \in L^p\bigl(\Omega;\D([0,T];E)\bigr) $ for every $m\ge1$.

We next prove contraction on a deterministic interval.
Set $ z^{(m)} := U^{(m+1)}-U^{(m)}, \ \delta_m := \Phi^{(m)}-\Phi^{(m-1)}.
$ Then $z^{(m)}(0)=0$ and
\begin{equation}
\label{eq:4.diff-delta}
\dd z^{(m)} =\Big[ \nu\Delta z^{(m)} + \nabla\cdot(\delta_m F_v) + \delta_m H_v \Big]\dd t + \delta_m\Sigma_v\,\dd W_t + \int_Z \delta_m(t-)\Gamma_v(t,z) \,\widetilde N(\dd t,\dd z).
\end{equation}
The global Lipschitz continuity of $\phi$ gives $|\delta_m(t)| \le C_\phi \|z^{(m-1)}(t)\|_p, \ |\delta_m(t-)| \le C_\phi \|z^{(m-1)}(t-)\|_p.$

Therefore, for $0<t\le T$,
\begin{equation}
\begin{aligned}
\E\int_0^t \Big( \|\delta_mF_v\|_{q_f}^p + \|\delta_mH_v\|_{q_h}^p + \|\delta_m\Sigma_v\|_{\mathbb L^p}^p \Big)\,\dd s + &\E\int_0^t\int_Z \| \delta_m(s-)\Gamma_v(s,z) \|_p^p \,\mu(\dd z)\,\dd s \\
&\qquad\le C_{p,q_f,q_h,k,R,N,\nu}\, t\, \E \sup_{0\le s\le t} \|z^{(m-1)}(s)\|_p^p,
\label{eq:4.diff-p}
\end{aligned}
\end{equation}
whereas the quadratic Poisson norm satisfies
\begin{equation}
\label{eq:4.diff-2}
\E \left( \int_0^t\int_Z \| \delta_m(s-)\Gamma_v(s,z) \|_p^2\mu(\dd z)\dd s \right)^\frac p2 \le C_{p,k,R} \E \left( \int_0^t \|z^{(m-1)}(s-)\|_p^2\,\dd s \right)^{p/2} \le C_{p,k,R}\, t^\frac p2\E \sup_{0\le s\le t} \|z^{(m-1)}(s)\|_p^p.
\end{equation}

Here we used $ z^{(m-1)}(s-)=z^{(m-1)}(s) \ \text{for }\dd s\text{-a.e.
}s.
$

Applying \cref{Thm:heat-exis} to \eqref{eq:4.diff-delta} and using \eqref{eq:4.diff-p}--\eqref{eq:4.diff-2}, we obtain
\begin{equation*}
\E \sup_{0\le s\le t} \|z^{(m)}(s)\|_p^p \le C \bigl( t+t^{p/2} \bigr) \E \sup_{0\le s\le t} \|z^{(m-1)}(s)\|_p^p,
\end{equation*}
where $ C=C_{p,q_f,q_h,k,R,N,\nu} $ is independent of $m$, $n$, and the initial datum.
Choose $t^*\in(0,T]$ such that $ C \bigl( t^*+(t^*)^{p/2} \bigr) \le \frac12.
$ It follows that $$ \E \sup_{0\le s\le t^*} \|U^{(m+1)}(s)-U^{(m)}(s)\|_p^p \le 2^{-m} \E \sup_{0\le s\le t^*} \|U^{(1)}(s)-U^{(0)}(s)\|_p^p.
$$ Consequently, $$ \sum_{m=0}^\infty \left( \E \sup_{0\le s\le t^*} \|U^{(m+1)}(s)-U^{(m)}(s)\|_p^p \right)^{1/p} <\infty.
$$ Hence, in the complete process space associated with the convention \eqref{eq:cadlag-Lp-convention}, there exists $ u \in L^p\bigl(\Omega;\D([0,t^*];E)\bigr) $ such that
\begin{equation}
\label{eq:4.strong-picard-limit}
\E \sup_{0\le s\le t^*} \|U^{(m)}(s)-u(s)\|_p^p \longrightarrow0.
\end{equation}

Put $ \Phi_u(t):=\phi(\|u(t)\|_p).
$ Since uniform convergence of c\`adl\`ag paths controls their left limits, $$ \sup_{0\le s\le t^*} \|U^{(m)}(s-)-u(s-)\|_p \le \sup_{0\le s\le t^*} \|U^{(m)}(s)-u(s)\|_p, $$ and therefore
\begin{equation}
\label{eq:4.phi-limit}
\begin{aligned}
\E \sup_{0\le s\le t^*} |\Phi^{(m)}(s)-\Phi_u(s)|^p &\longrightarrow0, \\
\E \sup_{0\le s\le t^*} |\Phi^{(m)}(s-)-\Phi_u(s-)|^p &\longrightarrow0.
\end{aligned}
\end{equation}

In particular, \eqref{eq:4.frozen-bounds} and \eqref{eq:4.phi-limit} imply
\begin{equation}
\label{eq:4.coeff-limit}
\begin{aligned}
\E\int_0^{t^*} \Big( \| (\Phi^{(m-1)}-\Phi_u)F_v \|_{q_f}^p + \| (\Phi^{(m-1)}-\Phi_u)H_v \|_{q_h}^p + \| (\Phi^{(m-1)}-\Phi_u)\Sigma_v \|_{\mathbb L^p}^p \Big)\,\dd s &\longrightarrow0, \\
\E \int_0^{t^*}\int_Z \| (\Phi^{(m-1)}(s-)-\Phi_u(s-)) \Gamma_v(s,z) \|_p^p \,\mu(\dd z)\,\dd s &\longrightarrow0, \\
\E \left( \int_0^{t^*}\int_Z \| (\Phi^{(m-1)}(s-)-\Phi_u(s-)) \Gamma_v(s,z) \|_p^2 \,\mu(\dd z)\,\dd s \right)^{p/2} &\longrightarrow0.
\end{aligned}
\end{equation}
The last two limits are precisely the two compensated-Poisson integrability modes entering the Bichteler--Jacod estimate.
The corresponding compensated Poisson integrals converge in $ L^p\bigl( \Omega; L^\infty(0,t^*;E) \bigr), $ and the Wiener integrals converge by \eqref{eq:2-W-BDG}.

Let $\bar u$ denote the solution obtained by \cref{Thm:heat-exis} with coefficients $ \Phi_uF_v, \ \Phi_uH_v, \ \Phi_u\Sigma_v, \ \Phi_u(\cdot-)\Gamma_v.
$ Applying the difference estimate in \cref{Thm:heat-exis} and \eqref{eq:4.coeff-limit} yields $ \E \sup_{0\le s\le t^*} \|U^{(m)}(s)-\bar u(s)\|_p^p \longrightarrow0.
$ Together with \eqref{eq:4.strong-picard-limit}, this gives $u=\bar u$ up to indistinguishability.
Hence $u$ solves \eqref{eq:4_iteration_n} on $[0,t^*]$.
The same estimate applied to two solutions gives uniqueness on this interval.

It remains to extend the solution to $[0,T]$.
Let $ N^* := \left\lceil\frac{T}{t^*}\right\rceil, \qquad t_j:=\min\{jt^*,T\}, \qquad j=0,\ldots,N^*.
$ Suppose that $u^{(n)}$ has been constructed on $[0,t_j]$.
Then $ u^{(n)}(t_j) \in L^p(\Omega,\mathcal F_{t_j};E).
$ On the shifted stochastic basis set $ \mathcal F_r^{(j)} := \mathcal F_{t_j+r}, \ W^{(j)}(r) := W(t_j+r)-W(t_j).
$ For $A\in\mathcal Z$ with $\mu(A)<\infty$, define $ N^{(j)}((0,r]\times A) := N((t_j,t_j+r]\times A), $ and let $ \widetilde N^{(j)}(\dd r,\dd z) := N^{(j)}(\dd r,\dd z)-\dd r\,\mu(\dd z).
$ Then $N^{(j)}$ is an $(\mathcal F_r^{(j)})$-Poisson random measure with compensator $\dd r\,\mu(\dd z)$, and for every admissible predictable integrand $K$, $ \int_0^r\int_Z K(s,z)\, \widetilde N^{(j)}(\dd s,\dd z) = \int_{t_j}^{t_j+r}\int_Z K(s-t_j,z)\, \widetilde N(\dd s,\dd z).
$ Thus the preceding contraction argument applies on $[t_j,t_{j+1}]$ with the same $t^*$ and the same contraction constant.
Since the shifted Poisson measure uses $(t_j,t_j+r]$, a possible jump at $t_j$ is already contained in the initial value $u^{(n)}(t_j)$ and is not counted a second time.
Finite iteration gives a unique adapted c\`adl\`ag solution $ u^{(n)} \in L^p\bigl(\Omega;\D([0,T];E)\bigr).
$

Finally, we have the constructed process $u^{(n)}$ on $[0,T]$ as the solution of the linear heat equation with the now fixed coefficients defined earlier.
Since $0\le\Phi_{u^{(n)}}\le1$, the estimates \eqref{eq:4.frozen-bounds}--\eqref{eq:4.frozen-jump-bounds} give
\begin{equation*}
\begin{aligned}
& \E\int_0^T \Big( \| \Phi_{u^{(n)}}F_v \|_{q_f}^p + \| \Phi_{u^{(n)}}H_v \|_{q_h}^p + \| \Phi_{u^{(n)}}\Sigma_v \|_{\mathbb L^p}^p \Big)\,\dd t \\
&\quad+ \E \int_0^T\int_Z \| \Phi_{u^{(n)}}(t-)\Gamma_v(t,z) \|_p^p \,\mu(\dd z)\,\dd t + \E \left( \int_0^T\int_Z \| \Phi_{u^{(n)}}(t-)\Gamma_v(t,z) \|_p^2 \,\mu(\dd z)\,\dd t \right)^{p/2} \\
&\qquad\le C_{p,q_f,q_h,k,R,N,\nu} \bigl( T+T^{p/2} \bigr).
\end{aligned}
\end{equation*}
Applying \cref{Thm:heat-exis} on the full interval $[0,T]$ and using $ \|{\mathrm P}_{\le k}u_0\|_p \le \|u_0\|_p, $ we obtain \eqref{eq:4_main_energy_bound} where $C$ may depend on $p,k,R,N,\nu,T$ and the structural constants of the noise, but is independent of $n$.
\end{proof}
\begin{proof}
[Proof of \cref{thm:main_existence}] By \cref{lem:4_trunexist}, for every $n\ge0$ the iterate $u^{(n)}$ is globally defined on $[0,T]$, $ u^{(n)} \in L^p\bigl(\Omega;\D([0,T];E)\bigr), $ and satisfies \eqref{eq:4_main_energy_bound} with a constant independent of $n$.
It remains to prove convergence of the outer iteration.

Set $ \Phi(a):=\phi(\|a\|_p), $ and define $$
\begin{aligned}
\mathcal B(a) &:= \mathcal P^{(1)}{\mathrm P}_{\le k} \bigl( a\otimes{\mathrm P}_{\le k}a \bigr), \mathcal T(a) := {\mathrm P}_{\le k}\mathcal P \bigl( g_N(|{\mathrm P}_{\le k}a|^2) {\mathrm P}_{\le k}a \bigr), \\
\Sigma(a) &:= {\mathrm P}_{\le k}\mathcal P \sigma({\mathrm P}_{\le k}a), \ \Gamma(a,z):= {\mathrm P}_{\le k}\mathcal P G({\mathrm P}_{\le k}a,z).
\end{aligned}
$$ Here $\mathcal P^{(1)}$ is the tensorial Leray multiplier introduced in \eqref{eq:4.commutation}.
Choose $ q_f, q_h $ such that these exponents lie in the admissible ranges of \cref{Thm:heat-exis}.

Since $k$ is fixed, Lemma~\ref{lem:2.2} yields, for every finite $r\ge p$,
\begin{equation}
\label{eq:4.fixed-k-smoothing}
\|{\mathrm P}_{\le k}a\|_r \le C_{k,p,r}\|a\|_p.
\end{equation}
One get,
\begin{align}
\|\mathcal B(a)\|_{q_f} &\le C_k\|a\|_p^2, & \|\mathcal B(a)-\mathcal B(b)\|_{q_f} &\le C_k (\|a\|_p+\|b\|_p) \|a-b\|_p,
\label{eq:4.B-local}
\\
\|\mathcal T(a)\|_{q_h} &\le C_{k,N,\nu}\|a\|_p^3, & \|\mathcal T(a)-\mathcal T(b)\|_{q_h} &\le C_{k,N,\nu} (\|a\|_p^2+\|b\|_p^2) \|a-b\|_p.
\label{eq:4.T-local}
\end{align}
The latter estimates follow from
\begin{equation*}
\begin{aligned}
|g_N(|\xi|^2)\xi| &\le C_{N,\nu}|\xi|^3, \\
\left| g_N(|\xi|^2)\xi - g_N(|\eta|^2)\eta \right| &\le C_{N,\nu} (|\xi|^2+|\eta|^2)|\xi-\eta|.
\end{aligned}
\end{equation*}
Likewise, \eqref{eq:2_WienerH} and \eqref{eq:4.fixed-k-smoothing} imply
\begin{align}
\|\Sigma(a)\|_{\mathbb L^p} &\le C_k(1+\|a\|_p^2),
\label{eq:4.Sigma-growth}
\\
\|\Sigma(a)-\Sigma(b)\|_{\mathbb L^p} &\le C_{k,R}\|a-b\|_p
\label{eq:4.Sigma-local}
\end{align}
whenever $\|a\|_p\vee\|b\|_p\le4R$.

For the jump coefficient, \eqref{eq:4.G1} gives, for $r\in\{2,p\}$,
\begin{equation}
\label{eq:4.Gamma-growth}
\int_Z \|\Gamma(a,z)\|_p^r\,\mu(\dd z) \le C_k(1+\|a\|_p^r).
\end{equation}
Provided $\|a\|_p\vee\|b\|_p\le4R$, \eqref{eq:4.G2}, H\"older's inequality, and \eqref{eq:4.fixed-k-smoothing},
\begin{equation}
\label{eq:4.Gamma-local}
\begin{aligned}
\int_Z \|\Gamma(a,z)-\Gamma(b,z)\|_p^r \,\mu(\dd z) &\le C \left\| \bigl( |{\mathrm P}_{\le k}a| + |{\mathrm P}_{\le k}b| \bigr)^\alpha {\mathrm P}_{\le k}(a-b) \right\|_p^r \le C (\|a\|_p+\|b\|_p)^{\alpha r} \| {\mathrm P}_{\le k}(a-b) \|_{p/(1-\alpha)}^r \\
&\le C_{k,R}\|a-b\|_p^r, \qquad r\in\{2,p\},
\end{aligned}
\end{equation}

The support condition on $\phi$, together with \eqref{eq:4.B-local}--\eqref{eq:4.Gamma-local}, implies that the truncated maps $$
\begin{aligned}
a&\longmapsto\Phi(a)\mathcal B(a), & a&\longmapsto\Phi(a)\mathcal T(a), \\
a&\longmapsto\Phi(a)\Sigma(a), & a&\longmapsto \Phi(a)\Gamma(a,\cdot)
\end{aligned}
$$ are globally Lipschitz from $E$ into, respectively, $ L^{q_f}, \ L^{q_h}, \ \mathbb L^p, \ L^r(Z,\mu;E), \ r\in\{2,p\}, $ with constants depending only on the fixed parameters.
For each $A \in \{\mathcal{B}, \mathcal{T}, \Sigma\}$ with its corresponding coefficient space $Y_A$, there exists a constant $C_{k,R} > 0$ such that $\| \Phi(a)A(a) - \Phi(b)A(b) \|_{Y_A} \le C_{k,R} \| a - b \|_p$ for all $a,b \in E$.
Likewise, for $r \in \{2,p\}$, we have $\int_Z \| \Phi(a)\Gamma(a,z) - \Phi(b)\Gamma(b,z) \|_p^r \, \mu(dz) \le C_{k,R} \| a - b \|_p^r$.
Indeed, these estimates follow from the preceding local Lipschitz bounds when $\|a\|_p \vee \|b\|_p \le 5R$.
If, say, $\|a\|_p \le 4R$ and $\|b\|_p > 5R$, then $\Phi(b) = 0$ and $\|a - b\|_p \ge R$; the boundedness of the truncated coefficient on $\{\|a\|_p \le 4R\}$ gives the same estimate.
The remaining cases are immediate from the support and Lipschitz properties of $\Phi$ where the transition region is controlled by \eqref{eq:4.cutoff-Lipschitz}.
In particular, all these truncated maps are uniformly bounded.

Set $ v^{(n)} := u^{(n+1)}-u^{(n)}.
$ Subtracting the equations for $u^{(n+1)}$ and $u^{(n)}$ gives
\begin{equation}
\label{eq:4.outer-difference}
\begin{aligned}
\dd v^{(n)} =& \Big[ \nu\Delta v^{(n)} + \nabla\cdot f^{(n)} + h^{(n)} \Big]\,\dd t + g^{(n)}\,\dd W_t + \int_Z G^{(n)}(t-,z)\, \widetilde N(\dd t,\dd z), \\
v^{(n)}(0) =&0, \qquad \nabla\cdot v^{(n)}=0,
\end{aligned}
\end{equation}
where
\begin{align*}
f^{(n)} &= - \phi^{(n+1)}\phi^{(n)} \mathcal B(u^{(n)}) + \phi^{(n)}\phi^{(n-1)} \mathcal B(u^{(n-1)}), \\
h^{(n)} &= - \phi^{(n+1)}\phi^{(n)} \mathcal T(u^{(n)}) + \phi^{(n)}\phi^{(n-1)} \mathcal T(u^{(n-1)}), \\
g^{(n)} &= \phi^{(n+1)}\phi^{(n)} \Sigma(u^{(n)}) - \phi^{(n)}\phi^{(n-1)} \Sigma(u^{(n-1)}), \\
G^{(n)}(t-,z) &= \phi^{(n+1)}(t-)\phi^{(n)}(t-) \Gamma(u^{(n)}(t-),z) - \phi^{(n)}(t-)\phi^{(n-1)}(t-) \Gamma(u^{(n-1)}(t-),z).
\end{align*}

The telescoping decomposition used in Section $4$ of \cite{MR4908978} reads
\begin{equation}
\label{eq:4.abstract-telescope}
\begin{aligned}
& \phi^{(n+1)}\phi^{(n)}A(u^{(n)}) - \phi^{(n)}\phi^{(n-1)}A(u^{(n-1)}) \\
&\quad= \phi^{(n)} (\phi^{(n+1)}-\phi^{(n)}) A(u^{(n)}) + \phi^{(n)} (\phi^{(n)}-\phi^{(n-1)}) A(u^{(n)})+\phi^{(n)}\phi^{(n-1)} \bigl( A(u^{(n)})-A(u^{(n-1)}) \bigr).
\end{aligned}
\end{equation}
Also,
\begin{equation}
\label{eq:4.cutoff-difference}
|\phi^{(n+1)}-\phi^{(n)}| \le C_\phi\|v^{(n)}\|_p, \qquad |\phi^{(n)}-\phi^{(n-1)}| \le C_\phi\|v^{(n-1)}\|_p.
\end{equation}
Whenever $\phi^{(j)}\ne0$, $ \|u^{(j)}\|_p\le4R.
$ Hence for $ A_n(t) := \E \sup_{0\le s\le t} \|v^{(n)}(s)\|_p^p.
$ with \eqref{eq:4.B-local}--\eqref{eq:4.Gamma-local}, \eqref{eq:4.abstract-telescope}, and \eqref{eq:4.cutoff-difference} imply
\begin{equation}
\label{eq:4.outer-det-W-bounds} \E \int_0^t \Big( \|f^{(n)}(s)\|_{q_f}^p + \|h^{(n)}(s)\|_{q_h}^p + \|g^{(n)}(s)\|_{\mathbb L^p}^p \Big)\,\dd s \le Ct \Big( A_n(t)+A_{n-1}(t) \Big),
\end{equation}
We record separately the estimates associated with the compensated Poisson term.
By \eqref{eq:4.Gamma-growth}--\eqref{eq:4.Gamma-local}, for $r\in\{2,p\}$,
\begin{equation*}
\int_Z \|G^{(n)}(s-,z)\|_p^r \,\mu(\dd z) \le C \Big( \|v^{(n)}(s-)\|_p^r + \|v^{(n-1)}(s-)\|_p^r \Big).
\end{equation*}
Consequently,
\begin{equation*}
\E \int_0^t\int_Z \|G^{(n)}(s-,z)\|_p^p \,\mu(\dd z)\,\dd s \le Ct \Big( A_n(t)+A_{n-1}(t) \Big),
\end{equation*}
while the quadratic Poisson norm satisfies
\begin{equation*}
\E \left( \int_0^t\int_Z \|G^{(n)}(s-,z)\|_p^2 \,\mu(\dd z)\,\dd s \right)^{p/2} \le Ct^{p/2} \Big( A_n(t)+A_{n-1}(t) \Big).
\end{equation*}
Here and below we use $ v^{(j)}(s-)=v^{(j)}(s) \ \text{for }\dd s\text{-a.e.
}s.
$ Likewise, \eqref{eq:4.outer-det-W-bounds} implies
\begin{equation}
\label{eq:4.outer-W-quadratic}
\E \left( \int_0^t \|g^{(n)}(s)\|_{\mathbb L^p}^2 \,\dd s \right)^{p/2} \le t^{\frac p2-1} \E \int_0^t \|g^{(n)}(s)\|_{\mathbb L^p}^p \dd s \le Ct^{p/2} \Big( A_n(t)+A_{n-1}(t) \Big).
\end{equation}

Applying \cref{Thm:heat-exis} to \eqref{eq:4.outer-difference}, together with \eqref{eq:4.outer-det-W-bounds}--\eqref{eq:4.outer-W-quadratic}, gives
\begin{equation*}
A_n(t) \le C \bigl( t+t^{p/2} \bigr) \bigl( A_n(t)+A_{n-1}(t) \bigr), \qquad 0<t\le T.
\end{equation*}
Choose $T^*>0$, depending only on the fixed parameters $p,k,\nu,N,R$ and the structural constants of the noise, such that $ C \bigl( T^*+(T^*)^{p/2} \bigr) \le \frac14.
$ Then
\begin{equation*}
A_n(T^*) \le \frac12 A_{n-1}(T^*).
\end{equation*}
Hence $ \sum_{n=0}^{\infty} A_n(T^*)^{1/p} <\infty.
$ It follows that $\{u^{(n)}\}$ is Cauchy in $ L^p\bigl( \Omega; L^\infty(0,T^*;E) \bigr).
$ The preceding summability implies almost sure uniform convergence of the Picard series.
One may invoke \cite[Lemma~5.2]{MR4385406}.
Since a uniform limit of $E$-valued c\`adl\`ag paths is c\`adl\`ag, there exists $ u \in L^p\bigl( \Omega;\D([0,T^*];E) \bigr) $ such that
\begin{equation}
\label{eq:4.outer-process-conv}
\E \sup_{0\le t\le T^*} \|u^{(n)}(t)-u(t)\|_p^p \longrightarrow0.
\end{equation}
In particular,
\begin{equation}
\label{eq:4.outer-left-limit-conv}
\E \sup_{0\le t\le T^*} \|u^{(n)}(t-)-u(t-)\|_p^p \longrightarrow0.
\end{equation}

We next identify the limiting equation.
To avoid confusing the coefficients of the difference equation with those of the $n$-th iterate, set $$
\begin{aligned}
\widehat f_n &:= -\phi^{(n)}\phi^{(n-1)} \mathcal B(u^{(n-1)}),\ \widehat h_n := -\phi^{(n)}\phi^{(n-1)} \mathcal T(u^{(n-1)}), \\
\widehat g_n &:= \phi^{(n)}\phi^{(n-1)} \Sigma(u^{(n-1)}),\ \widehat G_n(t,z) := \phi^{(n)}(t-)\phi^{(n-1)}(t-) \Gamma(u^{(n-1)}(t-),z).
\end{aligned}
$$ Thus $ f^{(n)} = \widehat f_{n+1}-\widehat f_n, $ and analogously for $h^{(n)},g^{(n)},G^{(n)}$.

Define $$
\begin{aligned}
f_u &:= -\Phi(u)^2\mathcal B(u), \ h_u := -\Phi(u)^2\mathcal T(u), \\
g_u &:= \Phi(u)^2\Sigma(u),\ G_u(t,z) := \Phi(u(t-))^2\Gamma(u(t-),z).
\end{aligned}
$$ The global Lipschitz properties of the truncated maps and \eqref{eq:4.outer-process-conv} give
\begin{align}
\widehat f_n &\longrightarrow f_u &&\text{in } L^p(\Omega\times(0,T^*);L^{q_f}),
\label{eq:4.limit-f}
\\
\widehat h_n &\longrightarrow h_u &&\text{in } L^p(\Omega\times(0,T^*);L^{q_h}),
\label{eq:4.limit-h}
\\
\widehat g_n &\longrightarrow g_u &&\text{in } L^p(\Omega\times(0,T^*);\mathbb L^p).
\label{eq:4.limit-g}
\end{align}

For completeness, we record the corresponding convergence of the jump coefficients.
From \eqref{eq:4.outer-process-conv}--\eqref{eq:4.outer-left-limit-conv} and the global Lipschitz property of $a\mapsto\Phi(a)\Gamma(a,\cdot)$, for $r\in\{2,p\}$,
\begin{equation*}
\begin{aligned}
\int_Z \| \widehat G_n(t,z)-G_u(t,z) \|_p^r \,\mu(\dd z) \le C \Big( \|u^{(n)}(t-)-u(t-)\|_p^r + \|u^{(n-1)}(t-)-u(t-)\|_p^r \Big).
\end{aligned}
\end{equation*}
Hence
\begin{equation*}
\E \int_0^{T^*}\int_Z \| \widehat G_n(t,z)-G_u(t,z) \|_p^p \,\mu(\dd z)\,\dd t \longrightarrow0,
\end{equation*}
\begin{equation}
\label{eq:4.limit-G2}
\E \left( \int_0^{T^*}\int_Z \| \widehat G_n(t,z)-G_u(t,z) \|_p^2 \,\mu(\dd z)\,\dd t \right)^{p/2} \longrightarrow0.
\end{equation}
Thus the compensated Poisson maximal inequality yields
\begin{equation*}
\E \sup_{0\le t\le T^*} \left\| \int_0^t\int_Z \bigl( \widehat G_n(s,z)-G_u(s,z) \bigr) \,\widetilde N(\dd s,\dd z) \right\|_p^p \longrightarrow0.
\end{equation*}
Similarly, by \eqref{eq:2-W-BDG},
\begin{equation*}
\E \sup_{0\le t\le T^*} \left\| \int_0^t \bigl( \widehat g_n(s)-g_u(s) \bigr)\,\dd W_s \right\|_p^p \longrightarrow0.
\end{equation*}

Let $\bar u$ denote the solution of \cref{Thm:heat-exis} with initial value ${\mathrm P}_{\le k}u_0$ and coefficients $ f_u,\ h_u,\ g_u,\ G_u.
$ The cutoff bounds ensure that these coefficients satisfy all hypotheses of \cref{Thm:heat-exis}.
Applying the corresponding difference estimate to $u^{(n)}-\bar u$ and using \eqref{eq:4.limit-f}--\eqref{eq:4.limit-G2}, we obtain $$ \E \sup_{0\le t\le T^*} \|u^{(n)}(t)-\bar u(t)\|_p^p \longrightarrow0.
$$ Together with \eqref{eq:4.outer-process-conv}, this yields $ u=\bar u$ up to indistinguishability .
Hence $u$ satisfies \eqref{eq:4_truncation} on $[0,T^*]$.

We next pass to the energy estimate.
By \cref{lem:4_trunexist}, $ \sup_{n\ge0} \E \int_0^{T^*} \left\| \nabla|u^{(n)}(s)|^{p/2} \right\|_2^2 \,\dd s <\infty.
$ Therefore \eqref{eq:4.outer-process-conv} and \cref{lem:L2_con} imply $ |u|^{p/2} \in L^2\!\left( \Omega\times(0,T^*); H^1(\R^3) \right) $ and
\begin{equation*}
\E \int_0^{T^*} \left\| \nabla|u(s)|^{p/2} \right\|_2^2\,\dd s \le \liminf_{n\to\infty} \E \int_0^{T^*} \left\| \nabla|u^{(n)}(s)|^{p/2} \right\|_2^2\,\dd s.
\end{equation*}
Moreover, $$ \left| \left( \E \sup_{0\le s\le T^*} \|u^{(n)}(s)\|_p^p \right)^{1/p} - \left( \E \sup_{0\le s\le T^*} \|u(s)\|_p^p \right)^{1/p} \right| \le \left( \E \sup_{0\le s\le T^*} \|u^{(n)}(s)-u(s)\|_p^p \right)^{1/p} \longrightarrow0.
$$ Thus \eqref{eq:4_main_energy_bound} holds on $[0,T^*]$.

We next prove pathwise uniqueness.
Let $u$ and $\widetilde u$ be two strong solutions of \eqref{eq:4_truncation} on $[0,T^*]$ with the same initial datum and driving noises, and set $ w:=u-\widetilde u.
$ Using the same telescoping decomposition and the global Lipschitz bounds for the truncated coefficients, the deterministic and Wiener terms satisfy $$ \E \int_0^t \Big( \|f^w(s)\|_{q_f}^p + \|h^w(s)\|_{q_h}^p + \|g^w(s)\|_{\mathbb L^p}^p \Big)\dd s \le Ct \E \sup_{0\le s\le t} \|w(s)\|_p^p.
$$ For the Poisson term, for $r\in\{2,p\}$, $ \int_Z \|G^w(s-,z)\|_p^r\,\mu(\dd z) \le C\|w(s-)\|_p^r, $ and hence
\begin{align*}
\E \int_0^t\int_Z \|G^w(s-,z)\|_p^p \,\mu(\dd z)\,\dd s &\le Ct \E \sup_{0\le s\le t} \|w(s)\|_p^p, \\
\E \left( \int_0^t\int_Z \|G^w(s-,z)\|_p^2 \,\mu(\dd z)\,\dd s \right)^{p/2} &\le Ct^{p/2} \E \sup_{0\le s\le t} \|w(s)\|_p^p.
\end{align*}
Since $w(0)=0$, \cref{Thm:heat-exis} gives $$ \E \sup_{0\le s\le t} \|w(s)\|_p^p \le C \bigl( t+t^{p/2} \bigr) \E \sup_{0\le s\le t} \|w(s)\|_p^p.
$$ After decreasing $T^*$ if necessary, $ C \bigl( T^*+(T^*)^{p/2} \bigr) <1, $ and therefore $ u=\widetilde u \ \P\text{-a.s.
on }[0,T^*].
$ Finally, $T^*$ depends only on the fixed parameters and not on the size of the initial datum.
The solution may therefore be extended to $[0,T]$ by finitely many deterministic restarts.
Let $ J := \left\lceil\frac{T}{T^*}\right\rceil, \ t_j := \min\{jT^*,T\}, \ j=0,\ldots,J.
$ Suppose that $u$ has been constructed on $[0,t_j]$ and set $ u_j:=u(t_j).
$ Then $ u_j\in L^p(\Omega,\mathcal F_{t_j};E), \ \nabla\cdot u_j=0.
$ On the shifted stochastic basis define $ \mathcal F_r^{(j)} := \mathcal F_{t_j+r}, \ W^{(j)}(r) := W(t_j+r)-W(t_j).
$ For $A\in\mathcal Z$ with $\mu(A)<\infty$, set $ N^{(j)}((0,r]\times A) := N((t_j,t_j+r]\times A) $ and $ \widetilde N^{(j)}(\dd r,\dd z) := N^{(j)}(\dd r,\dd z) - \dd r\,\mu(\dd z).
$ Then $N^{(j)}$ is an $(\mathcal F_r^{(j)})$-Poisson random measure with compensator $\dd r\,\mu(\dd z)$, and for every admissible predictable integrand $K$, $ \int_0^r\int_Z K(s,z)\, \widetilde N^{(j)}(\dd s,\dd z) = \int_{t_j}^{t_j+r}\int_Z K(s-t_j,z)\, \widetilde N(\dd s,\dd z).
$ The use of the interval $(t_j,t_j+r]$ ensures that a possible jump at $t_j$ is already contained in the initial value $u(t_j)$ and is not counted again after the restart.
Since $t_{j+1}-t_j\le T^*$, the preceding existence and uniqueness argument applies successively on every subinterval.
This yields a unique adapted c\`adl\`ag strong solution $ u \in L^p\bigl( \Omega;\D([0,T];E) \bigr).
$

It remains only to verify the estimate on the whole interval.
Regard the resulting process $u$ as the solution of the linear heat equation with coefficients $ f_u,\ h_u,\ g_u,\ G_u $ defined above.
By the support property of $\phi$, $$
\begin{aligned}
\|f_u(t)\|_{q_f} + \|h_u(t)\|_{q_h} + \|g_u(t)\|_{\mathbb L^p} &\le C, \\
\int_Z \|G_u(t,z)\|_p^r\,\mu(\dd z) &\le C, \qquad r\in\{2,p\},
\end{aligned}
$$ with $C$ independent of $u$.
Hence
\begin{equation*}
\begin{aligned}
& \E \int_0^T \Big( \|f_u(t)\|_{q_f}^p + \|h_u(t)\|_{q_h}^p + \|g_u(t)\|_{\mathbb L^p}^p \Big)\,\dd t \\
&\quad+ \E \int_0^T\int_Z \|G_u(t,z)\|_p^p \,\mu(\dd z)\,\dd t + \E \left( \int_0^T\int_Z \|G_u(t,z)\|_p^2 \,\mu(\dd z)\,\dd t \right)^{p/2} \\
&\qquad\le C_{p,k,\nu,N,R,T}.
\end{aligned}
\end{equation*}
Applying \cref{Thm:heat-exis} on $[0,T]$ and using $ \|{\mathrm P}_{\le k}u_0\|_p \le \|u_0\|_p, $ we obtain \cref{eq:4_main_energy_bound} where $C$ may depend on $p,k,\nu,N,R,T$ and the structural constants of the noise.
Pathwise uniqueness on $[0,T]$ follows by applying the preceding local uniqueness argument successively on the same deterministic subintervals.
\end{proof}

\begin{lemma}
[$L^2$-persistence for the truncated equation] \label{lem:4_L2} Let $p>3$ and retain the hypotheses of \cref{thm:main_existence}.
Assume additionally \eqref{Hypo_W2}, \eqref{eq:4_G5}, and $ u_0\in L^p(\Omega,\mathcal F_0;E) \cap L^2(\Omega,\mathcal F_0;L^2(\R^3)); \ \nabla\cdot u_0=0 \quad\P\text{-a.s.}.
$ For fixed $k\in\N$ and $R\ge1$, let $u$ denote the unique solution of \eqref{eq:4_truncation}, with $u(0)={\mathrm P}_{\le k}u_0$.
Then, for every $T>0$, $ u\in L^p\bigl(\Omega;\D([0,T];E)\bigr) \cap L^2\bigl(\Omega;\D([0,T];L^2)\bigr) \cap L^2\bigl(\Omega;L^2(0,T;H^1)\bigr), $ and
\begin{equation}
\label{eq:4.L2-energy}
\E\left[ \sup_{0\le t\le T}\|u(t)\|_2^2 + \nu\int_0^T\|\nabla u(s)\|_2^2\,\dd s \right] \le C_T\left(1+\E\|u_0\|_2^2\right).
\end{equation}
Here $C_T$ depends only on $T$, $\nu$, and the constants in \eqref{Hypo_W2} and \eqref{eq:4_G5}; in particular, it is independent of $k$ and of the scalar truncation level $R$.
\end{lemma}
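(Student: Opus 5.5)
We apply the $L^2$ Itô formula to the truncated solution $u$ of \eqref{eq:4_truncation}. The key point is that the two nonlinear drift terms have favourable sign (or vanish) in $L^2$ pairing, so no $R$- or $k$-dependent constants enter.

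\textbf{Regularity and localization.} Since $k$ is fixed, ${\mathrm P}_{\le k}u$ is smooth in space. We would first show $u(t)\in L^2$ by a mollified approximation exactly as in the proof of \cref{Thm:heat-exis}, but at the $L^2$ level. The frozen coefficients are controlled in $L^2$ as follows:
\begin{itemize}
\item the convective flux $\phi_u^2\mathcal P^{(1)}{\mathrm P}_{\le k}(u\otimes{\mathrm P}_{\le k}u)$ lies in $L^2$, since $\|u\otimes{\mathrm P}_{\le k}u\|_2\le\|u\|_p\|{\mathrm P}_{\le k}u\|_{2p/(p-2)}$ and the second factor is controlled by $\|u\|_p$ via \cref{lem:2.2};
\item the taming term is treated similarly;
\item the Wiener and jump coefficients are controlled in $L^2$ by \eqref{Hypo_W2} and \eqref{eq:4_G5}, since ${\mathrm P}_{\le k}\mathcal P$ is a contraction on $L^2$.
\end{itemize}
Hence the $L^2$ heat theory (the $p=2$ analogue of \cref{Thm:heat-exis}, or the standard Hilbert-space variational theory with jumps) gives $u\in L^2(\Omega;\D([0,T];L^2))\cap L^2(\Omega;L^2(0,T;H^1))$ with some constant. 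We then localize with $\tau_M:=\inf\{t:\|u(t)\|_2\vee\|u(t-)\|_2\ge M\}$ and apply the Itô formula for $\|\cdot\|_2^2$ with jumps on $[0,t\wedge\tau_M]$.

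\textbf{Cancellations.} By self-adjointness of ${\mathrm P}_{\le k}$ and $\mathcal P$ on $L^2$ (\cref{lem:2.2}) and $\nabla\cdot u=0$, the convective term satisfies
\[
\langle {\mathrm P}_{\le k}\mathcal P((u\cdot\nabla){\mathrm P}_{\le k}u),u\rangle=\langle (u\cdot\nabla){\mathrm P}_{\le k}u,{\mathrm P}_{\le k}u\rangle=0 .
\]
Similarly, the taming term gives $\langle g_N(|{\mathrm P}_{\le k}u|^2){\mathrm P}_{\le k}u,{\mathrm P}_{\le k}u\rangle\ge0$ because $g_N\ge0$, so it enters with a good sign multiplied by $\phi_u^2\ge0$. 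This produces the identity
\begin{align*}
\|u(t)\|_2^2+2\nu\int_0^t\|\nabla u\|_2^2\,\dd s\le{}&\|{\mathrm P}_{\le k}u_0\|_2^2+\int_0^t\phi_u^4\|{\mathrm P}_{\le k}\mathcal P\sigma({\mathrm P}_{\le k}u)\|_{\mathbb L^2}^2\,\dd s\\
&+\int_0^t\int_Z\phi_u(s-)^4\|{\mathrm P}_{\le k}\mathcal PG\|_2^2\,N(\dd s,\dd z)+M_W(t)+M_{\widetilde N}(t),
\end{align*}
where the jump contribution is exact: $\|a+b\|^2-\|a\|^2-2\langle a,b\rangle=\|b\|^2$.

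\textbf{Estimates and Gronwall.} Using $0\le\phi\le1$, the contraction $\|{\mathrm P}_{\le k}v\|_2\le\|v\|_2$, \eqref{Hypo_W2} and \eqref{eq:4_G5}, the Itô correction terms are bounded after taking expectations by $C\int_0^t(1+\|u(s)\|_2^2)\,\dd s$; here the $N(\dd s,\dd z)$ integral has expectation equal to its compensator. For the martingale terms:
\begin{itemize}
\item For $M_W$, the BDG inequality gives a bound by $\E(\int\|u\|_2^2\|\sigma\|_{\mathbb L^2}^2)^{1/2}\le\frac14\E\sup\|u\|_2^2+C\E\int(1+\|u\|_2^2)$.
\item For $M_{\widetilde N}=2\int\int\langle u(s-),\cdot\rangle\,\widetilde N$, Davis' inequality gives a bound by $\E\sup\|u\|_2\,(\int\int\|G\|_2^2\,N)^{1/2}\le\frac14\E\sup\|u\|_2^2+C\E\int\int\|G\|_2^2\,\mu\,\dd s$.
\end{itemize}
We then absorb the $\frac14\E\sup\|u\|_2^2$ terms, apply Gronwall, and let $M\to\infty$ by monotone convergence. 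This gives \eqref{eq:4.L2-energy} with $C_T$ depending only on $T$, $\nu$ and the $L^2$ structural constants, hence independent of $k$ and $R$.

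\textbf{Main obstacle.} The delicate step is justifying the $L^2$ Itô formula and the vanishing of the convective pairing for a process a priori known only in $L^p$. I would handle it by approximating in the mollified linear problem, with frozen coefficients, and passing to the limit. Independence of the constants from $k$ and $R$ is harmless, since only the regularity statement, not the final bound, uses $k$-dependent estimates.
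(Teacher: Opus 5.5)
Your energy computation is correct and is the same one the paper performs. The convective pairing vanishes by self-adjointness of ${\mathrm P}_{\le k}$ and $\mathcal P$, together with $\mathcal Pu=u$ and $\nabla\cdot u=0$. The taming pairing is nonnegative. The jump contribution is exactly $\|b\|_2^2$, so only the second moment of $G$ enters. BDG, Davis and Gronwall then give a constant depending only on $T$ and the constants in \eqref{Hypo_W2}, \eqref{eq:4_G5}.

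The gap is the preliminary step, where you apply the $L^2$ linear theory to $u$ itself to get $u\in L^2(\Omega;\D([0,T];L^2))\cap L^2(\Omega;L^2(0,T;H^1))$. Your bullet for the convective flux is false for $p>4$. By Young's inequality (\cref{lem:2.2}), ${\mathrm P}_{\le k}$ maps $L^p$ only into $L^q$ with $q\ge p$, whereas $2p/(p-2)<p$ for $p>4$; convolution does not improve decay at infinity. The correct bound is $\|u\otimes{\mathrm P}_{\le k}u\|_2\le\|u\|_2\|{\mathrm P}_{\le k}u\|_\infty\le C_k\|u\|_2\|u\|_p$. More fundamentally, for every $p>3$ the Wiener and jump coefficients are controlled in the Hilbert classes only through $C(1+\|u\|_2)$. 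So every $L^2$ coefficient bound presupposes $u\in L^2(\Omega\times(0,T);L^2)$, which is the conclusion you want. Treating $u$ as the solution of a linear $L^2$ problem with coefficients frozen at $u$ is therefore circular. Mollifying those coefficients does not help, because mollification creates no $L^2$-integrability at spatial infinity.

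What is missing is an approximating family that lies in the Hilbert energy class by construction, for example Picard iterates with coefficients frozen at the preceding iterate, started from $e^{\nu t\Delta}{\mathrm P}_{\le k}u_0$. You then need $L^2$-energy bounds uniform in the approximation index, and a limit passage. Since \eqref{Hypo_W2} and \eqref{eq:4_G5} give only $L^2$ growth and no $L^2$ Lipschitz bound, these uniform bounds cannot come from an $L^2$ contraction. They must come from a Gronwall-type recursion in the iteration index. They transfer to $u$ through the $L^p$ convergence of the approximants and weak lower semicontinuity in $L^\infty(0,T;L^2)\cap L^2(0,T;H^1)$. Only then can $u$ be identified with an $L^2$-valued c\`adl\`ag solution and your It\^o computation be run on $u$.

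The paper organizes this more economically. It uses approximants $u_m$ with an extra multiplier $S_m={\mathrm P}_{\le m}$, arranged so that the cancellation survives at the approximate level: $(u_m,S_mS_k\mathcal P((w_m\cdot\nabla)v_m))_2=(v_m,(w_m\cdot\nabla)v_m)_2=0$. It then proceeds as follows:
\begin{itemize}
\item it obtains $u_m\in L^2$ by a frozen-coefficient Picard argument;
\item it performs exactly your It\^o--Gronwall estimate on $u_m$, uniformly in $m,k,R$;
\item it proves $u_m\to u$ in $L^p$ by the fixed-$(k,R)$ stability argument;
\item it passes the bound to $u$ by lower semicontinuity and identifies the $L^2$ c\`adl\`ag regularity at the end.
\end{itemize}
With approximants of this kind, a separate It\^o formula for $u$ itself is no longer needed.
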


\begin{proof}
Set $ H:=L^2(\R^3;\R^3), \ S_m:={\mathrm P}_{\le m}, \ S_k:={\mathrm P}_{\le k}.
$ For $m\in\N$, let $u_m$ be the unique $L^p$-solution of
\begin{equation}
\label{eq:4.L2-regularized}
\begin{aligned}
\dd u_m =& \Big[ \nu\Delta u_m - a_m S_mS_k\mathcal P \bigl((w_m\cdot\nabla)v_m\bigr) - a_m S_mS_k\mathcal P \bigl(g_N(|v_m|^2)v_m\bigr) \Big]\,\dd t \\
&+ a_m S_mS_k\mathcal P\sigma(v_m)\,\dd W_t + \int_Z a_m(t-) S_mS_k\mathcal PG(v_m(t-),z)\, \widetilde N(\dd t,\dd z), \\
u_m(0) =& S_mS_ku_0,
\end{aligned}
\end{equation}
where $ w_m:=S_mu_m, \ v_m:=S_kw_m, \ a_m(t):=\phi(\|u_m(t)\|_p)^2.
$ The scalar factors in the stochastic terms are understood through their predictable representatives.
The existence and uniqueness of $u_m$ follow from the fixed-$R$ construction used in \cref{thm:main_existence}, since the insertion of $S_m$ preserves the required multiplier bounds.

We first record that, for each fixed $m,k,R$, the solution $u_m$ also belongs to the Hilbert energy class.
Indeed, on the support of $a_m$ the fixed-frequency smoothing estimates imply
\begin{equation*}
\left\| a_mS_mS_k\mathcal P \bigl((w_m\cdot\nabla)v_m\bigr) \right\|_2+ \left\| a_mS_mS_k\mathcal P \bigl(g_N(|v_m|^2)v_m\bigr) \right\|_2 \le C_{m,k,R,N,\nu,p}\|u_m\|_2.
\end{equation*}
By \eqref{Hypo_W2}, \eqref{eq:4_G5}, and the $L^2$-contractivity of $S_m$, $S_k$, and $\mathcal P$,
\begin{equation*}
\|S_mS_k\mathcal P\sigma(v)\|_{\mathbb L^2}^2 + \int_Z \|S_mS_k\mathcal PG(v,z)\|_2^2\,\mu(\dd z) \le C\bigl(1+\|v\|_2^2\bigr).
\end{equation*}
Running the Picard construction for \eqref{eq:4.L2-regularized}, with the coefficients frozen at the preceding iterate, and using the standard Hilbert-space heat-convolution estimate in the Gelfand triple $ H^1(\R^3; \R^3) \hookrightarrow L^2(\R^3; \R^3) \hookrightarrow H^{-1}(\R^3; \R^3), $ with $\nabla \cdot u =0$ gives, on every deterministic finite interval,
\begin{equation}
\label{eq:4.L2-preliminary}
u_m \in L^2\bigl(\Omega;\D([0,T];H)\bigr) \cap L^2\bigl(\Omega;L^2(0,T;H^1)\bigr).
\end{equation}
Here the constants used only to establish \eqref{eq:4.L2-preliminary} may depend on $m,k,R$.

Define $ Q_m^W := S_mS_k\mathcal P\sigma(v_m), \ Q_m^J(z) := S_mS_k\mathcal PG(v_m,z).
$ Since $S_m$ and $S_k$ are scalar, self-adjoint $L^2$-contractions commuting with $\mathcal P$ and with spatial derivatives, while $ \nabla\cdot u_m = \nabla\cdot w_m = \nabla\cdot v_m = 0, $ we have
\begin{align*}
\Bigl( u_m, S_mS_k\mathcal P ((w_m\cdot\nabla)v_m) \Bigr)_2 &= \bigl( v_m,(w_m\cdot\nabla)v_m \bigr)_2 =0, \\
\Bigl( u_m, S_mS_k\mathcal P (g_N(|v_m|^2)v_m) \Bigr)_2 &= \int_{\R^3} g_N(|v_m|^2)|v_m|^2\,\dd x \ge0.
\end{align*}
The first identity follows by density from $\nabla\cdot w_m=0$.

By \eqref{eq:4.L2-preliminary}, the variational It\^o formula with jumps in the above Gelfand triple applies to $\|u_m(t)\|_2^2$.
Now, for every $t\in[0,T]$,
\begin{equation}
\label{eq:4.L2-Ito-m}
\begin{aligned}
\|u_m(t)\|_2^2 &+ 2\nu\int_0^t\|\nabla u_m(s)\|_2^2\,\dd s + 2\int_0^t a_m(s) \int_{\R^3} g_N(|v_m(s)|^2)|v_m(s)|^2\,\dd x\,\dd s = \|S_mS_ku_0\|_2^2 + \\
& \int_0^t a_m(s)^2 \|Q_m^W(s)\|_{\mathbb L^2}^2\,\dd s + \int_0^t\int_Z a_m(s-)^2 \|Q_m^J(s-,z)\|_2^2\, \mu(\dd z)\,\dd s + M_m^W(t) + M_m^{J,1}(t) + M_m^{J,2}(t),
\end{aligned}
\end{equation}
where
\begin{align*}
M_m^W(t) := 2\int_0^t a_m(s-) \bigl( u_m(s-),Q_m^W(s-)\,\dd W_s \bigr)_2,\ & M_m^{J,1}(t) := 2\int_0^t\int_Z a_m(s-) \bigl( u_m(s-),Q_m^J(s-,z) \bigr)_2\, \widetilde N(\dd s,\dd z), \\
M_m^{J,2}(t) &:= \int_0^t\int_Z a_m(s-)^2 \|Q_m^J(s-,z)\|_2^2\, \widetilde N(\dd s,\dd z).
\end{align*}
The last term is understood as $ \int_0^t\int_ZK_m\,N(\dd s,\dd z) - \int_0^t\int_ZK_m\,\mu(\dd z)\,\dd s, \ K_m := a_m^2\|Q_m^J\|_2^2.
$ Thus only the second moment of the jump coefficient is used.

Since $0\le a_m\le1$, \eqref{Hypo_W2}, \eqref{eq:4_G5}, and $L^2$-contractivity give
\begin{equation}
\label{eq:4.L2-noise-growth-m}
\|Q_m^W(t)\|_{\mathbb L^2}^2 + \int_Z \|Q_m^J(t,z)\|_2^2\,\mu(\dd z) \le C\bigl(1+\|v_m(t)\|_2^2\bigr) \le C\bigl(1+\|u_m(t)\|_2^2\bigr),
\end{equation}
where $C$ is independent of $m$, $k$, and $R$.

For $M\ge1$, set $ \tau_{m,M} := \inf\{t\ge0:\|u_m(t)\|_2\ge M\}, \ \inf\varnothing:=\infty.
$ Since $u_m$ is $H$-valued c\`adl\`ag, $\tau_{m,M}$ is an $(\mathcal F_t)$-stopping time.
A jump may overshoot the level $M$, but the Poisson integrands are evaluated at the predictable pre-jump state.

For $\tau=t\wedge\tau_{m,M}$, the Burkholder--Davis--Gundy inequality and Young's inequality yield, for every $\delta>0$,
\begin{equation*}
\E \sup_{0\le r\le\tau} |M_m^W(r)| \le \delta \E \sup_{0\le r\le\tau}\|u_m(r)\|_2^2+ C_\delta \E \int_0^\tau a_m(s)^2 \|Q_m^W(s)\|_{\mathbb L^2}^2\,\dd s .
\end{equation*}
For the linear jump martingale, Davis' inequality applied to its optional quadratic variation gives
\begin{equation*}
\E \sup_{0\le r\le\tau} |M_m^{J,1}(r)| \le \delta \E \sup_{0\le r\le\tau} \|u_m(r)\|_2^2 + C_\delta \E \int_0^\tau\int_Z a_m(s-)^2 \|Q_m^J(s-,z)\|_2^2\, \mu(\dd z)\,\dd s .
\end{equation*}
Indeed, $ [M_m^{J,1}]_\tau = 4\int_0^\tau\int_Z a_m(s-)^2 \bigl| (u_m(s-),Q_m^J(s-,z))_2 \bigr|^2 N(\dd s,\dd z), $ and the compensator identity is used after Young's inequality.

For the quadratic jump term, $K_m\ge0$ gives pathwise $ \sup_{0\le r\le\tau} \left| \int_0^r\int_ZK_m\,\widetilde N(\dd s,\dd z) \right| \le \int_0^\tau\int_ZK_m\,N(\dd s,\dd z) + \int_0^\tau\int_ZK_m\,\mu(\dd z)\,\dd s.
$ Hence
\begin{equation}
\label{eq:4.L2-quadratic-jump-bound}
\E \sup_{0\le r\le\tau} |M_m^{J,2}(r)| \le 2\E \int_0^\tau\int_Z a_m(s-)^2 \|Q_m^J(s-,z)\|_2^2\, \mu(\dd z)\,\dd s.
\end{equation}
In particular, no fourth-moment assumption on $G$ is required.

Combining \eqref{eq:4.L2-Ito-m}--\eqref{eq:4.L2-quadratic-jump-bound}, discarding the nonnegative taming contribution, choosing $\delta>0$ sufficiently small, and using \eqref{eq:4.L2-noise-growth-m}, we obtain
\begin{equation*}
\begin{aligned}
& \E\left[ \sup_{0\le r\le t\wedge\tau_{m,M}} \|u_m(r)\|_2^2 + \nu \int_0^{t\wedge\tau_{m,M}} \|\nabla u_m(s)\|_2^2\,\dd s \right] \\
&\qquad\le C\left(1+\E\|u_0\|_2^2\right) + C\int_0^t \left[ 1+ \E \sup_{0\le r\le s\wedge\tau_{m,M}} \|u_m(r)\|_2^2 \right]\dd s ,
\end{aligned}
\end{equation*}
where $C$ is independent of $m$, $k$, $R$, and $M$.
Gronwall's lemma therefore yields
\begin{equation}
\label{eq:4.L2-stopped-uniform}
\E\left[ \sup_{0\le r\le T\wedge\tau_{m,M}} \|u_m(r)\|_2^2 + \nu \int_0^{T\wedge\tau_{m,M}} \|\nabla u_m(s)\|_2^2\,\dd s \right] \le C_T \left(1+\E\|u_0\|_2^2\right).
\end{equation}
For each fixed $m$, the $H$-c\`adl\`ag path of $u_m$ is bounded on $[0,T]$ almost surely; hence $\tau_{m,M}>T$ for all sufficiently large $M$, pathwise.
Letting $M\to\infty$ in \eqref{eq:4.L2-stopped-uniform} and using Fatou's lemma gives
\begin{equation}
\label{eq:4.L2-approx-uniform}
\sup_{m\ge1} \E\left[ \sup_{0\le t\le T}\|u_m(t)\|_2^2 + \nu\int_0^T\|\nabla u_m(s)\|_2^2\,\dd s \right] \le C_T\left(1+\E\|u_0\|_2^2\right).
\end{equation}

It remains to remove the auxiliary multiplier $S_m$.
Since $S_m\to I$ strongly on every Lebesgue and $\gamma$-radonifying space entering the fixed-$k$ construction, and since for fixed $k,R$ the truncated coefficient maps are globally Lipschitz in the corresponding forcing spaces, the same stability argument as in the proof of \cref{thm:main_existence} gives
\begin{equation*}
A_m(t) \le C_{k,R}\bigl(t+t^{p/2}\bigr)A_m(t) + \varepsilon_m(t), \qquad A_m(t) := \E \sup_{0\le s\le t} \|u_m(s)-u(s)\|_p^p,
\end{equation*}
where $\varepsilon_m(T)\to0$ as $m\to\infty$.
The consistency error contains the initial-data term $ \|S_mS_ku_0-S_ku_0\|_p^p $ and the corresponding deterministic, Wiener, and compensated Poisson coefficient errors, the latter in both Poisson norms of \cref{Thm:heat-exis}.
Choosing $t_*>0$ such that $ C_{k,R} \bigl(t_*+t_*^{p/2}\bigr) \le\frac12 $ and iterating over finitely many deterministic subintervals yields
\begin{equation}
\label{eq:4.L2-Lp-convergence}
\E \sup_{0\le t\le T} \|u_m(t)-u(t)\|_p^p \longrightarrow0.
\end{equation}
The dependence of the stability constant on $k$ and $R$ is used only for the identification of the limit in \eqref{eq:4.L2-Lp-convergence}; it does not enter the uniform estimate \eqref{eq:4.L2-approx-uniform}.
Passing to a deterministic subsequence, we may assume
\begin{equation}
\label{eq:4.L2-pathwise-Lp-convergence}
\sup_{0\le t\le T} \|u_m(t)-u(t)\|_p \longrightarrow0 \qquad \P\text{-a.s.}
\end{equation}
Set $ X_m := \sup_{0\le t\le T}\|u_m(t)\|_2^2 + \nu\int_0^T\|\nabla u_m(s)\|_2^2\,\dd s.
$ By \eqref{eq:4.L2-approx-uniform}, $\liminf_mX_m<\infty$ almost surely.
For such an $\omega$, choose a subsequence realizing the lower limit.
Along this subsequence, $\{u_m(\omega)\}$ is bounded in $ L^\infty(0,T;L^2) \cap L^2(0,T;H^1).
$ The convergence \eqref{eq:4.L2-pathwise-Lp-convergence} implies distributional convergence on $(0,T)\times\R^3$ and, for every fixed $t\in[0,T]$, distributional convergence in $\R^3$.
Weak compactness therefore identifies every weak limit with $u$.
By weak lower semicontinuity,
\begin{equation*}
\sup_{0\le t\le T}\|u(t)\|_2^2 + \nu\int_0^T\|\nabla u(s)\|_2^2\,\dd s \le \liminf_{m\to\infty} \left[ \sup_{0\le t\le T}\|u_m(t)\|_2^2 + \nu\int_0^T\|\nabla u_m(s)\|_2^2\,\dd s \right] \qquad \P\text{-a.s.}
\end{equation*}
Fatou's lemma and \eqref{eq:4.L2-approx-uniform} now give \eqref{eq:4.L2-energy}.
In particular,
\begin{equation}
\label{eq:4.L2-energy-membership}
u\in L^2\bigl(\Omega;L^\infty(0,T;L^2)\bigr) \cap L^2\bigl(\Omega;L^2(0,T;H^1)\bigr).
\end{equation}

It remains only to identify the $L^2$ path regularity.
For fixed $k$, the smoothing properties of $S_k$ give
\begin{equation}
\label{eq:4.L2-final-drift}
\begin{aligned}
\| S_k\mathcal P ((u\cdot\nabla)S_ku) \|_2 &\le C_k\|u\|_2^2, \\
\| S_k\mathcal P (g_N(|S_ku|^2)S_ku) \|_2 &\le C_{k,N,\nu} \bigl( \|u\|_2+\|u\|_2^3 \bigr).
\end{aligned}
\end{equation}
Thus the deterministic drift is pathwise integrable in $L^2$ on $[0,T]$.
Together with \eqref{Hypo_W2}, \eqref{eq:4_G5}, and \eqref{eq:4.L2-energy-membership}, the Wiener and jump coefficients belong to the corresponding Hilbert-space stochastic-integrability classes.
Hence the $L^2$-valued representation determined by \eqref{eq:4_truncation} is well defined.
After pairing with Schwartz test functions, this representation agrees with the already constructed $L^p$-solution.
Therefore the two processes coincide as distributions and hence, up to indistinguishability, as representatives of the same solution.
The deterministic heat convolution and the Wiener stochastic convolution have continuous $L^2$ paths, while the compensated Poisson convolution admits an $L^2$-valued c\`adl\`ag modification.
Therefore, $ u\in L^2\bigl( \Omega; \D([0,T];L^2(\R^3)) \bigr).
$.
\end{proof}
\section{Convergence of approximations and local well-posedness}
\label{sec5}

We now remove the spatial regularization introduced in \cref{sec4}.
Throughout this section the scalar truncation level $R\ge1$ is fixed, and $\phi$ denotes the cutoff used in \cref{sec4}.
The constants in this section may therefore depend on $R$; their uniformity will only be required with respect to the spectral regularization parameter.

We use the tensor convention $$ (a\otimes b)_{ij}:=b_i a_j, \qquad (\nabla\cdot F)_i:=\sum_{j=1}^3\partial_jF_{ij}.
$$ Consequently, $\nabla\cdot(a\otimes b)=(a\cdot\nabla)b$ whenever $\nabla\cdot a=0$.
The tensorial Leray multiplier $\mathcal P^{(1)}$ acts on the first index of $F$.
These conventions agree with the component form of the regularized convective flux used below.

Let $ \chi\in C_c^\infty(\R^3;[0,1]), \ \chi(x)=1 \ \text{for }|x|\le1, $ be the spatial cutoff used to approximate the initial datum.
Let $k:\N\to\N$ be strictly increasing with $ k(n)\longrightarrow\infty , $ and set $ P_n:={\mathrm P}_{\le k(n)} .
$ Recall that $\{P_n\}_{n\ge1}$ is uniformly bounded on every $L^q(\R^3)$, $1<q<\infty$, and $ P_nf\longrightarrow f \ \text{strongly in }L^q(\R^3).
$

Let $d=3$ and $p>3$.
For $n\in\N$, consider
\begin{equation}
\label{eq:5.1_truncated}
\begin{aligned}
\dd u^{(n)} =& \Big[ \nu\Delta u^{(n)} - (\phi^{(n)})^2 P_n\mathcal P \bigl( (u^{(n)}\cdot\nabla)P_nu^{(n)} \bigr) - (\phi^{(n)})^2 P_n\mathcal P \bigl( g_N(|P_nu^{(n)}|^2)P_nu^{(n)} \bigr) \Big]\,\dd t \\
&+ (\phi^{(n)})^2 P_n\mathcal P \sigma(P_nu^{(n)})\,\dd W_t + \int_Z (\phi^{(n)}(t-))^2 P_n\mathcal P G(P_nu^{(n)}(t-),z) \,\widetilde N(\dd t,\dd z), \\
\nabla\cdot u^{(n)} =&0, \\
u^{(n)}(0) =& P_n\mathcal P \left( \chi\left(\frac{\cdot}{n}\right)u_0 \right), \qquad \P\text{-a.s.},
\end{aligned}
\end{equation}
where $ \phi^{(n)}(t) := \phi(\|u^{(n)}(t)\|_p), \ \phi^{(n)}(t-) := \phi(\|u^{(n)}(t-)\|_p).
$

Since $p>2$ and $\chi(\cdot/n)u_0$ has compact support, $ \chi\left(\frac{\cdot}{n}\right)u_0 \in L^2(\R^3;\R^3)\cap E \quad \P\text{-a.s.} $.
Also $ \left\| \chi\left(\frac{\cdot}{n}\right)u_0 \right\|_p \le \|u_0\|_p $ and, by dominated convergence, $ \chi\left(\frac{\cdot}{n}\right)u_0 \longrightarrow u_0 \ \text{in}\ E \ \P\text{-a.s.} $ Since $\nabla\cdot u_0=0$ and $\mathcal Pu_0=u_0$,
\begin{equation*}
\left\| \mathcal P \left( \chi\left(\frac{\cdot}{n}\right)u_0 \right) -u_0 \right\|_p \le C_p \left\| \left( \chi\left(\frac{\cdot}{n}\right)-1 \right)u_0 \right\|_p \longrightarrow0 .
\end{equation*}
Consequently,
\begin{equation}
\label{eq:5_initialbd}
\left\| P_n\mathcal P \left( \chi\left(\frac{\cdot}{n}\right)u_0 \right) -u_0 \right\|_p \le C_p \left\| \mathcal P \left( \chi\left(\frac{\cdot}{n}\right)u_0 \right) -u_0 \right\|_p + \|(P_n-I)u_0\|_p \longrightarrow0 .
\end{equation}
If $u_0\in L^p(\Omega;E)$, the convergence in \eqref{eq:5_initialbd} also holds in $L^p(\Omega;E)$.

By \cref{thm:main_existence}, for every $n\in\N$ and $T>0$, \eqref{eq:5.1_truncated} admits a unique strong solution $ u^{(n)} \in L^p\bigl( \Omega;\D([0,T];E) \bigr).
$ The estimate from \cref{thm:main_existence} depends on $k(n)$ and cannot be used directly in the limit $n\to\infty$.
We therefore derive estimates uniform in $n$ after localization.

Fix $K\ge1$ and assume $ \|u_0\|_p\le K \ \P\text{-a.s.} $ The uniform $L^p$-boundedness of $P_n$ and $\mathcal P$ gives a constant $C_p$, independent of $n$, such that $ \|u^{(n)}(0)\|_p^p \le C_pK^p \ \P\text{-a.s.} $ Choose $M_0\ge1$, depending only on $p$ and the multiplier bounds, such that
\begin{equation}
\label{eq:5.M0-initial}
\sup_{n\ge1} \|u^{(n)}(0)\|_p^p \le \frac14M_0K^p \qquad \P\text{-a.s.}
\end{equation}

For $M\ge M_0$, define
\begin{equation*}
\mathcal Q_n(t) := \sup_{0\le s\le t} \|u^{(n)}(s)\|_p^p + \int_0^t \|u^{(n)}(s)\|_{3p}^p\,\dd s
\end{equation*}
and
\begin{equation*}
\tau_M^n := \inf \left\{ t>0: \mathcal Q_n(t)\ge MK^p \right\}, \qquad \inf\varnothing:=\infty .
\end{equation*}
Since $u^{(n)}$ is adapted and c\`adl\`ag, $\mathcal Q_n$ is adapted, nondecreasing, and right-continuous; hence $\tau_M^n$ is an $(\mathcal F_t)$-stopping time.
Furthermore, \eqref{eq:5.M0-initial} and right-continuity at $t=0$ imply $ \tau_M^n>0 \ \P\text{-a.s.} $

For $ \theta_n(t):=t\wedge\tau_M^n, $ we have
\begin{equation}
\label{eq:5.preexit-control}
\begin{aligned}
\sup_{0\le s<\theta_n(t)} \|u^{(n)}(s)\|_p^p \le MK^p,& \sup_{0\le s\le\theta_n(t)} \|u^{(n)}(s-)\|_p^p \le MK^p, \\
\int_0^{\theta_n(t)} \|u^{(n)}(s)\|_{3p}^p\,\dd s &\le MK^p .
\end{aligned}
\end{equation}
The first inequality does not include the post-jump value at $\tau_M^n$, the second controls precisely the predictable state which enters the Poisson coefficient.
The Lebesgue- and Wiener-time estimates below are unaffected by the value at the single terminal time.

The arguments follow the algebraic splitting used in [Section $5$, of \cite{MR4908978}], with the necessary modifications for the direct taming forcing and the compensated L\'evy term.

\begin{theorem}
\label{thm:5.1} Let $p>3$, $K\ge1$, and assume $ \|u_0\|_p\le K \qquad \P\text{-a.s.} $ For every $M\ge M_0$ and $T>0$, the solution $u^{(n)}$ of \eqref{eq:5.1_truncated} satisfies
\begin{equation}
\label{eq:5_eng_bnd}
\E \Bigg[ \sup_{0\le s\le T\wedge\tau_M^n} \|u^{(n)}(s)\|_p^p + \int_0^{T\wedge\tau_M^n} \left\| \nabla\bigl(|u^{(n)}(s)|^{p/2}\bigr) \right\|_2^2\,\dd s \Bigg] \le C_{R,M,T}K^p
\end{equation}
where $C_{R,M,T}$ may additionally depend on $p,\nu,N$ and the structural constants in the Wiener and jump hypotheses, but is independent of $n$ and $K$.

Moreover,
\begin{equation}
\label{eq:5_prob_bnd}
\lim_{t\downarrow0} \sup_{n\ge1} \P\Bigg( \sup_{0\le s\le t\wedge\tau_M^n} \|u^{(n)}(s)\|_p^p + \int_0^{t\wedge\tau_M^n} \|u^{(n)}(s)\|_{3p}^p\,\dd s \ge M_0K^p \Bigg) =0 .
\end{equation}
\end{theorem}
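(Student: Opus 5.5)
The plan is to apply the $L^p$ It\^o formula to $\|u^{(n)}\|_p^p$ exactly as in the proof of \cref{Thm:heat-exis}, but now stopped at $\theta_n(t)=t\wedge\tau_M^n$. Each nonlinear and stochastic term is estimated using only the pre-exit controls \eqref{eq:5.preexit-control}, and the multiplier bounds of \cref{lem:2.2} must be uniform in $n$. The It\^o formula is legitimate at fixed $n$, since $u^{(n)}$ solves the linear heat equation of \cref{Thm:heat-exis} with frozen coefficients and hence admits the mollified approximation used there.

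The equation gives the identity \eqref{eq:heat-proof-Ito} up to $\theta_n(t)$. The Laplacian produces the coercive term \eqref{eq:heat-proof-coercivity-final}. For the convection we use the divergence form \eqref{eq:4.commutation} with $f=-(\phi^{(n)})^2\mathcal P^{(1)}P_n(u^{(n)}\otimes P_nu^{(n)})$. Then $\|f\|_{q_f}\le C\|u^{(n)}\|_p\|u^{(n)}\|_{\ell_f}$, uniformly in $n$, since $P_n$ and $\mathcal P$ are bounded uniformly. Rather than the crude Young bound, we interpolate $\ell_f$ between $p$ and $3p$. Using $\|\mathcal W\|_6^2=\|u\|_{3p}^p\lesssim\|\nabla\mathcal W\|_2^2$, the estimate \eqref{eq:heat-proof-f-final} becomes $\delta\|\nabla\mathcal W\|_2^2+C_\delta\|u^{(n)}\|_p^{a}\|u^{(n)}\|_{3p}^{b}$ with $b\le p$. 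On $[0,\theta_n)$ the factor $\|u^{(n)}\|_p^{a}\le (MK^p)^{a/p}$, and the time integral of $\|u^{(n)}\|_{3p}^p$ is at most $MK^p$.

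The taming term is handled the same way through the $h$-estimate \eqref{eq:heat-proof-h-final} with $q_h=p$. It is bounded by $\|u^{(n)}\|_{3p}^3$, whose time integral is again controlled by $MK^p$ times powers of $M$, with constants depending on $N,\nu,R$. Alternatively, one can simply discard it, since $\langle J_p(u),P_n\mathcal P(\cdot)\rangle$ has no sign.

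For the noise, the Wiener growth $\|\sigma(P_nu)\|_{\mathbb L^p}\le C(1+\|u\|_{q_\sigma}^2)$ is split by interpolating $q_\sigma<3p/2$ between $p$ and $3p$. This gives $\int\|\sigma\|^p\,\dd s\le C(t+\int\|u\|_p^{c}\|u\|_{3p}^{2p-c})$ with $2p-c<p$. Hölder in time then yields a factor $t^{\epsilon}$, and the correction and BDG terms are treated as in \eqref{eq:heat-proof-Wcorr-final}--\eqref{eq:heat-proof-MW-final}. For the jumps we use \eqref{eq:4.G1} with $u(s-)$: the bound $\sup_{s\le\theta_n}\|u^{(n)}(s-)\|_p\le (MK^p)^{1/p}$ controls both Poisson modes, and the terminal jump is handled by Davis and the positivity trick \eqref{eq:heat-proof-MG2-final}. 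After absorption, this yields \eqref{eq:5_eng_bnd}, with all powers of $MK^p$ converted to $K^p$ by $K\ge1$ and the constants collected into $C_{R,M,T}$.

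For \eqref{eq:5_prob_bnd} we redo the same estimate but keep the explicit small-time factor. The target is
\[
\E\sup_{s\le\theta_n(t)}\|u^{(n)}(s)\|_p^p+\E\int_0^{\theta_n(t)}\|u^{(n)}\|_{3p}^p\le C_p\E\|u^{(n)}(0)\|_p^p+\omega(t),
\qquad \omega(t)\to0 \text{ uniformly in } n,
\]
where the $L^{3p}$ integral is bounded by the dissipation via Sobolev. The nonlinear contributions carry factors $t^{\epsilon}$, via Hölder against $\int\|u\|_{3p}^p\le MK^p$, and the noise contributions are $O(t+t^{p/2})$. Since $C_p\|u^{(n)}(0)\|_p^p\le\tfrac14 C_pM_0K^p$, we choose $M_0$ so that this leading term is at most $\tfrac12M_0K^p$; if needed, we first subtract the free heat evolution to control the initial part sharply. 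Chebyshev then gives the limit.

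The main obstacle is that the convective bound gives $\|u\|_{3p}^{b}$ with $b=p$ exactly at the endpoint in some ranges, which leaves no factor of $t$. To gain smallness, I would first choose $q_f$ strictly inside the admissible range, so that interpolation gives $b<p$. Failing that, I would use the splitting of \cite{MR4908978}, in which $u^{(n)}$ is compared with the linear heat flow $e^{\nu t\Delta}u^{(n)}(0)$, whose $L^{3p}$ time integral is small for small $t$ uniformly in $n$ because the initial data converge in $E$.
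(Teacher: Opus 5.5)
Your route is the paper's: the $L^p$ It\^o/linear estimate stopped at $\theta_n(t)$, the pre-exit bounds \eqref{eq:5.preexit-control}, the strict choice $q_f<3p/4$ for small-time powers, and positivity of the Taylor remainder. However, two steps fail as written.

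\emph{Independence of $K$ in \eqref{eq:5_eng_bnd}.} Your bounds are superlinear in the solution. For example, $\|f\|_{q_f}^p\lesssim\|u^{(n)}\|_p^{p(1+\alpha_f)}\|u^{(n)}\|_{3p}^{p(1-\alpha_f)}$ integrates under \eqref{eq:5.preexit-control} to $(MK^p)^2$, and your Wiener bound with $\|u^{(n)}\|_p^{c}$, $c>p$, does the same. The inequality $K\ge1$ only lowers powers $K^{ap}$ with $a\le1$; it cannot turn $K^{2p}$ into $K^p$. So your constant depends on $K$, contrary to the statement.

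The missing ingredient is the scalar cutoff. Every coefficient carries $(\phi^{(n)})^2$, and $\phi^{(n)}(s)\ne0$ forces $\|u^{(n)}(s)\|_p\le4R$. Using $\phi_n^2\|u_n\|_p^p\le(4R)^p$, the paper obtains $\|f_n\|_{q_f}^p\le C_R(\|u_n\|_p^p+\|u_n\|_{3p}^p)$, $\|h_n\|_{q_h}^p\le C_R\|u_n\|_{3p}^{pr_h}$ with $r_h<1$, and $\|g_n\|_{\mathbb L^p}^p\le C_R(1+\|u_n\|_{3p}^{p-\rho})$. After this, every power of $K^p$ is at most one.

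For the taming term, if you pass through the Young step of \eqref{eq:heat-proof-h-final} with $q_h=p$, you get $\|h\|_p^p\sim\|u\|_{3p}^{3p}$, which the localization does not control. Bound the pairing directly by $\|u\|_p^{p-1}\|u\|_{3p}^3$, or take $q_h<3p/7$. Also, a term without a sign cannot be ``discarded''.

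\emph{The limit \eqref{eq:5_prob_bnd}.} Write $X_n(t)$ for the quantity inside the probability. Chebyshev applied to $\E X_n(t)\le C_p\E\|u^{(n)}(0)\|_p^p+\omega(t)$ gives, with your normalization, only $\P(X_n(t)\ge M_0K^p)\le\frac12+\omega(t)/(M_0K^p)$. The initial contribution is not small, so this never yields the limit $0$, whatever $M_0$ is.

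The initial term has to be split off pathwise, before any expectation is taken. The paper keeps the It\^o inequality pathwise, \eqref{eq:5.pathwise-energy}, and uses the almost-sure margin \eqref{eq:5.M0-margin}. Then $\{X_n(t)\ge M_0K^p\}$ forces $\mathcal R_n(t)$ or one of the three martingale suprema to exceed $cM_0K^p$. Markov's inequality is applied only to these quantities, each of which has expectation $O(t^\beta)$ uniformly in $n$. The martingales are bounded directly via the deterministic pre-exit bound on $\|u_n(r-)\|_p^{p-1}$, not by absorption into $\E\sup\|u_n\|_p^p$.

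Your parenthetical subtraction of $e^{\nu t\Delta}u^{(n)}(0)$ could play the same role. That would require bounding the free part almost surely rather than in mean, and proving $\sup_n\int_0^t\|e^{\nu s\Delta}u^{(n)}(0)\|_{3p}^p\,\dd s\to0$ via the a.s.\ convergence of the initial data in $E$. Your proposal does not carry this out.
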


\begin{proof}
Write $ u_n:=u^{(n)}, \qquad \phi_n(t):=\phi(\|u_n(t)\|_p), $ and define
\begin{align*}
f_n &:= -\phi_n^2 \mathcal P^{(1)}P_n \bigl( u_n\otimes P_nu_n \bigr),\ h_n := -\phi_n^2 P_n\mathcal P \bigl( g_N(|P_nu_n|^2)P_nu_n \bigr), \\
g_n &:= \phi_n^2 P_n\mathcal P \sigma(P_nu_n),\ H_n(t,z) := \phi_n(t-)^2 P_n\mathcal P G(P_nu_n(t-),z).
\end{align*}
Then
\begin{equation}
\label{eq:5.heat-form}
\dd u_n = \bigl( \nu\Delta u_n+\nabla\cdot f_n+h_n \bigr)\,\dd t + g_n\,\dd W_t + \int_Z H_n(t,z)\,\widetilde N(\dd t,\dd z).
\end{equation}

Choose
\begin{equation}
\label{eq:5.qf-choice}
\max \left\{ \frac p2, \frac{3p}{p+1} \right\} <q_f<\frac{3p}{4}, \qquad \frac1{q_f} = \frac1p+\frac1\ell .
\end{equation}
Then $ p<\ell<3p.
$ Let $\alpha_f\in(0,1)$ be determined by $ \frac1\ell = \frac{\alpha_f}{p} + \frac{1-\alpha_f}{3p}, \ \alpha_f = \frac{3p/\ell-1}{2}.
$ The uniform $L^q$-boundedness of $P_n$ and $\mathcal P$, H\"older's inequality, interpolation, and $\phi_n^{2p}\le\phi_n^2$ give
\begin{equation}
\label{eq:5.f-est}
\begin{aligned}
\|f_n(s)\|_{q_f}^p &\le C\phi_n(s)^{2p} \|u_n(s)\|_p^p \|P_nu_n(s)\|_\ell^p \le C\phi_n(s)^2 \|u_n(s)\|_p^p \|u_n(s)\|_\ell^p \\
&\le C\phi_n(s)^2 \|u_n(s)\|_p^p \|u_n(s)\|_p^{p\alpha_f} \|u_n(s)\|_{3p}^{p(1-\alpha_f)} .
\end{aligned}
\end{equation}
Since $\phi_n(s)\ne0$ implies $\|u_n(s)\|_p\le4R$, $ \phi_n(s)^2\|u_n(s)\|_p^p \le (4R)^p .
$ Weighted Young's inequality therefore yields
\begin{equation*}
\|f_n(s)\|_{q_f}^p \le C_R \left( \|u_n(s)\|_p^p + \|u_n(s)\|_{3p}^p \right) \ \dd s\text{-a.e.}
\end{equation*}

For the taming term choose
\begin{equation}
\label{eq:5.qh-choice}
\max \left\{ \frac{3p}{2p+1}, \frac p3 \right\} <q_h<\frac{3p}{7}.
\end{equation}
Set $ \theta_h := \frac{p/q_h-1}{2}, \ r_h:=3(1-\theta_h).
$ Then $ \frac23<\theta_h<1, \ 0<r_h<1, $ and $ \frac1{3q_h} = \frac{\theta_h}{p} + \frac{1-\theta_h}{3p}.
$ The growth of $g_N$, interpolation, and the multiplier bounds give
\begin{equation*}
\|h_n(s)\|_{q_h}^p\le C_{N,\nu} \phi_n(s)^{2p} \|P_nu_n(s)\|_{3q_h}^{3p} \le C_{N,\nu} \phi_n(s)^{2p} \|u_n(s)\|_p^{3p\theta_h} \|u_n(s)\|_{3p}^{pr_h} \le C_{R,N,\nu} \|u_n(s)\|_{3p}^{pr_h},
\end{equation*}
where the last estimate again uses the support of $\phi$.

For the Wiener coefficient fix $ q_\sigma = \frac{3p}{2}-\varepsilon $ with $\varepsilon>0$ sufficiently small, and let $\theta_\sigma\in(\frac12,1)$ satisfy $ \frac1{q_\sigma} = \frac{\theta_\sigma}{p} + \frac{1-\theta_\sigma}{3p}.
$ Set $ \rho := 2p\theta_\sigma-p \in(0,p).
$ The ideal property of $\gamma$-radonifying operators, the multiplier bounds, and \eqref{eq:2_WienerH} imply
\begin{equation*}
\begin{aligned}
\|g_n(s)\|_{\mathbb L^p}^p &\le C\phi_n(s)^{2p} \left( 1+ \|P_nu_n(s)\|_{q_\sigma}^{2p} \right)\le C\phi_n(s)^2 \left[ 1+ \|u_n(s)\|_p^{p+\rho} \|u_n(s)\|_{3p}^{p-\rho} \right] \\
&\le C_R \left( 1+ \|u_n(s)\|_{3p}^{p-\rho} \right).
\end{aligned}
\end{equation*}

Finally, by the $L^p$-boundedness of $P_n\mathcal P$ and \eqref{eq:4.G1}, for $r\in\{2,p\}$,
\begin{equation}
\label{eq:5.H-growth}
\int_Z \|H_n(s,z)\|_p^r\,\mu(\dd z) \le C \left( 1+\|u_n(s-)\|_p^r \right),
\end{equation}
with $C$ independent of $n$ and $R$.

Let $ \theta_n(t):=t\wedge\tau_M^n .
$ All Lebesgue-time estimates below are understood modulo the single terminal time $\theta_n(t)$; hence \eqref{eq:5.preexit-control} gives

\begin{equation}
\label{eq:5.f-integrated}
\begin{aligned}
\E \int_0^{\theta_n(t)}\|f_n(s)\|_{q_f}^p\,\dd s \le C_{R,M,T}K^p,\,& \E \int_0^{\theta_n(t)}\|h_n(s)\|_{q_h}^p\,\dd s \le C_{R,M,T,N,\nu}K^p, \\
\ \E \int_0^{\theta_n(t)}\|g_n(s)\|_{\mathbb L^p}^p\,\dd s &\le C_{R,M,T}K^p.
\end{aligned}
\end{equation}

Indeed, $$ \int_0^{\theta_n(t)} \|u_n(s)\|_{3p}^{pr_h}\dd s \le T^{1-r_h} \left( \int_0^{\theta_n(t)} \|u_n(s)\|_{3p}^p\dd s \right)^{r_h}, \int_0^{\theta_n(t)} \|u_n(s)\|_{3p}^{p-\rho}\dd s \le T^{\rho/p} \left( \int_0^{\theta_n(t)} \|u_n(s)\|_{3p}^p\dd s \right)^{1-\rho/p}.
$$ Since $K\ge1$, $r_h<1$, and $\rho>0$, the resulting lower powers of $K^p$ are bounded by $K^p$.

For the jump coefficient, the predictable bound in \eqref{eq:5.preexit-control} and \eqref{eq:5.H-growth} yield
\begin{equation}
\begin{aligned}
\label{eq:5.H-integrated}
\E \int_0^{\theta_n(t)} \int_Z \|H_n(s,z)\|_p^p \,\mu(\dd z)\,\dd s &\le C_{M,T}K^p, \\
\E \left( \int_0^{\theta_n(t)} \int_Z \|H_n(s,z)\|_p^2 \,\mu(\dd z)\,\dd s \right)^{p/2} &\le C T^{p/2} \E \left[ 1+ \sup_{0\le s\le\theta_n(t)} \|u_n(s-)\|_p^p \right] \le C_{M,T}K^p .
\end{aligned}
\end{equation}

Apply the localized version of \eqref{ineq:3_eng-est} from \cref{Thm:heat-exis}, with the $\nu\Delta$-variant used in \cref{sec4}, to the stopped equation.
The stopping indicator $ \mathds 1_{\{s\le\theta_n(T)\}} $ is predictable.
For the Poisson term this retains the possible jump at $\theta_n(T)$, while its coefficient is evaluated at $u_n(\theta_n(T)-)$ and is therefore controlled by \eqref{eq:5.preexit-control}.
Using \eqref{eq:5.f-est} -- \eqref{eq:5.H-integrated} and $ \E\|u_n(0)\|_p^p\le CK^p, $ we obtain \eqref{eq:5_eng_bnd} uniformly in $n$.

We next establish \eqref{eq:5_prob_bnd}.
The strict upper bounds in \eqref{eq:5.qf-choice} and \eqref{eq:5.qh-choice} imply positive small-time exponents.
From \eqref{eq:5.f-est} and \eqref{eq:5.preexit-control},
\begin{equation}
\label{eq:5.f-smalltime}
\int_0^{\theta_n(t)} \|f_n(s)\|_{q_f}^p\,\dd s \le C M^2K^{2p}t^{\alpha_f}.
\end{equation}
Indeed, $ \int_0^{\theta_n(t)} \|u_n(s)\|_p^{p(1+\alpha_f)} \|u_n(s)\|_{3p}^{p(1-\alpha_f)} \,\dd s \le (MK^p)^{1+\alpha_f} t^{\alpha_f} \left( \int_0^{\theta_n(t)} \|u_n(s)\|_{3p}^p\,\dd s \right)^{1-\alpha_f}.$

Set $ \alpha_h:=1-r_h>0.
$ Then
\begin{equation}
\int_0^{\theta_n(t)} \|h_n(s)\|_{q_h}^p\,\dd s \le C_{N,\nu} M^3K^{3p}t^{\alpha_h},
\end{equation}
because $ 3\theta_h+r_h=3.
$ Likewise,
\begin{equation}
\int_0^{\theta_n(t)} \|g_n(s)\|_{\mathbb L^p}^p\,\dd s \le Ct + CM^2K^{2p}t^{\rho/p}.
\end{equation}
Furthermore,
\begin{equation}
\label{eq:5.H2-smalltime}
\begin{aligned}
\int_0^{\theta_n(t)} \int_Z \|H_n(s,z)\|_p^p \,\mu(\dd z)\,\dd s &\le C_MK^p t, \\
\int_0^{\theta_n(t)} \int_Z \|H_n(s,z)\|_p^2 \,\mu(\dd z)\,\dd s &\le Ct \left[ 1+(MK^p)^{2/p} \right].
\end{aligned}
\end{equation}

Let $ J_p(v):=|v|^{p-2}v $ and $ \mathcal R_p(v,h) := \|v+h\|_p^p - \|v\|_p^p - p\langle J_p(v),h\rangle .
$ For $p>2$, $$ 0\le \mathcal R_p(v,h) \le C_p \left( \|v\|_p^{p-2}\|h\|_p^2 + \|h\|_p^p \right).
$$

The localized It\^o estimate established in the proof of \cref{Thm:heat-exis}, applied to \eqref{eq:5.heat-form} up to $\theta_n(t)$, gives
\begin{equation}
\label{eq:5.pathwise-energy}
\mathcal E_n(t) \le C_0\|u_n(0)\|_p^p + \mathcal R_n(t)+ C\sup_{0\le s\le\theta_n(t)} |M_n^W(s)| + C\sup_{0\le s\le\theta_n(t)} |M_n^{J,1}(s)| + C\sup_{0\le s\le\theta_n(t)} |M_n^{J,2}(s)|,
\end{equation}
where $\mathcal R_n(t)\ge0$ contains the finite-variation contributions,
\begin{align*}
\mathcal E_n(t) := \sup_{0\le s\le\theta_n(t)} &\|u_n(s)\|_p^p + c_{p,\nu} \int_0^{\theta_n(t)} \left\| \nabla|u_n(s)|^{p/2} \right\|_2^2\,\dd s , \\
M_n^W(s):= p \int_0^{s\wedge\tau_M^n} \left\langle g_n(r)^*J_p(u_n(r)), \,\dd W_r \right\rangle_{\mathcal U},& \quad M_n^{J,1}(s):= p \int_0^{s\wedge\tau_M^n} \int_Z \left\langle J_p(u_n(r-)), H_n(r,z) \right\rangle \,\widetilde N(\dd r,\dd z), \\
M_n^{J,2}&(s) := \int_0^{s\wedge\tau_M^n} \int_Z \mathcal R_p \bigl( u_n(r-),H_n(r,z) \bigr) \,\widetilde N(\dd r,\dd z).
\end{align*}
The estimates \eqref{eq:5.f-smalltime}--\eqref{eq:5.H2-smalltime} and the deterministic part of the localized It\^o estimate imply
\begin{equation}
\label{eq:5.R-small}
\E\mathcal R_n(t) \le C_{M,K,R} \left( t^{\alpha_f} + t^{\alpha_h} + t^{\rho/p} + t \right),
\end{equation}
uniformly in $n$.

For the Wiener martingale, BDG and H\"older's inequality give
\begin{equation}
\label{eq:5.MW-small}
\begin{aligned}
\E \sup_{0\le s\le\theta_n(t)} |M_n^W(s)| &\le C t^{\frac12-\frac1p} \E \Bigg[ \sup_{0\le r<\theta_n(t)} \|u_n(r)\|_p^{p-1}\left( \int_0^{\theta_n(t)} \|g_n(r)\|_{\mathbb L^p}^p\,\dd r \right)^{1/p} \Bigg] \\
&\le C_{M,K} \left( t^{1/2} + t^{\frac12-\frac1p+\frac{\rho}{p^2}} \right) \le C_{M,K}t^\beta
\end{aligned}
\end{equation}
for some $\beta>0$ depending only on $p$ and the choice of $q_\sigma$.

For the linear jump martingale, Davis' inequality, \eqref{eq:5.preexit-control}, and the compensator identity yield
\begin{equation}
\label{eq:5.MJ1-small}
\begin{aligned}
&\E \sup_{0\le s\le\theta_n(t)} |M_n^{J,1}(s)| \le C \E \left( \int_0^{\theta_n(t)} \int_Z \|u_n(r-)\|_p^{2p-2} \|H_n(r,z)\|_p^2 \,N(\dd r,\dd z) \right)^{1/2} \\
&\le C(MK^p)^{(p-1)/p} \left[ \E \int_0^{\theta_n(t)} \int_Z \|H_n(r,z)\|_p^2 \,\mu(\dd z)\,\dd r \right]^{1/2} \le C_{M,K}K^p\,t^{1/2}.
\end{aligned}
\end{equation}

For the Taylor-remainder martingale, no higher jump moment is required.
Since $\mathcal R_p\ge0$, $$
\begin{aligned}
\sup_{0\le s\le\theta_n(t)} |M_n^{J,2}(s)| \le & \int_0^{\theta_n(t)} \int_Z \mathcal R_p(u_n(r-),H_n(r,z)) \,N(\dd r,\dd z) + \int_0^{\theta_n(t)} \int_Z \mathcal R_p(u_n(r-),H_n(r,z)) \,\mu(\dd z)\,\dd r .
\end{aligned}
$$ Hence
\begin{equation}
\label{eq:5.MJ2-small}
\begin{aligned}
\E \sup_{0\le s\le\theta_n(t)} |M_n^{J,2}(s)| &\le 2 \E \int_0^{\theta_n(t)} \int_Z \mathcal R_p(u_n(r-),H_n(r,z)) \,\mu(\dd z)\,\dd r \\
&\le C \E \int_0^{\theta_n(t)} \left( 1+\|u_n(r-)\|_p^p \right)\,\dd r \le C_MK^p t .
\end{aligned}
\end{equation}

By the Sobolev inequality on $\R^3$, $\|u_n(s)\|_{3p}^p = \bigl\||u_n(s)|^{p/2}\bigr\|_6^2 \le C_S \left\| \nabla|u_n(s)|^{p/2} \right\|_2^2 .$

Since every c\`adl\`ag path satisfies $ \sup_{0\le s\le t} \|u_n(s-)\|_p \le \sup_{0\le s\le t} \|u_n(s)\|_p , $ there exists $C_*=C_*(p,\nu)>0$ such that
\begin{equation}
\label{eq:5.stop-vs-energy}
\sup_{0\le s\le\theta_n(t)} \|u_n(s)\|_p^p + \int_0^{\theta_n(t)} \|u_n(s)\|_{3p}^p\,\dd s \le C_*\mathcal E_n(t).
\end{equation}

Increase $M_0$, if necessary, so that
\begin{equation}
\label{eq:5.M0-margin}
C_*C_0 \sup_{n\ge1} \|u_n(0)\|_p^p \le \frac14M_0K^p .
\end{equation}
Define $ \mathcal S_n(t) := \left\{ \sup_{0\le s\le\theta_n(t)} \|u_n(s)\|_p^p + \int_0^{\theta_n(t)} \|u_n(s)\|_{3p}^p\,\dd s \ge M_0K^p \right\}.
$ By \eqref{eq:5.pathwise-energy}, \eqref{eq:5.stop-vs-energy}, and \eqref{eq:5.M0-margin}, there exists $c>0$, independent of $n$, such that
\begin{align*}
\mathcal S_n(t) \subset& \left\{ \mathcal R_n(t)\ge cM_0K^p \right\} \cup \left\{ \sup_{s\le\theta_n(t)} |M_n^W(s)| \ge cM_0K^p \right\} \\
&\cup \left\{ \sup_{s\le\theta_n(t)} |M_n^{J,1}(s)| \ge cM_0K^p \right\} \cup \left\{ \sup_{s\le\theta_n(t)} |M_n^{J,2}(s)| \ge cM_0K^p \right\}.
\end{align*}
Therefore, by Markov's inequality and \eqref{eq:5.R-small}, \eqref{eq:5.MW-small}, \eqref{eq:5.MJ1-small}, and \eqref{eq:5.MJ2-small}, $$ \sup_{n\ge1} \P(\mathcal S_n(t)) \le C_{M_0,M,K,R} \left( t^{\alpha_f} + t^{\alpha_h} + t^{\rho/p} + t^\beta + t^{1/2} + t \right).
$$ Every exponent on the right-hand side is strictly positive.
So, $ \lim_{t\downarrow0} \sup_{n\ge1} \P(\mathcal S_n(t)) =0, $ which proves \eqref{eq:5_prob_bnd}.
\end{proof}
\begin{lemma}
\label{lem:5.2} Let $p>3$, $K\ge1$, and suppose that $ \|u_0\|_p\le K \ \P\text{-a.s.} $ Let $R\ge1$ be the fixed scalar cutoff level of the present section, and let $u^{(n)}$ be the solution of \eqref{eq:5.1_truncated}.
Then there exist $ t=t(R,M,K)>0 $ and a strictly increasing sequence $\{k(n)\}_{n\ge1}\subset\N$ with $k(n)\uparrow\infty$ such that
\begin{equation}
\label{eq:5.Cauchy_limit}
\lim_{m\to\infty}\sup_{n>m} \E \Bigg[ \sup_{0\le s\le\tau_{n,m}} \|u^{(n)}(s)-u^{(m)}(s)\|_p^p + \int_0^{\tau_{n,m}} \|u^{(n)}(s)-u^{(m)}(s)\|_{3p}^p \,\dd s \Bigg] =0,
\end{equation}
where $ \tau_{n,m} := \tau_M^n\wedge\tau_M^m\wedge t .
$ The dependence of $t$ on the fixed structural parameters $p,\nu,N$ and on the constants in the noise assumptions is suppressed.
\end{lemma}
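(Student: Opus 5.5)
Set $w:=u^{(n)}-u^{(m)}$ for $n>m$. Subtracting the two instances of \eqref{eq:5.1_truncated} gives a linear heat equation of the form \eqref{eq:heat}:
\[
\dd w=\bigl(\nu\Delta w+\nabla\cdot F+H\bigr)\dd t+\Sigma\,\dd W_t+\int_Z\Gamma\,\widetilde N(\dd t,\dd z),
\]
with $w(0)=P_n\mathcal P(\chi(\cdot/n)u_0)-P_m\mathcal P(\chi(\cdot/m)u_0)$. I plan to apply the localized It\^o estimate from the proof of \cref{Thm:heat-exis} up to $\tau_{n,m}$. Each coefficient difference is split, following Section~5 of \cite{MR4908978}, into a \emph{solution-difference} part and a \emph{multiplier-defect} part. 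For the convective flux this reads
\[
\phi_n^2P_n(u_n\otimes P_nu_n)-\phi_m^2P_m(u_m\otimes P_mu_m)=(\phi_n^2-\phi_m^2)(\cdots)+\phi_m^2\bigl[P_n(w\otimes P_nu_n)+P_n(u_m\otimes P_nw)\bigr]+\phi_m^2\bigl[(P_n-P_m)(u_m\otimes P_nu_m)+P_m(u_m\otimes(P_n-P_m)u_m)\bigr].
\]
The taming term $g_N(|\cdot|^2)\cdot$, the coefficient $\sigma$ and the coefficient $G$ are split in the same way. The cutoff differences are bounded by $|\phi_n^2-\phi_m^2|\le 2C_\phi\|w\|_p$.

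The solution-difference parts are handled exactly as in \cref{thm:5.1}, but with one factor replaced by $w$. For the flux, H\"older with $1/q_f=1/p+1/\ell$ and interpolation between $L^p$ and $L^{3p}$ give
\[
\|\cdot\|_{q_f}^p\lesssim \|w\|_p^{p}\,\|u\|_p^{p\alpha_f}\|u\|_{3p}^{p(1-\alpha_f)}+\|w\|_p^{p\alpha_f}\|w\|_{3p}^{p(1-\alpha_f)}\|u\|_p^p .
\]
The second term is absorbed into the dissipation via Young's inequality and the Sobolev bound $\|w\|_{3p}^p\lesssim\|\nabla|w|^{p/2}\|_2^2$. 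The first term produces $\int\|w\|_p^p a(s)\,\dd s$, where $a(s)=1+\|u_n\|_{3p}^p+\|u_m\|_{3p}^p$ is integrable on $[0,\tau_{n,m}]$ by \eqref{eq:5.preexit-control}. The taming term ($q_h$), $\sigma$ (via the weighted Lipschitz bound in \eqref{eq:2_WienerH} and $q_\sigma<3p/2$) and $G$ (via \eqref{eq:4.G2} with $\alpha<2/3$, both modes $r\in\{2,p\}$) are treated similarly. For $G$ we use only the left limits $u(s-)$, which the second bound in \eqref{eq:5.preexit-control} controls.

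The random weight $a(s)$ prevents a direct Gronwall argument in expectation. I would therefore use a stochastic Gronwall lemma, or equivalently insert the integrating factor $e^{-C\int_0^t a}$ before taking expectations. The weight is bounded by $e^{-CMK^p}$ from below on $[0,\tau_{n,m}]$, so removing it costs only a constant. The quadratic Poisson mode is handled by $\bigl(\int_0^\tau\|w(s-)\|_p^2\,\dd s\bigr)^{p/2}\le t^{p/2}\sup\|w(s-)\|_p^p$, with $t$ small. This smallness condition fixes $t=t(R,M,K)$. The result is
\[
\E\Bigl[\sup_{s\le\tau_{n,m}}\|w\|_p^p+\int_0^{\tau_{n,m}}\|w\|_{3p}^p\Bigr]\le C\,\bigl(\E\|w(0)\|_p^p+\mathcal D_{n,m}\bigr),
\]
where $\mathcal D_{n,m}$ collects the defect terms. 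The initial term tends to $0$ by \eqref{eq:5_initialbd}.

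The main obstacle is $\mathcal D_{n,m}$. By \cref{lem:2.4}, each defect is bounded by $|1/k(n)-1/k(m)|$ times a gradient norm, for instance $\|\nabla(u_m\otimes P_nu_m)\|_{q_f}$ or $\|\nabla\sigma(P_nu_m)\|_{\mathbb L^p}$, together with the corresponding $\nabla G$ norms from \eqref{eq:4.G3}. These gradients are not controlled uniformly by the $L^p$ energy. They are finite for each fixed $m$, however, since $P_m$ is smoothing and \cref{lem:4_L2} gives finite $L^2/H^1$ energy for the $m$-th approximation (the datum $\chi(\cdot/m)u_0\in L^2$). The resulting bound is
\[
\mathcal D_{n,m}\le C_{m}\,\bigl|1/k(n)-1/k(m)\bigr|^{\kappa}\le C_m\,k(m)^{-\kappa},
\]
where $C_m$ depends on $k(m)$, $R$, $M$, $K$ and on $\E\|\chi(\cdot/m)u_0\|_2^2\lesssim m^{3(1-2/p)}K^2$, but not on $n$. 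Here $\kappa>0$ may be less than $1$ after interpolating the defect between the crude $L^q$ bound of \cref{lem:2.2} and the gradient bound.

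I would then choose $k(m)$ inductively: strictly increasing, with $k(m)$ large enough that $C_m k(m)^{-\kappa}\le 1/m$. This is legitimate because $C_m$ depends only on $k(m)$ through the smoothing constants of $P_m$; we can take $k(m)$ larger than required for $P_1,\dots,P_{m-1}$ and iterate with $C_m$ increasing. If $C_m$ turns out to grow faster in $k(m)$ than $k(m)^{\kappa}$, I would instead bound the defect using only the $H^1$ energy of \cref{lem:4_L2}, which is independent of $k$ and $R$, combined with the Gagliardo--Nirenberg inequality. The defect then decays in $k(m)$ with a constant depending only on $m$ through $\|\chi(\cdot/m)u_0\|_2$, and a fast-growing choice of $k(m)$ works. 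The Poisson defects must be checked in both modes $r\in\{2,p\}$, and the Wiener defects through \eqref{eq:2-W-BDG}. Taking $\sup_{n>m}$ and letting $m\to\infty$ then yields \eqref{eq:5.Cauchy_limit}.
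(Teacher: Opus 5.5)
Once you commit to your fallback, your proposal is correct, and its architecture is the paper's:
\begin{itemize}
\item the localized estimate of \cref{Thm:heat-exis} applied to $w$ on $[0,\tau_{n,m}]$;
\item a split of each coefficient difference into solution-difference and multiplier-defect parts;
\item \cref{lem:2.4}, interpolated against crude bounds, for the defects;
\item the $k$-uniform $H^1$ bound of \cref{lem:4_L2}, finite for each $m$ because of $\chi(\cdot/m)$;
\item a recursive, fast-growing choice of $k(m)$.
\end{itemize}
The one genuinely different step is your treatment of the solution-difference terms. You Young the factors $\|u\|_{3p}^{p\beta}$, $\beta<1$, into a random weight $a(s)$, and then need the integrating factor $e^{-C\int_0^t a}$. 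That means redoing the It\^o computation of \cref{Thm:heat-exis} with a weight. This is legitimate, because $\int_0^{\tau_{n,m}}a\,\dd s\le t+2MK^p$ deterministically, but it costs a factor $e^{CMK^p}$. The paper instead applies H\"older in time against the same stopped bound, which produces a factor $t^{1-\beta}$. These terms then appear as $\Lambda_\varepsilon(t)\,\E\sup_{s\le\tau_{n,m}}\|w(s)\|_p^p$ with $\Lambda_\varepsilon(t)\to0$. They are absorbed by the same small $t$ that controls the quadratic Poisson mode, so no Gronwall argument is needed and \cref{Thm:heat-exis} is used as a black box.

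Drop your first justification of the choice of $k(m)$, because it is circular.
\begin{itemize}
\item The smoothing constants of ${\mathrm P}_{\le k}$ grow with $k$; for instance $\|\nabla{\mathrm P}_{\le k}f\|_q\le k\|\nabla K\|_1\|f\|_q$. So enlarging $k(m)$ enlarges $C_m$, and the condition $C_mk(m)^{-\kappa}\le 1/m$ may be unattainable.
\item The factor $u_m$ in $u_m\otimes P_nu_m$ is not band-limited, so its gradient can only come from dissipation anyway.
\end{itemize}
The recursion is legitimate precisely because the gradient enters only through $\E\int_0^{\tau_{n,m}}\|\nabla u_m\|_2^2\,\dd s$. \Cref{lem:4_L2} bounds this independently of $k(m)$ and $R$; this is the paper's $\mathcal R_m\le B_m$ with $k(m)^\rho\ge mB_m$.

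When you execute this, Gagliardo--Nirenberg at the $H^1$ level is not the right tool. The remaining factors must be measured in $L^r$ with $p<r<3p$, which in general lies beyond $H^1\hookrightarrow L^6$. The $H^1$ energy should supply only the factor $\|\nabla u_m\|_2^{\vartheta p}$, and the rest must be paired with the localized $L^{3p}$ bound. You then need $\vartheta p<2$ and $2b/(2-\vartheta p)<p$, so that Young's inequality yields $1+\|\nabla u_m\|_2^2+\|u_m\|_{3p}^p$, which is integrable on $[0,\tau_{n,m}]$. This is where the strict bounds $q_f<3p/4$ and $q_h<3p/7$ are used. The paper takes the outer defects through \cref{lem:2.4} in $L^{2p/(p+2)}$ (convection) and $L^{r_*}$ (taming), and the inner defects through $L^2$.
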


\begin{proof}

For $n>m$, set $ u_n:=u^{(n)}, \ u_m:=u^{(m)}, \ w:=u_n-u_m, $ and $ P_n:={\mathrm P}_{\le k(n)}, \ P_m:={\mathrm P}_{\le k(m)}, \ P_{n,m}:=P_n-P_m .
$ We write $ \phi_n(s):=\phi(\|u_n(s)\|_p), \ \phi_m(s):=\phi(\|u_m(s)\|_p), $ and $ a_j := P_j\mathcal P \left( \chi\left(\frac{\cdot}{j}\right)u_0 \right).
$

With
\begin{align*}
f_j &:= \phi_j^2 P_j\mathcal P^{(1)} \bigl( u_j\otimes P_ju_j \bigr),\ h_j:= \phi_j^2 P_j\mathcal P \mathcal T(P_ju_j), \text{where} \ \mathcal T(v):=g_N(|v|^2)v, \\
g_j &:= \phi_j^2 P_j\mathcal P\sigma(P_ju_j),\quad H_j(s,z) := \phi_j(s-)^2 P_j\mathcal P G(P_ju_j(s-),z),
\end{align*}
the difference $w$ satisfies
\begin{equation}
\label{eq:5.diff-equation}
\begin{aligned}
\dd w =& \Big[ \nu\Delta w - \nabla\cdot(f_n-f_m) - (h_n-h_m) \Big]\,\dd s + (g_n-g_m)\,\dd W_s + \int_Z (H_n-H_m)(s,z) \,\widetilde N(\dd s,\dd z), \\
\nabla\cdot w =&0, \qquad w(0)=a_n-a_m .
\end{aligned}
\end{equation}

All coefficient estimates below are understood on $[0,\tau_{n,m}]$.
More precisely, when \cref{Thm:heat-exis} is applied, every coefficient is multiplied by $ \mathds 1_{\{s\le\tau_{n,m}\}}.
$ This process is adapted and left-continuous, hence predictable.
The possible post-jump overshoot of $u_n(\tau_M^n)$ or $u_m(\tau_M^m)$ is therefore immaterial for the $\dd s$-integrated coefficients, while the Poisson coefficients are evaluated at the predictable left limits.
In particular,
\begin{equation}
\label{eq:5.2-preexit}
\begin{aligned}
\sup_{s<\tau_{n,m}} \bigl( \|u_n(s)\|_p^p+\|u_m(s)\|_p^p \bigr) \le 2MK^p, & \quad \sup_{s\le\tau_{n,m}} \bigl( \|u_n(s-)\|_p^p+\|u_m(s-)\|_p^p \bigr)\le 2MK^p, \\
\int_0^{\tau_{n,m}} \bigl( \|u_n(s)\|_{3p}^p+\|u_m(s)\|_{3p}^p \bigr)\,\dd s &\le 2MK^p.
\end{aligned}
\end{equation}

Choose
\begin{equation}
\label{eq:5.2-qf}
\max \left\{ \frac p2,\frac{3p}{p+1} \right\} <q_f<\frac{3p}{4}
\end{equation}
\begin{equation}
\label{eq:5.2-qh-revised}
\max \left\{ \frac p3,\frac{3p}{2p+1} \right\} <q_h<\frac{3p}{7}.
\end{equation}
All multiplier bounds used below are uniform in $m,n$.

\noindent \emph{A common estimate for inner spectral defects.} We first derive the interpolation strategy used repeatedly below.
Suppose that $c\ge0$ and $q\in(1,\infty)$ satisfy
\begin{equation}
\label{eq:5.2-spectral-range}
\frac1{3p} < \frac1q-\frac cp < \frac1p .
\end{equation}
Let $ d_{n,m}:=P_{n,m}u_m .
$ Choose $\delta>0$ sufficiently small and define $\ell_\delta$ by $ \frac1q = \frac cp+\frac{\delta}{2} +\frac1{\ell_\delta}.
$ Then $ r_\delta:=(1-\delta)\ell_\delta \in(p,3p) $ for all sufficiently small $\delta$.
Let $\beta_\delta\in(0,1)$ be determined by $ \frac1{r_\delta} = \frac{\beta_\delta}{p} + \frac{1-\beta_\delta}{3p}.
$ Lemma~\ref{lem:2.4}, the uniform multiplier bounds and interpolation give
\begin{equation*}
\begin{aligned}
\|d_{n,m}\|_2 &\le \frac{C}{k(n)\wedge k(m)} \|\nabla u_m\|_2, \\
\|d_{n,m}\|_{r_\delta} &\le C \|u_m\|_p^{\beta_\delta} \|u_m\|_{3p}^{1-\beta_\delta}.
\end{aligned}
\end{equation*}
now whenever the remaining factor carries the scalar cutoff and has $L^{p/c}$-norm bounded by $C_R$,
\begin{equation}
\label{eq:5.2-inner-template}
\begin{aligned}
\|\mathcal S\,d_{n,m}\|_q^p \le \frac{C_R} {(k(n)\wedge k(m))^{\delta p}} \|\nabla u_m\|_2^{\delta p} \|u_m\|_{3p}^{b_\delta},
\end{aligned}
\end{equation}
where $\mathcal S$ denotes the generic spatial multiplicative factor $ b_\delta := p(1-\delta)(1-\beta_\delta).
$ At $\delta=0$, condition \eqref{eq:5.2-spectral-range} implies $\beta_0>0$ and hence $b_0<p$.
We may therefore choose $\delta>0$ so small that $\delta p<2,\ \frac{2b_\delta}{2-\delta p}<p $ Young's inequality then yields
\begin{equation}
\label{eq:5.2-inner-template-final}
\|\mathcal S\,d_{n,m}\|_q^p \le \frac{C_R} {(k(n)\wedge k(m))^{\delta p}} \left( 1+\|\nabla u_m\|_2^2+\|u_m\|_{3p}^p \right).
\end{equation}

We shall use \eqref{eq:5.2-inner-template-final} with $ (c,q) = (1,q_f),\ (2,q_h),\ \left(\frac12,p\right),\ (\alpha,p).
$ The ranges \eqref{eq:5.2-qf}--\eqref{eq:5.2-qh-revised} and $\alpha<2/3$ ensure \eqref{eq:5.2-spectral-range} in all four cases.

\medskip \noindent \emph{Convective term.} We use the exact decomposition $f_n-f_m=\sum_{j=1}^6F_j$, where
\begin{align*}
F_1 &= \phi_n(\phi_n-\phi_m) P_n\mathcal P^{(1)} (u_n\otimes P_nu_n), \ F_2 = \phi_n\phi_m P_n\mathcal P^{(1)} (u_n\otimes P_nw), \\
F_3 &= \phi_n\phi_m P_n\mathcal P^{(1)} (u_n\otimes P_{n,m}u_m),\ F_4 = \phi_n\phi_m P_n\mathcal P^{(1)} (w\otimes P_mu_m), \\
F_5 &= \phi_n\phi_m P_{n,m}\mathcal P^{(1)} (u_m\otimes P_mu_m), \ F_6 = (\phi_n-\phi_m)\phi_m P_m\mathcal P^{(1)} (u_m\otimes P_mu_m).
\end{align*}

The terms $F_1,F_2,F_4,F_6$ are estimated exactly by the interpolation argument in [Section $5$, of \cite{MR4908978}].
In the present fixed-$R$ setting, for every $\varepsilon>0$,
\begin{equation}
\label{eq:5.2-f-local}
\E \int_0^{\tau_{n,m}} \sum_{j\in\{1,2,4,6\}} \|F_j(s)\|_{q_f}^p\,\dd s \le \varepsilon \E \int_0^{\tau_{n,m}} \|w(s)\|_{3p}^p\,\dd s + \Lambda_{f,\varepsilon}(t) \E \sup_{s\le\tau_{n,m}} \|w(s)\|_p^p,
\end{equation}
where $ \Lambda_{f,\varepsilon}(t) \longrightarrow0 \ \text{as }t\downarrow0 .
$ For $F_3$, apply \eqref{eq:5.2-inner-template-final} with $c=1$ and $q=q_f$.
Thus, for some $\rho_{f,3}>0$,
\begin{equation}
\label{eq:5.2-F3}
\E \int_0^{\tau_{n,m}} \|F_3(s)\|_{q_f}^p\,\dd s \le \frac{C_R} {(k(n)\wedge k(m))^{\rho_{f,3}}} \mathcal R_m ,
\end{equation}
where, from now on,
\begin{equation}
\label{eq:5.2-Rm-revised}
\mathcal R_m := 1+ \E \int_0^{\tau_{n,m}} \left( 1+\|\nabla u_m(s)\|_2^2 +\|u_m(s)\|_{3p}^p \right)\,\dd s .
\end{equation}

For $F_5$, we retain the outer-defect interpolation used in [Section $5$, of \cite{MR4908978}].
Put $ \kappa:=\frac{p-2}{p+2}, \qquad \frac1{1+\kappa} = \frac12+\frac1p .
$ Choose $ \bar\ell\in(q_f,3p/4) $ sufficiently close to $q_f$ and $\vartheta\in(0,1)$ so that $ \frac1{q_f} = \frac{\vartheta}{1+\kappa} + \frac{1-\vartheta}{\bar\ell}, \ \vartheta p<2.
$ Using Lemma~\ref{lem:2.4} in $L^{1+\kappa}$, the uniform $L^{\bar\ell}$-boundedness of the multipliers, and $ \|\nabla(u_m\otimes P_mu_m)\|_{1+\kappa} \le C \|\nabla u_m\|_2\|u_m\|_p, $ we obtain
\begin{equation}
\label{eq:5.2-F5}
\begin{aligned}
\|F_5\|_{q_f}^p \le \frac{C_R} {(k(n)\wedge k(m))^{\vartheta p}} \|\nabla u_m\|_2^{\vartheta p} \|u_m\|_{2\bar\ell}^{2(1-\vartheta)p}.
\end{aligned}
\end{equation}
Let $\bar\delta\in(0,1)$ satisfy $ \frac1{2\bar\ell} = \frac{\bar\delta}{p} + \frac{1-\bar\delta}{3p}.
$ Taking $\bar\ell$ sufficiently close to $q_f$ makes $\vartheta$ sufficiently small so that, in addition, $\frac{ 4(1-\bar\delta)(1-\vartheta)p }{ 2-\vartheta p } <p$ .
Young's inequality in \eqref{eq:5.2-F5} therefore gives
\begin{equation}
\label{eq:5.2-F5-revised}
\E \int_0^{\tau_{n,m}} \|F_5(s)\|_{q_f}^p\,\dd s \le \frac{C_R} {(k(n)\wedge k(m))^{\vartheta p}} \mathcal R_m .
\end{equation}

Combining \eqref{eq:5.2-f-local}, \eqref{eq:5.2-F3}, and \eqref{eq:5.2-F5-revised}, there is $\rho_f>0$ such that
\begin{equation}
\label{eq:5.2-f-summary}
\E \int_0^{\tau_{n,m}} \|f_n-f_m\|_{q_f}^p\,\dd s \le \varepsilon \E \int_0^{\tau_{n,m}} \|w(s)\|_{3p}^p\,\dd s + \Lambda_{f,\varepsilon}(t) \E \sup_{s\le\tau_{n,m}}\|w(s)\|_p^p + \frac{C_R} {(k(n)\wedge k(m))^{\rho_f}} \mathcal R_m .
\end{equation}

\medskip \noindent \emph{Taming term.} The map $ \mathcal T(v)=g_N(|v|^2)v $ satisfies
\begin{equation}
\label{eq:5.2-taming-Lip-revised}
\begin{aligned}
|\mathcal T(a)| \le C_{N,\nu}|a|^3,\ & |\mathcal T(a)-\mathcal T(b)| \le C_{N,\nu} (|a|^2+|b|^2)|a-b|, \\
|\nabla\mathcal T(v)| &\le C_{N,\nu}|v|^2|\nabla v|.
\end{aligned}
\end{equation}
Decompose $ h_n-h_m=\sum_{j=1}^5T_j, $ where
\begin{align*}
T_1 &= \phi_n(\phi_n-\phi_m) P_n\mathcal P\mathcal T(P_nu_n), \quad T_2 = \phi_n\phi_m P_n\mathcal P \left[ \mathcal T(P_nu_n)-\mathcal T(P_nu_m) \right], \\
T_3 &= \phi_n\phi_m P_n\mathcal P \left[ \mathcal T(P_nu_m)-\mathcal T(P_mu_m) \right], \ T_4 = \phi_n\phi_m P_{n,m}\mathcal P \mathcal T(P_mu_m),\ T_5 = (\phi_n-\phi_m)\phi_m P_m\mathcal P\mathcal T(P_mu_m).
\end{align*}

By \eqref{eq:5.2-taming-Lip-revised}, \eqref{eq:5.2-qh-revised}, interpolation between $L^p$ and $L^{3p}$, and the cutoff bounds,
\begin{equation}
\E \int_0^{\tau_{n,m}} \sum_{j\in\{1,2,5\}} \|T_j(s)\|_{q_h}^p\,\dd s \le \varepsilon \E \int_0^{\tau_{n,m}} \|w(s)\|_{3p}^p\,\dd s+ \Lambda_{h,\varepsilon}(t) \E \sup_{s\le\tau_{n,m}} \|w(s)\|_p^p,
\end{equation}
with $ \Lambda_{h,\varepsilon}(t)\longrightarrow0 \ (t\downarrow0).
$ The strict inequality $q_h<3p/7$ is precisely the subcriticality needed here.

For $T_3$, \eqref{eq:5.2-taming-Lip-revised} reduces the estimate to terms of the form $ \phi_n\phi_m \bigl( |P_nu_m|^2+|P_mu_m|^2 \bigr) P_{n,m}u_m .
$ Applying \eqref{eq:5.2-inner-template-final} with $c=2$ and $q=q_h$ gives some $\rho_{h,3}>0$ such that
\begin{equation*}
\E \int_0^{\tau_{n,m}} \|T_3(s)\|_{q_h}^p\,\dd s \le \frac{C_{R,N,\nu}} {(k(n)\wedge k(m))^{\rho_{h,3}}} \mathcal R_m .
\end{equation*}

It remains to estimate $T_4$.
Choose $r_*\in[1,q_h)$ and $a_1,a_2\in[p,3p]$ such that $\frac1{r_*} = \frac12+\frac1{a_1}+\frac1{a_2}.$ Such a choice exists because $q_h>1$ and $q_h>p/3$.
Next choose $ \ell_h\in(q_h,3p/7) $ sufficiently close to $q_h$ and $\vartheta_h\in(0,1)$ satisfying $ \frac1{q_h} = \frac{\vartheta_h}{r_*} + \frac{1-\vartheta_h}{\ell_h}.
$ Then $\vartheta_h\downarrow0$ as $\ell_h\downarrow q_h$.
From Lemma~\ref{lem:2.4}, \eqref{eq:5.2-taming-Lip-revised}, and interpolation,
\begin{align}
\|P_{n,m}\mathcal P\mathcal T(P_mu_m)\|_{r_*} &\le \frac{C} {k(n)\wedge k(m)} \|\nabla u_m\|_2 \|u_m\|_{a_1}\|u_m\|_{a_2},
\label{eq:5.2-T4-low}
\\
\|P_{n,m}\mathcal P\mathcal T(P_mu_m)\|_{\ell_h} &\le C \|u_m\|_{3\ell_h}^3.
\label{eq:5.2-T4-high}
\end{align}
Interpolating \eqref{eq:5.2-T4-low} and \eqref{eq:5.2-T4-high} gives
\begin{equation}
\label{eq:5.2-T4}
\|T_4\|_{q_h}^p \le \frac{C_R} {(k(n)\wedge k(m))^{\vartheta_hp}} \|\nabla u_m\|_2^{\vartheta_hp} \|u_m\|_{3p}^{b_h},
\end{equation}
for an exponent $b_h=b_h(\ell_h,\vartheta_h)$ continuous in $\vartheta_h$.
At $\vartheta_h=0$, interpolation of $L^{3\ell_h}$ between $L^p$ and $L^{3p}$ gives $ b_h(\ell_h,0)<p \Longleftrightarrow \ell_h<\frac{3p}{7}.
$ By taking $\ell_h$ sufficiently close to $q_h$, we may ensure $ \vartheta_hp<2, \ \frac{2b_h}{2-\vartheta_hp}<p .
$ Young's inequality in \eqref{eq:5.2-T4} therefore gives
\begin{equation*}
\E \int_0^{\tau_{n,m}} \|T_4(s)\|_{q_h}^p\,\dd s \le \frac{C_{R,N,\nu}} {(k(n)\wedge k(m))^{\vartheta_hp}} \mathcal R_m .
\end{equation*}

Thus, for some $\rho_h>0$,
\begin{equation}
\label{eq:5.2-h-summary}
\E \int_0^{\tau_{n,m}} \|h_n-h_m\|_{q_h}^p\,\dd s \le \varepsilon \E \int_0^{\tau_{n,m}} \|w(s)\|_{3p}^p\,\dd s + \Lambda_{h,\varepsilon}(t) \E \sup_{s\le\tau_{n,m}}\|w(s)\|_p^p + \frac{C_{R,N,\nu}} {(k(n)\wedge k(m))^{\rho_h}} \mathcal R_m .
\end{equation}

\medskip \noindent \emph{Wiener coefficient.} The exact decomposition is $ g_n-g_m=\sum_{j=1}^5S_j, $ where
\begin{align*}
S_1 &= \phi_n(\phi_n-\phi_m) P_n\mathcal P\sigma(P_nu_n), S_2 = \phi_n\phi_m P_n\mathcal P \bigl[ \sigma(P_nu_n)-\sigma(P_nu_m) \bigr], S_3 = \phi_n\phi_m P_n\mathcal P \bigl[ \sigma(P_nu_m)-\sigma(P_mu_m) \bigr], \\
S_4 &= \phi_n\phi_m P_{n,m}\mathcal P\sigma(P_mu_m), \quad S_5 = (\phi_n-\phi_m)\phi_m P_m\mathcal P\sigma(P_mu_m).
\end{align*}

The cutoff, growth and weighted Lipschitz estimates in \eqref{eq:2_WienerH} imply
\begin{equation*}
\E \int_0^{\tau_{n,m}} \sum_{j\in\{1,2,5\}} \|S_j(s)\|_{\mathbb L^p}^p\,\dd s \le \varepsilon \E \int_0^{\tau_{n,m}} \|w(s)\|_{3p}^p\,\dd s + \Lambda_{g,\varepsilon}(t) \E \sup_{s\le\tau_{n,m}} \|w(s)\|_p^p,
\end{equation*}
with $ \Lambda_{g,\varepsilon}(t)\longrightarrow0 \ (t\downarrow0).
$ For $S_3$, the weighted Lipschitz estimate reduces the problem to $$ \left\| \bigl( |P_nu_m|+|P_mu_m| \bigr)^{1/2} P_{n,m}u_m \right\|_p .
$$ Hence \eqref{eq:5.2-inner-template-final} with $c=\frac12$, $q=p$ gives
\begin{equation*}
\E \int_0^{\tau_{n,m}} \|S_3(s)\|_{\mathbb L^p}^p\,\dd s \le \frac{C_R} {(k(n)\wedge k(m))^{\rho_{g,3}}} \mathcal R_m
\end{equation*}
for some $\rho_{g,3}>0$.
For $S_4$, Lemma~\ref{lem:2.4} in the $\gamma$-radonifying space and the gradient bound in \eqref{eq:2_WienerH} give $ \|S_4(s)\|_{\mathbb L^p} \le \frac{C} {k(n)\wedge k(m)} \phi_n\phi_m \left( 1+\|P_mu_m(s)\|_{3p/2}^2 \right).
$ Since $ \|u_m\|_{3p/2}^p \le \|u_m\|_p^{p/2} \|u_m\|_{3p}^{p/2}, $ the cutoff bound and Young's inequality yield
\begin{equation*}
\E \int_0^{\tau_{n,m}} \|S_4(s)\|_{\mathbb L^p}^p\,\dd s \le \frac{C_R} {(k(n)\wedge k(m))^p} \mathcal R_m .
\end{equation*}

Consequently, for some $\rho_g>0$,
\begin{equation}
\label{eq:5.2-g-summary}
\E \int_0^{\tau_{n,m}} \|g_n-g_m\|_{\mathbb L^p}^p\,\dd s \le\varepsilon \E \int_0^{\tau_{n,m}} \|w(s)\|_{3p}^p\,\dd s + \Lambda_{g,\varepsilon}(t) \E \sup_{s\le\tau_{n,m}}\|w(s)\|_p^p + \frac{C_R} {(k(n)\wedge k(m))^{\rho_g}} \mathcal R_m .
\end{equation}
\noindent \emph{Compensated Poisson coefficient.} Put $ \mathcal H_{n,m}(s,z):=H_n(s,z)-H_m(s,z).
$ All cutoff factors and states in what follows are evaluated at $s-$.
We have $ \mathcal H_{n,m}=\sum_{j=1}^5J_j, $ where
\begin{align*}
J_1 &= \phi_n(\phi_n-\phi_m) P_n\mathcal P G(P_nu_n,z),\qquad J_2 = \phi_n\phi_m P_n\mathcal P \bigl[ G(P_nu_n,z)-G(P_nu_m,z) \bigr], \\
J_3 &= \phi_n\phi_m P_n\mathcal P \bigl[ G(P_nu_m,z)-G(P_mu_m,z) \bigr], J_4 = \phi_n\phi_m P_{n,m}\mathcal P G(P_mu_m,z), J_5 = (\phi_n-\phi_m)\phi_m P_m\mathcal P G(P_mu_m,z).
\end{align*}

For a predictable $E$-valued coefficient $J$, set
\begin{equation*}
\mathcal J_{n,m}(J) := \E \int_0^{\tau_{n,m}}\int_Z \|J(s,z)\|_p^p \,\mu(\dd z)\,\dd s+ \E \left( \int_0^{\tau_{n,m}}\int_Z \|J(s,z)\|_p^2 \,\mu(\dd z)\,\dd s \right)^{p/2}.
\end{equation*}
For a finite sum,
\begin{equation}
\label{eq:5.2-Poisson-triangle}
\mathcal J_{n,m} \left( \sum_{j=1}^5J_j \right) \le C_p \sum_{j=1}^5 \mathcal J_{n,m}(J_j).
\end{equation}

By \eqref{eq:4.G1}, the Lipschitz property of $\phi$, and \eqref{eq:5.2-preexit},
\begin{equation}
\label{eq:5.2-J15-revised}
\mathcal J_{n,m}(J_1) + \mathcal J_{n,m}(J_5) \le C_{R} \bigl( t+t^{p/2} \bigr) \E \sup_{s\le\tau_{n,m}} \|w(s)\|_p^p .
\end{equation}

For $J_2$, let $\alpha\in[0,2/3)$ be the exponent from \eqref{eq:4.G2} and set $ \theta_G:=1-\frac{3\alpha}{2}\in(0,1].
$ Since an $L^p$-valued c\`adl\`ag path has at most countably many jumps, $ u_j(s-)=u_j(s), \ w(s-)=w(s) \ \text{for }\dd s\text{-a.e.
}s.
$ For $r\in\{2,p\}$ and $\dd s$-a.e.
$s$,
\begin{equation}
\int_Z \|J_2(s,z)\|_p^r\,\mu(\dd z)\le C \left\| \bigl( |P_nu_n(s-)|+|P_nu_m(s-)| \bigr)^\alpha P_nw(s-) \right\|_p^r \le C_R \|w(s-)\|_p^{\theta_Gr} \|w(s)\|_{3p}^{(1-\theta_G)r}.
\end{equation}
Writing $$ X_{n,m} := \sup_{s\le\tau_{n,m}}\|w(s)\|_p^p, \qquad Y_{n,m} := \int_0^{\tau_{n,m}}\|w(s)\|_{3p}^p\dd s, $$ the $r=p$ contribution satisfies $ \E \int_0^{\tau_{n,m}}\int_Z \|J_2\|_p^p\,\mu(\dd z)\,\dd s \le C_R t^{\theta_G} \E \left( X_{n,m}^{\theta_G} Y_{n,m}^{1-\theta_G} \right).
$ Thus, for every $\varepsilon>0$,
\begin{equation*}
\E \int_0^{\tau_{n,m}}\int_Z \|J_2\|_p^p\,\mu(\dd z)\,\dd s \le \varepsilon\E Y_{n,m} + C_{\varepsilon,R}t\E X_{n,m}.
\end{equation*}
For the $r=2$ contribution, since $p>2$,
\begin{equation*}
\E \left( \int_0^{\tau_{n,m}}\int_Z \|J_2\|_p^2 \,\mu(\dd z)\,\dd s \right)^{p/2} \le t^{p/2-1} \E \int_0^{\tau_{n,m}} \left( \int_Z\|J_2\|_p^2\,\mu(\dd z) \right)^{p/2} \dd s\le \varepsilon\E Y_{n,m} + C_{\varepsilon,R}t^{p/2} \E X_{n,m}.
\end{equation*}
Consequently,
\begin{equation}
\label{eq:5.2-J2}
\mathcal J_{n,m}(J_2) \le 2\varepsilon\E Y_{n,m} + C_{\varepsilon,R} (t+t^{p/2}) \E X_{n,m}.
\end{equation}

For $J_3$, Hypothesis~\eqref{eq:4.G2} and \eqref{eq:5.2-inner-template-final}, now with $c=\alpha$ and $q=p$, imply, for some $\rho_{G,3}>0$,
\begin{equation*}
\E \int_0^{\tau_{n,m}}\int_Z \|J_3(s,z)\|_p^p \,\mu(\dd z)\,\dd s \le \frac{C_R} {(k(n)\wedge k(m))^{\rho_{G,3}}} \mathcal R_m .
\end{equation*}
The $r=2$ part is controlled without any higher jump moment: by H\"older in time,
\begin{equation*}
\E \left( \int_0^{\tau_{n,m}}\int_Z \|J_3(s,z)\|_p^2 \,\mu(\dd z)\,\dd s \right)^{p/2} \le t^{p/2-1} \E \int_0^{\tau_{n,m}} \left( \int_Z \|J_3(s,z)\|_p^2 \,\mu(\dd z) \right)^{p/2} \dd s \le \frac{C_R} {(k(n)\wedge k(m))^{\rho_{G,3}}} \mathcal R_m .
\end{equation*}
Hence
\begin{equation}
\label{eq:5.2-J3-revised}
\mathcal J_{n,m}(J_3) \le \frac{C_R} {(k(n)\wedge k(m))^{\rho_{G,3}}} \mathcal R_m .
\end{equation}

For $J_4$, Lemma~\ref{lem:2.4} and \eqref{eq:4.G3} give, for $r\in\{2,p\}$,
\begin{equation*}
\int_Z \|J_4(s,z)\|_p^r\,\mu(\dd z) \le \frac{C} {(k(n)\wedge k(m))^r} \left( 1+\|P_mu_m(s-)\|_{3p/2}^r \right).
\end{equation*}
Since $ \|u_m\|_{3p/2} \le \|u_m\|_p^{1/2} \|u_m\|_{3p}^{1/2}, $ the cutoff bound, \eqref{eq:5.2-preexit}, H\"older's inequality in time, and the $dt$-a.e.
identity $u_m(s-)=u_m(s)$ yield
\begin{equation}
\label{eq:5.2-J4-revised}
\mathcal J_{n,m}(J_4) \le \frac{C_{R,M,K,t}} {(k(n)\wedge k(m))^p} \mathcal R_m .
\end{equation}

Combining \eqref{eq:5.2-J15-revised}, \eqref{eq:5.2-J2}, \eqref{eq:5.2-J3-revised}, \eqref{eq:5.2-J4-revised}, and \eqref{eq:5.2-Poisson-triangle}, we obtain, for some $\rho_G>0$,
\begin{equation}
\label{eq:5.2-G1-summary}
\mathcal J_{n,m}(\mathcal H_{n,m}) \le 2\varepsilon \E \int_0^{\tau_{n,m}} \|w(s)\|_{3p}^p\,\dd s+ \Lambda_{G,\varepsilon}(t) \E \sup_{s\le\tau_{n,m}} \|w(s)\|_p^p + \frac{C_{R,M,K,t}} {(k(n)\wedge k(m))^{\rho_G}} \mathcal R_m ,
\end{equation}
where $ \Lambda_{G,\varepsilon}(t)\longrightarrow0 \ (t\downarrow0).
$

\medskip \noindent \emph{Conclusion of the difference estimate.} Combining \eqref{eq:5.2-f-summary}, \eqref{eq:5.2-h-summary}, \eqref{eq:5.2-g-summary}, and \eqref{eq:5.2-G1-summary}, there exist $\rho>0$ and, for every $\varepsilon>0$, a deterministic function $\Lambda_\varepsilon$ satisfying $ \Lambda_\varepsilon(t)\longrightarrow0 \ (t\downarrow0) $ such that
\begin{equation*}
\begin{aligned}
& \E \int_0^{\tau_{n,m}} \Big( \|f_n-f_m\|_{q_f}^p + \|h_n-h_m\|_{q_h}^p + \|g_n-g_m\|_{\mathbb L^p}^p \Big)\,\dd s + \mathcal J_{n,m}(\mathcal H_{n,m}) \\
&\le C\varepsilon \E \int_0^{\tau_{n,m}} \|w(s)\|_{3p}^p\,\dd s\quad+ \Lambda_\varepsilon(t) \E \sup_{s\le\tau_{n,m}} \|w(s)\|_p^p + \frac{C_{R,M,K,t}} {(k(n)\wedge k(m))^\rho} \mathcal R_m .
\end{aligned}
\end{equation*}

Applying the localized estimate of \cref{Thm:heat-exis} to \eqref{eq:5.diff-equation} yields
\begin{equation}
\label{eq:5_Cauchy}
\begin{aligned}
\E \Bigg[ \sup_{0\le s\le\tau_{n,m}} \|w(s)\|_p^p + \int_0^{\tau_{n,m}} \left\| \nabla|w(s)|^{p/2} \right\|_2^2\,\dd s \Bigg] & \le C\E\|a_n-a_m\|_p^p + C\varepsilon \E \int_0^{\tau_{n,m}} \|w(s)\|_{3p}^p\,\dd s \\
&\quad+ C\Lambda_\varepsilon(t) \E \sup_{0\le s\le\tau_{n,m}} \|w(s)\|_p^p + \frac{C_{R,M,K,t}} {(k(n)\wedge k(m))^\rho} \mathcal R_m .
\end{aligned}
\end{equation}
By the Sobolev inequality on $\R^3$, $\|w(s)\|_{3p}^p = \bigl\||w(s)|^{p/2}\bigr\|_6^2 \le C \left\| \nabla|w(s)|^{p/2} \right\|_2^2 .$ Choose first $\varepsilon>0$ sufficiently small and then $t=t(R,M,K)>0$ sufficiently small so that the second and third terms on the right-hand side of \eqref{eq:5_Cauchy} are absorbed.
It follows that
\begin{equation}
\label{eq:5.2-after-absorb}
\E \Bigg[ \sup_{0\le s\le\tau_{n,m}} \|w(s)\|_p^p + \int_0^{\tau_{n,m}} \|w(s)\|_{3p}^p\,\dd s \Bigg] \le C \E\|a_n-a_m\|_p^p + \frac{C_{R,M,K,t}} {(k(n)\wedge k(m))^\rho} \mathcal R_m .
\end{equation}

It remains to choose the spectral sequence.
By \cref{thm:5.1}, $ \E \int_0^{\tau_{n,m}} \|u_m(s)\|_{3p}^p\,\dd s \le C_{R,M,t}K^p .
$ Moreover, since $ \chi\left(\frac{\cdot}{m}\right)u_0 \in L^2(\R^3)\cap L^p(\R^3) \ \P\text{-a.s.}, $ \cref{lem:4_L2} gives
\begin{equation*}
\E \int_0^{\tau_{n,m}} \|\nabla u_m(s)\|_2^2\,\dd s \le C_t \left( 1+ \E \left\| \chi\left(\frac{\cdot}{m}\right)u_0 \right\|_2^2 \right),
\end{equation*}
where the constant is independent of $k(m)$.
Notice that the right-hand side is finite for every fixed $m$; indeed, compact support and $\|u_0\|_p\le K$ imply $ \E \left\| \chi\left(\frac{\cdot}{m}\right)u_0 \right\|_2^2 <\infty .
$ Hence
\begin{equation*}
\mathcal R_m \le B_m, \qquad B_m := C_{R,M,K,t} \left[ 1+ \E \left\| \chi\left(\frac{\cdot}{m}\right)u_0 \right\|_2^2 \right] <\infty .
\end{equation*}
No uniform bound on $B_m$ is required.
Choose $k(m)\in\N$ recursively so that $ k(m)>k(m-1), \ k(m)^\rho\ge mB_m .
$ Then $k(m)\uparrow\infty$ and, for $n>m$, $ k(n)\wedge k(m)=k(m), $ while
\begin{equation}
\label{eq:5.2-spectral}
\sup_{n>m} \frac{\mathcal R_m} {(k(n)\wedge k(m))^\rho} \le \frac{B_m}{k(m)^\rho} \le \frac1m \longrightarrow0 .
\end{equation}

Finally, \eqref{eq:5_initialbd} gives $ a_n = P_n\mathcal P \left( \chi\left(\frac{\cdot}{n}\right)u_0 \right) \longrightarrow u_0 \ \text{in }L^p(\Omega;E), $ and therefore
\begin{equation}
\label{eq:5.2-initial-Cauchy}
\lim_{m\to\infty} \sup_{n>m} \E \|a_n-a_m\|_p^p =0 .
\end{equation}
Taking $\sup_{n>m}$ in \eqref{eq:5.2-after-absorb} and using \eqref{eq:5.2-spectral} and \eqref{eq:5.2-initial-Cauchy} proves \eqref{eq:5.Cauchy_limit}.
\end{proof}
The endpoint $\alpha=0$ in the proof of \cref{lem:5.2} requires a separate reading of the inner-defect estimate: $(c,q)=(0,p)$ is not covered by the strict upper inequality in \eqref{eq:5.2-spectral-range}.
For this case, choose $\delta>0$ sufficiently small and set $$ \frac1{r_\delta} :=\frac{1/p-\delta/2}{1-\delta}.
$$ Since $p>2$, one has $p<r_\delta<3p$ for all sufficiently small $\delta$.
Writing $$ \frac1{r_\delta} =\frac{\beta_\delta}{p} +\frac{1-\beta_\delta}{3p}, \qquad b_\delta:=p(1-\delta)(1-\beta_\delta), $$ interpolation gives $$
\begin{aligned}
\|P_{n,m}u_m\|_p^p &\le \|P_{n,m}u_m\|_2^{\delta p} \|P_{n,m}u_m\|_{r_\delta}^{(1-\delta)p} \\
&\le \frac{C}{(k(n)\wedge k(m))^{\delta p}} \|\nabla u_m\|_2^{\delta p} \|u_m\|_p^{p(1-\delta)\beta_\delta} \|u_m\|_{3p}^{b_\delta}.
\end{aligned}
$$ Here $\beta_\delta\to1$ and $b_\delta\to0$ as $\delta\downarrow0$.
Choose $\delta$ so that $\delta p<2$ and $2b_\delta/(2-\delta p)<p$.
After multiplication by the cutoff factors occurring in $J_3$, Young's inequality yields $$ (\phi_n\phi_m)^p\|P_{n,m}u_m\|_p^p \le \frac{C_R}{(k(n)\wedge k(m))^{\delta p}} \bigl(1+\|\nabla u_m\|_2^2+\|u_m\|_{3p}^p\bigr).
$$ Hypothesis \eqref{eq:4.G2} with $\alpha=0$ and H\"older's inequality in time now give the two Poisson norm estimates for $J_3$ used in that proof.

In the estimate of $T_4$, the exponent $r_*$ is chosen strictly greater than $1$.
Such a choice is always possible: take $$ \max\left\{1,\frac{2p}{p+4}\right\} <r_*< \min\left\{q_h,\frac{6p}{3p+4}\right\}, \qquad a_1=a_2:=\frac{4r_*}{2-r_*}.
$$ The interval is nonempty for $p>3$ and the prescribed $q_h$, and $a_1,a_2\in(p,3p)$ with $1/r_*=1/2+1/a_1+1/a_2$.
Thus every use of the Leray projection in this estimate is at an exponent strictly between $1$ and $\infty$.

Having established the Cauchy estimate for the regularized solutions, we next extract an almost surely convergent subsequence on a strictly positive random interval.
We retain the stopping-time extraction of \cite[Lemma~5.3]{MR4908978}, with the modifications required by the c\`adl\`ag compensated-Poisson setting.

\begin{lemma}
\label{lem:5.3} Let $p>3$, $K\ge1$, and $R\ge1$, and retain the hypotheses and notation of \cref{thm:5.1,lem:5.2}.
Suppose that $$ \|u_0\|_p\le K \qquad \P\text{-a.s.} $$ Then the spectral sequence can be chosen so that there exist an $(\mathcal F_t)$-stopping time $\tau$, a deterministic subsequence $\{u^{(n_k)}\}_{k\ge1}$, and an adapted $E$-valued c\`adl\`ag process $u$ such that $\P(0<\tau\le1)=1$, $$ u\in L^p\bigl(\Omega;\D([0,\tau];E)\bigr) \cap L^p\bigl(\Omega;L^p(0,\tau;L^{3p})\bigr), $$ and
\begin{equation}
\label{eq:5.3-pathwise-conv}
\lim_{k\to\infty} \left[ \sup_{0\le s\le\tau} \|u^{(n_k)}(s)-u(s)\|_p^p + \int_0^\tau \|u^{(n_k)}(s)-u(s)\|_{3p}^p\,\dd s \right] =0 \qquad \P\text{-a.s.}
\end{equation}
Moreover,
\begin{equation}
\label{eq:5.3-energy}
\E\left[ \sup_{0\le s\le\tau}\|u(s)\|_p^p + \int_0^\tau \left\|\nabla\bigl(|u(s)|^{p/2}\bigr)\right\|_2^2 \,\dd s \right] \le C_RK^p,
\end{equation}
where $C_R$ may depend on $R,p,\nu,N$ and the structural constants of the noise coefficients, but is independent of $K$.
\end{lemma}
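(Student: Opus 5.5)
The plan is to follow the stopping-time extraction of \cite[Lemma~5.3]{MR4908978}, combining \cref{thm:5.1} (the small-time probability bound \eqref{eq:5_prob_bnd}) with the Cauchy estimate \cref{lem:5.2}. Throughout we work with the pre-exit and left-limit bounds \eqref{eq:5.preexit-control}, never with post-jump values at exit times.

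\emph{Choice of subsequence and time.} Fix $M=M_0$ in \cref{thm:5.1} and take the corresponding $t=t(R,M_0,K)\wedge1$ from \cref{lem:5.2}. By \eqref{eq:5.Cauchy_limit}, choose a deterministic increasing sequence $n_k$ such that
\begin{equation*}
\sup_{n>n_k}\E\Big[\sup_{s\le\tau_{n,n_k}}\|u^{(n)}-u^{(n_k)}\|_p^p+\int_0^{\tau_{n,n_k}}\|u^{(n)}-u^{(n_k)}\|_{3p}^p\,\dd s\Big]\le 2^{-kp}.
\end{equation*}
Next use \eqref{eq:5_prob_bnd} to pick deterministic times $t_k\downarrow0$ with $t_k\le t$ such that
\begin{equation*}
\sup_n\P\big(\mathcal Q_n(t_k\wedge\tau_{M_0}^n)\ge M_0K^p\big)\le 2^{-k}.
\end{equation*}
On the complement of this event, the strict pre-exit bound forces $\tau_{M_0}^{n}>t_k$, since $\mathcal Q_n(\tau)\ge M_0K^p$ at an exit time.

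\emph{The stopping time.} Define
\begin{equation*}
\tau:=\inf_{k}\big(\tau^{n_k}_{M_0}\wedge\tau^{n_{k+1}}_{M_0}\wedge t\big),
\end{equation*}
or more carefully $\tau:=t\wedge\inf_{k\ge k_0}\tau^{n_k}_{M_0}$ intersected over a random index. As in \cite{MR4908978}, it is cleaner to set
\begin{equation*}
\Omega_j:=\{\tau^{n_k}_{M_0}>t_j\ \forall k\ge j\},
\end{equation*}
which by Borel--Cantelli satisfies $\P(\bigcup_j\Omega_j)=1$. Then put $\tau:=\inf_{k}\tau_{M_0}^{n_k}\wedge t$. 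This is a countable infimum of stopping times, hence a stopping time (by right-continuity of the filtration), with $\tau\le t\le1$, and on $\Omega_j$ one has $\tau\ge\min(t_j,\tau^{n_1}_{M_0},\dots,\tau^{n_{j-1}}_{M_0})>0$, because each $\tau^n_{M_0}>0$ a.s. So $\P(\tau>0)=1$.

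\emph{Pathwise convergence.} Since $\tau\le\tau_{n_{k+1},n_k}$, the choice of $n_k$ gives
\begin{equation*}
\sum_k\Big(\E\big[\sup_{s\le\tau}\|u^{(n_{k+1})}-u^{(n_k)}\|_p^p+\int_0^\tau\|u^{(n_{k+1})}-u^{(n_k)}\|_{3p}^p\big]\Big)^{1/p}<\infty,
\end{equation*}
so the series converges a.s. and in $L^p(\Omega)$. Hence $u^{(n_k)}(\cdot\wedge\tau)$ is a.s. uniformly Cauchy in $E$ on $[0,\tau]$ and Cauchy in $L^p(0,\tau;L^{3p})$. The uniform limit $u$ is c\`adl\`ag, and it is adapted because each $u^{(n_k)}(t\wedge\tau)$ is $\mathcal F_t$-measurable and the a.s. limit preserves this under the usual conditions. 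This gives \eqref{eq:5.3-pathwise-conv} and the stated membership.

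\emph{Energy bound.} Apply \eqref{eq:5_eng_bnd} with $T=1$ and $\tau\le\tau_{M_0}^{n_k}\wedge1$; this gives bounds uniform in $k$ with constant $C_{R}K^p$ (since $M=M_0$ is fixed). The supremum term passes to the limit by Fatou. The dissipation term follows from \cref{lem:L2_con} applied to the stopped processes. Here one obstacle appears: \cref{lem:L2_con} is stated on deterministic intervals, so I would apply it to $\mathds 1_{\{s\le\tau\}}$-weighted integrands, or equivalently redo its weak-compactness argument in $L^2(\Omega\times(0,1))$ with the predictable weight $\mathds 1_{\{s\le\tau\}}$.

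\emph{Main difficulty.} The genuinely delicate point is the positivity of $\tau$ despite jump overshoot. This is why \eqref{eq:5_prob_bnd} is phrased with the stopped functional $\mathcal Q_n(t\wedge\tau^n_M)$: the event $\{\tau^n_{M_0}\le t_k\}$ is contained in $\{\mathcal Q_n(t_k\wedge\tau_{M_0}^n)\ge M_0K^p\}$ precisely because $\mathcal Q_n$ is right-continuous, so it attains the level at the exit time even when the level is reached by a jump. Checking this inclusion, and the uniformity in $n$, is the step I expect to require care.
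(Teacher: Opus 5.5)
\paragraph{Gap: positivity of $\tau$.} Your argument breaks down when you show $\tau>0$.

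You claim that $\Omega_j=\{\tau^{n_k}_{M_0}>t_j\ \forall k\ge j\}$ satisfies $\P(\bigcup_j\Omega_j)=1$ by Borel--Cantelli. This does not follow. We have $\P(\Omega_j^c)\le\sum_{k\ge j}\P(\tau^{n_k}_{M_0}\le t_j)$, but \eqref{eq:5_prob_bnd} bounds each summand only by $2^{-j}$. The union bound over the infinitely many $k\ge j$ therefore gives nothing.

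Borel--Cantelli applied to $\{\tau^{n_k}_{M_0}\le t_k\}$ yields only $\tau^{n_k}_{M_0}>t_k$ eventually. Since $t_k\downarrow0$, this says nothing about $\inf_k\tau^{n_k}_{M_0}$. A uniform marginal bound $\sup_n\P(\tau^n\le t)\to0$ can never by itself control an infimum over infinitely many indices; independent uniformly distributed times are a counterexample. So a pathwise coupling between consecutive exit times, coming from the Cauchy estimate, is indispensable.

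The inclusion $\{\tau^n_{M_0}\le t\}\subset\{\mathcal Q_n(t\wedge\tau^n_{M_0})\ge M_0K^p\}$, which you single out as the delicate point, is immediate from right-continuity. It is not where the difficulty lies.

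\paragraph{How the paper supplies the coupling.} Write $\|v\|_{\mathcal X_r}:=\bigl(\sup_{s\le r}\|v(s)\|_p^p+\int_0^r\|v(s)\|_{3p}^p\,\dd s\bigr)^{1/p}$, and let $\bar t$ be the time from \cref{lem:5.2} capped at $1$.

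The paper separates the levels. It takes the Cauchy estimate of \cref{lem:5.2} at a large level $M>2^{p+1}M_1$, where $M_1>2^pM_0$. It then introduces exit times $\eta_k$ of $r\mapsto\|u^{(n_k)}\|_{\mathcal X_r}$ at the strictly decreasing levels $a_k=M_1^{1/p}K+2^{-k}$, so that $\eta_k\le\tau^{n_k}_M$.

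Let $\Gamma_j$ be the Borel--Cantelli event on which $\|u^{(n_{k+1})}-u^{(n_k)}\|_{\mathcal X_{\tau_{n_{k+1},n_k}}}<4^{-k}$ for all $k\ge j$. On $\Gamma_j$, suppose $\eta_k<\eta_{k+1}\wedge\bar t$. The triangle inequality at $\eta_k$ then gives $a_k<a_{k+1}+4^{-k}<a_k$, a contradiction. Hence $\eta_{k+1}\wedge\bar t\le\eta_k\wedge\bar t$ for $k\ge j$.

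Consequently $\tau:=\liminf_k(\eta_k\wedge\bar t)$ is the limit of an eventually nonincreasing sequence, and $\tau\le\tau^{n_k}_M\wedge\bar t$ for $k\ge j$ on $\Gamma_j$. Fatou's lemma then gives
$\P(\tau<\ell)\le\liminf_k\P(\eta_k<\ell)\le\sup_n\P\bigl(\|u^{(n)}\|^p_{\mathcal X_{\ell\wedge\tau^n_M}}\ge2^{-p}M_1K^p\bigr)$, which tends to $0$ as $\ell\downarrow0$.

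With your single level $M=M_0$ this mechanism is unavailable. The comparison at $\tau^{n_k}_{M_0}$ only gives $M_0^{1/p}K\le\|u^{(n_k)}\|_{\mathcal X_{\tau^{n_k}_{M_0}}}<M_0^{1/p}K+4^{-k}$, which is no contradiction.

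\paragraph{Remaining steps.} Once such a $\tau$ is available, your other steps go through with one adjustment. Now $\tau\le\tau_{n_{k+1},n_k}$ holds only for $k\ge j$ on $\Gamma_j$. So the Cauchy summation must be done pathwise on $\Gamma_j$ rather than through $L^p(\Omega)$ norms. Likewise, the energy bound from \eqref{eq:5_eng_bnd} with $T=1$ is applied to expectations restricted to $\Gamma_j$, and then $j\to\infty$.
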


\begin{proof}
We adapt the extraction argument of \cite[Lemma~5.3]{MR4908978}, retaining the closed stopping intervals required for c\`adl\`ag trajectories.

For $r\ge0$, write $$ \|v\|_{\mathcal X_r} := \left( \sup_{0\le s\le r}\|v(s)\|_p^p + \int_0^r\|v(s)\|_{3p}^p\dd s \right)^{1/p}, $$ whenever the right-hand side is finite.
This norm satisfies the triangle inequality and is nondecreasing in $r$.
For the approximating solutions, it is finite on every bounded time interval almost surely.

Choose $$ M_1>\max\{2^pM_0,1\}, \qquad M>2^{p+1}M_1.
$$ These constants are fixed independently of $K$.
Since $M_1/2^p>M_0$, \cref{thm:5.1} gives
\begin{equation}
\label{eq:5.3-smalltime}
\lim_{r\downarrow0}\sup_{n\ge1} \P\left( \|u^{(n)}\|_{\mathcal X_{r\wedge\tau_M^n}}^p \ge \frac{M_1}{2^p}K^p \right) =0.
\end{equation}

For this choice of $M$, select the spectral sequence and the deterministic time $t_*>0$ supplied by \cref{lem:5.2}.
Set $$ \bar t:=t_*\wedge1, \qquad \tau_{n,m}:=\tau_M^n\wedge\tau_M^m\wedge\bar t.
$$ The Cauchy estimate in \cref{lem:5.2} permits the selection of a deterministic, strictly increasing sequence $\{n_k\}_{k\ge1}$ such that
\begin{equation}
\label{eq:5.3-geometric}
\E\left[ \|u^{(n_{k+1})}-u^{(n_k)} \|_{\mathcal X_{\tau_{n_{k+1},n_k}}}^p \right] \le 8^{-kp}, \qquad k\ge1.
\end{equation}

For $k\ge1$, define $$ a_k:=M_1^{1/p}K+2^{-k}, \qquad \eta_k := \inf\left\{ r>0: \|u^{(n_k)}\|_{\mathcal X_r}\ge a_k \right\}, \qquad \inf\varnothing:=\infty.
$$ The process $r\mapsto\|u^{(n_k)}\|_{\mathcal X_r}^p$ is adapted, nondecreasing, and right-continuous.
Consequently, $\eta_k$ is an $(\mathcal F_t)$-stopping time.
The initial bound \eqref{eq:5.M0-initial} and right-continuity at zero imply that $\eta_k>0$ almost surely.

Furthermore, $$ a_k^p \le 2^{p-1}(M_1+1)K^p < MK^p.
$$ The definitions of the two exit times therefore give $$ \eta_k\le\tau_M^{n_k} \qquad \P\text{-a.s.} $$ Equality is allowed: a single jump may cross both localizing levels.

Define the events $$ \Omega_k := \left\{ \|u^{(n_{k+1})}-u^{(n_k)} \|_{\mathcal X_{\tau_{n_{k+1},n_k}}} \ge4^{-k} \right\}, $$ and, for $j\ge2$, set $$ \Gamma_j:=\bigcap_{k\ge j}\Omega_k^c, \qquad \Gamma:=\bigcup_{j\ge2}\Gamma_j.
$$ By \eqref{eq:5.3-geometric} and Markov's inequality, $$ \P(\Omega_k)\le2^{-kp}.
$$ Thus the Borel--Cantelli lemma yields $\Gamma_j\uparrow\Gamma$ and $\P(\Gamma)=1$.

We claim that, on $\Gamma_j$, $$ \eta_{k+1}\wedge\bar t \le \eta_k\wedge\bar t, \qquad k\ge j.
$$ Suppose that this fails for some $k\ge j$.
Then $\eta_k<\bar t$ and $\eta_k<\eta_{k+1}$.
Since $\eta_i\le\tau_M^{n_i}$, it follows that $$ \eta_k\le\tau_{n_{k+1},n_k}.
$$ The exit-time definition and right-continuity give $$ \|u^{(n_k)}\|_{\mathcal X_{\eta_k}}\ge a_k, \qquad \|u^{(n_{k+1})}\|_{\mathcal X_{\eta_k}}<a_{k+1}.
$$ Because $\omega\in\Omega_k^c$, the triangle inequality implies $$ a_k \le \|u^{(n_k)}\|_{\mathcal X_{\eta_k}} < a_{k+1}+4^{-k} < a_k, $$ where the last inequality holds for $k\ge2$.
This contradiction proves the claim.
In particular, the argument uses the lower bound at an exit time and does not require an upper bound on its post-jump value.

Set $$ \tau := \liminf_{k\to\infty}(\eta_k\wedge\bar t) = \sup_{j\ge1}\inf_{k\ge j}(\eta_k\wedge\bar t).
$$ Under the usual conditions on the filtration, countable infima and suprema of stopping times are stopping times.
Hence $\tau$ is an $(\mathcal F_t)$-stopping time.
The preceding monotonicity gives, on $\Gamma_j$,
\begin{equation}
\label{eq:5.3-tauGamma}
\tau = \lim_{k\to\infty}(\eta_k\wedge\bar t) \le \eta_k\wedge\bar t \le \tau_M^{n_k}\wedge\bar t, \qquad k\ge j.
\end{equation}

We next prove that $\tau>0$ almost surely.
Fix $\ell\in(0,\bar t)$.
The almost-sure convergence in \eqref{eq:5.3-tauGamma} implies $$ \mathds 1_{\{\tau<\ell\}} \le \liminf_{k\to\infty} \mathds 1_{\{\eta_k<\ell\}} \qquad \P\text{-a.s.} $$ Therefore, $$ \P(\tau<\ell) \le \liminf_{k\to\infty}\P(\eta_k<\ell).
$$ On $\{\eta_k<\ell\}$, the inequality $\eta_k\le\tau_M^{n_k}$ and monotonicity of the path norm give $$ \|u^{(n_k)}\|_{\mathcal X_{\ell\wedge\tau_M^{n_k}}}^p \ge a_k^p \ge \frac{M_1}{2^p}K^p.
$$ Consequently, $$ \P(\tau<\ell) \le \sup_{n\ge1} \P\left( \|u^{(n)}\|_{\mathcal X_{\ell\wedge\tau_M^n}}^p \ge \frac{M_1}{2^p}K^p \right).
$$ Letting $\ell\downarrow0$ and using \eqref{eq:5.3-smalltime}, we obtain $\P(\tau=0)=0$.

We now construct the limit.
Fix $\omega\in\Gamma_j$.
For $m>k\ge j$, \eqref{eq:5.3-tauGamma} and the triangle inequality yield $$ \|u^{(n_m)}-u^{(n_k)}\|_{\mathcal X_\tau} \le \sum_{i=k}^{m-1} \|u^{(n_{i+1})}-u^{(n_i)}\|_{\mathcal X_\tau} \le \sum_{i=k}^{m-1}4^{-i}.
$$ Thus the subsequence is almost surely Cauchy both uniformly on $[0,\tau]$ with values in $E$ and in $L^p(0,\tau;L^{3p})$.

For $s\in[0,\bar t]$, define $$ U_k(s):=u^{(n_k)}(s\wedge\tau).
$$ Each $U_k$ is adapted and $E$-valued c\`adl\`ag.
The preceding estimate shows that $\{U_k\}$ converges uniformly on $[0,\bar t]$ almost surely to an $E$-valued process $U$.
Define $U=0$ on the exceptional null set, which belongs to $\mathcal F_0$ by completeness.
For each deterministic $s$, $U(s)$ is then $\mathcal F_s$-measurable.
Moreover, a uniform limit of c\`adl\`ag paths is c\`adl\`ag, and $U$ is stopped at $\tau$ because every $U_k$ is stopped there.

Let $u$ be this stopped process, extended constantly after $\bar t$.
In particular, $$ \sup_{0\le s\le\tau} \|u^{(n_k)}(s)-u(s)\|_p \longrightarrow0 \qquad \P\text{-a.s.} $$ The initial convergence \eqref{eq:5_initialbd} gives $u(0)=u_0$ almost surely.
Uniform convergence also controls the left limits: $$ \sup_{0\le s\le\tau} \|u^{(n_k)}(s-)-u(s-)\|_p \le \sup_{0\le s\le\tau} \|u^{(n_k)}(s)-u(s)\|_p \longrightarrow0, $$ with the convention $v(0-)=v(0)$.
Since $u$ is adapted and c\`adl\`ag, $u_-$ is predictable.

Completeness of $L^p(0,\tau(\omega);L^{3p})$ gives a limit $\widetilde u$ in that space for almost every $\omega$.
Both this convergence and the uniform $L^p$ convergence imply distributional convergence on $(0,\tau(\omega))\times\mathbb R^3$.
Hence $\widetilde u=u$ almost everywhere, proving \eqref{eq:5.3-pathwise-conv}.

It remains to establish the energy estimate.
Write $$ A_k := \sup_{0\le s\le\tau}\|u^{(n_k)}(s)\|_p^p, \qquad D_k := \int_0^\tau \left\| \nabla\bigl(|u^{(n_k)}(s)|^{p/2}\bigr) \right\|_2^2\dd s.
$$ For $k\ge j$, \eqref{eq:5.3-tauGamma} implies on $\Gamma_j$ that $\tau\le\tau_M^{n_k}\wedge1$.
Applying \cref{thm:5.1} with the fixed deterministic horizon $T=1$, we obtain
\begin{equation}
\label{eq:5.3-energy1}
\sup_{k\ge j} \E\left[ \mathds 1_{\Gamma_j}(A_k+D_k) \right] \le C_RK^p.
\end{equation}
Here $C_R$ is independent of $j,k$, and $K$, since $M$ has already been fixed independently of $K$.
The events $\Gamma_j$ are used only to restrict expectations and samplewise arguments.

By Fatou's lemma and \eqref{eq:5.3-energy1}, $$ \E\left[ \mathds 1_{\Gamma_j}\liminf_{k\to\infty}D_k \right] \le C_RK^p.
$$ In particular, $\liminf_kD_k<\infty$ almost surely on $\Gamma_j$.

Fix a trajectory for which this finiteness and the previous convergence statements hold.
Set $$ f_k:=|u^{(n_k)}|^{p/2}, \qquad f:=|u|^{p/2}.
$$ For $a,b\in L^p(\mathbb R^3;\mathbb R^3)$, $$ \bigl\||a|^{p/2}-|b|^{p/2}\bigr\|_2 \le C_p \left( \|a\|_p^{p/2-1} + \|b\|_p^{p/2-1} \right) \|a-b\|_p.
$$ The uniform convergence on $[0,\tau]$ therefore gives $$ f_k\longrightarrow f \quad\text{strongly in } L^2(0,\tau;L^2(\mathbb R^3)).
$$

Choose a subsequence realizing $\liminf_kD_k$.
Along this subsequence the gradients $\nabla f_k$ are bounded in $L^2((0,\tau)\times\mathbb R^3;\mathbb R^3)$.
After a further extraction they converge weakly.
The strong convergence of $f_k$ identifies their weak limit with the distributional gradient of $f$.
Thus $ f\in L^2(0,\tau;H^1(\mathbb R^3)) $ and weak lower semicontinuity gives $$ D := \int_0^\tau \left\|\nabla\bigl(|u(s)|^{p/2}\bigr)\right\|_2^2 \dd s \le \liminf_{k\to\infty}D_k.
$$ This subsequence is used only in the deterministic lower-semicontinuity argument for the fixed trajectory; the deterministic sequence defining $u$ is unchanged.

Uniform convergence also implies $$ A_k\longrightarrow A:=\sup_{0\le s\le\tau}\|u(s)\|_p^p.
$$ Consequently, another application of Fatou's lemma, together with \eqref{eq:5.3-energy1}, yields $$ \E\left[ \mathds 1_{\Gamma_j}(A+D) \right] \le \liminf_{k\to\infty} \E\left[ \mathds 1_{\Gamma_j}(A_k+D_k) \right] \le C_RK^p.
$$ Since $\Gamma_j\uparrow\Gamma$ and $\P(\Gamma)=1$, monotone convergence proves \eqref{eq:5.3-energy}.

Finally, the Sobolev inequality on $\mathbb R^3$ gives $$ \|u(s)\|_{3p}^p = \bigl\||u(s)|^{p/2}\bigr\|_6^2 \le C \left\|\nabla\bigl(|u(s)|^{p/2}\bigr)\right\|_2^2 $$ for almost every $(s,\omega)$ with $0<s<\tau(\omega)$.
Integrating and using \eqref{eq:5.3-energy} establishes the asserted $L^p(\Omega;L^p(0,\tau;L^{3p}))$ regularity.
On the other hand, the stopped process $$ U(t):=u(t\wedge\tau),\qquad 0\le t\le1, $$ was constructed above as the uniform limit of adapted $E$-valued c\`adl\`ag processes and is therefore itself adapted and c\`adl\`ag.
Moreover, \eqref{eq:5.3-energy} gives $$ \E\sup_{0\le t\le1}\|U(t)\|_p^p = \E\sup_{0\le s\le\tau}\|u(s)\|_p^p <\infty.
$$ Thus, one gets $ u\in L^p\bigl(\Omega;\D([0,\tau];E)\bigr).
$
\end{proof}
We next establish pathwise uniqueness for the scalar-cutoff equation
\begin{equation}
\label{eq:5.23}
\begin{aligned}
\dd u =& \Big[ \nu\Delta u- \phi_u(t)^2 \mathcal P \bigl( (u\cdot\nabla)u \bigr) - \phi_u(t)^2 \mathcal P \bigl( g_N(|u|^2)u \bigr) \Big]\,\dd t \\
&+ \phi_u(t)^2 \mathcal P \sigma(u)\,\dd W_t + \int_Z (\phi_u(t-))^2 \mathcal P G(u(t-),z) \,\widetilde N(\dd t,\dd z), \\
\nabla\cdot u =&0, \\
u(0,x) =& u_0(x), \qquad \P\text{-a.s.},
\end{aligned}
\end{equation}

The identification of the limit furnished by \cref{lem:5.3} with a solution of \eqref{eq:5.23} is completed in the proof of \cref{eq:2_Main_thm}.
It uses convergence of the nonlinear coefficients and of both stochastic integrals on the stopped interval.

\begin{theorem}
[Pathwise uniqueness] \label{thm:5.4} Let $p>3$ and $T>0$, and assume the standing coefficient hypotheses.
Fix the scalar cutoff $\phi$ in \eqref{eq:5.23}.
Let $(u^{(1)},\tau_1)$ and $(u^{(2)},\tau_2)$ be local strong solutions of this equation on the same stochastic basis, with the same initial datum and driving noises.
Suppose that, for $i=1,2$,
\begin{equation}
\label{eq:5.4-solution-energy}
\E\left[ \sup_{0\le s\le T\wedge\tau_i}\|u^{(i)}(s)\|_p^p +\int_0^{T\wedge\tau_i} \left\|\nabla\bigl(|u^{(i)}(s)|^{p/2}\bigr)\right\|_2^2 \,\dd s \right]<\infty.
\end{equation}
Then, with $\tau_*:=T\wedge\tau_1\wedge\tau_2$, $$ \P\left( u^{(1)}(t)=u^{(2)}(t) \text{ for every }t\in[0,\tau_*] \right)=1.
$$
\end{theorem}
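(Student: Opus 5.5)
We compare the two solutions on a localized random interval on which their $L^p$-norms and space--time $L^{3p}$-norms are controlled by deterministic levels. For $L\ge1$ set
\[
\varsigma_L:=\inf\Bigl\{t\ge0:\ \sum_{i=1}^2\Bigl(\|u^{(i)}(t\wedge\tau_i)\|_p^p+\int_0^{t\wedge\tau_i}\|u^{(i)}(s)\|_{3p}^p\,\dd s\Bigr)\ge L\Bigr\}\wedge\tau_* .
\]
By \eqref{eq:5.4-solution-energy} and the Sobolev inequality $\|v\|_{3p}^p\le C\|\nabla|v|^{p/2}\|_2^2$, we have $\varsigma_L\uparrow\tau_*$ almost surely. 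As in \eqref{eq:5.preexit-control}, we only use the strict pre-exit bound $\sup_{s<\varsigma_L}\|u^{(i)}(s)\|_p^p\le L$, the bound $\sup_{s\le\varsigma_L}\|u^{(i)}(s-)\|_p^p\le L$ on the predictable left limits, and the integral bound. The difference $w:=u^{(1)}-u^{(2)}$, with $w(0)=0$, then solves the linear heat equation \eqref{eq:heat} on $[0,\varsigma_L]$ with all coefficients multiplied by the predictable indicator $\mathds 1_{\{s\le\varsigma_L\}}$. Its coefficients are
\[
\begin{aligned}
f&=-\mathcal P^{(1)}\bigl(\phi_1^2\,u^{(1)}\otimes u^{(1)}-\phi_2^2\,u^{(2)}\otimes u^{(2)}\bigr), &\quad h&=-\mathcal P\bigl(\phi_1^2\mathcal T(u^{(1)})-\phi_2^2\mathcal T(u^{(2)})\bigr),\\
g&=\mathcal P\bigl(\phi_1^2\sigma(u^{(1)})-\phi_2^2\sigma(u^{(2)})\bigr), &\quad H&=\mathcal P\bigl(\phi_1(s-)^2G(u^{(1)}(s-),z)-\phi_2(s-)^2G(u^{(2)}(s-),z)\bigr).
\end{aligned}
\]
The plan is to apply the localized form of \eqref{ineq:3_eng-est} from \cref{Thm:heat-exis} with exponents $q_f,q_h$ as in \eqref{eq:5.2-qf}--\eqref{eq:5.2-qh-revised}. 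The goal is the estimate
\[
\E\Bigl[X+\int_0^{\varsigma_L}\|\nabla|w|^{p/2}\|_2^2\,\dd s\Bigr]\le C\varepsilon\,\E Y+C_{\varepsilon,L,R}\,\Lambda(t)\,\E X ,\qquad X:=\sup_{s\le\varsigma_L\wedge t}\|w\|_p^p,\quad Y:=\int_0^{\varsigma_L\wedge t}\|w\|_{3p}^p\,\dd s,
\]
with a deterministic $\Lambda(t)\to0$ as $t\downarrow0$.

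The coefficient bounds follow the telescoping splittings used for $F_1,F_2,F_4,F_6$, $T_1,T_2,T_5$, $S_1,S_2,S_5$ and $J_1,J_2,J_5$ in \cref{lem:5.2}. There are no spectral defects here, so the terms $F_3,F_5,T_3,T_4,S_3,S_4,J_3,J_4$ do not occur.

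\begin{itemize}
\item Cutoff differences are handled by $|\phi_1^2-\phi_2^2|\le 2C_\phi\|w\|_p$ together with the support bound $\phi_i\ne0\Rightarrow\|u^{(i)}\|_p\le4R$.
\item The convective difference $u^{(1)}\otimes w+w\otimes u^{(2)}$ is estimated by H\"older's inequality, interpolation of $L^\ell$ between $L^p$ and $L^{3p}$, and Young's inequality. This gives $\varepsilon\|w\|_{3p}^p+C\|w\|_p^p\bigl(1+\|u^{(i)}\|_{3p}^{p}\bigr)$ pointwise in time. The weight $\|u^{(i)}\|_{3p}^p$ is integrable on $[0,\varsigma_L]$ but not bounded.
\item The taming, Wiener and Poisson differences are treated the same way, using $q_h<3p/7$, the bound $(|u_1|+|u_2|)^{1/2}$ from \eqref{eq:2_WienerH}, and $\alpha<2/3$ from \eqref{eq:4.G2}. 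The Poisson term is estimated in both modes $r\in\{2,p\}$ exactly as in \eqref{eq:5.2-J2}, with H\"older in time for $r=2$.
\end{itemize}

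The main obstacle is that the Lipschitz weights are random and only time-integrable, rather than small in $t$. A factor $\|w\|_p^p\,\|u^{(i)}\|_{3p}^{a}$ with $a<p$ can be reduced via H\"older in time to $t^{1-a/p}L^{a/p}X$, which gives smallness. The endpoint weight $a=p$ from the convective term does not come with a $t$-power. To avoid it, I would not absorb at this endpoint directly. Instead, I would choose $q_f$ strictly inside its range so that interpolation leaves a strictly subcritical power, as in the $F_2,F_4$ estimate of \cite{MR4908978}. This yields $a<p$. If some endpoint term remains, I would use a stochastic Gronwall argument: define a further stopping time through $\int_0^{\cdot}\|u^{(i)}\|_{3p}^p\,\dd s$ and split $[0,\varsigma_L]$ into finitely many random subintervals on which this integral is at most a small $\delta$. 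On each subinterval I would restart the estimate using the half-open shift convention, so that the jump at the left endpoint is already contained in the restart value.

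The rest of the argument is routine. Choose $\varepsilon$ small and use $\|w\|_{3p}^p\le C\|\nabla|w|^{p/2}\|_2^2$ to absorb $C\varepsilon\,\E Y$ into the left-hand side. Then choose $t_0$ so small that $C\Lambda(t_0)<1$, which gives $\E X=0$ on $[0,\varsigma_L\wedge t_0]$. Since $t_0$ depends only on $L,R$ and not on the data, the same bound iterated on the deterministic grid $[jt_0,(j+1)t_0]\cap[0,\varsigma_L]$ gives $w=0$ on $[0,\varsigma_L]$. Because the paths are c\`adl\`ag and the value at $\varsigma_L$ is included through the stopped integrals, the processes agree on the closed interval $[0,\varsigma_L]$ almost surely. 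Letting $L\to\infty$ and using $\varsigma_L\uparrow\tau_*$, a jump at $\tau_*$ itself being covered because the terminal jump is retained in the stopped integrals, shows that $u^{(1)}=u^{(2)}$ on $[0,\tau_*]$ almost surely.
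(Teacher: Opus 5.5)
Your proposal is correct and follows essentially the paper's own argument. Both localize with a stopping time controlling $\|u^{(i)}\|_p$ and $\int\|u^{(i)}\|_{3p}^p$ through pre-exit and left-limit bounds, and apply \cref{Thm:heat-exis} to the indicator-restricted difference equation with the nonspectral splittings of \cref{lem:5.2} in both Poisson modes. Both then absorb via Sobolev, take a small deterministic time, iterate over a deterministic grid, and remove the localization.

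The endpoint weight you worry about never arises, so the stochastic Gronwall fallback is unnecessary. The Lipschitz part $u^{(i)}\otimes w$ is bounded by $C\|u^{(i)}\|_p\|w\|_\ell$ with $\ell\in(p,3p)$, and $\|u^{(i)}\|_p$ is controlled before exit, so this term carries no random weight. Only the cutoff-difference terms carry weights $\|u^{(i)}\|_{3p}^a$, and these are strictly subcritical ($a<p$) because $q_f<3p/4$, $q_h<3p/7$ and $q_\sigma<3p/2$.
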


\begin{proof}
Represent the solutions by their stopped c\`adl\`ag extensions and set $$ w:=u^{(1)}-u^{(2)},\qquad D_p(a):=\left\|\nabla\bigl(|a|^{p/2}\bigr)\right\|_2^2.
$$ In particular, $w(0)=0$ almost surely.
For $i=1,2$, put $$ Q_i(t):= \sup_{0\le s\le t\wedge\tau_i}\|u^{(i)}(s)\|_p^p +\int_0^{t\wedge\tau_i}D_p(u^{(i)}(s))\dd s, \qquad 0\le t\le T.
$$ For an integer $M\ge2$, define $$ \widehat\eta_i^M :=\inf\{t\in[0,T]:Q_i(t)\ge M^p\},\qquad \eta^M:=\widehat\eta_1^M\wedge\widehat\eta_2^M\wedge\tau_*, \qquad \inf\varnothing:=\infty.
$$ The processes $Q_i$ are adapted, nondecreasing, and right-continuous, so these are stopping times.
The time $\eta^M$ may vanish when the initial norm already reaches the localizing level.
This causes no difficulty because the initial difference is zero.

With suprema over empty intervals understood as zero, the localization gives
\begin{equation}
\label{eq:5.4-preexit}
\begin{aligned}
\sup_{0\le s<\eta^M}\|u^{(i)}(s)\|_p^p&\le M^p, &\sup_{0<s\le\eta^M}\|u^{(i)}(s-)\|_p^p&\le M^p, \\
\int_0^{\eta^M}D_p(u^{(i)}(s))\,\dd s&\le M^p, &\int_0^{\eta^M}\|u^{(i)}(s)\|_{3p}^p\,\dd s&\le C M^p.
\end{aligned}
\end{equation}
The last bound follows from the Sobolev inequality.
No upper bound on $u^{(i)}(\eta^M)$ is asserted.

Write $\phi_i(s):=\phi(\|u^{(i)}(s)\|_p)$ and $\mathcal T(a):=g_N(|a|^2)a$.
On the common lifetime, the difference equation is $$ \dd w=(\nu\Delta w+\nabla\cdot f+h)\dd s +g\dd W_s +\int_Z H(s,z)\widetilde N(\dd s,\dd z), $$ where $$
\begin{aligned}
f&=-\phi_1^2\mathcal P^{(1)}(u^{(1)}\otimes u^{(1)}) +\phi_2^2\mathcal P^{(1)}(u^{(2)}\otimes u^{(2)}), \\
h&=-\phi_1^2\mathcal P\mathcal T(u^{(1)}) +\phi_2^2\mathcal P\mathcal T(u^{(2)}), \\
g(s)&=\phi_1(s-)^2\mathcal P\sigma(s,u^{(1)}(s-)) -\phi_2(s-)^2\mathcal P\sigma(s,u^{(2)}(s-)), \\
H(s,z)&=\phi_1(s-)^2\mathcal PG(s,u^{(1)}(s-),z) -\phi_2(s-)^2\mathcal PG(s,u^{(2)}(s-),z).
\end{aligned}
$$ The possible $\omega$-dependence is suppressed, and the stochastic coefficients are represented predictably.

Choose $$ \max\left\{\frac p2,\frac{3p}{p+1}\right\}<q_f<\frac{3p}{4}, \qquad \max\left\{\frac p3,\frac{3p}{2p+1}\right\}<q_h<\frac{3p}{7}.
$$ For $0<S\le T\wedge1$, set $$ \theta_M(S):=S\wedge\eta^M,\qquad X_M(S):=\sup_{0\le s\le\theta_M(S)}\|w(s)\|_p^p, \qquad Y_M(S):=\int_0^{\theta_M(S)}\|w(s)\|_{3p}^p\dd s.
$$ The nonspectral convection, taming, and Wiener estimates in the proof of \cref{lem:5.2}, together with \eqref{eq:5.4-preexit}, give, for every $\varepsilon>0$,
\begin{equation}
\label{eq:5.4-nonjump}
\E\int_0^{\theta_M(S)} \bigl(\|f(s)\|_{q_f}^p+\|h(s)\|_{q_h}^p +\|g(s)\|_{\mathbb L^p}^p\bigr)\,\dd s \le \varepsilon\E Y_M(S) +\Lambda_{R,M,\varepsilon}(S)\E X_M(S),
\end{equation}
where $\Lambda_{R,M,\varepsilon}(S)\to0$ as $S\downarrow0$.
Here the terms containing a difference of cutoff factors are controlled by $|\phi_1-\phi_2|\le C_\phi\|w\|_p$ and the localized $L^p(0,\eta^M;L^{3p})$ bounds.
The strict upper bounds on $q_f,q_h$, and the assumption $q_\sigma<3p/2$, give positive powers of $S$ in these terms.

For the Poisson term, write $H=H_1+H_2+H_3$, with $$
\begin{aligned}
H_1&=\phi_1(\phi_1-\phi_2)\mathcal PG(u^{(1)},z), \\
H_2&=\phi_1\phi_2\mathcal P \bigl(G(u^{(1)},z)-G(u^{(2)},z)\bigr), \\
H_3&=(\phi_1-\phi_2)\phi_2\mathcal PG(u^{(2)},z).
\end{aligned}
$$ All states and scalar factors in this decomposition are evaluated at $s-$.
For a predictable coefficient $J$, put 
$$
\mathcal J_{M,S}(J):=  \E \ \int_0^{\theta_M(S)}\int_Z \|J(s,z)\|_p^p\mu(\dd z)\dd s  + \E\ \left( \int_0^{\theta_M(S)}\int_Z \|J(s,z)\|_p^2\mu(\dd z)\dd s \right)^{p/2}.$$ By \eqref{eq:4.G1}, the Lipschitz continuity of $\phi$, and \eqref{eq:5.4-preexit}, $$ \mathcal J_{M,S}(H_1)+\mathcal J_{M,S}(H_3) \le C_{R,M}(S+S^{p/2})\E X_M(S).
$$ When $\alpha=0$, \eqref{eq:4.G2} gives the same bound for $\mathcal J_{M,S}(H_2)$.
For $0<\alpha<2/3$, set $\vartheta:=1-3\alpha/2\in(0,1)$.
H\"older's inequality and interpolation give, for $r=2,p$, $$ \int_Z\|H_2(s,z)\|_p^r\mu(\dd z) \le C_M\|w(s)\|_p^{\vartheta r} \|w(s)\|_{3p}^{(1-\vartheta)r} $$ for $\dd s\otimes\P$-almost every point of $(0,\eta^M]$.
In using this bound, we identify current and left-limit states only almost everywhere in time.
No $L^{3p}$-valued left limits are required.
Consequently, $$
\begin{aligned}
\E\int_0^{\theta_M(S)}\int_Z \|H_2(s,z)\|_p^p\,\mu(\dd z)\,\dd s &\le C_M S^\vartheta \E\bigl[X_M(S)^\vartheta Y_M(S)^{1-\vartheta}\bigr] \\
&\le \varepsilon\E Y_M(S) +C_{\varepsilon,M}S\E X_M(S), \\
\E\left( \int_0^{\theta_M(S)}\int_Z \|H_2(s,z)\|_p^2\,\mu(\dd z)\,\dd s \right)^{p/2} &\le C_M S^{(p-3\alpha)/2} \E\bigl[X_M(S)^\vartheta Y_M(S)^{1-\vartheta}\bigr] \\
&\le \varepsilon\E Y_M(S) +C_{\varepsilon,M}S^{(p-3\alpha)/(2\vartheta)} \E X_M(S).
\end{aligned}
$$ The finite-sum inequality for the two Poisson norms therefore yields
\begin{equation}
\label{eq:5.4-Levy-est}
\mathcal J_{M,S}(H) \le C\varepsilon\E Y_M(S) +\Lambda^J_{R,M,\varepsilon}(S)\E X_M(S), \qquad \Lambda^J_{R,M,\varepsilon}(S)\longrightarrow0.
\end{equation}

We explain the application of the linear estimate.
Multiply $f,h,g,H$ by the predictable indicator $\mathds 1_{(0,\theta_M(S)]}$ and solve the resulting linear heat equation on $[0,S]$ with zero initial datum.
The preceding bounds and the assumed energy regularity ensure the required integrability.
The resulting solution agrees with $w$ on $[0,\theta_M(S)]$: their difference satisfies the homogeneous heat equation with zero initial datum on that interval, and both equations contain the same terminal Poisson jump.
Thus restriction of the estimate in \cref{Thm:heat-exis}, using its $\nu\Delta$ version, gives by \eqref{eq:5.4-nonjump} and \eqref{eq:5.4-Levy-est}
\begin{equation}
\label{eq:5.4-energy-difference}
\E X_M(S) +c_{p,\nu}\E\int_0^{\theta_M(S)}D_p(w(s))\,\dd s \le C\varepsilon\E Y_M(S) +C\bigl(\Lambda_{R,M,\varepsilon}(S) +\Lambda^J_{R,M,\varepsilon}(S)\bigr) \E X_M(S).
\end{equation}
The Sobolev inequality gives $Y_M(S)\le C_S\int_0^{\theta_M(S)}D_p(w(s))\,\dd s$.
Choose $\varepsilon$ sufficiently small to absorb this dissipation term, and then choose $S_M\in(0,T\wedge1]$ sufficiently small to absorb the remaining supremum term.
It follows from \eqref{eq:5.4-energy-difference} that $$ w(s)=0\quad\text{for every }s\in[0,S_M\wedge\eta^M] \qquad\P\text{-a.s.} $$

The same argument can be iterated on a deterministic partition $0=t_0<\cdots<t_J=T$ with mesh at most $S_M$.
At $t_j$, restrict the difference equation to the event $A_j:=\{\eta^M>t_j\}\in\mathcal F_{t_j}$.
Inductively, $\mathds 1_{A_j}w(t_j)=0$.
The bounds in \eqref{eq:5.4-preexit} remain valid on the remaining interval, with the same constants.
Apply the preceding argument to the translated equation, with every forcing term multiplied by $\mathds 1_{A_j}$.
The translated Poisson integral is taken over $(t_j,t_j+s]$, so its lower endpoint is excluded and the jump already contained in the initial state at $t_j$ is not counted again.
Finite induction gives
\begin{equation}
\label{eq:5.4-localized}
w(s)=0\quad\text{for every }s\in[0,\eta^M] \qquad\P\text{-a.s.}
\end{equation}

By \eqref{eq:5.4-solution-energy}, $Q_i(T)<\infty$ almost surely.
Hence, for almost every trajectory, $\eta^M=\tau_*$ for every sufficiently large integer $M$.
Taking the countable intersection of the full-probability events in \eqref{eq:5.4-localized} proves the assertion, including the common terminal value.
\end{proof}

\begin{proof}
[Proof of \cref{eq:2_Main_thm}] Choose $\chi_0\in C^\infty([0,\infty);[0,1])$ such that $$ \chi_0(r)=1 \quad\text{for }0\le r\le2, \qquad \chi_0(r)=0 \quad\text{for }r\ge4, $$ and set $ \phi_R(r):=\chi_0(r/R), \ R\ge1.
$ Thus $\phi_R=1$ on $[0,2R]$ and $\phi_R=0$ on $[4R,\infty)$.

We first identify the limit for bounded initial data.
Fix $R\ge1$ and a divergence-free $\xi\in L^p(\Omega,\mathcal F_0;E)$ satisfying $\|\xi\|_p\le R$ almost surely.
Apply \cref{lem:5.3} to the approximations \eqref{eq:5.1_truncated} with initial datum $\xi$ and scalar cutoff $\phi_R$.
Write $$ v_k:=u^{(n_k)}, \qquad S_k:=P_{n_k}, $$ and denote the resulting stopping time and limit by $\bar\tau$ and $v$, respectively.
Represent $v$ by its stopped c\`adl\`ag extension.
Then $\P(0<\bar\tau\le1)=1$ and
\begin{equation}
\label{eq:2-main-path-limit}
\sup_{0\le s\le\bar\tau}\|v_k(s)-v(s)\|_p^p + \int_0^{\bar\tau}\|v_k(s)-v(s)\|_{3p}^p\,\dd s \longrightarrow0 \qquad \P\text{-a.s.}
\end{equation}
Moreover, $v$ has the energy regularity stated in \cref{lem:5.3}, and $v_k(0)\to\xi$ in $E$ almost surely.

The range of an $E$-valued c\`adl\`ag path on a compact interval has compact closure.
Consequently, the uniform boundedness and strong convergence of $S_k$ on $E$ imply $$ \sup_{0\le s\le\bar\tau} \|(S_k-I)v(s)\|_p \longrightarrow0 \qquad \P\text{-a.s.} $$ Strong convergence on $L^{3p}$ and dominated convergence also give $$ \int_0^{\bar\tau} \|(S_k-I)v(s)\|_{3p}^p\dd s \longrightarrow0 \qquad \P\text{-a.s.} $$ Together with \eqref{eq:2-main-path-limit}, these facts show that the same convergence holds with $v_k$ replaced by $S_kv_k$.

Set $$ a_k(s):=\phi_R(\|v_k(s)\|_p)^2, \qquad a(s):=\phi_R(\|v(s)\|_p)^2, \qquad \mathcal T(w):=g_N(|w|^2)w.
$$ The cutoff factors $a_k$ converge uniformly to $a$ on $[0,\bar\tau]$, and their left limits converge uniformly as well.

For clarity, write the regularized convective flux in components: $$ \mathcal B_k := \bigl((S_kv_k)_i(v_k)_j\bigr)_{i,j=1}^3, \qquad \mathcal B := \bigl(v_iv_j\bigr)_{i,j=1}^3.
$$ With the convention $(\nabla\cdot F)_i=\sum_j\partial_jF_{ij}$, the identity $\nabla\cdot v_k=0$ gives $$ \nabla\cdot\mathcal B_k = (v_k\cdot\nabla)S_kv_k.
$$ Define $$
\begin{aligned}
f_k(s) &:=-a_k(s)S_k\mathcal P^{(1)}\mathcal B_k(s), & f(s) &:=-a(s)\mathcal P^{(1)}\mathcal B(s), \\
h_k(s) &:=-a_k(s)S_k\mathcal P\mathcal T(S_kv_k(s)), & h(s) &:=-a(s)\mathcal P\mathcal T(v(s)), \\
g_k(s) &:=a_k(s-)S_k\mathcal P\sigma(S_kv_k(s-)), & g(s) &:=a(s-)\mathcal P\sigma(v(s-)), \\
H_k(s,z) &:=a_k(s-)S_k\mathcal PG(S_kv_k(s-),z), & H(s,z) &:=a(s-)\mathcal PG(v(s-),z).
\end{aligned}
$$ The possible $(\omega,s)$-dependence of $\sigma$ and $G$ is suppressed.
The stochastic coefficients are predictable.
Their left-state representation agrees with the Wiener coefficients in the approximate equation for $\dd s\otimes\P$-almost every $(s,\omega)$.

Choose $$ \max\left\{\frac p2,\frac{3p}{p+1}\right\} <q_f<\frac{3p}{4}, \qquad \max\left\{\frac p3,\frac{3p}{2p+1}\right\} <q_h<\frac{3p}{7}.
$$ These intervals are nonempty because $p>3$.
Let $\ell_f$ satisfy $$ \frac1{q_f}=\frac1p+\frac1{\ell_f}.
$$ Then $p<\ell_f<3p$ and $p<3q_h<9p/7$.
Interpolation in space and H\"older's inequality in time therefore show that both $v_k$ and $S_kv_k$ converge to $v$ in $$ L^p(0,\bar\tau;L^{\ell_f}) \cap L^{3p}(0,\bar\tau;L^{3q_h}) \qquad \P\text{-a.s.} $$ The product estimate for the convective flux and $$ |\mathcal T(x)-\mathcal T(y)| \le C_{N,\nu}(|x|^2+|y|^2)|x-y|, \qquad x,y\in\mathbb R^3, $$ then give convergence of the corresponding nonlinear terms in $L^p(0,\bar\tau;L^{q_f})$ and $L^p(0,\bar\tau;L^{q_h})$.

For the noise coefficients, we use the weighted Lipschitz assumptions.
If $0<\delta<2/3$, H\"older's inequality gives $$ \bigl\|(|b|+|c|)^\delta(b-c)\bigr\|_p \le (\|b\|_p+\|c\|_p)^\delta \|b-c\|_{p/(1-\delta)}.
$$ Writing $\theta_\delta=1-3\delta/2>0$, interpolation yields $$ \|w\|_{p/(1-\delta)} \le \|w\|_p^{\theta_\delta} \|w\|_{3p}^{1-\theta_\delta}.
$$ Thus convergence in the norm appearing in \eqref{eq:2-main-path-limit} implies convergence in $L^p(0,\bar\tau;L^{p/(1-\delta)})$.
Apply this observation with $\delta=1/2$ for the Wiener coefficient and with $\delta=\alpha$ for the jump coefficient when $\alpha>0$.
When $\alpha=0$, the required convergence follows directly from \eqref{eq:4.G2}.

All estimates involving $L^{3p}$ are understood for Lebesgue-almost every time.
Here we use $v_k(s-)=v_k(s)$ and $v(s-)=v(s)$ in $E$ for almost every $s$; no $L^{3p}$-valued left limits are assumed.
The outer multipliers are removed by their strong convergence in the corresponding coefficient spaces.
Combining these observations with the uniform convergence of the scalar cutoffs gives
\begin{equation}
\label{eq:2-main-coeff-limit}
\begin{aligned}
&\int_0^{\bar\tau} \left( \|f_k-f\|_{q_f}^p + \|h_k-h\|_{q_h}^p + \|g_k-g\|_{\mathbb L^p}^p \right)\dd s \\
&\quad+ \sum_{r\in\{2,p\}} \int_0^{\bar\tau} \left( \int_Z \|H_k(s,z)-H(s,z)\|_p^r \,\mu(\dd z) \right)^{p/r}\dd s \longrightarrow0 \qquad \P\text{-a.s.}
\end{aligned}
\end{equation}

The limiting coefficients have the required integrability.
Indeed, the cutoff bounds, the growth assumptions, and the preceding interpolation ranges give $$ \|f(s)\|_{q_f}^p + \|h(s)\|_{q_h}^p + \|g(s)\|_{\mathbb L^p}^p \le C_R\bigl(1+\|v(s)\|_{3p}^p\bigr) $$ for $\dd s\otimes\P$-almost every $(s,\omega)$ with $s<\bar\tau$, and $$ \int_Z\|H(s,z)\|_p^r\mu(\dd z) \le C_R, \qquad r\in\{2,p\}.
$$ Together with \cref{lem:5.3}, these estimates ensure that the limiting stochastic integrals are well defined after multiplication by $\mathds 1_{\{s\le\bar\tau\}}$.

We next justify convergence of the stochastic integrals without assuming convergence in expectation in \eqref{eq:2-main-coeff-limit}.
For $0\le t\le1$, set $$ Q_k(t) :=\int_0^{t\wedge\bar\tau} \Bigg[\|g_k(s)-g(s)\|_{\mathbb L^p}^2 +\int_Z\|H_k(s,z) H(s,z)\|_p^2 \mu(\dd z)+\int_Z \|H_k(s,z)-H(s,z)\|_p^p \mu(\dd z) \Bigg]\dd s.
$$ These processes are adapted, continuous, and nondecreasing.
By \eqref{eq:2-main-coeff-limit} and H\"older's inequality in time, $$ Q_k(1)\longrightarrow0 \qquad \P\text{-a.s.} $$ Define $$
\begin{aligned}
M_k^W(t) &:= \int_0^t \mathds 1_{\{s\le\bar\tau\}} (g_k(s)-g(s))\,\dd W_s, \\
M_k^J(t) &:= \int_0^t\int_Z \mathds 1_{\{s\le\bar\tau\}} (H_k(s,z)-H(s,z)) \,\widetilde N(\dd s,\dd z).
\end{aligned}
$$ For $b>0$, stop these integrals at $$ \zeta_{k,b} := \inf\{t\in[0,1]:Q_k(t)\ge b\}\wedge1, \qquad \inf\varnothing:=\infty.
$$ The Wiener and compensated-Poisson maximal inequalities, applied up to $\zeta_{k,b}$, give, for every $\varepsilon>0$, $$
\begin{aligned}
\P\left( \sup_{t\le1}\|M_k^W(t)\|_p + \sup_{t\le1}\|M_k^J(t)\|_p >\varepsilon \right) \le \P(Q_k(1)\ge b) + \frac{C_p}{\varepsilon^p} \bigl(b^{p/2}+b\bigr).
\end{aligned}
$$ First let $k\to\infty$ and then $b\downarrow0$.
It follows that both stochastic integrals converge to zero in probability, uniformly on $[0,1]$.
The stopping indicators are predictable, and the integrals retain a possible jump at $\bar\tau$.

We may now pass to the limit in the stopped weak formulation of \eqref{eq:5.1_truncated}.
For every $\varphi\in C_c^\infty(\mathbb R^3;\mathbb R^3)$, the deterministic terms converge almost surely, uniformly in time, by \eqref{eq:2-main-path-limit} and \eqref{eq:2-main-coeff-limit}.
The preceding stochastic convergence therefore identifies the limiting identity as $$
\begin{aligned}
\langle v(t\wedge\bar\tau),\varphi\rangle =& \langle\xi,\varphi\rangle + \nu\int_0^{t\wedge\bar\tau} \langle v(s),\Delta\varphi\rangle\,\dd s - \int_0^{t\wedge\bar\tau} \langle f(s),\nabla\varphi\rangle\,\dd s + \int_0^{t\wedge\bar\tau} \langle h(s),\varphi\rangle\,\dd s \\
&+ \int_0^t \mathds 1_{\{s\le\bar\tau\}} \langle g(s)^*\varphi,\dd W_s\rangle_{\mathcal U} + \int_0^t\int_Z \mathds 1_{\{s\le\bar\tau\}} \langle H(s,z),\varphi\rangle \,\widetilde N(\dd s,\dd z).
\end{aligned}
$$ The c\`adl\`ag versions make this identity simultaneous in $t$ for each test function.
A countable determining family of test functions and continuity of the pairings give the stopped weak formulation.
Also, $\nabla\cdot v=0$ follows from the strong $L^p$ convergence of the divergence-free approximations.
Thus $(v,\bar\tau)$ is a local strong solution of \eqref{eq:5.23} with cutoff $\phi_R$ and initial datum $\xi$.

Consequently, \cref{lem:5.3,thm:5.4} yield, for every such $\xi$, a pathwise unique local strong solution $(u_R,\bar\tau_R)$ of the scalar-cutoff equation, with $u_R(0)=\xi$, $0<\bar\tau_R\le1$ almost surely, and
\begin{equation}
\label{eq:2-main-fixed-R-energy}
\E\left[ \sup_{0\le s\le\bar\tau_R}\|u_R(s)\|_p^p + \int_0^{\bar\tau_R} \left\| \nabla\bigl(|u_R(s)|^{p/2}\bigr) \right\|_2^2\,\dd s \right] \le C_RR^p<\infty.
\end{equation}
Only finiteness for each fixed $R$ is needed below.

We now remove the boundedness restriction on the initial datum.
For $R\in\N$, define $$ B_R:=\{R-1\le\|u_0\|_p<R\}, \qquad u_{0,R}:=\mathds 1_{B_R}u_0.
$$ The events $B_R$ belong to $\mathcal F_0$, are pairwise disjoint, and cover $\Omega$ up to a null set.
Moreover, $$ \nabla\cdot u_{0,R}=0, \qquad \|u_{0,R}\|_p<R \qquad \P\text{-a.s.} $$ Let $(u_R,\bar\tau_R)$ be the scalar-cutoff solution constructed above with initial datum $u_{0,R}$.
Using its stopped extension $$ \widetilde u_R(t):=u_R(t\wedge\bar\tau_R), $$ define $$ \eta_R := \inf\left\{ t>0: \sup_{0\le s\le t}\|\widetilde u_R(s)\|_p \ge2R \right\}, \qquad \sigma_R^0:=\bar\tau_R\wedge\eta_R\wedge1.
$$ The running supremum is adapted and right-continuous, so $\eta_R$ is a stopping time.
Since $\|u_{0,R}\|_p<R$, right-continuity at zero gives $\sigma_R^0>0$ almost surely.

The exit-time definition and the existence of left limits imply
\begin{equation}
\label{eq:2-main-prejump}
\sup_{0\le s<\sigma_R^0}\|u_R(s)\|_p \le2R, \qquad \sup_{0\le s\le\sigma_R^0}\|u_R(s-)\|_p \le2R.
\end{equation}
Hence $$ \phi_R(\|u_R(s)\|_p)=1 \quad\text{for Lebesgue-a.e.
}s\in[0,\sigma_R^0], $$ and $$ \phi_R(\|u_R(s-)\|_p)=1, \qquad 0<s\le\sigma_R^0.
$$ Thus the stopped scalar-cutoff equation agrees with \eqref{eq:1_Main} on $[0,\sigma_R^0]$, including a possible terminal Poisson jump.
Its initial datum is $u_{0,R}$, which equals $u_0$ on $B_R$.
No bound on the post-jump value $u_R(\sigma_R^0)$ is used.

To obtain a summable energy estimate, set, for $0\le t\le1$, $$ \mathcal E_R(t) := \sup_{0\le s\le t\wedge\sigma_R^0}\|u_R(s)\|_p^p + \int_0^{t\wedge\sigma_R^0} \left\| \nabla\bigl(|u_R(s)|^{p/2}\bigr) \right\|_2^2\dd s.
$$ By \eqref{eq:2-main-fixed-R-energy}, $ \E\left[ \mathds 1_{B_R}\mathcal E_R(1) \right]<\infty.
$ Right-continuity at zero and integrability of the dissipation give $$ \mathcal E_R(t)\longrightarrow\|u_0\|_p^p \quad\text{almost surely on }B_R \quad\text{as }t\downarrow0.
$$ Since $\mathcal E_R(t)\le\mathcal E_R(1)$, dominated convergence yields $$ \lim_{t\downarrow0} \E\left[ \mathds 1_{B_R}\mathcal E_R(t) \right] = \E\left[ \mathds 1_{B_R}\|u_0\|_p^p \right].
$$ Choose a deterministic $\delta_R\in(0,1]$ such that
\begin{equation}
\label{eq:2-main-deltaR}
\E\left[ \mathds 1_{B_R}\mathcal E_R(\delta_R) \right] \le \E\left[ \mathds 1_{B_R}\|u_0\|_p^p \right] + 2^{-R}.
\end{equation}
When $\P(B_R)=0$, any $\delta_R\in(0,1]$ may be used.

Set $$ \sigma_R:=\sigma_R^0\wedge\delta_R, \qquad \tau:=\sum_{R=1}^\infty \mathds 1_{B_R}\sigma_R, $$ and define $$ u(t):=\sum_{R=1}^\infty \mathds 1_{B_R}u_R(t\wedge\sigma_R), \qquad t\ge0.
$$ On the null event $$ B_\infty:=\Omega\setminus\bigcup_{R\ge1}B_R \in\mathcal F_0, $$ set $\tau=0$ and $u=0$.
For every $t\ge0$, $$ \{\tau\le t\} = B_\infty \cup \bigcup_{R\ge1} \bigl(B_R\cap\{\sigma_R\le t\}\bigr) \in\mathcal F_t.
$$ Thus $\tau$ is a stopping time, and $\P(0<\tau\le1)=1$.
Exactly one branch is active outside $B_\infty$; hence $u$ is adapted, divergence-free, c\`adl\`ag, and stopped at $\tau$, with $u(0)=u_0$ almost surely.

We verify the stochastic formulation for the pasted process.
Let $$ C_m:=\bigcup_{R=1}^mB_R=\{\|u_0\|_p<m\}.
$$ For the Poisson coefficient, the identity $$
\mathds 1_{C_m}\mathds 1_{\{s\le\tau\}} \mathcal PG(s,u(s-),z) = \sum_{R=1}^m \mathds 1_{B_R}\mathds 1_{\{s\le\sigma_R\}} \mathcal PG(s,u_R(s-),z)
$$ holds almost everywhere.
Every term on the right is predictable.
The analogous identities hold for the Wiener coefficient and the deterministic terms.
Multiplying the stopped formulation for each branch by $\mathds 1_{B_R}$ and summing over $1\le R\le m$ therefore gives the required formulation on $C_m$.

These finite-shell formulations are consistent as $m$ increases, by locality of stochastic integration with respect to $\mathcal F_0$-measurable events.
More explicitly, the stopping times $$ \lambda_m:=m\mathds 1_{C_m} $$ increase to infinity almost surely.
Since $\tau\le1$, stopping the coefficients at $\lambda_m$ is equivalent, on $(0,\tau]$, to restricting them to $C_m$.
Thus the finite-shell stochastic integrals define the usual local stochastic integrals for the pasted process.
Letting $m\to\infty$ gives the stopped weak formulation of \eqref{eq:1_Main}.
The predictable bound \eqref{eq:2-main-prejump} ensures that a jump at $\sigma_R$, and hence at $\tau$ on $B_R$, is retained with the coefficient of the original equation.

On $B_R$, the energy of the pasted process equals $\mathcal E_R(\delta_R)$.
Consequently, Tonelli's theorem and \eqref{eq:2-main-deltaR} give $$
\begin{aligned}
\E\left[ \sup_{0\le s\le\tau}\|u(s)\|_p^p + \int_0^\tau \left\| \nabla\bigl(|u(s)|^{p/2}\bigr) \right\|_2^2\,\dd s \right] &= \sum_{R=1}^\infty \E\left[ \mathds 1_{B_R}\mathcal E_R(\delta_R) \right] \\
&\le \sum_{R=1}^\infty \E\left[ \mathds 1_{B_R}\|u_0\|_p^p \right] + \sum_{R=1}^\infty2^{-R} = \E\|u_0\|_p^p+1.
\end{aligned}
$$ This proves \eqref{eq:2-main-energy}, in fact with $C=1$ for the selected lifetime.
The Sobolev inequality $$ \|u(s)\|_{3p}^p = \bigl\||u(s)|^{p/2}\bigr\|_6^2 \le C \left\| \nabla\bigl(|u(s)|^{p/2}\bigr) \right\|_2^2 $$ then gives $u\in L^p(\Omega;L^p(0,\tau;L^{3p}))$.
The supremum estimate and the c\`adl\`ag construction give $u\in L^p(\Omega;\D([0,\tau];E))$ under the process-space convention of the paper.

Finally, let $(u^{(1)},\tau_1)$ and $(u^{(2)},\tau_2)$ be two local strong solutions of \eqref{eq:1_Main} with the same initial datum and driving noises, both satisfying the stated local energy regularity.
Write $$ \widetilde u^{(i)}(t) := u^{(i)}(t\wedge\tau_i), \qquad i=1,2.
$$ For $R\in\N$ and $T>0$, define $$
\begin{aligned}
D_R :=\{\|u_0\|_p<R\},\ &\eta_R^{(i)} := \inf\left\{ t>0: \sup_{0\le s\le t} \|\widetilde u^{(i)}(s)\|_p \ge2R \right\}, \\
\theta_{R,T} &:= T\wedge\tau_1\wedge\tau_2 \wedge\eta_R^{(1)}\wedge\eta_R^{(2)}.
\end{aligned}
$$ On $D_R$, both exit times are positive almost surely.
The same pre-jump argument as in \eqref{eq:2-main-prejump} shows that both solutions satisfy the scalar-cutoff equation with cutoff $\phi_R$ on $[0,\theta_{R,T}]$, including its terminal Poisson jump.

Because $D_R\in\mathcal F_0$, the estimates in the proof of \cref{thm:5.4} apply with every expectation multiplied by $\mathds 1_{D_R}$.
This multiplication preserves predictability, and the initial difference is zero.
Hence $$ \mathds 1_{D_R}u^{(1)}(s) = \mathds 1_{D_R}u^{(2)}(s), \qquad 0\le s\le\theta_{R,T}, \qquad \P\text{-a.s.} $$

For each fixed $T$, the stopped c\`adl\`ag paths $\widetilde u^{(1)}$ and $\widetilde u^{(2)}$ are bounded in $E$ on $[0,T]$ almost surely.
Therefore, for almost every $\omega$ and all sufficiently large integers $R$, $$ \omega\in D_R, \qquad \eta_R^{(1)}(\omega)>T, \qquad \eta_R^{(2)}(\omega)>T.
$$ It follows that $$ u^{(1)}(s)=u^{(2)}(s), \qquad 0\le s\le T\wedge\tau_1\wedge\tau_2, \qquad \P\text{-a.s.} $$ Taking the corresponding countable intersection over $T\in\N$ proves indistinguishable equality on the common closed lifetime.
This establishes pathwise uniqueness and completes the proof.
\end{proof}

\begin{theorem}
[Localized Lipschitz dependence on the initial datum] \label{thm:local-continuous-dependence} Let $p>3$, and assume the hypotheses of \cref{eq:2_Main_thm}.
Fix the stochastic basis, the coefficients, and the driving Wiener process and Poisson random measure.
Let $(u,\tau_u)$ and $(v,\tau_v)$ be local strong solutions of \eqref{eq:1_Main}, belonging to the local-energy class of \cref{eq:2_Main_thm}, with divergence-free initial data $$ u_0,v_0\in L^p(\Omega,\mathcal F_0;E).
$$ Fix $T>0$ and $M\ge1$.
Let $\theta$ be a stopping time such that $$ 0\le\theta\le T\wedge\tau_u\wedge\tau_v, \qquad \sup_{0\le s<\theta} \bigl(\|u(s)\|_p+\|v(s)\|_p\bigr)\le M \quad\P\text{-a.s.}, $$ where the supremum over the empty interval is zero.
Then
\begin{equation}
\label{eq:5.5_energy-estimate}
\E\left[ \sup_{0\le s\le\theta}\|u(s)-v(s)\|_p^p +\int_0^\theta \left\|\nabla\bigl(|u(s)-v(s)|^{p/2}\bigr)\right\|_2^2 \,\dd s \right] \le C_{M,T}\E\|u_0-v_0\|_p^p.
\end{equation}
The constant depends only on $M,T,p,\nu,N,\alpha$ and the structural constants in the coefficient assumptions.
It is independent of the initial data, the particular solutions, the comparison time $\theta$, and all auxiliary approximation parameters.
The estimate includes a possible jump at $\theta$.
The same conclusion holds with the dissipation integral replaced by $\int_0^\theta\|u(s)-v(s)\|_{3p}^p\,\dd s$, after adjusting the constant.
\end{theorem}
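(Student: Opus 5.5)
The plan is a direct It\^o estimate for $\|w\|_p^p$, where $w:=u-v$, using only the sup-bound $M$ on $[0,\theta)$. The point is that every nonlinear term in the difference equation is \emph{subcritical} when the background fields lie merely in $L^p$ with $p>3$. Consequently each term can be bounded through the Gagliardo--Nirenberg inequality by $\delta\|\nabla\mathcal W\|_2^2+C_{\delta,M}\|w\|_p^p$, where $\mathcal W:=w|w|^{(p-2)/2}$, exactly as in the proof of \cref{Thm:heat-exis}. No $L^{3p}$-bound on $u$ or $v$ enters, so the constant depends only on $M,T$ and the structural data.

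\textbf{Step 1: It\^o formula for the difference.} On $[0,\theta]$ the difference $w$ solves the linear heat equation (with $\nu\Delta$) with the following data:
\begin{itemize}
\item initial datum $u_0-v_0$;
\item divergence forcing $f=-\mathcal P^{(1)}(u\otimes w+w\otimes v)$;
\item direct forcing $h=-\mathcal P(\mathcal T(u)-\mathcal T(v))$, where $\mathcal T(a)=g_N(|a|^2)a$;
\item Wiener coefficient $\mathcal P(\sigma(u)-\sigma(v))$;
\item jump coefficient $\mathcal P(G(u(s-),z)-G(v(s-),z))$.
\end{itemize}
I multiply all of these by the predictable indicator $\mathds 1_{\{s\le\theta\}}$. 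The energy class makes these data admissible for \cref{Thm:heat-exis}, since $\int_0^\theta\|u\|_{3p}^p\,\dd s<\infty$ a.s.; if only almost-sure finiteness holds, I localize further by auxiliary stopping times $\rho_j\uparrow$ that bound the dissipation, and remove them at the end. The mollification argument of \cref{Thm:heat-exis} then justifies the It\^o formula for $\|w(t\wedge\theta\wedge\rho_j)\|_p^p$ with the terms $I_\Delta$, $I_f$, $I_h$, the Wiener correction, $M_W$, $I_{G,\mathrm{corr}}$, $M_{G,1}$ and $M_{G,2}$.

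The coercivity \eqref{eq:heat-proof-coercivity-final} gives the dissipation $\|\nabla\mathcal W\|_2^2\gtrsim\|\nabla|w|^{p/2}\|_2^2$. The lifetime constraint is built in as follows. Deterministic and Wiener integrands use the state at Lebesgue-a.e.\ time $s<\theta$, where $\|u(s)\|_p+\|v(s)\|_p\le M$. The Poisson integrands use $u(s-)$ and $v(s-)$, whose norms are also $\le M$ for $s\le\theta$, because these are limits from $[0,\theta)$. Hence the possible jump at $\theta$ is included, and no post-jump bound is used.

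\textbf{Step 2: subcritical bounds for the nonlinear terms.} Throughout I use $\|\mathcal W\|_2^2=\|w\|_p^p$ and interpolation $\|\mathcal W\|_\rho\le C\|\mathcal W\|_2^{1-a}\|\nabla\mathcal W\|_2^{a}$ with $a=3(\frac12-\frac1\rho)$.
\begin{itemize}
\item \emph{Convection.} After integrating by parts, $|\nabla J_p(w)|\lesssim|w|^{(p-2)/2}|\nabla\mathcal W|$, and boundedness of $\mathcal P^{(1)}$ on $L^2$ give
\[
|I_f|\lesssim\|\nabla\mathcal W\|_2\bigl(\|u\|_p+\|v\|_p\bigr)\|\mathcal W\|_{2p/(p-2)} .
\]
This is handled by writing the pairing with $J_p(w)$ through Hölder at exponents inside $(1,\infty)$, as in \cref{Thm:heat-exis}. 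Since $2p/(p-2)<6$ exactly when $p>3$, the interpolation exponent satisfies $a=3/p<1$, and Young yields $\delta\|\nabla\mathcal W\|_2^2+C_\delta M^{2p/(p-3)}\|w\|_p^p$.
\item \emph{Taming.} I use $|\mathcal T(u)-\mathcal T(v)|\le C(|u|^2+|v|^2)|w|$ and place $\mathcal P$ on an $L^q$ space with $1/q=2/p+1/s$. Then $\|h\|_q\lesssim M^2\|w\|_s$. Pairing with $|J_p(w)|=|\mathcal W|^{2(p-1)/p}$ gives a homogeneous degree-$2$ expression in $\mathcal W$. Since the potential $|u|^2$ lies in $L^{p/2}$ with $p/2>3/2$, the total derivative exponent is $<2$, so Young again gives $\delta\|\nabla\mathcal W\|_2^2+C_{\delta,M}\|w\|_p^p$. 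I choose $s$ with $p<s<3p$ so that all exponents stay in $(1,\infty)$.
\item \emph{Wiener terms.} By \eqref{eq:2_WienerH},
\[
\|\sigma(u)-\sigma(v)\|_{\mathbb L^p}\lesssim M^{1/2}\|w\|_{2p}=M^{1/2}\|\mathcal W\|_4^{2/p}.
\]
Hence the correction term $\lesssim\|w\|_p^{p-2}\|\sigma(u)-\sigma(v)\|_{\mathbb L^p}^2$ involves a power $\|\nabla\mathcal W\|_2^{3/2\cdot 2/p\cdot\ldots}$ strictly below $2$. The martingale $M_W$ is treated with BDG and Young, giving $\delta\E\sup\|w\|_p^p$ plus the same quantity.
\item \emph{Jump terms.} For $r\in\{2,p\}$, \eqref{eq:4.G2} gives
\[
\Bigl(\int_Z\|\Delta G\|_p^r\,\mu(\dd z)\Bigr)^{1/r}\lesssim M^\alpha\|w(s-)\|_{p/(1-\alpha)} .
\]
Here $\|w\|_{p/(1-\alpha)}$ corresponds to $\mathcal W\in L^{2/(1-\alpha)}$, and $2/(1-\alpha)<6$ since $\alpha<2/3$. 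I use $w(s-)=w(s)$ for a.e.\ $s$ to interpolate. The terms $I_{G,\mathrm{corr}}$, $M_{G,1}$ (Davis' inequality and the Poisson moment inequality for both modes) and $M_{G,2}$ (positivity of $\mathcal R_p$ with the compensator identity) are then estimated exactly as in \eqref{eq:heat-proof-Gcorr-final}--\eqref{eq:heat-proof-MG2-final}. The quadratic mode uses Hölder in time, $t^{p/2-1}$, and produces again $\delta\E\sup\|w\|_p^p+\delta\E\int\|\nabla\mathcal W\|_2^2+C_{\delta,M,T}\E\int\|w\|_p^p$.
\end{itemize}

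\textbf{Step 3: absorption and Gronwall.} Choosing $\delta$ small, I absorb the dissipation and the supremum terms and obtain, for $X(t):=\E\sup_{s\le t\wedge\theta\wedge\rho_j}\|w(s)\|_p^p$,
\[
X(t)+c\,\E\int_0^{t\wedge\theta\wedge\rho_j}\|\nabla|w|^{p/2}\|_2^2\,\dd s\le C\,\E\|u_0-v_0\|_p^p+C_{M,T}\int_0^tX(s)\,\dd s .
\]
Since $X$ is finite by the localization, deterministic Gronwall gives the bound with $C_{M,T}$ independent of $j$. Letting $j\to\infty$ by monotone convergence proves \eqref{eq:5.5_energy-estimate}. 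The Sobolev inequality $\|w\|_{3p}^p\le C\|\nabla|w|^{p/2}\|_2^2$ gives the variant with $\int_0^\theta\|u-v\|_{3p}^p\,\dd s$.

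\textbf{Main obstacle.} The hardest point is the taming and convection pairings in the presence of $\mathcal P$: monotonicity of $\mathcal T$ is lost after projection, so one must check that a Hölder chain exists with all exponents in $(1,\infty)$ and Gagliardo--Nirenberg power below $2$, using only $p>3$. I would first try the exponent choice with $s$ slightly above $p$, where the room $p>3$ is largest. A secondary technical point is justifying the It\^o formula for $w$ at the regularity of the energy class up to a stopping time with a terminal jump; I would handle this by mollifying the data as in \cref{Thm:heat-exis} and identifying the limit via the uniqueness part of that theorem.
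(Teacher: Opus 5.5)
Your argument is correct in substance, but it is organized differently from the paper's.

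\textbf{Your route.} You redo the $L^p$ It\^o computation for $w=u-v$ and bound each nonlinear pairing directly by Gagliardo--Nirenberg in $\mathcal W=w|w|^{(p-2)/2}$. The total powers of $\|\nabla\mathcal W\|_2$ are $1+3/p$ for convection, $6/p$ for taming, $3/p$ for the Wiener correction and $3\alpha$ for the jump modes. All are below $2$, as you claim.

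\textbf{The paper's route.} The paper never applies It\^o's formula to $w$. It bounds the difference coefficients in the forcing norms of \cref{Thm:heat-exis}: $\|f\|_{q_f}\le CM\|w\|_{r_f}$, $\|h\|_{q_h}\le CM^2\|w\|_{r_h}$, $\|\Sigma\|_{\mathbb L^p}\le CM^{1/2}\|w\|_{2p}$ and $\int_Z\|J\|_p^r\,\mu(\dd z)\le CM^{\alpha r}\|w\|_{r_G}^r$, with all exponents in $[p,3p)$. Interpolation against the energy class of $u$ and $v$ makes the data multiplied by $\mathds 1_{(0,\theta]}$ admissible. The paper then solves the linear problem to get $\widehat w$ and identifies $\widehat w=w$ on $[0,\theta]$ by linear uniqueness. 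It absorbs via $C\|a\|_r^p\le\varepsilon D_p(a)+C_\varepsilon\|a\|_p^p$ and closes with Gronwall.

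\textbf{What each buys.} The paper gets a clean justification. The only It\^o formula used is the one for mollified linear data inside \cref{Thm:heat-exis}, and the finiteness of the dissipation of $w$, which you need in order to absorb, comes for free from $\widehat w$. Your route makes the subcriticality and the $M$-dependence explicit, but its justification step is the weak point. At energy-class regularity, where only $|w|^{p/2}\in H^1$ is known, there is no off-the-shelf It\^o formula for $\|w\|_p^p$. If you mollify the data as proposed, the pairing becomes $\langle\nabla J_p(w^\varepsilon),f^\varepsilon\rangle$, with $f^\varepsilon$ built from $w$ rather than $w^\varepsilon$. The degree-two homogeneity in a single $\mathcal W$ is then lost. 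You must split mixed products, keep a term $\delta\,\E\int\|\nabla\mathcal W\|_2^2$ on the right, and close by lower semicontinuity plus the finiteness supplied by \cref{Thm:heat-exis}. That is essentially the paper's forcing-norm argument in disguise.

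\textbf{Two small corrections.}
\begin{enumerate}
\item In the taming term the binding constraint is not that exponents lie in $(1,\infty)$. It is that $\|J_p(w)\|_{q'}=\|\mathcal W\|_{\rho_1}^{2(p-1)/p}$ with $\rho_1=2(p-1)q'/p\le 6$, i.e.\ $q\ge 3p/(2p+1)$, equivalently $s\ge 3p/(2p-5)$. For $3<p<4$ this forces $s$ toward $3p$, so your first try ``$s$ slightly above $p$'' fails there. The admissible window is nonempty exactly because $p>3$, and it is the same lower bound on $q_h$ as in \cref{Thm:heat-exis}.
\item In the convection term, $\mathcal P^{(1)}$ is used on $L^{b}$ with $b=p^2/(2p-2)$, not on $L^2$.
\end{enumerate}
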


\begin{proof}
Set $$ w:=u-v,\qquad w_0:=u_0-v_0,\qquad D_p(a):=\left\|\nabla\bigl(|a|^{p/2}\bigr)\right\|_2^2.
$$ The pre-exit assumption and the c\`adl\`ag property imply $$ \|u(s-)\|_p+\|v(s-)\|_p\le M, \qquad 0<s\le\theta.
$$ The corresponding bound for the current states holds for $\dd s\otimes\P$-almost every point of $(0,\theta]$.
These assertions also cover $\theta=0$, since the indicated integration interval is then empty.

Choose $$ \max\left\{\frac p2,\frac{3p}{p+1}\right\}<q_f<\frac{3p}{4}, \qquad \max\left\{\frac p3,\frac{3p}{2p+1}\right\}<q_h<\frac{3p}{7}, $$ and define $$ \frac1{r_f}:=\frac1{q_f}-\frac1p, \qquad \frac1{r_h}:=\frac1{q_h}-\frac2p, \qquad r_G:=\frac{p}{1-\alpha}.
$$ Then $r_f,r_h\in(p,3p)$ and $r_G\in[p,3p)$.

On the common lifetime, the difference equation is $$ \dd w=(\nu\Delta w+\nabla\cdot f+h)\dd s +\Sigma\dd W_s +\int_Z J(s,z)\widetilde N(\dd s,\dd z), $$ where $$
\begin{aligned}
f&=-\mathcal P^{(1)}(u\otimes u-v\otimes v), \\
h&=-\mathcal P\bigl(g_N(|u|^2)u-g_N(|v|^2)v\bigr), \\
\Sigma(s)&=\mathcal P\bigl(\sigma(s,u(s-)) -\sigma(s,v(s-))\bigr), \\
J(s,z)&=\mathcal P\bigl(G(s,u(s-),z)-G(s,v(s-),z)\bigr).
\end{aligned}
$$ The possible $\omega$-dependence is suppressed.
H\"older's inequality, boundedness of the Leray projection, and the coefficient difference assumptions give
\begin{equation}
\label{eq:5.5-coefficient-bounds}
\begin{aligned}
\|f(s)\|_{q_f}&\le C M\|w(s)\|_{r_f},& \|h(s)\|_{q_h}&\le C M^2\|w(s)\|_{r_h}, \\
\|\Sigma(s)\|_{\mathbb L^p} &\le C M^{1/2}\|w(s)\|_{2p},& \int_Z\|J(s,z)\|_p^r\,\mu(\dd z) &\le C M^{\alpha r}\|w(s)\|_{r_G}^r, \qquad r=2,p.
\end{aligned}
\end{equation}
These estimates are asserted for $\dd s\otimes\P$-almost every point of $(0,\theta]$.
The predictable integrands themselves retain the left-limit states; the replacement by current states in these norm estimates uses only $u_-=u$ and $v_-=v$ almost everywhere in time.

We first verify that the linear estimate can be used without assuming dissipation regularity for the difference.
The energy regularity of $u$ and $v$ implies $$ \E\sup_{s\le\theta}\|w(s)\|_p^p +\E\int_0^\theta\|w(s)\|_{3p}^p\dd s<\infty.
$$ For $p\le r\le3p$, put $\beta_r:=\frac32(1-p/r)\in[0,1]$.
Interpolation in space and H\"older's inequality in time and probability yield $$
\begin{aligned}
\E\int_0^\theta\|w(s)\|_r^p\,\dd s \le& T^{1-\beta_r} \left(\E\sup_{s\le\theta}\|w(s)\|_p^p \right)^{1-\beta_r} \left(\E\int_0^\theta\|w(s)\|_{3p}^p\,\dd s \right)^{\beta_r}<\infty,
\end{aligned}
$$ with the endpoint cases read directly.
Moreover, \eqref{eq:5.5-coefficient-bounds} gives, for $0<t\le T$, $$
\begin{aligned}
\E\int_0^{t\wedge\theta}\int_Z \|J(s,z)\|_p^p\,\mu(\dd z)\,\dd s &\le C_M\E\int_0^{t\wedge\theta} \|w(s)\|_{r_G}^p\,\dd s, \\
\E\left( \int_0^{t\wedge\theta}\int_Z \|J(s,z)\|_p^2\,\mu(\dd z)\,\dd s \right)^{p/2} &\le C_M T^{p/2-1}\E\int_0^{t\wedge\theta} \|w(s)\|_{r_G}^p\,\dd s.
\end{aligned}
$$ Thus $\mathds 1_{(0,\theta]}f$, $\mathds 1_{(0,\theta]}h$, $\mathds 1_{(0,\theta]}\Sigma$ and $\mathds 1_{(0,\theta]}J$ satisfy all the integrability requirements of \cref{Thm:heat-exis}.
The indicator $\mathds 1_{(0,\theta]}$ is predictable.

Let $\widehat w$ be the solution on $[0,T]$ of the linear heat equation with these forcing terms and initial datum $w_0$.
On $[0,\theta]$, the difference $\widehat w-w$ satisfies the homogeneous heat equation with zero initial datum.
Pathwise uniqueness for the deterministic heat equation in the $L^p$ class therefore gives $$ \widehat w(t)=w(t),\qquad 0\le t\le\theta, \qquad\P\text{-a.s.} $$ The endpoint is included because both formulations contain the same Poisson jump there.
The process $\widehat w$ continues by the homogeneous heat equation after $\theta$; it is not the constant stopped extension of $w$.

Define $$ Z(t):=\E\sup_{0\le s\le t}\|\widehat w(s)\|_p^p +\E\int_0^t D_p(\widehat w(s))\dd s.
$$ The $\nu\Delta$ version of \cref{Thm:heat-exis}, \eqref{eq:5.5-coefficient-bounds}, and the two preceding Poisson bounds imply $$ Z(t)\le C_T\E\|w_0\|_p^p +C_{M,T}\E\int_0^{t\wedge\theta} \bigl(\|w(s)\|_{r_f}^p+\|w(s)\|_{r_h}^p +\|w(s)\|_{2p}^p+\|w(s)\|_{r_G}^p\bigr) \dd s.
$$ The constants may be chosen uniformly for $0<t\le T$: apply the linear estimate with horizon $T$ to the forcing terms additionally restricted to $(0,t]$, and then restrict the resulting solution to $[0,t]$.

For $p\le r<3p$, the Sobolev inequality and Young's inequality give
\begin{equation}
\label{eq:5.5-interpolation}
C_{M,T}\|a\|_r^p \le \varepsilon D_p(a) +C_{\varepsilon,M,T}\|a\|_p^p,
\end{equation}
whenever $a\in L^p$ and $|a|^{p/2}\in H^1$.
Indeed, for $p<r<3p$ one has $$ \|a\|_r^p \le \|a\|_p^{p(1-\beta_r)}\|a\|_{3p}^{p\beta_r} \le C\|a\|_p^{p(1-\beta_r)}D_p(a)^{\beta_r}, \qquad 0<\beta_r<1.
$$ For $r=p$, \eqref{eq:5.5-interpolation} is immediate without the dissipation term.
In particular this covers $r_G=p$ when $\alpha=0$.

Apply \eqref{eq:5.5-interpolation} to $\widehat w(s)$ for the four exponents in the bound for $Z(t)$.
Since $w=\widehat w$ on $[0,\theta]$, choosing $\varepsilon$ sufficiently small and absorbing the total dissipation contribution gives $$ Z(t)\le C_{M,T}\E\|w_0\|_p^p +C_{M,T}\int_0^t Z(s)\dd s.
$$ Gronwall's inequality yields $Z(T)\le C_{M,T}\E\|w_0\|_p^p$.
Restriction to $[0,\theta]$ proves \eqref{eq:5.5_energy-estimate}, and the Sobolev inequality proves the assertion concerning $L^{3p}$.
Throughout the argument the stopped Poisson integrand is evaluated at the predictable pre-jump states and retains its upper endpoint.
\end{proof}

Theorem~\ref{thm:local-continuous-dependence} gives continuous dependence on common localized intervals.
More precisely, suppose $u_0^{(n)}\to u_0$ in $L^p(\Omega;E)$, and let $u^{(n)}$ and $u$ be the corresponding local solutions on the same stochastic basis with the same coefficients and noises.
For fixed $M,T$, let $\theta_n$ satisfy the comparison assumptions of that theorem for the pair $(u^{(n)},u)$.
Then $$ \E\left[ \sup_{0\le s\le\theta_n}\|u^{(n)}(s)-u(s)\|_p^p +\int_0^{\theta_n}\|u^{(n)}(s)-u(s)\|_{3p}^p\dd s \right]\longrightarrow0.
$$ This statement uses a common stopping time for each pair of solutions.
It makes no assertion about convergence of their local or maximal lifetimes.

\section{Maximal Solution}
\label{sec:maximal-solution}

The local theory established in the preceding section proves a pathwise unique strong solution of \eqref{eq:1_Main} on a strictly positive stochastic interval.
The corresponding lifetime, however, is only a localizing stopping time and need not be maximal.
We now pass from this local description to a canonical maximal one.
The construction is based solely on pathwise uniqueness and stochastic pasting.
Compatible local solutions are identified on their common lifetime and concatenated, which leads to a maximal stopping time obtained from the family of all attainable local lifetimes.

The maximal solution is obtained independently of any continuation criterion.
Since the equation is driven by a compensated Poisson random measure, the maximal process is a priori canonical only on $[0,\tau_{\max})$.
No terminal value at $\tau_{\max}$ is imposed without a separate endpoint argument.
Continuation at a finite maximal lifetime and the corresponding blow-up alternative are therefore left to the global theory developed in the second part of this work.

\begin{definition}
[Maximal local strong solution] \label{def:maximal-solution} Let $E$ defined as before and $p>3.
$ A \emph{maximal local strong solution} of \eqref{eq:1_Main} is a triple $ \bigl(u,\{\tau_n\}_{n\ge1},\tau_{\max}\bigr) $ satisfying the following properties.

\begin{enumerate}
\item[(i)] $\tau_{\max}$ is an $(\mathcal F_t)$-stopping time with $ \P(\tau_{\max}>0)=1, $ and $\{\tau_n\}_{n\ge1}$ is an increasing sequence of $(\mathcal F_t)$-stopping times such that
\begin{equation*}
0<\tau_n\le\tau_{n+1}, \qquad \tau_n\uparrow\tau_{\max} \quad \P\text{-a.s.}
\end{equation*}

\item[(ii)] For every $n\ge1$, the pair $(u,\tau_n)$ is a local strong solution of \eqref{eq:1_Main} with the prescribed initial datum and driving noises, belonging to the local-energy uniqueness class.
In particular,
\begin{equation*}
\E \Bigg[ \sup_{0\le s\le\tau_n}\|u(s)\|_p^p + \int_0^{\tau_n} \left\| \nabla\bigl(|u(s)|^{p/2}\bigr) \right\|_2^2\,\dd s \Bigg] <\infty .
\end{equation*}

\item[(iii)] The process $u$ is canonically defined on the stochastic interval $[0,\tau_{\max})$ and is adapted there.
For every $n\ge1$, its local representative up to $\tau_n$ is $E$-valued and c\`adl\`ag, and the representatives are consistent on common stochastic intervals.

\item[(iv)] If $(v,\sigma)$ is any other local strong solution of \eqref{eq:1_Main} on the same stochastic basis, with the same initial datum and the same driving Wiener process and Poisson random measure, and belonging to the same local-energy uniqueness class, then $\sigma\le\tau_{\max} \ \P\text{-a.s.}$, and
\begin{equation*}
\P\left( v(t)=u(t) \text{ for every }0\le t<\sigma \right) =1.
\end{equation*}
Moreover, on $\{\sigma<\tau_{\max}\}$ the equality extends to the endpoint:
\begin{equation*}
\mathds 1_{\{\sigma<\tau_{\max}\}}v(\sigma) = \mathds 1_{\{\sigma<\tau_{\max}\}}u(\sigma) \qquad \P\text{-a.s.}
\end{equation*}
Here $u(\sigma)$ is well defined on $\{\sigma<\tau_{\max}\}$ through any localization satisfying $\sigma\le\tau_n$.
\end{enumerate}
\end{definition}
\begin{lemma}
[Restart at a bounded stopping time] \label{lem:6.2-restart} Let $p>3$ and assume the hypotheses of the local existence theorem \cref{eq:2_Main_thm}, including \eqref{eq:2_WienerH}, \eqref{Hypo_W2}, \eqref{eq:4.G1}--\eqref{eq:4.G3}, and \eqref{eq:4_G5}.
Suppose that the independent driving noises $(W,N)$ are jointly compatible with $(\mathcal F_t)$: for every deterministic $t\ge0$, their joint increments after $t$ are independent of $\mathcal F_t$.

Let $\rho$ be a bounded $(\mathcal F_t)$-stopping time and let $\xi\in L^p(\Omega,\mathcal F_\rho,\P;E)$.
Set $\mathcal F_s^\rho=\mathcal F_{\rho+s}, \ge0$, and define the shifted noises by
\begin{align}
W_s^\rho(h)&=W_{\rho+s}(h)-W_\rho(h), &&h\in\mathcal U,
\label{eq:6.2-shifted-Wiener}\\
N^\rho((0,s]\times A)&=N((\rho,\rho+s]\times A), &&A\in\mathcal Z,\quad\mu(A)<\infty.
\label{eq:6.2-shifted-PRM}
\end{align}
Then $(\mathcal F_s^\rho)$ satisfies the usual conditions, $W^\rho$ is a cylindrical $\mathcal U$-Wiener process, and $N^\rho$ is a Poisson random measure relative to this filtration, independent of $W^\rho$, with compensator $\dd s\,\mu(\dd z)$.
Write $\widetilde N^\rho(\dd s,\dd z) =N^\rho(\dd s,\dd z)-\dd s\,\mu(\dd z).$

Retaining the possible $(\omega,t)$-dependence of the coefficients, define
\begin{equation}
\label{eq:6.2-shifted-coefficients}
\begin{aligned}
\sigma^\rho(\omega,s,a) &=\sigma(\omega,\rho(\omega)+s,a), \\
G^\rho(\omega,s,a,z) &=G(\omega,\rho(\omega)+s,a,z).
\end{aligned}
\end{equation}

There exist an $(\mathcal F_s^\rho)$-stopping time $\vartheta$, with $0<\vartheta\le1$ almost surely, and a pathwise unique local strong solution $(v,\vartheta)$ of
\begin{equation}
\label{eq:6.2-shifted-equation}
\begin{aligned}
\dd v(s)=&\Big[ \nu\Delta v(s)-\mathcal P((v(s)\cdot\nabla)v(s)) -\mathcal P\bigl(g_N(|v(s)|^2)v(s)\bigr) \Big]\dd s \\
&+\mathcal P\sigma^\rho(s,v(s-))\,\dd W_s^\rho +\int_Z\mathcal PG^\rho(s,v(s-),z) \,\widetilde N^\rho(\dd s,\dd z), \\ v(0)&=\xi \qquad \nabla\cdot \xi =0.
\end{aligned}
\end{equation}
Here and below the dependence on $\omega$ is suppressed.
The solution is divergence-free and satisfies
\begin{equation*}
\label{eq:6.2-shifted-regularity}
v\in L^p\bigl(\Omega;\D([0,\vartheta];E)\bigr) \cap L^p\bigl(\Omega;L^p(0,\vartheta;L^{3p})\bigr),
\end{equation*}
and
\begin{equation}
\label{eq:6.2-shifted-energy}
\E\left[ \sup_{0\le s\le\vartheta}\|v(s)\|_p^p +\int_0^\vartheta \left\|\nabla\bigl(|v(s)|^{p/2}\bigr)\right\|_2^2\,\dd s \right] \le C\left(1+\E\|\xi\|_p^p\right),
\end{equation}
with the structural dependence of the local theorem and with $C$ independent of $\rho$ and all auxiliary cutoff parameters.
Moreover, $\widehat\tau=\rho+\vartheta$ is an $(\mathcal F_t)$-stopping time satisfying $\widehat\tau>\rho$ almost surely, and $\widehat v(t)=v(t-\rho),\ \rho\le t\le\widehat\tau,$ is an adapted c\`adl\`ag solution branch of \eqref{eq:1_Main}, issued from $\xi$ at $\rho$ and driven by the original noises on $(\rho,\widehat\tau]$.
Its extension
\begin{equation}
\label{eq:6.2-original-stopped-extension}
\overline v(t)=
\begin{cases}
0,&0\le t<\rho, \\
v\bigl((t-\rho)\wedge\vartheta\bigr),&t\ge\rho,
\end{cases}
\end{equation}
is adapted and c\`adl\`ag on $[0,\infty)$.
\end{lemma}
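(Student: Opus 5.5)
The argument has three parts: verify the shifted stochastic basis, apply \cref{eq:2_Main_thm} on that basis, and transfer the solution back to the original time scale.

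\emph{Step 1: the shifted basis.} Since $\rho$ is a bounded stopping time, right-continuity and completeness of $(\mathcal F_{\rho+s})_{s\ge0}$ follow from the corresponding properties of $(\mathcal F_t)$ together with the optional sampling identities $\mathcal F_{\rho+s+}=\bigcap_{r>s}\mathcal F_{\rho+r}$.

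For the noises I would invoke the strong Markov property of L\'evy processes. First treat elementary stopping times $\rho$ taking finitely many values $t_i$. On each $\{\rho=t_i\}\in\mathcal F_{t_i}$, the joint compatibility hypothesis gives that the increments $(W_{t_i+\cdot}-W_{t_i},\,N((t_i,t_i+\cdot]\times\cdot))$ are independent of $\mathcal F_{t_i}$. They have the law of $(W,N)$. A general bounded $\rho$ is then approximated from above by $\rho_m=2^{-m}\lceil 2^m\rho\rceil$. The passage to the limit uses continuity of $W$ and right-continuity of the counting processes $s\mapsto N((\rho,\rho+s]\times A)$ for $\mu(A)<\infty$. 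The half-open interval is what makes this limit correct: $N((\rho_m,\rho_m+s]\times A)\to N((\rho,\rho+s]\times A)$.

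This argument yields the following for $W^\rho$ and $N^\rho$:
\begin{itemize}
\item they are independent of $\mathcal F_\rho$, and the increments after time $s$ are independent of $\mathcal F_{\rho+s}$;
\item $W^\rho$ is a cylindrical Wiener process, identified via L\'evy's characterization applied to each $W^\rho(h)$ with quadratic variation $s\|h\|^2$;
\item $N^\rho$ is a Poisson random measure with intensity $\dd s\,\mu$;
\item $W^\rho$ and $N^\rho$ are mutually independent, since independence of the original pair passes through the same limit.
\end{itemize}
The shifted coefficients $\sigma^\rho$ and $G^\rho$ are $\mathscr P^\rho\otimes\mathcal B(E)$-measurable (resp.\ $\otimes\mathcal Z$), because $(\omega,s)\mapsto(\omega,\rho+s)$ maps $(\mathcal F^\rho_s)$-predictable sets into $(\mathcal F_t)$-predictable sets; this is checked on the generators $(a,b]\times F$. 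The structural bounds \eqref{eq:2_WienerH}--\eqref{eq:4_G5} hold with the same constants, since they are uniform in $(\omega,t)$.

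\emph{Step 2: local solution on the shifted basis.} All hypotheses of \cref{eq:2_Main_thm} now hold on $(\Omega,\mathcal F,(\mathcal F^\rho_s),\P)$ with initial datum $\xi\in L^p(\Omega,\mathcal F^\rho_0;E)$. The theorem gives a local strong solution $(v,\vartheta)$ and the energy bound \eqref{eq:6.2-shifted-energy}, with a constant depending only on structural data; in particular it is independent of $\rho$. The proof of that theorem already produces $0<\vartheta\le1$, since $\tau\le1$ there. Pathwise uniqueness is likewise the uniqueness part of \cref{eq:2_Main_thm}.

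\emph{Step 3: transfer back, which is the main obstacle.} I would check the following in order.

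\begin{enumerate}
\item $\widehat\tau=\rho+\vartheta$ is an $(\mathcal F_t)$-stopping time. Write $\{\rho+\vartheta\le t\}=\bigcup_{q\in\mathbb Q}\{\rho\le q\}\cap\{\vartheta\le t-q\}$, up to the usual right-continuity adjustment, and use $\{\vartheta\le r\}\in\mathcal F_{\rho+r}$. Since $\vartheta>0$, we get $\widehat\tau>\rho$.

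\item Adaptedness of $\overline v$. For fixed $t$, use $\overline v(t)=\sum$ over dyadic approximations, or more directly: $\overline v(t)\mathds 1_{\{\rho\le t\}}$ is $\mathcal F_t$-measurable because the process $s\mapsto v(s\wedge\vartheta)$ is $(\mathcal F^\rho_s)$-progressive, and on $\{\rho\le t\}$ the random time $t-\rho$ is an $(\mathcal F^\rho_s)$-stopping time evaluated appropriately. The c\`adl\`ag property on $[0,\infty)$ is immediate, with a possible jump at $\rho$.

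\item The equation on the original time scale. This requires the time-change identities for stochastic integrals,
\[
\int_0^{r}K(s)\,\dd W^\rho_s=\int_\rho^{\rho+r}K(t-\rho)\,\dd W_t,\qquad
\int_0^{r}\!\!\int_Z K\,\widetilde N^\rho(\dd s,\dd z)=\int_\rho^{\rho+r}\!\!\int_Z K(t-\rho,z)\,\widetilde N(\dd t,\dd z).
\]
I would first prove these for elementary predictable $K$, where both sides agree pathwise by definition of $W^\rho$ and $N^\rho$. They then extend by the isometry and BDG inequality \eqref{eq:2-W-BDG}, and by the Poisson maximal inequality in both its $L^2$ and $L^p$ modes, together with localization. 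Applying them in the weak formulation with the predictable indicator $\mathds 1_{\{s\le\vartheta\}}$ shows that $\widehat v$ solves \eqref{eq:1_Main} on $(\rho,\widehat\tau]$, including any terminal jump. A jump at $\rho$ itself is excluded because the interval $(\rho,\rho+s]$ is open at $\rho$; such a jump is accounted for in $\xi$.

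The delicate point throughout is that predictability must survive the random time shift in both directions. I expect this to need the most care, and I would handle it via the generator argument of Step 1 applied in both directions.
\end{enumerate}
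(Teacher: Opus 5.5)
Your proposal is correct and follows essentially the same route as the paper: you approximate $\rho$ from above by dyadic stopping times and use right-continuity of the counting processes to transfer the independent-increment property to $(W^\rho,N^\rho)$, apply \cref{eq:2_Main_thm} on the shifted basis, use a rational-union argument to show $\rho+\vartheta$ is a stopping time, and prove the time-change identities first for elementary predictable integrands before extending them by the Wiener and two-mode Poisson maximal inequalities. The one step you leave vague, adaptedness of $\overline v$, is made precise in the paper exactly along your lines: $a_t=(t-\rho)^+$ is an $(\mathcal F^\rho_s)$-stopping time and $\mathcal F^\rho_{S}=\mathcal F_{\rho+S}$, so $v(a_t\wedge\vartheta)$ is $\mathcal F_{\rho\vee t}$-measurable and its restriction to $\{\rho\le t\}$ is $\mathcal F_t$-measurable.
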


\begin{proof}
The stopping-time sigma-field identity $\bigcap_{r>s}\mathcal F_{\rho+r}=\mathcal F_{\rho+s}$ shows that $(\mathcal F_s^\rho)$ is right-continuous; completeness is inherited from $(\mathcal F_t)$.
In particular, $\mathcal F_0^\rho=\mathcal F_\rho$.

For finitely many $h_i\in\mathcal U$ and sets $A_j\in\mathcal Z$ with $\mu(A_j)<\infty$, consider the finite-dimensional process with coordinates $W_t(h_i)$ and $N((0,t]\times A_j)$.
It is a c\`adl\`ag jump process whose increments after each deterministic time are independent of the corresponding $\mathcal F_t$.
But now the new origin is the random time $\rho$.

To transfer deterministic-time independence to $\rho$, approximate $\rho$ from above by finite-valued dyadic stopping times: $ \rho_n = 2^{-n} (\lfloor 2^n \rho \rfloor + 1).
$ Then $$ \rho \le \rho_n < \rho + 2^{-n}, \qquad \rho_n \downarrow \rho.
$$

Each $\rho_n$ takes only finitely many deterministic values.
Now using right-continuity of these coordinates extends this independent-increment property to that stopping time.
Applied at $\rho+s$, this proves that the increments of $(W^\rho,N^\rho)$ after $s$ are independent of $\mathcal F_{\rho+s}$ and have the prescribed Wiener--Poisson law.
Applying this argument to every finite collection of coordinates proves the assertions concerning the shifted noises, including their mutual independence and the compensator of $N^\rho$.
This argument uses only the sigma-finiteness of $(Z,\mathcal Z,\mu)$.

Predictability is preserved by the shift $s\mapsto\rho+s$.
Indeed, this holds for adapted left-continuous processes by evaluation at stopping times, and hence for the predictable sigma-algebra they generate.
The same argument with the state and mark variables gives the required product measurability.
The coefficients in \eqref{eq:6.2-shifted-coefficients} satisfy the hypotheses of \cref{eq:2_Main_thm} relative to $(\mathcal F_s^\rho)$, with the same structural constants.
Since $\xi$ is $\mathcal F_0^\rho$-measurable, that theorem supplies a local strong solution with initial datum $\xi$.
Stopping its lifetime at $1$, if necessary, gives $0<\vartheta\le1$ and preserves the energy estimate.
This proves \eqref{eq:6.2-shifted-energy} and pathwise uniqueness in the local-energy class.
The Sobolev inequality $\|v\|_{3p}^p\lesssim \|\nabla(|v|^{p/2})\|_2^2$ gives the second space in \eqref{eq:6.2-shifted-regularity}.
One can use $v(s-)$ in the Wiener coefficient is because $v=v_-$ for $\dd s\otimes\P$-almost every $(s,\omega)$.

We establish the stopping-time translation explicitly.
For any $(\mathcal F_s^\rho)$-stopping time $S$, $ \{\rho+S<t\} =\bigcup_{q\in\mathbb Q,\ q>0} \bigl(\{S<q\}\cap\{\rho+q<t\}\bigr).
$ Each event on the right belongs to $\mathcal F_t$, since $\{S<q\}\in\mathcal F_{\rho+q}$.
Right-continuity therefore implies that $\rho+S$ is an $(\mathcal F_t)$-stopping time.
The definition of stopped sigma-fields also gives
\begin{equation}
\label{eq:6.2-stopping-field-translation}
\mathcal F_S^\rho=\mathcal F_{\rho+S}.
\end{equation}
Taking $S=\vartheta$ proves the stopping-time assertion for $\widehat\tau$ and the strict inequality follows from $\vartheta>0$.

To verify adaptedness of \eqref{eq:6.2-original-stopped-extension}, fix $t\ge0$ and set $a_t=(t-\rho)^+$.
Then $a_t$ is an $(\mathcal F_s^\rho)$-stopping time, because $ \{a_t\le s\}=\{\rho+s\ge t\}\in\mathcal F_{\rho+s}, \ s\ge0.
$ The stopped process $V(s)=v(s\wedge\vartheta)$ is adapted and c\`adl\`ag relative to this filtration.
Thus $V(a_t)$ is $\mathcal F_{\rho+a_t} =\mathcal F_{\rho\vee t}$-measurable by \eqref{eq:6.2-stopping-field-translation}.
On $\{\rho\le t\}$ this is measurability at time $t$, and hence $\overline v(t)=\mathds 1_{\{\rho\le t\}}V(a_t)$ is $\mathcal F_t$-measurable.
Its paths are c\`adl\`ag, with value $\xi$ at $\rho$ and constant value $v(\vartheta)$ after $\widehat\tau$.
In particular, $\overline v_-$ is predictable.

Set $\chi_\rho(r)=\mathds 1_{\{\rho<r\le\widehat\tau\}}$.
This process is adapted and left-continuous, hence predictable.
For a test function $\varphi\in C_c^\infty(\R^3;\R^3)$, write $ \mathcal A_\varphi(a) =\nu\langle a,\Delta\varphi\rangle -\langle\mathcal P((a\cdot\nabla)a),\varphi\rangle -\langle\mathcal P(g_N(|a|^2)a),\varphi\rangle.
$ Translation of the stopped shifted equation gives, for every $t\ge0$, almost surely,
\begin{equation}
\label{eq:6.2-original-time-weak}
\begin{aligned}
\langle\overline v(t),\varphi\rangle =&\mathds 1_{\{\rho\le t\}}\langle\xi,\varphi\rangle +\int_0^t\chi_\rho(r) \mathcal A_\varphi(\overline v(r))\,\dd r +\int_0^t\chi_\rho(r) \left\langle \bigl(\mathcal P\sigma(r,\overline v(r-))\bigr)^*\varphi, \dd W_r \right\rangle_{\mathcal U} \\
& \qquad+\int_0^t\int_Z\chi_\rho(r) \left\langle\mathcal PG(r,\overline v(r-),z),\varphi\right\rangle \,\widetilde N(\dd r,\dd z).
\end{aligned}
\end{equation}
Here all integrands are set equal to zero outside $(\rho,\widehat\tau]$.
For completeness, the translation identities hold first for elementary predictable integrands such as a shifted interval $(a,b]$ becomes $(\rho+a,\rho+b]$, and a coefficient measurable at $\mathcal F_a^\rho$ is measurable at $\mathcal F_{\rho+a}$.
The identities for $W^\rho$ and $N^\rho$ then follow directly from \eqref{eq:6.2-shifted-Wiener}--\eqref{eq:6.2-shifted-PRM}; the compensator transforms by the ordinary change of variables $r=\rho+s$.
Approximation in the defining stochastic-integral norms, after localization, extends these identities to the integrands in \eqref{eq:6.2-shifted-equation}.
Evaluating the stopped shifted formulation at $a_t$ and using these identities proves \eqref{eq:6.2-original-time-weak}.
The formulations may be chosen simultaneously in time, for each test function, by their c\`adl\`ag versions.

For $t\ge\rho$, \eqref{eq:6.2-original-time-weak} is exactly the stopped weak formulation issued from $\xi$ at $\rho$.
The indicator $\chi_\rho$ excludes a Poisson mark at $\rho$ and includes a possible mark at $\widehat\tau$, with coefficient evaluated at the left limit.
Thus, when $\xi=u(\rho)$, the restart does not count the jump already contained in $u(\rho)$ a second time.
The local-energy estimate on $[\rho,\widehat\tau]$ follows from \eqref{eq:6.2-shifted-energy} by time translation.
Finally, any competing branch in the same class shifts to a solution with initial datum $\xi$ and noises $(W^\rho,N^\rho)$.
Pathwise uniqueness on the shifted basis therefore gives uniqueness on the original common lifetime.
\end{proof}
\begin{lemma}
[Stochastic pasting] \label{lem:6.3-pasting} Let $p>3$, and assume the hypotheses of \cref{eq:2_Main_thm}.
All local strong solutions below belong to its local-energy uniqueness class and are represented by their stopped c\`adl\`ag versions.

\begin{enumerate}
\item[(i)] Let $(u^{(1)},\tau_1)$ and $(u^{(2)},\tau_2)$ be local strong solutions of \eqref{eq:1_Main} on the same stochastic basis, with the same initial datum $u_0$ and the same driving noises.
Then
\begin{equation}
\label{eq:6.3-overlap}
\P\left( u^{(1)}(t)=u^{(2)}(t) \text{ for every }0\le t\le\tau_1\wedge\tau_2 \right)=1.
\end{equation}
There is a unique local strong solution $(u,\tau_1\vee\tau_2)$ extending both solutions on their closed lifetimes.
The family
\begin{equation*}
\mathscr T =\left\{\sigma:
\begin{array}
{l} \text{there exists a local strong solution }(v,\sigma) \text{ of \eqref{eq:1_Main}} \\
\text{in the above class with the prescribed initial datum and driving noises}
\end{array}
\right\}
\end{equation*}
is upward directed.

\item[(ii)] Assume additionally the joint compatibility condition of \cref{lem:6.2-restart}.
Let $(u,\rho)$ be a local strong solution of \eqref{eq:1_Main}, where $\rho$ is bounded.
Let $\widehat v$ be the restarted branch supplied by \cref{lem:6.2-restart}, issued from $\xi=u(\rho)$ and defined on $[\rho,\widehat\tau]$, with $\widehat\tau>\rho$ almost surely.
Then $u$ and $\widehat v$ have a unique pasted extension $(w,\widehat\tau)$ in the same local-energy class.
A possible Poisson jump at $\rho$ is counted exactly once, and a possible terminal jump at $\widehat\tau$ is retained.
\end{enumerate}
\end{lemma}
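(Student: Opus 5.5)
For part (i), the overlap identity \eqref{eq:6.3-overlap} is exactly the pathwise uniqueness statement of \cref{eq:2_Main_thm}: both solutions lie in the local-energy class and share the initial datum and the noises. So $u^{(1)}=u^{(2)}$ on $[0,\tau_1\wedge\tau_2]$, indistinguishably and including the endpoint. I would then define the pasted process algebraically, with $\theta=\tau_1\wedge\tau_2$:
\[
u(t)=u^{(1)}(t\wedge\tau_1)+u^{(2)}(t\wedge\tau_2)-u^{(2)}(t\wedge\theta).
\]
Adaptedness and the c\`adl\`ag property follow at once from the stopped processes. Using the overlap identity, one checks that $u=u^{(1)}$ on $[0,\tau_1]$ and $u=u^{(2)}$ on $[0,\tau_2]$; the argument splits on the $\mathcal F_\theta$-measurable events $\{\tau_1\le\tau_2\}$ and $\{\tau_2<\tau_1\}$, but never needs to choose a branch.

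For the weak formulation \eqref{eq:2-strong-weak} up to $\tau=\tau_1\vee\tau_2$, I take the stopped formulation for $u^{(1)}$ and add the formulation for $u^{(2)}$ minus its restriction to $\theta$. The integrands are multiplied by the predictable indicators
\[
\mathds 1_{\{s\le\tau_1\}}+\mathds 1_{\{s\le\tau_2\}}-\mathds 1_{\{s\le\theta\}}=\mathds 1_{\{s\le\tau\}}.
\]
On each piece the coefficient is evaluated at the left limit of the corresponding branch, and this coincides with $u(s-)$ on the relevant stochastic interval. Linearity of the Wiener and Poisson integrals in predictable integrands then gives the formulation for $u$ up to $\tau$. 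Because the indicators are of the form $\{s\le\cdot\}$, a jump at the switching time $\theta$ is counted exactly once, and a jump at $\tau$ is retained.

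The energy bound follows from
\[
\sup_{s\le\tau}\|u(s)\|_p^p\le\sup_{s\le\tau_1}\|u^{(1)}(s)\|_p^p+\sup_{s\le\tau_2}\|u^{(2)}(s)\|_p^p,
\]
together with the analogous splitting of the dissipation integral over $[0,\tau_1]\cup[0,\tau_2]$. Uniqueness of the extension is again pathwise uniqueness. Upward directedness of $\mathscr T$ is then immediate, since $\tau_1\vee\tau_2\in\mathscr T$.

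For part (ii), $\rho$ is bounded and $\xi=u(\rho)$ is $\mathcal F_\rho$-measurable and divergence-free. It lies in $L^p(\Omega;E)$ by the energy bound of $(u,\rho)$, so \cref{lem:6.2-restart} applies and gives $\widehat v$ together with the extension $\overline v$ from \eqref{eq:6.2-original-stopped-extension}. I set
\[
w(t)=u(t\wedge\rho)+\overline v(t)-\mathds 1_{\{\rho\le t\}}\xi,
\]
which is adapted and c\`adl\`ag and equals $\widehat v$ on $[\rho,\widehat\tau]$. Adding \eqref{eq:6.2-original-time-weak} to the stopped formulation of $u$, the initial terms $\mathds 1_{\{\rho\le t\}}\langle\xi,\varphi\rangle$ cancel against the added $\xi$. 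The integrands combine through
\[
\mathds 1_{\{s\le\rho\}}+\mathds 1_{\{\rho<s\le\widehat\tau\}}=\mathds 1_{\{s\le\widehat\tau\}}.
\]
The half-open $\chi_\rho$ excludes the mark at $\rho$, which is already in $\xi$, and keeps the mark at $\widehat\tau$. The left limits agree: for $s\le\rho$ we have $w(s-)=u(s-)$, while for $s>\rho$ we have $w(s-)=\overline v(s-)$, using $\overline v(\rho)=\xi$. The energy bound is the sum of the two local bounds, the second coming from \eqref{eq:6.2-shifted-energy}, which is finite. Uniqueness follows from \cref{eq:2_Main_thm} on $[0,\rho]$ and the uniqueness clause of \cref{lem:6.2-restart} on $[\rho,\widehat\tau]$.

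The main obstacle is the rigorous identification of stochastic integrals with piecewise-defined predictable integrands. In particular one has to verify that $\mathds 1_{\{\rho<s\le\widehat\tau\}}G(s,w(s-),z)$ is predictable and equals the translated integrand, so that locality of the integrals on predictable stochastic intervals can be invoked. I would handle this first for elementary integrands and then argue by density after localization, as in the restart proof.
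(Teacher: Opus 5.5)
Your proposal is correct and follows the paper's proof essentially line by line. It uses the same algebraic pasting formula and predictable indicators in (i), and the same $w=u(\cdot\wedge\rho)+\overline v-\mathds 1_{\{\rho\le\cdot\}}\xi$ with $\chi_\rho=\mathds 1_{(\rho,\widehat\tau]}$ and the additive energy bound in (ii); the only cosmetic difference is that you obtain uniqueness in (ii) by splitting at $\rho$ and invoking the restart lemma's uniqueness clause, whereas the paper applies pathwise uniqueness of the main theorem directly on $[0,\widehat\tau]$.
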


\begin{proof}
For a process $v$ on a stochastic interval $[\alpha,\beta]$, write $$ \mathcal E(v;\alpha,\beta) :=\E\left[ \sup_{\alpha\le s\le\beta}\|v(s)\|_p^p +\int_\alpha^\beta \left\|\nabla\bigl(|v(s)|^{p/2}\bigr)\right\|_2^2\dd s \right].
$$ For $\varphi\in C_c^\infty(\R^3;\R^3)$, set $ \mathcal A_\varphi(a) :=\nu\langle a,\Delta\varphi\rangle -\langle\mathcal P((a\cdot\nabla)a),\varphi\rangle -\langle\mathcal P(g_N(|a|^2)a),\varphi\rangle.
$ For an adapted c\`adl\`ag process $v$ and a bounded predictable process $\chi$, whenever the integrals are defined, denote
\begin{equation*}
\begin{aligned}
\mathcal I_\varphi(v,\chi)(t) :=&\int_0^t\chi(s)\mathcal A_\varphi(v(s))\,\dd s +\int_0^t\chi(s) \left\langle \bigl(\mathcal P\sigma(s,v(s-))\bigr)^*\varphi,\dd W_s \right\rangle_{\mathcal U} \\
&+\int_0^t\int_Z\chi(s) \left\langle\mathcal PG(s,v(s-),z),\varphi\right\rangle \,\widetilde N(\dd s,\dd z).
\end{aligned}
\end{equation*}
The possible $\omega$-dependence of the coefficients is suppressed.
We use predictable versions of the Wiener integrands.
The identities below hold indistinguishably for each fixed test function.

\emph{(i)} Pathwise uniqueness gives \eqref{eq:6.3-overlap}.
Set $\theta=\tau_1\wedge\tau_2$ and $\tau=\tau_1\vee\tau_2$, and define
\begin{equation*}
u(t) :=u^{(1)}(t\wedge\tau_1) +u^{(2)}(t\wedge\tau_2) -u^{(2)}(t\wedge\theta), \qquad t\ge0.
\end{equation*}
This process is adapted, divergence-free, c\`adl\`ag, and stopped at the positive stopping time $\tau$.
On the full-probability event in \eqref{eq:6.3-overlap}, it agrees with $u^{(i)}$ on $[0,\tau_i]$, $i=1,2$.
In particular,
\begin{equation}
\label{eq:6.3-energy}
\mathcal E(u;0,\tau) \le \mathcal E(u^{(1)};0,\tau_1) +\mathcal E(u^{(2)};0,\tau_2) <\infty.
\end{equation}

Put $$\chi_1=\mathds 1_{(0,\tau_1]},  \chi_{12} =\mathds 1_{(0,\tau_2]} -\mathds 1_{(0,\theta]} =\mathds 1_{(\tau_1,\tau_2]}.$$

These processes are predictable and satisfy $\chi_1+\chi_{12}=\mathds 1_{(0,\tau]}$.
Adding the stopped formulation for $(u^{(1)},\tau_1)$ to the difference between those for $(u^{(2)},\tau_2)$ and $(u^{(2)},\theta)$ yields
\begin{equation}
\label{eq:6.3-pasted-weak}
\begin{aligned}
\langle u(t\wedge\tau),\varphi\rangle =&\langle u_0,\varphi\rangle +\mathcal I_\varphi(u^{(1)},\chi_1)(t) +\mathcal I_\varphi(u^{(2)},\chi_{12})(t) \\
=&\langle u_0,\varphi\rangle +\mathcal I_\varphi(u,\mathds 1_{(0,\tau]})(t).
\end{aligned}
\end{equation}
The second equality follows from agreement of the states and their left limits on the respective integration intervals.
All integrals are inherited from the constituent stopped equations by restriction and addition.
On $\{\tau_1<\tau_2\}$, the Poisson intervals $(0,\tau_1]$ and $(\tau_1,\tau_2]$ are disjoint and retain their upper endpoints.
On $\{\tau_2\le\tau_1\}$ the second interval is empty.
Thus \eqref{eq:6.3-pasted-weak} is the required stopped weak formulation, including terminal jumps.
Together with \eqref{eq:6.3-energy}, it proves existence in the prescribed class.
Uniqueness follows from \cref{eq:2_Main_thm}, and the construction gives $\sigma_1\vee\sigma_2\in\mathscr T$ whenever $\sigma_1,\sigma_2\in\mathscr T$.

\emph{(ii)} The stopped value $\xi=u(\rho)$ is $\mathcal F_\rho$-measurable and satisfies $ \E\|\xi\|_p^p \le\mathcal E(u;0,\rho)<\infty.
$ Hence \cref{lem:6.2-restart} applies.
Let
\begin{equation*}
\overline v(t)=
\begin{cases}
0,&0\le t<\rho, \\
\widehat v(t),&\rho\le t\le\widehat\tau, \\
\widehat v(\widehat\tau),&t>\widehat\tau,
\end{cases}
\qquad \Lambda_\rho(t)=\mathds 1_{\{\rho\le t\}}\xi.
\end{equation*}
The process $\overline v$ is adapted and c\`adl\`ag by \cref{lem:6.2-restart}; the same properties hold for $\Lambda_\rho$ because $\xi$ is $\mathcal F_\rho$-measurable.
Define
\begin{equation*}
w(t) :=u(t\wedge\rho)+\overline v(t)-\Lambda_\rho(t), \qquad t\ge0.
\end{equation*}
Then $w$ is adapted, divergence-free, c\`adl\`ag, and stopped at $\widehat\tau$.
It agrees with $u$ on $[0,\rho]$ and with $\widehat v$ on $[\rho,\widehat\tau]$.
The energy estimate in \cref{lem:6.2-restart} gives
\begin{equation}
\label{eq:6.3-restart-energy-expectation}
\begin{aligned}
\mathcal E(w;0,\widehat\tau) &\le\mathcal E(u;0,\rho) +\mathcal E(\widehat v;\rho,\widehat\tau)\le\mathcal E(u;0,\rho) +C\bigl(1+\E\|\xi\|_p^p\bigr) <\infty.
\end{aligned}
\end{equation}

Set $\chi_\rho :=\mathds 1_{(\rho,\widehat\tau]} =\mathds 1_{(0,\widehat\tau]} -\mathds 1_{(0,\rho]}.$ This process is predictable.
Since $\overline v(t\wedge\rho)=\Lambda_\rho(t)$, the translated formulation in \cref{lem:6.2-restart} reads
\begin{equation*}
\langle\overline v(t)-\Lambda_\rho(t),\varphi\rangle =\mathcal I_\varphi(\overline v,\chi_\rho)(t).
\end{equation*}
Adding the stopped formulation for $(u,\rho)$ and using $\mathds 1_{(0,\rho]}+\chi_\rho =\mathds 1_{(0,\widehat\tau]}$ gives
\begin{equation}
\label{eq:6.3-restarted-pasted-weak}
\begin{aligned}
\langle w(t\wedge\widehat\tau),\varphi\rangle =&\langle u_0,\varphi\rangle +\mathcal I_\varphi(u,\mathds 1_{(0,\rho]})(t) +\mathcal I_\varphi(\overline v,\chi_\rho)(t) \\
=&\langle u_0,\varphi\rangle +\mathcal I_\varphi(w,\mathds 1_{(0,\widehat\tau]})(t).
\end{aligned}
\end{equation}
Here the current states and the predictable left limits agree on the respective integration intervals.
The first Poisson integral includes the jump at $\rho$, whereas the restarted integral is supported on $(\rho,\widehat\tau]$.
Thus the jump at $\rho$ is counted exactly once and the possible terminal jump at $\widehat\tau$ is retained.
Equation \eqref{eq:6.3-restarted-pasted-weak} and \eqref{eq:6.3-restart-energy-expectation} prove that $(w,\widehat\tau)$ is a local strong solution in the prescribed class.
Pathwise uniqueness from \cref{eq:2_Main_thm} proves uniqueness of the pasted extension.
\end{proof}

\begin{proof}
[Proof of \cref{prop:6.4-maximal}] By \cref{eq:2_Main_thm}, the family $\mathscr T$ is nonempty and its elements are almost surely positive.
It is closed under pairwise maxima by \cref{lem:6.3-pasting}\,(i).
Stopping an admissible solution at a smaller positive stopping time preserves both its stopped formulation and its energy bound; in particular, $\sigma\wedge k\in\mathscr T$ for $\sigma\in\mathscr T$ and $k\ge1$.

We first construct a countable family realizing the essential supremum.
Set $h(r)=1-e^{-r}$ for $r\in[0,\infty]$, with $h(\infty)=1$, and let $ a:=\sup_{\sigma\in\mathscr T}\E h(\sigma).
$ Choose $(\sigma_n)\subset\mathscr T$ with $\E h(\sigma_n)\to a$, and set $\kappa_n=\bigvee_{j=1}^n\sigma_j$ and $\kappa=\sup_n\kappa_n$.
Closure under maxima and dominated convergence give $ \E h(\kappa)=a, \ \E h(\kappa\vee\sigma) =\lim_{n\to\infty}\E h(\kappa_n\vee\sigma) \le a, \ \sigma\in\mathscr T.
$ Since $h$ is strictly increasing, $\sigma\le\kappa$ almost surely for every $\sigma\in\mathscr T$.
As $\kappa$ is a countable supremum of elements of $\mathscr T$, it represents their essential supremum.
Define $\tau_n:=\kappa_n\wedge n, \ \tau_{\max}:=\sup_{n\ge1}\tau_n.$ Then $\tau_n\in\mathscr T$, $0<\tau_n\le n$, and $\tau_n\uparrow\kappa$ almost surely.
Moreover, $$ \{\tau_{\max}\le t\} =\bigcap_{n\ge1}\{\tau_n\le t\}\in\mathcal F_t, t\ge0.
$$ This proves \eqref{eq:6.4-maximal-time} and \eqref{eq:6.4-positive-maximal-time}.

For each $n$, choose an admissible solution $(U^{(n)},\tau_n)$, represented by its stopped c\`adl\`ag version on $[0,\infty)$.
By \cref{lem:6.3-pasting}\,(i), after discarding one null set,
\begin{equation}
\label{eq:6.4-all-consistency}
U^{(n)}(t)=U^{(m)}(t), \qquad 0\le t\le\tau_m,\quad m\le n,
\end{equation}
simultaneously for all $m,n$.
Define $$u(t) :=\mathds 1_{\{t\le\tau_1\}}U^{(1)}(t) +\sum_{n=2}^{\infty} \mathds 1_{\{\tau_{n-1}<t\le\tau_n\}}U^{(n)}(t), \qquad t\ge0,$$ with value zero when all indicators vanish.
The interval indicators are predictable and the processes $U^{(n)}$ are progressively measurable.
Since at most one summand is nonzero, $u$ is progressively measurable.
Equation \eqref{eq:6.4-all-consistency} gives
\begin{equation}
\label{eq:6.4-u-equals-un}
u^{\tau_n}=U^{(n)} \quad\text{indistinguishably},\qquad n\ge1.
\end{equation}
Thus every stopped process has the required c\`adl\`ag regularity, satisfies the stopped weak formulation, and inherits \eqref{eq:6.4-local-energy} from $U^{(n)}$.
In particular, the compensated Poisson integral remains supported on $(0,\tau_n]$, with its coefficient evaluated at $u(s-)=U^{(n)}(s-)$ on that interval, and retains a possible jump at $\tau_n$.
Every compact interval contained in $[0,\tau_{\max})$ lies inside some $[0,\tau_n]$ almost surely, which proves the asserted local path regularity.

Let $(v,\sigma)$ be an admissible local solution.
Since $\sigma\in\mathscr T$, the definition of $\tau_{\max}$ gives \eqref{eq:6.4-maximality-time}.
Pathwise uniqueness and \eqref{eq:6.4-u-equals-un} imply, on a common full-probability event, $$ v(t)=u(t), \qquad 0\le t\le\sigma\wedge\tau_n, \quad n\ge1.
$$ For every $t<\sigma\le\tau_{\max}$, some $n$ satisfies $t\le\tau_n$; this proves \eqref{eq:6.4-maximality}.
On $\{\sigma<\tau_{\max}\}$, some $n$ also satisfies $\sigma\le\tau_n$, so the same identity proves \eqref{eq:6.4-maximality2}, including a possible jump at $\sigma$.

More generally, let $(v,(\rho_m)_{m\ge1},\zeta)$ be a local solution with $\rho_m\uparrow\zeta$ and admissible closed localizations $(v^{\rho_m},\rho_m)$.
Then $\rho_m\in\mathscr T$ for every $m$, whence $\zeta\le\tau_{\max}$ almost surely.
Applying \eqref{eq:6.4-maximality} to these localizations and taking a countable intersection gives $v=u$ throughout $[0,\zeta)$ almost surely.

Finally, two maximal constructions dominate each other's localizing lifetimes.
Taking their increasing limits yields \eqref{eq:6.4-maximal-uniqueness-time}.
For every $n,m$, pathwise uniqueness gives $u=\widetilde u$ on $[0,\tau_n\wedge\widetilde\tau_m]$ almost surely.
Intersecting these countably many events and using the convergence of both localizing sequences proves \eqref{eq:6.4-maximal-uniqueness-process}.
\end{proof}

Proposition~\ref{prop:6.4-maximal} completes the maximal local well-posedness theory for \eqref{eq:1_Main}.
It identifies the unique maximal process on $[0,\tau_{\max})$ without asserting an endpoint value or a continuation criterion at $\tau_{\max}$.
The intrinsic continuation theory and the global existence argument under additional $L^2/H^1$ hypotheses will be developed in the subsequent paper.
\begin{corollary}
[Localized continuous dependence of maximal solutions] \label{cor:6-maximal-continuous-dependence} Let $p>3$ and assume the hypotheses of \cref{eq:2_Main_thm}.
Let $ \bigl(u,(\tau_j)_{j\ge1},\tau_{\max}^{u}\bigr), \ \bigl(v,(\sigma_j)_{j\ge1},\tau_{\max}^{v}\bigr) $ be the maximal local strong solutions supplied by \cref{prop:6.4-maximal}, with divergence-free initial data $u_0,v_0\in L^p(\Omega,\mathcal F_0;E)$.
The stochastic basis, coefficients, Wiener process, and Poisson random measure are the same for both solutions.
Fix $T>0$ and $M\ge1$.
Let $\rho$ be a stopping time such that
\begin{equation}
\label{eq:6-common-time}
0\le\rho\le T, \qquad \rho<\tau_{\max}^{u}\wedge\tau_{\max}^{v} \quad\P\text{-a.s.},
\end{equation}
and assume
\begin{equation}
\label{eq:6-stability-exit}
\sup_{0\le s<\rho} \bigl(\|u(s)\|_p+\|v(s)\|_p\bigr) \le M \quad\P\text{-a.s.}
\end{equation}

For a difference process $w=u-v$, set $$ \mathcal E_\rho(w) := \sup_{0\le s\le\rho}\|w(s)\|_p^p ++\int_0^\rho \left\|\nabla\bigl(|w(s)|^{p/2}\bigr)\right\|_2^2 \dd s .
$$ Then, for every $A\in\mathcal F_0$,
\begin{equation}
\label{eq:6-maximal-stability}
\E\bigl[\mathds 1_A\mathcal E_\rho(u-v)\bigr] \le C_{M,T}\, \E\bigl[\mathds 1_A\|u_0-v_0\|_p^p\bigr].
\end{equation}
The constant depends only on $M,T,p,\nu,N$ and the structural constants of the coefficient assumptions.
It is independent of $\rho$, the initial data, and the chosen localizing sequences.

Consequently, let $u^{(n)}$ and $u$ be the maximal solutions with initial data $u_0^{(n)}$ and $u_0$, respectively.
Suppose that $\rho_n$ satisfies
\eqref{eq:6-common-time}--
\eqref{eq:6-stability-exit} for the pair $(u^{(n)},u)$, with the same $M,T$.
If $ u_0^{(n)}\longrightarrow u_0 \quad\text{in probability in }E, $ then $ \mathcal E_{\rho_n}(u^{(n)}-u) \longrightarrow0 \quad\text{in probability}.
$ If the initial convergence holds in $L^p(\Omega;E)$, then, $ \E\, \mathcal E_{\rho_n}(u^{(n)}-u) \longrightarrow0.
$
\end{corollary}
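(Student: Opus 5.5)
The plan is to reduce \eqref{eq:6-maximal-stability} to \cref{thm:local-continuous-dependence}, applied to closed localizations of the two maximal solutions. I will then handle the event $A\in\mathcal F_0$ by restricting the initial data.

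\emph{Step 1: find admissible closed localizations covering $[0,\rho]$.} Since $\rho<\tau^u_{\max}$ almost surely and $\tau_j\uparrow\tau^u_{\max}$, the events $\{\rho\le\tau_j\}$ increase to a full-probability set, and the same holds for the $\sigma_j$. The times $\rho\wedge\tau_j$ need not equal $\rho$, so instead I will paste along $\rho$ directly. Set $\rho_j:=\rho\wedge\tau_j\wedge\sigma_j$. Then $(u^{\tau_j},\tau_j)$ and $(v^{\sigma_j},\sigma_j)$ are local strong solutions in the energy class by \cref{prop:6.4-maximal}. The comparison time $\theta=\rho_j$ satisfies $\theta\le T\wedge\tau_j\wedge\sigma_j$, and the pre-exit bound $\sup_{s<\theta}(\|u(s)\|_p+\|v(s)\|_p)\le M$ is inherited from \eqref{eq:6-stability-exit}. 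On $[0,\rho_j]$ the stopped processes agree with $u,v$, including at the endpoint. This follows from \eqref{eq:6.4-u-equals-un}, since $\rho_j\le\tau_j$ and $\rho_j\le\sigma_j$.

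\emph{Step 2: insert the event $A$.} For $A\in\mathcal F_0$, consider the data $\mathds 1_A u_0$ and $\mathds 1_A v_0$. Rather than constructing new solutions, I will rerun the proof of \cref{thm:local-continuous-dependence} with every forcing term multiplied by $\mathds 1_A$. This multiplication preserves predictability, and $\mathds 1_A w$ solves the linear heat equation with initial datum $\mathds 1_A(u_0-v_0)$. This is the same device used in the uniqueness part of the proof of \cref{eq:2_Main_thm}, where the estimates were multiplied by $\mathds 1_{D_R}$. The Gronwall argument then goes through verbatim, giving
\[
\E\bigl[\mathds 1_A\mathcal E_{\rho_j}(u-v)\bigr]\le C_{M,T}\,\E\bigl[\mathds 1_A\|u_0-v_0\|_p^p\bigr],
\]
with $C_{M,T}$ independent of $j$.

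\emph{Step 3: let $j\to\infty$.} Almost surely, $\rho_j=\rho$ for all large $j$, because $\rho<\tau^u_{\max}\wedge\tau^v_{\max}$ and the localizing sequences increase to the maximal times. Hence $\mathcal E_{\rho_j}(u-v)\uparrow\mathcal E_\rho(u-v)$, and monotone convergence (or Fatou's lemma) gives \eqref{eq:6-maximal-stability}. The main delicate point is exactly this eventual equality: if $\tau_j$ merely converges to a limit strictly above $\rho$ without ever reaching it pathwise, the argument would fail. Since $\tau_j\uparrow\tau_{\max}>\rho$, however, some $\tau_j$ exceeds $\rho$ pathwise, so no endpoint value at $\tau_{\max}$ is needed.

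\emph{Step 4: the consequences.} The $L^p(\Omega;E)$ statement follows immediately by taking $A=\Omega$. For convergence in probability, fix $\eta>0$ and let $A_n:=\{\|u_0^{(n)}-u_0\|_p\le\eta\}\in\mathcal F_0$. Then
\[
\P\bigl(\mathcal E_{\rho_n}>\varepsilon\bigr)\le\P(A_n^c)+\varepsilon^{-1}C_{M,T}\,\E\bigl[\mathds 1_{A_n}\|u_0^{(n)}-u_0\|_p^p\bigr]\le\P(A_n^c)+\varepsilon^{-1}C_{M,T}\,\eta^p.
\]
Letting first $n\to\infty$ and then $\eta\downarrow0$ gives convergence in probability. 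This is why I keep a general $A\in\mathcal F_0$ in the estimate: the data are not assumed integrable in difference.
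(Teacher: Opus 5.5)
Your proposal is correct and follows essentially the same route as the paper. Like the paper, you localize at $\rho_j=\rho\wedge\tau_j\wedge\sigma_j$, apply the $\mathds 1_A$-weighted version of \cref{thm:local-continuous-dependence} with a $j$-independent constant, and pass to the limit using $\rho_j=\rho$ for all large $j$ almost surely (Fatou or monotone convergence); you then obtain the consequences from $A=\Omega$ and from the events $\{\|u_0^{(n)}-u_0\|_p\le\eta\}\in\mathcal F_0$ combined with Markov's inequality. (The worry raised in your Step~3 cannot occur: $\tau_j\uparrow\tau^{u}_{\max}>\rho$ already forces $\tau_j>\rho$ for all large $j$.)
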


\begin{proof}
The estimate of \cref{thm:local-continuous-dependence} also holds with every expectation weighted by $\mathds 1_A$, where $A\in\mathcal F_0$.
Indeed, its proof applies to the linear difference equation multiplied by $\mathds 1_A$.
This multiplication preserves predictability and commutes with spatial differentiation and stochastic integration.

Set $ \rho_j:=\rho\wedge\tau_j\wedge\sigma_j.
$ The closed representatives of $u$ and $v$ up to $\tau_j$ and $\sigma_j$ belong to the local-energy class.
Moreover, \eqref{eq:6-stability-exit} holds with $\rho_j$ in place of $\rho$.
Hence \cref{thm:local-continuous-dependence} gives $$ \E\bigl[ \mathds 1_A\mathcal E_{\rho_j}(u-v) \bigr] \le C_{M,T} \E\bigl[ \mathds 1_A\|u_0-v_0\|_p^p \bigr], $$ with a constant independent of $j$.

Since $\tau_j\uparrow\tau_{\max}^{u}$, $\sigma_j\uparrow\tau_{\max}^{v}$, and $\rho$ is strictly smaller than both maximal lifetimes, $\rho_j=\rho$ for all sufficiently large $j$, almost surely.
Fatou's lemma therefore yields \eqref{eq:6-maximal-stability}.
In particular, the supremum includes the value at $\rho$ and hence retains a possible terminal jump.

The assertion for convergence in $L^p(\Omega;E)$ follows by taking $A=\Omega$.
For convergence in probability, fix $\varepsilon,\delta>0$ and put $ A_{n,\delta} :=\{\|u_0^{(n)}-u_0\|_p\le\delta\} \in\mathcal F_0.
$ By \eqref{eq:6-maximal-stability} and Markov's inequality, $$
\begin{aligned}
\P\bigl( \mathcal E_{\rho_n}(u^{(n)}-u)>\varepsilon \bigr) \le& \P\bigl( \|u_0^{(n)}-u_0\|_p>\delta \bigr) +\frac{C_{M,T}\delta^p}{\varepsilon}.
\end{aligned}
$$ First let $n\to\infty$ and then $\delta\downarrow0$.
This proves the remaining assertion.
\end{proof}
\section*{Acknowledgment}
The authors would like to express sincere gratitude to Dr.
Ratikanta Panda for his overall guidance, invaluable discussions, and continuous support throughout the preparation of this manuscript.
\bibliographystyle{elsarticle-harv} \bibliography{References}
\end{document}